\newtheorem{FactCounter}{dummy}[subsection] 
\newtheorem{theorem}[FactCounter]{Theorem}
\newtheorem{corollary}[FactCounter]{Corollary}
\newtheorem{lemma}[FactCounter]{Lemma}
\newtheorem{proposition}[FactCounter]{Proposition}
\theoremstyle{definition}
\newtheorem{definition}[FactCounter]{Definition}
\theoremstyle{definition}
\theoremstyle{remark}
\newtheorem{remark}[FactCounter]{Remark}
\theoremstyle{remark}
\newtheorem{remarks}[FactCounter]{Remarks}
\theoremstyle{definition}
\newtheorem{example}[FactCounter]{Example}
\theoremstyle{definition}
\newtheorem{convention}[FactCounter]{Convention}
\theoremstyle{definition}
\theoremstyle{definition}
\newtheorem{notation}[FactCounter]{Notation}
\theoremstyle{plain}
\newtheorem{conjecture}[FactCounter]{Conjecture}
\newtheorem{FactCounterApp}{dummy}[section] 
\theoremstyle{plain}
\newtheorem{theoremApp}[FactCounterApp]{Theorem}
\theoremstyle{plain}
\theoremstyle{plain}
\newtheorem{lemmaApp}[FactCounterApp]{Lemma}
\theoremstyle{plain}
\newtheorem{FactCounterConc}{dummy}[section] 
\theoremstyle{plain}
\newtheorem{theoremConc}[FactCounterConc]{Theorem}
\theoremstyle{plain}
\theoremstyle{plain}
\theoremstyle{plain}
\theoremstyle{plain}
\newtheorem{conjectureConc}[FactCounterConc]{Conjecture}
\theoremstyle{remark}
\theoremstyle{definition}
\theoremstyle{definition}
\newcommand\N{\mathds{N}}
\newcommand\Z{\mathds{Z}}
\newcommand\Q{\mathds{Q}}
\newcommand\R{\mathds{R}}
\newcommand{\into}{\hookrightarrow}
\newcommand{\onto}{\twoheadrightarrow}
\newcommand{\lra}{\longrightarrow}%
\newcommand{\ra}{\rightarrow}%
\newcommand{\Ra}{\Rightarrow}%
\DeclareRobustCommand\lonto
\DeclareRobustCommand\linto
\DeclareMathOperator*{\bdwedge}{\bigwedge\mkern-15mu\bigwedge}
\newcommand{\subsetsim}{\mathrel{\ooalign{\raise.4ex\hbox{$\subset$}\cr$\raise-.9ex\hbox{$\sim$}$}}}
\newcommand{\subsetnsim}{\mathrel{\ooalign{\raise.4ex\hbox{$\subset$}\cr$\raise-.9ex\hbox{$\not\sim$}$}}}
\newcommand{\supsetsim}{\mathrel{\ooalign{\raise.4ex\hbox{$\supset$}\cr$\raise-.9ex\hbox{$\sim$}$}}}
\newcommand{\supsetnsim}{\mathrel{\ooalign{\raise.4ex\hbox{$\subset$}\cr$\raise-.9ex\hbox{$\not\sim$}$}}}
\let\emptyset\varnothing
\let\phi\varphi
\let\epsilon\varepsilon
\colorlet{red}{BrickRed}
\colorlet{green}{OliveGreen}
\colorlet{magenta}{Thistle}
\tikzset{
    dot diameter/.store in=\dot@diameter,
    dot diameter=3pt,
    dot spacing/.store in=\dot@spacing,
    dot spacing=10pt,
    dots/.style={
        line width=\dot@diameter,
        line cap=round,
        dash pattern=on 0pt off \dot@spacing
    }
}
\def\@tocline#1#2#3#4#5#6#7{\relax
	\ifnum #1>\c@tocdepth 
	\else
	\par \addpenalty\@secpenalty\addvspace{#2}%
	\begingroup \hyphenpenalty\@M
	\@ifempty{#4}{%
		\@tempdima\csname r@tocindent\number#1\endcsname\relax
	}{%
		\@tempdima#4\relax
	}%
	\parindent\z@ \leftskip#3\relax \advance\leftskip\@tempdima\relax
	\rightskip\@pnumwidth plus4em \parfillskip-\@pnumwidth
	#5\leavevmode\hskip-\@tempdima
	\ifcase #1
	\or\or \hskip 1em \or \hskip 2em \else \hskip 3em \fi%
	#6\nobreak\relax
	\dotfill\hbox to\@pnumwidth{\@tocpagenum{#7}}\par
	\nobreak
	\endgroup
	\fi}
\date{\today}
\subjclass[2020]{03C10 (Primary), 06F25, 13F99, 13H99 (Secondary)}
\keywords{SV-ring, real closed ring, model theory,  model complete, model companion,  decidability, NIP}
\title{Model Theory of Local Real Closed SV-Rings of Finite Rank}
\author{Ricardo Palomino Piepenborn}
\address{The University of Manchester, Department of Mathematics, Oxford Road, Manchester,  M13 9PL, United Kingdom}
\email{\href{mailto:ricardo.palomino@rjpp.net}{ricardo.palomino@rjpp.net}}
\urladdr{\href{http://www.rjpp.net}{rjpp.net}}
\begin{document}

\begin{abstract}
	This note begins the model-theoretic study of local real closed SV-rings of finite rank; to this end, a structure theorem for reduced local SV-rings of finite rank is given and branching ideals in local real closed rings of finite rank are analysed. The class of local real closed SV-rings of rank $ n \in \N^{\geq 2} $  is elementary in the language of rings $ \mathscr{L} := \{ +, -, \cdot, 0, 1 \} $ and its $ \mathscr{L} $-theory $ T_n $ has a model companion $ T_{n,1} $; models of  $ T_{n,1} $ are $ n $-fold fibre products $ (((V_1 \times_{\emph{\textbf{k}}} V_2) \times_{\emph{\textbf{k}}}V_3) \dots \times_{\emph{\textbf{k}}} V_{n-1})\times_{\emph{\textbf{k}}}  V_{n} $ of non-trivial real closed valuation rings $ V_i $ with isomorphic  residue field  $ \emph{\textbf{k}} $. The $ \mathscr{L} $-theory $ T_{n,1} $ is complete, decidable, and NIP. After enriching $ \mathscr{L} $ with a predicate for the maximal ideal,  models of  $ T_n $  have prime extensions in models of  $ T_{n,1} $, and $ T_{n,1} $ is the model completion of $ T_n $ in this enriched language. A quantifier elimination result for $ T_{n,1} $ is also given. The class of those local real closed SV-rings of rank $ n\in \N^{\geq 2} $ which are $ n $-fold fibre products $ (((V_1 \times_{W} V_2) \times_{W}V_3) \dots \times_{W} V_{n-1})\times_{W}  V_{n} $ of non-trivial real closed valuation rings $ V_i $ along surjective morphisms $ V_i \lonto W $  onto a non-trivial domain $ W $  is elementary in the language of rings, and its $ \mathscr{L} $-theory $ T_{n,2} $  is also complete, decidable, and NIP; after enriching $ \mathscr{L} $ with predicates for the maximal ideal and the unique branching ideal, $ T_{n,2} $ is model complete. 
\end{abstract}

\maketitle

\vspace{-1.21cm}

\setcounter{tocdepth}{2}  
\tableofcontents

\vspace{-1.21cm}
\section{Introduction}\label{SEC.intro}
  The present note studies $ n $-fold fibre products of non-trivial real closed valuation rings along surjective ring homomorphisms onto a fixed domain, where $ n \in \N^{\geq 2} $. This \enquote{bottom-up} definition of this class of rings has an equivalent \enquote{top-down} description, namely, these rings  are exactly \textit{local real closed SV-rings of finite rank with one branching ideal}. In what follows it will be explained what each of the words \enquote{real closed}, \enquote{SV}, \enquote{finite rank}, and \enquote{branching ideal} mean, giving along the way some motivation for the model-theoretic analysis of local real closed SV-rings of finite rank.

\textit{Survaluation rings} (\textit{SV-rings} for short)  were first introduced in \cite{henriksen/wilson.when_is_CX_a_val_ring} in connection to rings $ C(X) $ of continuous real-valued  functions on a completely regular topological space $ X $; in the aforementioned paper, the authors call the ring $ C(X) $ an SV-ring if $ C(X)/\mathfrak{p} $ is a valuation ring for every prime ideal $ \mathfrak{p}$ of $ C(X) $, and $ X $ an SV-space if $ C(X) $ is an SV-ring. A canonical partial order on the rings $ C(X) $ is defined by setting $ f \leq g $ if and only if  $ f(x) \leq g (x) $ for all $ x \in X $; this partial order is a lattice-order which gives  $ C(X) $ the structure of an $ f $-ring (\cite{gillman/jerison.rings_cts_func}, \cite{bkw.groupes}), and this motivated the study of SV-rings within the class of $ f $-rings in \cite{henriksen/larson.semiprime_f-rings_that} (see also \cite{henriksen/larson/martinez/woods.lattice-ord_alg}, \cite{larson.finitely_1-conv_f-rings}, as well as the survey \cite{larson.SV_and_related_f-rings_and_spaces}). SV-rings can also be studied without the presence of a partial order: Schwartz defines in \cite{schwartz.SV} a commutative and unital ring $ A $   to be an SV-ring if $ A/\mathfrak{p} $ is a valuation ring for all prime ideals $\mathfrak{p}  $ of $ A $, and this is what is meant here by an SV-ring (Definition \ref{def.SV-ring}). 

The article  \cite{schwartz.SV} contains a systematic study of SV-rings and it is the main reference on SV-rings for the present work.     \cite{schwartz.SV} also opened up the door for the model-theoretic study of SV-rings by  proving the first results on axiomatizability (in the sense of model theory) of SV-rings in the language of rings $ \mathscr{L}:= \{ +, - , \cdot, 0 ,1 \}  $. In particular, it is shown in \cite[Section 3]{schwartz.SV} that the question of whether a class of SV-rings is elementary or not is tightly connected with  the \textit{rank} of the rings in this class; the rank of a prime ideal $ \mathfrak{p} $ in a ring $ A $ is defined as the number (which is either a natural number or  $ \infty $) of minimal prime ideals  $ \mathfrak{q}  $ of $ A $ such that  $ \mathfrak{q} \subseteq \mathfrak{p} $, and the rank of the ring $ A $ is the supremum of the ranks of its prime ideals (Definition \ref{def.rank}), therefore a local ring has finite rank if and only if it has finitely many minimal prime ideals.

The   rings $ C(X) $ are particular examples of \textit{real closed rings} in the sense of  Schwartz (\cite{schwartz.basic}, \cite{schwartz.rcr}, \cite{schwartz.rings_of_cts_functions_as_rcrs}, \cite[Section 12]{schwartz/madden.safr}, \cite{tressl.super}). The terminology \enquote{real closed ring}  was first coined by Cherlin and Dickmann in \cite{cherlin/dickmann.rcrI} and \cite{cherlin/dickmann.rcrII}, and some results in the literature about real closed rings refer to real closed rings in the sense of Cherlin and Dickmann (e.g. \cite{weak}); in this note a real closed ring is always meant to be a real closed ring in the sense of Schwartz (Definition \ref{D.RCSVR.rcr}). Real closed rings in the sense of Cherlin and Dickmann are exactly real closed rings which are also valuation rings, i.e., they are \textit{real closed valuation rings} (\cite{schwartz.rcvr}); equivalently, these are local real closed SV-rings of rank $ 1 $ (Theorem \ref{T.RCSVR.equiv_rcvr}).

Non-trivial real closed valuation rings (i.e., those which are not fields) are exactly proper convex subrings of real closed fields, and this close relationship between these two classes of rings entails that   non-trivial real closed valuation rings have many of the good model-theoretic properties of real closed fields (\cite{cherlin/dickmann.rcrII}, \cite{becker}); in particular, the class of non-trivial real closed valuation rings is elementary in the language of rings $ \mathscr{L} $, and its theory is complete, decidable, and NIP (\cite{cherlin/dickmann.rcrII}, \cite{weak}). It follows that the class of local real closed SV-rings of rank $ 1 $ splits into the classes of models of two complete, decidable, and NIP $ \mathscr{L} $-theories, namely, the $ \mathscr{L} $-theory $ \sf{RCF} $ of real closed fields and the $ \mathscr{L} $-theory $ \sf{RCVR} $ of non-trivial real closed valuation rings.

Local real closed SV-rings of rank $ n \in \N^{\geq 2} $ are exactly those rings obtained by taking  iterated fibre products of finitely many non-trivial real closed valuation rings along surjective ring homomorphisms onto domains, see Theorem \ref{T.RCSVR.equiv_loc_real_closed_SV-ring_finite_rank} for a precise formulation of this statement. Moreover, the class of local real closed SV-rings of rank $ n \in \N^{\geq 2} $ is elementary in the language $ \mathscr{L} $; this follows from \cite[Proposition 2.2 and Corollary 3.16]{schwartz.SV}, but an equivalent axiomatization $ T_n $ for this class of rings is given in Definition \ref{D.MOD_TH.T_n}. 

A very particular class of local real closed SV-rings of rank $ n \in \N^{\geq 2} $ is the one whose rings have  exactly one \textit{branching ideal} (Definition \ref{D.RCSVR.branch_id} and Lemma \ref{L.RCSVR.equiv_loc_rcsvr_rk_n_exactly_one_branching_ideal}): a prime ideal $ \mathfrak{q} $ in a  ring $ A $ is defined to be a branching ideal if there exist distinct prime ideals $ \mathfrak{p}_1, \mathfrak{p}_2  \subseteq A$ such that $ \mathfrak{p}_1, \mathfrak{p}_2 \subsetneq \mathfrak{q} $ and $ \mathfrak{q}= \mathfrak{p}_1 + \mathfrak{p}_2 $. Local real closed rings of rank $ n \in \N^{\geq 2} $ have at least one branching ideal and at most $ n-1 $ branching ideals (Remarks \ref{R.RCSVR.branch_0.ii} and \ref{R.RCSVR.branch_2}), so those with exactly one branching ideal are the simplest rings in this class; moreover, there exists an $ \mathscr{L} $-sentence $ \phi_{\text{br}, n} $ (Definition \ref{D.MODTH.phi_br,n}) such that for all local real closed rings $ A $ of rank $ n  $, $ A \models \phi_{\text{br}, n}  $ if and only if $ A  $ has exactly one branching ideal (Lemma \ref{L.MODTH.phi_br,n}). 

If $ A $ is a local real closed ring of rank $ n $ with unique maximal ideal $ \mathfrak{m}_A $ and with a  unique branching ideal  $ \mathfrak{b}_A $, then either $ \mathfrak{b}_A = \mathfrak{m}_A$ or $ \mathfrak{b}_A \subsetneq \mathfrak{m}_A $, and this is an elementary property of the ring $ A $ (Proposition \ref{P.RCSVR.equiv_branching_max_id}). In particular, the elementary class of local real closed SV-rings of rank  $ n $ with exactly one branching ideal splits into two elementary classes of rings, namely, local real closed SV-rings of rank  $ n $ with exactly one branching ideal $ \mathfrak{b}_A $ such that $ \mathfrak{b}_A= \mathfrak{m}_A $ (called for brevity rings of \textit{type $ (n,1) $}, see Definition \ref{D.RCSVR.type}), and local real closed SV-rings of rank  $ n $ with exactly one branching ideal $ \mathfrak{b}_A $ such that $ \mathfrak{b}_A\subsetneq \mathfrak{m}_A $ (called for brevity rings of \textit{type $ (n,2) $});  geometric examples of rings of type $ (n,1) $ are rings of germs of continuous semi-algebraic functions $ X \lra R $ at a point $ x \in X $, where $ X \subseteq R^m $  is a semi-algebraic curve over a real closed field  $ R $ (Example \ref{ex.germs}).  

The next theorems summarize the main contributions in Section \ref{SEC.mod_th}:

\begin{theoremConc}\label{thm_1}
	The $ \mathscr{L} $-theory $ T_n \cup \{ \phi_{\emph{br},n} \}  $ of local real closed SV-rings of rank $ n \in \N^{\geq 2} $ with one branching ideal splits into two  complete, decidable, and NIP $ \mathscr{L} $-theories, namely, the theories $ T_{n,1} $ and $ T_{n,2} $ of rings of type $ (n,1) $ and of type $ (n,2) $, respectively.
\end{theoremConc}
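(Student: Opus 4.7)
The splitting is immediate once one observes that by Proposition \ref{P.RCSVR.equiv_branching_max_id} the alternative $\mathfrak{b}_A = \mathfrak{m}_A$ versus $\mathfrak{b}_A \subsetneq \mathfrak{m}_A$ is $\mathscr{L}$-elementary; defining $T_{n,1}$ and $T_{n,2}$ as the two corresponding refinements of $T_n \cup \{\phi_{\text{br},n}\}$, the content of the theorem reduces to proving that each of the two theories is complete, decidable, and NIP. The plan is to treat all three properties uniformly by leaning on the quantifier elimination and model completion results announced in the abstract.

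For completeness of $T_{n,1}$, the plan is to pass to the enrichment $\mathscr{L}^+$ of $\mathscr{L}$ by a predicate for the maximal ideal, to invoke the announced quantifier elimination of $T_{n,1}$ in $\mathscr{L}^+$, and to observe that every model of $T_{n,1}$ contains $\Q$ (with its maximal ideal interpreted as $\{0\}$) as a common $\mathscr{L}^+$-substructure; this forces completeness of $T_{n,1}$ in $\mathscr{L}^+$, and since the maximal ideal of a local ring is $\mathscr{L}$-definable as the set of non-units, completeness descends to the pure ring language. For $T_{n,2}$ the same strategy applies after further enriching with a predicate for the unique branching ideal $\mathfrak{b}_A$, which in the setting of a rank-$n$ ring with a single branching ideal coincides with $\mathfrak{p}_1 + \cdots + \mathfrak{p}_n$ for the minimal primes $\mathfrak{p}_i$ and is therefore $\mathscr{L}$-definable. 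Decidability then follows for both theories because their axiomatisations $T_n$, $\phi_{\text{br},n}$, and the auxiliary sentence distinguishing the two types are all explicitly recursive.

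NIP is handled by reducing to the known NIP of $\sf{RCF}$ and $\sf{RCVR}$: using the structure theorem that realises a model $A$ of either theory as an iterated fibre product $V_1 \times_W \cdots \times_W V_n$ of non-trivial real closed valuation rings along surjections onto a common domain $W$, the plan is to show that $A$ is interpretable in the multi-sorted structure consisting of the factors $V_i$ together with $W$ and the projection homomorphisms. Since each $V_i$ and $W$ have NIP theories and finite disjoint unions of NIP structures remain NIP, the interpretation transfers NIP to $A$ and hence to $T_{n,1}$ and $T_{n,2}$.

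The principal obstacle is the quantifier elimination step in the enriched language; once it is secured, the rest follows by standard considerations. For $T_{n,1}$ the QE proof will be a back-and-forth argument that simultaneously extends partial isomorphisms on each real closed valuation factor $V_i$ compatibly with their shared residue field $\mathbf{k}$, reducing via QE for $\sf{RCVR}$ on each factor to compatibility at the residue level which is handled by QE for $\sf{RCF}$. For $T_{n,2}$ the analogous back-and-forth must be coordinated over a non-trivial real closed valuation ring $W$ rather than a field, so the compatibility conditions invoke QE for $\sf{RCVR}$ at the base; ensuring that such extensions can always be performed while respecting both the maximal and branching ideal predicates is the technical crux.
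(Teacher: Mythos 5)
The splitting via Proposition \ref{P.RCSVR.equiv_branching_max_id}, and decidability from completeness plus recursive axiomatizability, agree with the paper. Your completeness argument, however, rests on quantifier elimination for $T_{n,1}$ in $\mathscr{L}(\texttt{m})$, and that fails: the example at the start of the quantifier-elimination subsection produces $\mathscr{L}(\texttt{m})$-embeddings $V\linto V\times_{\textbf{\emph{k}}}V$ and $V\linto V\times_{\textbf{\emph{k}}}V/\mathfrak{p}$ (the diagonal maps; each preserves non-units) whose Zariski-spectrum squares cannot be made to commute in any amalgam, so the amalgamation needed for QE is blocked in any language in which these are substructure embeddings, including $\mathscr{L}(\texttt{m})$. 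The paper's QE theorem for $T_{n,1}^{\dag}$ is in the language $\mathscr{L}(\vee,\wedge,\leq,\text{div},e_1,\dots,e_n)$, and the constants $e_i$ must name non-zero pairwise orthogonal elements, which $\Q$ cannot supply, so the common-substructure route to completeness fails even there. Completeness is instead derived in the paper from model completeness (Theorems \ref{T.MODTH.T_1n_mod_compl} and \ref{T.MODTH.T_2n_mod_compl}) plus the joint embedding property, both of which are obtained from the Hahn series embedding Lemmas \ref{L.RCSVR.1_square} and \ref{L.RCSVR.2_square}; QE plays no role in the completeness proof.

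Your NIP argument has an analogous gap. Interpreting a model in the multi-sorted structure $(V_1,\dots,V_n,W)$ with the surjections $V_i\lonto W$ is reasonable, but the preservation of NIP under finite disjoint union applies only when there is no cross-sort structure; once the surjections are part of the language this no longer applies, and in general a definable map between two NIP sorts can create IP (a generic bijection between two copies of a dense linear order already does). The paper avoids this by first invoking completeness to reduce to the homogeneous model $\prod_{\textbf{\emph{k}}}^n V$ (resp.\ $\prod_{V/\mathfrak{p}}^n V$), for which all factors coincide and the whole fibre product is interpretable in the one-sorted structure $(V,\mathfrak{m}_V)$ (resp.\ $(V,\mathfrak{p})$); NIP for those one-sorted structures follows from weak o-minimality of $\sf{RCVR}(\leq,\text{div})$, passage to the Shelah expansion, and external definability of prime ideals. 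The homogeneity reduction is thus load-bearing — it is what collapses the many sorts to one — and your outline does not supply a substitute for it.
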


\begin{theoremConc}\label{thm_2}
	Let $ n \in \N^{\geq 2} $.
	\begin{enumerate}[\normalfont(I)]
	\item 	The $ \mathscr{L} $-theory $ T_{n,1}  $ is model complete, and it is the model companion of $ T_n $ and also of the $ \mathscr{L} $-theory of local real closed rings of rank $ n $. 
		\begin{enumerate}[\normalfont(i)]
		\item 	Let $ \emph{\texttt{m}} $ be a unary predicate and let $ T_{n,1}(\emph{\texttt{m}}) $ and $ T_{n}(\emph{\texttt{m}}) $ be the $ \mathscr{L}(\emph{\texttt{m}}) $-theories $ T_{n,1} $ and $ T_{n} $ together with the sentence expressing that $ \emph{\texttt{m}} $ is the set of non-units, respectively. Models of $ T_{n}(\emph{\texttt{m}}) $  have prime extensions in the class of models of $ T_{n,1}(\emph{\texttt{m}}) $, and $ T_{n,1}(\emph{\texttt{m}}) $ is the model completion of $ T_{n}(\emph{\texttt{m}}) $.
		\item $ T_{n,1} $ has quantifier elimination in the language  of lattice-ordered rings $ \mathscr{L}(\vee, \wedge, \leq)$ together with the divisibility predicate $ \emph{div} $ and $ n  $ constant symbols $ e_1, \dots, e_n $ interpreted as non-zero pairwise orthogonal elements. 
		\end{enumerate}
	\item Let $ \texttt{\emph{b}} $ and $ \texttt{\emph{m}} $ be unary predicates and set $ \mathscr{L}^* := \mathscr{L}(\texttt{\emph{b}}, \texttt{\emph{m}}) $. Let $T_{n,2}^*$ be the $ \mathscr{L}^* $-theory $ T_{n,2} $ together with the sentence expressing that $ \texttt{\emph{b}} $ is the branching ideal and $ \texttt{\emph{m}} $  is the set of non-units; $ T_{n,2}^* $ is model complete.
	\end{enumerate}
\end{theoremConc}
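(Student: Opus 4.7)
The overall plan is to reduce the model-theoretic properties of $T_{n,1}$ and $T_{n,2}^*$ to those of the theory $\mathsf{RCVR}$ of non-trivial real closed valuation rings, using the structure theorem that models of $T_{n,1}$ (resp.\ $T_{n,2}$) decompose as $n$-fold fibre products of RCVRs along surjections onto $\textbf{k}$ (resp.\ onto a non-trivial domain $W$). An inclusion $A \hookrightarrow B$ of such models induces, via the minimal-prime / branching-ideal quotients, inclusions $V_i^A \hookrightarrow V_i^B$ of RCVRs and $R^A \hookrightarrow R^B$ of the common base. The heart of the proof is then a lifting lemma: factor-wise elementarity should entail elementarity of the fibre product.

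For Part~(I)(ii), I would prove quantifier elimination for $T_{n,1}$ in the expanded language $\mathscr{L}(\vee, \wedge, \leq, \mathrm{div}, e_1, \ldots, e_n)$ directly. The constants $e_1, \ldots, e_n$ are interpreted as non-zero pairwise orthogonal elements, so in a model $A \cong V_1 \times_{\textbf{k}} \cdots \times_{\textbf{k}} V_n$ each $e_i$ has non-zero image in exactly one branch $V_i$. Combining the canonical $f$-ring structure with the $e_i$'s, the ``projection to the $i$-th branch'' map becomes quantifier-free definable, and divisibility in $A$ decomposes branch-by-branch; QE then reduces to the classical QE of $\mathsf{RCVR}$ in the lattice-ordered ring language with divisibility, applied to each $V_i$. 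Model completeness of $T_{n,1}$ in the plain language $\mathscr{L}$ follows at once, because the $e_i$'s are $\mathscr{L}$-definable with parameters and the lattice and divisibility predicates are $\mathscr{L}$-definable over any local real closed ring. For the model companion assertion, I would show that a local real closed ring $A$ of rank $n$ embeds diagonally into the fibre product of its minimal-prime quotients over the common residue field $\textbf{k}$, and verify that this larger ring is a model of $T_{n,1}$. Part~(I)(i) then follows because this construction is canonical and minimal in $\mathscr{L}(\texttt{m})$, yielding a prime extension, and amalgamation of two such extensions over a common base yields the model completion statement. Part~(II) proceeds analogously: in $\mathscr{L}^*$, both $\texttt{b}$ and $\texttt{m}$ are preserved under embeddings, and the lifting argument now takes place over the base domain $W$, whose structure is directly accessible as the quotient by $\texttt{b}$.

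The principal obstacle is the lifting step itself: factor-wise elementarity does not in general imply fibre-product elementarity, and one must exploit that the factors are RCVRs with a shared residue structure that is itself definable inside the fibre product. I expect the cleanest route is through the QE of Part~(I)(ii): once every $T_{n,1}$-formula is quantifier-free in the enriched language, elementarity along embeddings becomes automatic, and one descends back to $\mathscr{L}$ using the $\mathscr{L}$-definability of the additional symbols. Some additional care will be needed in Part~(II), where the base domain $W$ carries an independent valuation-theoretic structure whose encoding in $\mathscr{L}^*$ may first require an auxiliary relative QE statement in a suitably enriched language before model completeness can be extracted.
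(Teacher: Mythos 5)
The high-level reduction you propose --- pass to the factors $V_i$, use the model theory of $\mathsf{RCVR}$, and reassemble --- is the same strategy as the paper's, but you have not actually supplied the ingredient that makes the reassembly step work, and you acknowledge as much: you call the lifting step "the principal obstacle" and then propose to resolve it via the quantifier elimination of Part~(I)(ii), which in turn would need the lifting. This is circular. The paper's concrete resolution is the pair of embedding lemmas \ref{L.RCSVR.1_square} and \ref{L.RCSVR.2_square}: given $A \subseteq B$ of type $(n,j)$, one embeds both into \emph{homogeneous} rings $A' \subseteq B'$ of type $(n,j)$ (all factors of $A'$ equal to a single $V'$, all factors of $B'$ equal to a single $W'$), with the embeddings $A\hookrightarrow A'$, $B\hookrightarrow B'$ sharp at the branching ideal. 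This is done by pushing each factor into a Hahn series valuation ring $\textbf{k}[[\Gamma]]$ compatibly via Proposition \ref{P.RCSVR.rcvr_embedd_nice} (which itself rests on Theorem \ref{T.APP.rcvf_hahn_embedd_II}). Homogeneity is what makes membership of a tuple $(b_1,\dots,b_n)\in W^n$ in the fibre product an \emph{internal} condition on $W$ (namely $b_i - b_{i'} \in \mathfrak{m}_W$ for all $i,i'$, or $\in \mathfrak{b}_W$ in the type-$(n,2)$ case), which is exactly what lets the model completeness of $\mathsf{RCVR}(\texttt{m})$ (resp. $\mathsf{RCVR}(\texttt{b},\texttt{m})$) produce witnesses that live simultaneously in all factors and hence in the fibre product. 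Your proposal has no substitute for this step, and without it "factor-wise elementarity implies fibre-product elementarity" is false in general, as you yourself note.

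Two further gaps. First, the logical order is reversed from the paper without a workable alternative: the paper proves model completeness of $T_{n,1}$ in $\mathscr{L}$ first (Theorem \ref{T.MODTH.T_1n_mod_compl}) and only afterwards deduces quantifier elimination for $T_{n,1}^\dagger$, because that QE proof needs model completeness plus an amalgamation argument. Your plan is to prove QE in $\mathscr{L}(\vee,\wedge,\leq,\text{div},e_1,\dots,e_n)$ "directly" and then descend; but the direct QE argument you sketch (projection to the $i$-th branch is quantifier-free definable; divisibility decomposes branch-by-branch; apply QE for RCVR) is not a proof --- the residue domains $A/\mathfrak{p}_i$ are only interpretable, not definable, and assembling a tuple of branch-wise solutions into an element of the fibre product is precisely the unaddressed lifting step. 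Second, for the claim that $T_{n,1}$ is the model companion of the $\mathscr{L}$-theory of local real closed rings of rank $n$ (not just SV-rings), the diagonal embedding into $\prod_{A/\mathfrak{m}_A}A/\mathfrak{p}_i$ does \emph{not} land in a model of $T_{n,1}$ when $A$ is not SV, since the $A/\mathfrak{p}_i$ are merely real closed domains, not valuation rings. The paper needs an extra step (Lemma \ref{L.conv_hull_rcd} and Proposition \ref{P.RCSVR.embedd_loc_rcr_fin_rank}): pass to the convex hulls $V_i$ of the $A/\mathfrak{p}_i$ in their fraction fields and then realign residue fields and value groups via the Hahn construction. Your proposal omits this and so does not establish the companion statement for the non-SV theory.
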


Much of the work towards proving Theorems \ref{thm_1} and \ref{thm_2} rests on having good algebraic descriptions of local real closed SV-rings of finite rank and of their branching ideals, and this is the content of Section \ref{SEC.rcSV}. In particular, Theorem \ref{T.RCSVR.equiv_loc_real_closed_SV-ring_finite_rank} is a structure theorem for local real closed SV-rings of finite rank which is deduced from a structure theorem for reduced local SV-rings rings of finite rank (Theorem \ref{T.SV.equiv_red_loc_SV-ring_finite_rank}), and Proposition \ref{P.RCSVR.equiv_branching_max_id} gives various equivalent conditions for the maximal ideal in a local real closed ring of finite rank to be a branching ideal;  Proposition \ref{P.RCSVR.equiv_branching_max_id}  then yields several equivalent characterizations of branching ideals in rings of this latter class (Remark \ref{R.RCSVR.branch_1}). 

Since all local real closed SV-rings of rank $ 2 $ have exactly one branching ideal, it follows from Theorem \ref{thm_1} that the $ \mathscr{L} $-theory of local real closed SV-rings of rank $ 2 $ splits into two complete, decidable, and  NIP  $ \mathscr{L} $-theories; the results above together with this observation build up to the following: 

\begin{conjectureConc}\label{conj_1}
The $ \mathscr{L} $-theory $ T_n $ of local real closed SV-rings of rank $ n \in \N^{\geq 2} $ splits into finitely many complete, decidable, and NIP $ \mathscr{L} $-theories.
\end{conjectureConc}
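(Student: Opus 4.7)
The plan is to refine the single-branching-ideal decomposition into a finer classification of rings of rank $n$ indexed by a combinatorial invariant of their prime spectrum. By Theorem \ref{T.RCSVR.equiv_loc_real_closed_SV-ring_finite_rank}, any local real closed SV-ring $A$ of rank $n$ is an iterated fibre product of non-trivial real closed valuation rings $V_1, \dots, V_n$ along surjections onto domains, and the combinatorics of this iteration is encoded by a rooted tree $\tau(A)$ whose $n$ leaves are the minimal primes of $A$, whose internal nodes are the branching ideals (between $1$ and $n-1$ in number by Remarks \ref{R.RCSVR.branch_0.ii} and \ref{R.RCSVR.branch_2}), and whose root is the maximal ideal. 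Up to isomorphism there are only finitely many such rooted trees with $n$ leaves, so it suffices to show that for each $\tau$ the subclass $\mathcal{C}_\tau$ of rings with spectrum of shape $\tau$ is elementary and its theory $T_\tau$ is complete, decidable, and NIP.

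First I would produce, for each $\tau$, an $\mathscr{L}$-sentence $\phi_\tau$ generalising $\phi_{\text{br},n}$ and expressing that the branching ideals of $A$ fit together in the pattern dictated by $\tau$; elementarity of $\mathcal{C}_\tau$ would then reduce to verifying that the rank of each branching ideal, the property of being a branching ideal, and the inclusion relations among branching ideals are all $\mathscr{L}$-expressible, which follows from the analysis of branching ideals in Section \ref{SEC.rcSV} together with Proposition \ref{P.RCSVR.equiv_branching_max_id} and the characterisations in Remark \ref{R.RCSVR.branch_1}. This step should yield the decomposition $T_n = \bigcup_\tau T_\tau$ into finitely many $\mathscr{L}$-theories.

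To establish completeness, decidability, and NIP of each $T_\tau := T_n \cup \{\phi_\tau\}$, I would proceed by induction on the size of $\tau$. Let $\mathfrak{b}$ be the branching ideal at the root of $\tau$, which either equals the maximal ideal or sits strictly below it, paralleling types $(n,1)$ and $(n,2)$; the structure theorem presents $A$ as a fibre product along surjections onto $A/\mathfrak{b}$ of rings whose spectra are the subtrees of $\tau$ below $\mathfrak{b}$, each of strictly smaller rank. Working in the expansion $\mathscr{L}^+$ of $\mathscr{L}$ by predicates for the branching ideals and the maximal ideal, I would prove a relative quantifier-elimination or Feferman--Vaught-style theorem reducing the theory of $A$ to the theories of the fibre-product components and of the quotient $A/\mathfrak{b}$. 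By induction the component theories are complete, decidable, and NIP, while the residue ring $A/\mathfrak{b}$ is a real closed valuation ring or field and so is controlled by $\sf{RCVR}$ or $\sf{RCF}$; completeness of $T_\tau$ in $\mathscr{L}^+$ then transfers to $\mathscr{L}$, decidability follows from recursiveness of the axiomatisation, and NIP is preserved by the fibre-product construction along definable reducts.

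The main obstacle is the passage back to the pure language $\mathscr{L}$. Theorem \ref{thm_2}(II) already only gives model completeness of $T_{n,2}^*$ in the expansion by predicates, and the higher-$n$ analogues will first be established in $\mathscr{L}^+$; one then needs that every branching ideal in a local real closed ring of rank $n$ is $\mathscr{L}$-definable uniformly in the ring, so that the expansion is definitional and the good properties transfer back. Doing this uniformly across all tree shapes $\tau$, while simultaneously handling residue rings that may themselves be non-trivial real closed valuation rings rather than fields, is where the bulk of the technical work will lie; the type $(n,1)$ subcase of this program, already carried out in Section \ref{SEC.mod_th}, provides the template, and the expected difficulty grows with the combinatorial depth of $\tau$.
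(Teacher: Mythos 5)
The statement you are trying to prove is Conjecture \ref{conj_1}; the paper offers no proof of it, and Section \ref{SEC.concl_rem} is explicitly devoted to explaining why the problem is open. So there is no ``paper's own proof'' to compare against --- only a sketched programme and a list of obstacles --- and your proposal essentially restates that programme while asserting the hard parts as if they were routine.

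What you get right is the coarse decomposition. Indexing completions of $T_n$ by the isomorphism type of the rooted tree $\tau(A)$ of minimal primes, branching ideals, and the maximal ideal is exactly the branching spectrum $\text{BrSpec}(A)$ of Definition \ref{def.br}, and the paper already proves (Lemma \ref{L.br_spec}, Corollary \ref{corll}) that this is captured by an $\mathscr{L}$-sentence $\phi_{(P,\sqsubseteq)}$ and is therefore an elementary invariant. So $T_n$ does split into finitely many $\mathscr{L}$-theories $T_\tau$, and each is recursively axiomatizable. The content of Conjecture \ref{conj_1} is that each $T_\tau$ is complete (decidability then follows), and that is precisely Conjecture \ref{conj_elem}, which remains open.

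The gap is the inductive step. You invoke ``a relative quantifier-elimination or Feferman--Vaught-style theorem reducing the theory of $A$ to the theories of the fibre-product components and of the quotient $A/\mathfrak{b}$,'' but no such theorem is proved, and Section \ref{SUBSEC.diff} shows concretely why the existing machinery does not extend. The model-completeness arguments for $T_{n,1}$ and $T_{n,2}$ hinge on Lemmas \ref{L.RCSVR.1_square} and \ref{L.RCSVR.2_square}, which embed an arbitrary model into a homogeneous one $\prod_C^n W$; the final example of Section \ref{SUBSEC.diff} exhibits real closed valuation rings $V_1 = \emph{\textbf{k}}[[\Gamma]]$ and $V_2 = \emph{\textbf{k}}[[\Q]]$ that admit no common cover $W$, so the homogenization that underlies the existential-closedness argument of Lemma \ref{L.MODTH.1_embedd_imm.II} breaks down exactly when $\tau$ has two incomparable branching nodes. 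Your induction is on the depth of $\tau$, which is the case where this failure bites. Moreover, you note that you would need each branching ideal to be ``$\mathscr{L}$-definable uniformly in the ring'' so that the expansion by predicates is definitional; but Example \ref{ex.not_def} constructs a rank-$4$ ring with an automorphism swapping two of its branching ideals, so individual branching ideals are \emph{not} $\mathscr{L}$-definable (only the maximal one, being $\sum_i \mathfrak{p}_i$, is). The assertion that NIP is ``preserved by the fibre-product construction along definable reducts'' is likewise left unjustified; the paper's NIP proof proceeds instead by interpreting the specific models $\prod_{\emph{\textbf{k}}}^n V$ and $\prod_{V/\mathfrak{p}}^n V$ inside a single weakly o-minimal valued field. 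Finally, the route the paper itself proposes toward Conjecture \ref{conj_1} is not a Feferman--Vaught decomposition at all, but rather to equip $A$ with the radical relation $\preceq_{\text{BrSpec}(A)}$ and invoke the model theory of real closed rings with radical relations due to Prestel and Schwartz; your proposal does not engage with that machinery.
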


A proof of the above conjecture would rest in a good model-theoretic understanding of arbitrary local real closed SV-rings of finite rank. Subsection \ref{SUBSEC.diff} highlights some of the key obstacles that arise in the model-theoretic analysis of such rings when there are two or more branching ideals; based on the results in Section \ref{SEC.mod_th}, Conjecture \ref{conj_elem} gives candidates for what each of the finitely many completions of $ T_n $ mentioned in Conjecture \ref{conj_1} could be.

\subsection{Structure of the paper} Section \ref{SEC.SV} starts by collecting the relevant material on SV-rings and on ranks of rings; in particular, Lemma \ref{L.SV.ann_intersection_of_min_prime_id.iii} and Corollary \ref{C.SV.min_prime_ann} describe minimal prime ideals in reduced local rings of finite rank as annihilator ideals of elements, and this  is crucially used in Section \ref{SEC.mod_th}. The remaining part of Section \ref{SEC.SV} is devoted to prove a structure theorem for reduced local SV-rings of finite rank (Theorem \ref{T.SV.equiv_red_loc_SV-ring_finite_rank}), where the key algebraic result being used in the proof is Goursat's lemma for rings (Lemma \ref{L.SV.goursat}). 

Real closed rings are defined in Section \ref{SEC.rcSV} and their main algebraic properties are summarized in Theorem \ref{T.RCSVR.properties_rcr}; here a particular emphasis is made on real closed valuation rings corresponding to the canonical valuation on a real closed Hahn series field, as these rings  are used to construct certain embeddings (Proposition \ref{P.RCSVR.rcvr_embedd_nice}, Lemma \ref{L.RCSVR.1_square}, and Lemma \ref{L.RCSVR.2_square}) needed for the model completeness proofs in Section \ref{SEC.mod_th}. In Subsection \ref{SUBSEC.branch} branching ideals in rings are defined and studied in local real closed rings of finite rank, the main result here being Proposition \ref{P.RCSVR.equiv_branching_max_id}, which gives equivalent characterizations for the maximal ideal in these rings to be a branching ideal; both Subsection \ref{SUBSEC.prelim_rcr} and Subsection \ref{SUBSEC.branch} can be read independently of Section \ref{SEC.SV}. In the last part of Section \ref{SEC.rcSV}  the real closed version of Theorem \ref{T.SV.equiv_red_loc_SV-ring_finite_rank} is proved (Theorem \ref{T.RCSVR.equiv_loc_real_closed_SV-ring_finite_rank}), and this is then used to formally define the main objects of this note, namely rings of type $ (n, 1) $ and of type $ (n,2) $ (Lemma \ref{L.RCSVR.equiv_loc_rcsvr_rk_n_exactly_one_branching_ideal} and Definition \ref{D.RCSVR.type}); the section concludes by proving some embedding lemmas of such rings which are used in Section \ref{SEC.mod_th}. 

All the model theory of the note is contained in Sections \ref{SEC.mod_th} and \ref{SEC.concl_rem}. Section \ref{SEC.mod_th} starts by defining all the relevant theories (cf. Theorems \ref{thm_1} and \ref{thm_2}), and this is followed by the model completeness results (Theorems \ref{T.MODTH.T_1n_mod_compl} and \ref{T.MODTH.T_2n_mod_compl}) from which much of the remaining statements in Section \ref{SEC.mod_th} stem from. Section \ref{SEC.concl_rem}  starts by explaining some  difficulties in the model-theoretic study of arbitrary local real closed SV-rings of finite rank, and in Subsection \ref{SUBSEC.branching} an approach to overcome these difficulties is proposed by introducing the notion of the branching spectrum of a local real closed ring of finite rank and connecting it with the model theory of real closed rings with a radical relation as developed in \cite{prestel.schwartz/mod_th_rcr}; in particular, Corollary \ref{corll}  shows that elementary equivalent local real closed rings of finite rank have poset-isomorphic branching spectra, and this yields a candidate for an elementary classification of all local real closed SV-rings of finite rank (Conjecture \ref{conj_elem}).

The sole purpose of Appendix \ref{SEC.app} is proving  (the seemingly folklore) Theorem \ref{T.APP.rcvf_hahn_embedd_II}, which is needed for the aforementioned embedding lemmas of rings of type $ (n,1) $ and of type $ (n,2) $.

\subsection{Conventions, notation, and terminology}\label{S.conventions}
Fix the following conventions, notation and terminology for the rest of the note:

\begin{enumerate}[\normalfont(I), ref=\ref{S.conventions} (\Roman*)]
\item 	All rings  are commutative and unital.
\item\label{S.conventions.II} If $ A  $ is a ring, then:
	\begin{enumerate}[\normalfont(i), ref=\ref{S.conventions.II} (\roman*)]
		\item\label{S.conventions.II.i} 	$ \text{Spec}(A) $ is the \textit{Zariski spectrum of $ A $}, i.e., the set of prime ideals $ \mathfrak{p} $ of $ A $ topologized by taking  the sets $ D(a):= \{\mathfrak{p} \in \text{Spec}(A)\mid a \notin \mathfrak{p}\} $ $ (a \in A) $ as a basis of open sets (cf. \cite[Chapter 12]{dickmann/schwartz/tressl.specbook}); 
		\item $ \text{Spec}^{\text{min}}(A) \subseteq \text{Spec}(A)  $ is the set of minimal prime ideals of $ A $;
		\item $ \text{Nil}(A):= \{ a \in  A\mid a^{n}=0 \text{ for some } n \in \N \}  $ is the \textit{nilradical of $  A$} (recall also that $ \text{Nil}(A) = \bigcap_{\mathfrak{p} \in \text{Spec}(A)}\mathfrak{p} $, see \cite[Corollary 12.1.3]{dickmann/schwartz/tressl.specbook}); 
	\item $ A^{\times} $ is the group of multiplicative units of $ A $; 
	\item a \textit{residue ring of $ A $} is the ring $ A/I $ for some ideal $I \subseteq A$, and if $ I  $ is a prime ideal, then $ A/I $ is a \textit{residue domain}; and
	\item if $ a \in A $, then $ \text{Ann}_A(a) := \{ b \in A \mid ba=0 \} $ (set $ \text{Ann}(a):= \text{Ann}_A(a) $ if $ A  $ is clear from the context).
	\end{enumerate}
\item If $ A  $ is a domain, then $ \text{qf}(A) $ denotes its quotient field.
\item A valuation ring $ A $ (that is, a domain such that for all $  a \in \text{qf}(A)^{\times} $ either $ a \in A $ or  $ a^{-1} \in A $) is \textit{non-trivial} if $ A \neq \text{qf}(A) $.
\item If $ A, B,  $ and $ C $ are rings, and $ f: A \lra C$ and  $ g: B \lra C $ are ring homomorphisms, then the  \textit{fibre product of $ A $ and $ B $ over $ C $  (along $ f $ and $ g $)} is \[
		A \times_{C} B := \{ (a, b) \in A \times B  \mid f(a)= g(b)\}   
	;\] $ A \times_{C} B $ is the pullback of $ A  $ and $ B $ over $ C $  (along $ f $ and  $ g $) in the category of rings (cf. \cite[71]{maclane.categories}).
\item\label{S.conventions.subdirect} Let $ \{ A_i \}_{i \in I}  $  be a non-empty set of rings.
	\begin{enumerate}[\normalfont(i)]
	\item 	For each $ j  \in I $, let $ \pi_j: \prod_{i \in I} A_i \lonto A_j$ be the projection map. 
	\item A ring $ A  $ is a \textit{subdirect product of $ \{ A_i \}_{i \in I} $} if $ A $ is a subring of $ \prod_{i \in I} A_i  $ and $ \pi_{i \upharpoonright A}: A \lra A_i  $ is surjective for all $ i \in I $; if $ A  $ is a subdirect product of $ \{ A_i \}_{i \in I} $, define $ p_i:= \pi_{i \upharpoonright A} $ for all $ i\in I $.
	\end{enumerate}
\item If $ A $ and $ B $ are local rings, then an injective ring homomorphism $ f: A \linto B $ is \textit{local} if $ f^{-1}(\mathfrak{m}_B) = \mathfrak{m}_A $. 
\item If $ n \in \N^{\geq 2} $, then set $ [n]:= \{ 1, \dots, n \}  $.
\end{enumerate}

\subsection{Acknowledgements} 
This work is part of the author's PhD thesis done under the supervision  of Marcus Tressl at the University of Manchester  and supported by the President's Doctoral Scholar Award; the author is grateful to Marcus Tressl for all his feedback and comments during the course of this  research.

\section{SV-Rings}\label{SEC.SV}
\subsection{Preliminaries on SV-rings}\label{SUBSEC.prelim_SV-rings}

\begin{definition}\label{def.SV-ring}
	A ring  $ A $ is an \textit{SV-ring} if $ A/\mathfrak{p} $ is a valuation ring for all $ \mathfrak{p}\in \text{Spec}(A) $.
\end{definition}

Residue domains of valuation rings are valuation rings, therefore valuation rings are the first examples of SV-rings. Rings of Krull dimension 0 (i.e., rings in which every prime ideal is maximal) are clearly also SV-rings; for more examples of SV-rings see Proposition \ref{P.SV.constr_SV-ring} and Example \ref{E.SV.bool_prod}. The next theorem collects some known equivalent characterizations of SV-rings:

\begin{theorem}\label{T.SV.equiv_SV-ring} 
Let $ A  $ be a ring. The following are equivalent:
\begin{enumerate}[\normalfont(i), ref=\ref{T.SV.equiv_SV-ring} (\roman*)]
\item $ A $ is an SV-ring.
\item\label{T.SV.equiv_SV-ring.min_prime}  $ A/\mathfrak{p} $ is a valuation ring for all $ \mathfrak{p} \in \emph{Spec}^{\emph{min}}(A)$.
\item $ A_{\emph{red}}:=A/\emph{Nil}(A) $ is an SV-ring.
\item $ \emph{Spec}(A) $ is a normal space\footnote{Equivalently, $ A $ is a \textit{Gelfand ring}; see \cite[199]{johnstone.stone_spaces} and \cite[Theorem 4.3]{schwartz/tressl.minmax}.} and the localization $  A_{\mathfrak{m}}$ is an SV-ring for every maximal ideal $ \mathfrak{m} $ of $ A $.
\item For all  $ a, b \in A_{\emph{red}} $ there exists a polynomial $ P\in A_{\emph{red}}[X,Y]$ of the form $P(X, Y):= \prod_{i =1}^r(X-c_i\cdot Y)  $ such that $ P(a,b)\cdot P(b,a)= 0 $.
\end{enumerate}
Moreover, if $ A  $ satisfies any of the conditions \emph{(i)} - \emph{(vi)} above, then  $ A_{\emph{red}} $ is isomorphic to a subdirect product \emph{(\ref{S.conventions.subdirect})} of valuation rings.
\end{theorem}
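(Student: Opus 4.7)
The plan is to establish the equivalences through short cycles of easy algebraic manipulations for conditions (i)--(iv) and to handle the polynomial characterization (v) separately, then to read off the moreover clause from (ii). For (i) $\Leftrightarrow$ (ii), the forward direction is immediate; conversely, any prime $\mathfrak{p}$ contains a minimal prime $\mathfrak{q}$, and $A/\mathfrak{p}$ is then a residue domain of the valuation ring $A/\mathfrak{q}$, hence itself a valuation ring. The equivalence (i) $\Leftrightarrow$ (iii) is immediate since the canonical bijection between primes of $A$ and primes of $A_{\text{red}}$ (obtained because every prime contains $\text{Nil}(A)$) identifies the corresponding residue rings.

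For (i) $\Leftrightarrow$ (iv), I would argue in two steps. First, any SV-ring is Gelfand: for each prime $\mathfrak{p}$ the valuation ring $A/\mathfrak{p}$ is local, so there is a unique maximal ideal of $A$ above $\mathfrak{p}$; normality of $\text{Spec}(A)$ then follows from the cited equivalence with the Gelfand property. Localization preserves the SV property because primes of $A_{\mathfrak{m}}$ correspond bijectively to primes of $A$ contained in $\mathfrak{m}$, with matching residue rings. Conversely, assuming (iv), the Gelfand property forces $A/\mathfrak{p}$ to be local, whence $A/\mathfrak{p} = A_{\mathfrak{m}}/\mathfrak{p}A_{\mathfrak{m}}$ for the unique maximal $\mathfrak{m}$ above $\mathfrak{p}$, and this last ring is a valuation ring by hypothesis.

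The technical heart is (ii) $\Leftrightarrow$ (v). The easy direction (v) $\Rightarrow$ (ii) follows by reducing modulo a minimal prime $\mathfrak{p}$: in the domain $A_{\text{red}}/\mathfrak{p}$ the identity $P(a,b) P(b,a) = 0$ forces some linear factor $a - c_i b$ or $b - c_j a$ to vanish, witnessing that $a \mid b$ or $b \mid a$ modulo $\mathfrak{p}$. The converse is the main obstacle: the hypothesis produces, at each minimal prime $\mathfrak{p}$, an element $c_{\mathfrak{p}} \in A_{\text{red}}$ with $a - c_{\mathfrak{p}} b \in \mathfrak{p}$ or $b - c_{\mathfrak{p}} a \in \mathfrak{p}$, and one must patch these pointwise witnesses into a single finite list $c_1, \dots, c_r$ working simultaneously for all minimal primes. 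This calls for a compactness argument on $\text{Spec}^{\text{min}}(A_{\text{red}})$ (for instance in the patch topology, where the relevant sets $\{\mathfrak{p} : a - c b \in \mathfrak{p} \text{ or } b - c a \in \mathfrak{p}\}$ are clopen), followed by a Chinese-remainder-style amalgamation; I would follow the execution in Schwartz's paper rather than reprove it here. Finally, the moreover clause falls out of (ii): the canonical map $A_{\text{red}} \hookrightarrow \prod_{\mathfrak{p} \in \text{Spec}^{\text{min}}(A_{\text{red}})} A_{\text{red}}/\mathfrak{p}$ is injective because its kernel is $\bigcap \mathfrak{p} = \text{Nil}(A_{\text{red}}) = 0$, and each component projection is surjective onto the valuation ring factor $A_{\text{red}}/\mathfrak{p}$.
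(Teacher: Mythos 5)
Your proposal takes the same route as the paper's proof, but supplies details where the paper simply cites Schwartz. Your arguments for (i) $\Leftrightarrow$ (ii) $\Leftrightarrow$ (iii), the Gelfand mechanism behind (i) $\Leftrightarrow$ (iv) (including the observation that the unique maximal ideal over $\mathfrak{p}$ makes $A_{\mathfrak{m}}/\mathfrak{p}A_{\mathfrak{m}} \cong A/\mathfrak{p}$ literally), and the moreover clause are all correct. The one thing to flag is the hint you give for (ii) $\Rightarrow$ (v): a compactness argument is indeed the right idea, but it should be run on $\text{Spec}(A_{\text{red}})$ with the patch topology, which is always compact, rather than on $\text{Spec}^{\text{min}}(A_{\text{red}})$, which is Hausdorff and zero-dimensional but need not be compact even for reduced rings. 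Since each set $V(a - cb) \cup V(b - ca)$ is specialization-closed, a cover of the minimal primes is automatically a cover of the whole spectrum; each such set is patch-clopen, so patch-compactness of $\text{Spec}(A_{\text{red}})$ yields a finite subcover $c_1, \dots, c_r$, and then $\prod_i (a - c_i b) \cdot \prod_i (b - c_i a)$ lies in every prime and hence equals $0$ in the reduced ring --- no separate amalgamation step is required. Since you explicitly defer to Schwartz for the execution, this is a misdirected hint rather than a gap, but it is worth recording: compactness of the minimal prime spectrum is a subtle and frequently false condition, and the argument genuinely needs the full spectrum.
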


\begin{proof}
	The equivalence of (i) - (iii) is clear, the equivalence of (i) and (iv) is Proposition 1.5 in \cite{schwartz.SV}, and the equivalence of (i) and (v)  follows from the equivalence of (i) and (iii) together with Theorem 3.4 in \cite{schwartz.SV}.    To conclude, suppose that $ A $ is an SV-ring and let $ B:= A_{\text{red}} $;  $ B $ is a reduced SV-ring by the implication (i) $ \Ra $ (iii), therefore canonical map $ B \lra \prod_{\mathfrak{p} \in \text{Spec}(B)} B/\mathfrak{p} $  is injective and its image is a subdirect product of the valuation rings $ \{ B/\mathfrak{p} \}_{\mathfrak{p} \in \text{Spec}(B)}  $. 
\end{proof}

It follows from the equivalences (i) $ \Leftrightarrow $  (iii) $ \Leftrightarrow $  (iv) in Theorem \ref{T.SV.equiv_SV-ring} that in the study of SV-rings, the  class  of reduced local SV-rings is at the forefront.

\begin{remarks}
\begin{enumerate}[\normalfont(i)]
	\item Rings with normal Zariski spectrum are abundant in the realm of real algebra; in particular, the Zariski spectrum of a real closed ring is normal (Definition \ref{D.RCSVR.rcr} and Theorem \ref{T.RCSVR.properties_rcr.II.iii}), and thus  for such class of rings the SV property is a \enquote{local property} by the equivalence  (i) $ \Leftrightarrow $  (iv) in Theorem \ref{T.SV.equiv_SV-ring}.
	\item In general, subdirect products of valuation rings are not SV-rings:  If $ X $ is not an SV-space (\cite{henriksen/wilson.when_is_CX_a_val_ring}), then the ring $ C(X)  $ of continuous real-valued functions on $ X $ is a subdirect product of valuation rings which is not an SV-ring.   On the other hand, every subdirect product\footnote{The model theory of subdirect products of structures has been investigated via a $ 2 $-sorted set-up in \cite{weispfenning.subdirect}; see also \cite{weispfenning.latprod}.} of finitely many valuation rings is an SV-ring (see Subsection \ref{SUBSEC.str_thm} and Remark \ref{R.SV.semi-local}).
\end{enumerate}	
\end{remarks}

One way to obtain examples of SV-rings  to find ring-theoretic constructions  which preserve the SV property; the next proposition summarizes the main such constructions:

\begin{proposition}\label{P.SV.constr_SV-ring}
The class of SV-rings is closed under formation of residue rings and localizations by multiplicative subsets. Moreover:
\begin{enumerate}[\normalfont(i), ref=\ref{P.SV.constr_SV-ring} (\roman*)]
\item\label{P.SV.constr_SV-ring.i} A finite direct product of rings is an SV-ring if and only if each factor is an SV-ring.
\item Direct limits of SV-rings are SV-rings.
\item\label{P.SV.constr_SV-ring.iii} Direct products of valuation rings are SV-rings.
\item\label{P.SV.constr_SV-ring.iv} If $ A $ and $ B $ are SV-rings and $ f: A \lonto C$ and  $ g: B\lonto C $ are surjective ring homomorphisms onto a  ring $ C $, then fibre product  $ A \times_{C} B$ is an SV-ring.
\end{enumerate}
\end{proposition}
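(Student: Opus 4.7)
The plan is to verify each claim by returning to Definition \ref{def.SV-ring}: a ring $R$ is an SV-ring if and only if $R/\mathfrak{p}$ is a valuation ring for every $\mathfrak{p}\in\text{Spec}(R)$. For residue rings $A/I$, the primes are in bijection with primes of $A$ containing $I$, and $(A/I)/(\mathfrak{p}/I)\cong A/\mathfrak{p}$ is a valuation ring by hypothesis. For localizations $S^{-1}A$, the primes have the form $S^{-1}\mathfrak{p}$ with $\mathfrak{p}\cap S=\emptyset$, and $S^{-1}A/S^{-1}\mathfrak{p}$ is a localization of the valuation ring $A/\mathfrak{p}$ at the image of $S$, hence itself a valuation ring.

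For (i), each factor of a finite direct product is a residue ring, giving the forward direction; conversely, using the idempotents $(0,\dots,1,\dots,0)$, every prime of $A_1\times\cdots\times A_n$ is of the form $A_1\times\cdots\times\mathfrak{p}_i\times\cdots\times A_n$ for some $i$ and some $\mathfrak{p}_i\in\text{Spec}(A_i)$, with quotient $A_i/\mathfrak{p}_i$. For (ii), given $\mathfrak{p}$ in the direct limit and elements $a,b$ of the limit, lift them to a common index $i_0$, apply the SV property in $A_{i_0}/f_{i_0}^{-1}(\mathfrak{p})$ to obtain a divisibility relation there, and push it forward to the quotient by $\mathfrak{p}$.

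For (iii), let $A=\prod_{i\in I}V_i$ be a product of valuation rings, fix $\mathfrak{p}\in\text{Spec}(A)$ and $a,b\in A$, and define the idempotent $e=(e_i)\in A$ with $e_i=1$ whenever $a_i$ divides $b_i$ in $V_i$ and $e_i=0$ otherwise; on the complementary index set, $b_i$ divides $a_i$ since $V_i$ is a valuation ring. Because $e(1-e)=0$, one of $e$ or $1-e$ lies in $\mathfrak{p}$, and in each case one reads off the pointwise divisibility witnesses to produce an explicit $c\in A$ with either $ca-b\in (1-e)A\subseteq\mathfrak{p}$ or $cb-a\in eA\subseteq\mathfrak{p}$, exhibiting divisibility in $A/\mathfrak{p}$.

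For (iv), set $R:=A\times_C B$, $J:=\ker f$, and $I:=\ker g$. Then $J\times\{0\}$ and $\{0\}\times I$ are the kernels of the surjective projections $R\to B$ and $R\to A$ respectively, and their product in $R$ is zero. Any $\mathfrak{p}\in\text{Spec}(R)$ must therefore contain one of them; if $\{0\}\times I\subseteq\mathfrak{p}$, then $\mathfrak{p}$ corresponds under $R/(\{0\}\times I)\cong A$ to a prime $\mathfrak{q}\subseteq A$, whence $R/\mathfrak{p}\cong A/\mathfrak{q}$ is a valuation ring, and the case $J\times\{0\}\subseteq\mathfrak{p}$ is symmetric. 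The main obstacle is (iii), where the correct choice of idempotent and the careful bookkeeping needed to extract the divisibility witness modulo $\mathfrak{p}$ are what make the argument work.
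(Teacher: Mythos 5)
Your proof is correct. The paper's own proof handles residue rings, localizations, and item (i) with one-line remarks and simply cites Example 1.2 of Schwartz for items (ii)--(iv); you instead supply explicit self-contained arguments. Your treatment of (iv) is particularly clean: observing that the two coordinate-projection kernels $J\times\{0\}$ and $\{0\}\times I$ have zero product in $R=A\times_C B$, so every prime of $R$ contains one of them and hence $R/\mathfrak{p}$ is a residue domain of $A$ or of $B$. Your idempotent argument for (iii) works as stated: choosing $e$ with $e_i=1$ exactly when $a_i\mid b_i$ in $V_i$ and using $e(1-e)=0\in\mathfrak{p}$ to split into cases is precisely the right move, and the witnesses $ca-b=-(1-e)b$ and $c'b-a=-ea$ land in $\mathfrak{p}$ as claimed. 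The direct-limit argument in (ii) correctly exploits filteredness to lift $a,b$ to a common stage and push a divisibility witness forward. Both approaches prove the proposition; yours simply makes the omitted content explicit, and (iii) could alternatively be deduced from the paper's Example \ref{E.SV.bool_prod} via the criterion (v) of Theorem \ref{T.SV.equiv_SV-ring}, but your direct argument is just as good.
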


\begin{proof}
That the class of SV-rings is closed under the formation of residue rings and localizations by multiplicative subsets is clear, and item (i)  follows from the characterization of prime ideals in a finite direct product of rings; for the proof of items (ii) - (iv) see Example 1.2 in \cite{schwartz.SV}.
\end{proof}

 In particular, if $ V_1 $ and $ V_2 $ are non-trivial valuation rings with isomorphic residue field $ \textbf{\emph{k}} $, then by Proposition \ref{P.SV.constr_SV-ring.iv} the fibre product $ V_1\times_{\emph{\textbf{k}}}V_2$ is an SV-ring;  $ V_1\times_{\emph{\textbf{k}}}V_2 $ should be thought as constructed by \enquote{gluing} the valuation rings $ V_1 $ and $ V_2 $ in a particular way, and this construction of SV-rings via fibre products is a central theme of this note. The next example, which generalizes Proposition \ref{P.SV.constr_SV-ring.iii}, shows how to construct some SV-rings  by gluing valuations rings via a sheaf construction: 

\begin{example}\label{E.SV.bool_prod}
	Let $ A $ be a \textit{boolean product} of valuation rings in the language $ \mathscr{L}(\text{div}) $, where $ \mathscr{L}(\text{div})$ is the language of rings $ \mathscr{L} := \{ +, -, \cdot, 0, 1 \} $ together with a binary predicate $ \text{div} $ interpreted as divisibility; i.e., there exists a Boolean space  $ X  $ and a Hausdorff sheaf $ \mathcal{O}_X $ of valuation rings on $ X $  (that is,   the stalk $ \mathcal{O}_{X, x} $ is a valuation ring for each $ x \in X $) such that  $  A $ is the $ \mathscr{L}(\text{div}) $-structure $  \Gamma(X, \mathcal{O}_X) $ of global continuous sections, see \cite{burris/werner.sheaf} and \cite[Section 2]{guier.boolean}. It will be shown that $ A $ is an SV-ring; to this end, set $ A_{U}:= \Gamma(U, \mathcal{O}_X) $ for each $ U \subseteq X $ open, $ A_x:=   \mathcal{O}_{X, x}$ for each $ x \in X $, and write $ a_{\upharpoonright U} $ for the image of  $ a\in A $ in $ A_U $ and $ a(x) $ for the image of $ a \in A $ in $ A_x $. Pick $ a, b \in A $ and note that $ X= U \ \dot{\cup}\ V \ \dot{\cup} \ W $, where \[
		U:= \llbracket \text{div}(a, b) \rrbracket \cap (X \setminus \llbracket \text{div}(b, a) \rrbracket), \ 		V:= \llbracket \text{div}(b, a) \rrbracket \cap (X \setminus \llbracket \text{div}(a, b) \rrbracket), \ W:= \llbracket \text{div}(a, b) \rrbracket \cap \llbracket \text{div}(b, a) \rrbracket,
	\] and $ \llbracket \text{div}(a,b)\rrbracket := \{ x \in X \mid A_x \models \text{div}(a(x), b(x))\} $; since $ A  $ is a Boolean product in the language $ \mathscr{L}(\text{div}) $, $ U, V, W \subseteq X $ are clopen, and a standard compactness argument (e.g. as in the proof of \cite[Lemma 1.1]{burris/werner.sheaf}) yields elements  $ d \in A_U $, $ e\in A_V $, and  $f\in A_W $ such that  $ a_{\upharpoonright U}d = b_{\upharpoonright U} $,  $ b_{\upharpoonright V}e = a_{\upharpoonright V} $, and  $ b_{\upharpoonright W}f=a_{\upharpoonright W} $, hence $ c:= d \cup e \cup f \in A $ is an element such that  $ (a-cb)(b-ca)=0 $, therefore $ A $ is an SV-ring by the implication  (v) $ \Ra $  (i) in Theorem \ref{T.SV.equiv_SV-ring}. 
\end{example}

\subsection{The rank of a ring}\label{SUBSEC.rank}

\noindent One way of classifying the complexity of SV-rings is via the following notion of rank of a ring (this definition can be found in \cite[Section 2]{schwartz.SV}):

\begin{definition}\label{def.rank}
	Let $ A $ be a ring and $ \infty $ be a symbol such that $ n < \infty $ for all  $ n \in \N $.
\begin{enumerate}[(i)]
	\item For $ \mathfrak{p} \in \text{Spec}(A) $, define $ \text{rk}(A, \mathfrak{p}) \in \N  $ to be the number of minimal prime ideals $ \mathfrak{q} $ of $ A $ such that $ \mathfrak{q} \subseteq \mathfrak{p} $ if this number is finite, and $ \text{rk}(A, \mathfrak{p})= \infty $ otherwise.
	\item The \textit{rank of $ A $} is $ \text{rk}(A):= \sup\{\text{rk}(A, \mathfrak{p})\mid \mathfrak{p} \in \text{Spec}(A)\} \in \N  \cup \{ \infty \} $.
	\end{enumerate}
	The ring $ A $ is \textit{of finite rank} if $ \text{rk}(A) \neq \infty $.
\end{definition}

 It has been already noted that reduced local SV-rings are in the forefront of the study of  SV-rings; amongst reduced local SV-rings, those of rank 1 are exactly the simplest SV-rings, namely valuation rings: 

\begin{lemma}\label{L.SV.val_ring_equiv}
Let $ A  $ be a ring. The following are equivalent:
\begin{enumerate}[\normalfont(i)]
\item $ A $ is a reduced local SV-ring of rank 1.
\item $ A $ is a valuation ring.
\end{enumerate}
\end{lemma}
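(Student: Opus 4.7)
The plan is to prove each direction directly from the definitions, with the main work in the forward direction.

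For the implication (ii) $\Rightarrow$ (i), I would observe that a valuation ring $A$ is a local domain, hence reduced and with a unique minimal prime ideal, namely $(0)$; this forces $\text{rk}(A) = 1$. It is an SV-ring because residue domains of valuation rings are again valuation rings (as noted in the paragraph following Definition \ref{def.SV-ring}), so in particular $A/\mathfrak{p}$ is a valuation ring for every $\mathfrak{p} \in \text{Spec}(A)$.

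For the implication (i) $\Rightarrow$ (ii), the key step is to identify the unique minimal prime of $A$ with the zero ideal. Let $\mathfrak{m}$ be the maximal ideal of $A$; since $A$ is local, every prime ideal is contained in $\mathfrak{m}$, so $\text{rk}(A,\mathfrak{m})$ counts \emph{all} minimal primes of $A$. The hypothesis $\text{rk}(A) = 1$ therefore gives a unique minimal prime ideal $\mathfrak{q}$. Since $A$ is reduced,
\[
0 = \text{Nil}(A) = \bigcap_{\mathfrak{p} \in \text{Spec}(A)}\mathfrak{p} = \mathfrak{q},
\]
where the last equality holds because $\mathfrak{q}$ is contained in every prime ideal. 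Hence $A$ is a domain, and by the SV-ring hypothesis applied to the prime ideal $(0)$, the ring $A = A/(0)$ is a valuation ring.

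No serious obstacle is expected: both directions are direct consequences of the definitions together with the standard identification of the nilradical with the intersection of minimal primes (cited in \ref{S.conventions.II.i}). One could equivalently invoke Theorem \ref{T.SV.equiv_SV-ring.min_prime} to reduce checking the SV condition to the unique minimal prime $(0)$, but this is not needed once we know $A$ is a domain.
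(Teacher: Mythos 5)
Your proof is correct and follows essentially the same route as the paper: both directions hinge on identifying the unique minimal prime with the nilradical, which vanishes since $A$ is reduced, and then applying the SV hypothesis to $(0)$. The paper simply dismisses (ii) $\Rightarrow$ (i) as trivial, whereas you spell it out, but the substance is identical.
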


\begin{proof}
	The implication (ii) $ \Ra $  (i) is trivial. Conversely, if item (i) holds, then  $ A $ being local of rank 1 implies that $ A $ has exactly one minimal prime ideal $ \mathfrak{q} $, therefore $\text{Nil}(A) = \bigcap_{\mathfrak{p} \in \text{Spec}(A)}\mathfrak{p} = \mathfrak{q} $ is a prime ideal of $ A $; since $ A  $ is reduced, $ \mathfrak{q} = (0) $, and since $ A $ is an  SV-ring,  $ A/\mathfrak{q} =  A $  is a valuation ring, as required.
\end{proof}

The same proof as in Lemma \ref{L.SV.val_ring_equiv} shows that local domains are exactly the reduced local rings of rank $ 1 $; in particular, if a reduced local ring is not a domain, then its rank is at least $ 2 $, so the rank of a reduced local ring is related with its zero divisors. The next lemmas clarify the relationship between the rank of a reduced local ring and its zero divisors (cf. \cite[Proposition 2.1]{schwartz.SV} and \cite[Theorem 3.6]{larson.SV_and_related_f-rings_and_spaces}); Lemma \ref{L.SV.ann_intersection_of_min_prime_id.iii} and Corollary \ref{C.SV.min_prime_ann} will be of particular importance in Section \ref{SEC.mod_th}.

\begin{lemma}\label{L.SV.ann_intersection_of_min_prime_id}
	Let $ A $ be a reduced ring and set $ \emph{Spec}^{\emph{min}}(A) := \{ \mathfrak{p}_i \mid i \in I \}  $.
	
	\begin{enumerate}[\normalfont(I), ref=\ref{L.SV.ann_intersection_of_min_prime_id} (\Roman*) ]
	\item If $ a \in A  $ is a non-zero zero divisor, then $ S:=\{ i \in I \mid a \notin \mathfrak{p}_i \} $  is a non-empty proper subset of $ I $ such that $\bigcap_{i \in S}\mathfrak{p}_i= \emph{Ann}(a) $.
	\item Let $ m \in \N^{\geq 2}  $. If $ a_1, \dots, a_m \in A$ are non-zero and pairwise \emph{orthogonal} (i.e., $ a_ia_j = 0 $ for all $ i,j \in [n]  $ such that $ i \neq j $), then $ S_j := \{ i \in I \mid a_j \notin \mathfrak{p}_i \}  $ $ (j \in [m]) $ are pairwise disjoint non-empty proper subsets of $I $ such that $\bigcap_{i\in S_j}\mathfrak{p}_i = \emph{Ann}(a_j)$ for all $ j \in [m] $.
\item \label{L.SV.ann_intersection_of_min_prime_id.iii} Suppose that $ A $ is local.
	\begin{enumerate}[\normalfont(i), ref=\ref{L.SV.ann_intersection_of_min_prime_id.iii} (\roman*) ]

		\item \label{L.SV.ann_intersection_of_min_prime_id.ii.a}	$ \emph{rk}(A)= \sup \{ m\in \N \mid \exists a_1, \dots, a_m \in A \emph{ non-zero and pairwise orthogonal} \}  $.
		\item\label{L.SV.ann_intersection_of_min_prime_id.ii.b} If $ \emph{rk}(A) = |I| = n \in \N^{\geq 2}$, then for all $ a_1, \dots, a_n \in A$ non-zero and pairwise orthogonal there exists a bijection $ \sigma: [n] \lra [n] $ such that $\mathfrak{p}_i = \emph{Ann}(a_{\sigma(i)})$ for all $ i  \in [n] $.
	\end{enumerate}
\end{enumerate}
\end{lemma}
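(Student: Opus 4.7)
The core observation underlying all four items is that since $A$ is reduced, $\bigcap_{i \in I}\mathfrak{p}_i = \text{Nil}(A) = (0)$; hence an element of $A$ is zero if and only if it lies in every minimal prime, and primality of each $\mathfrak{p}_i$ lets us distribute products across $\mathfrak{p}_i$-containment.

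For (I), I would first note that $S$ is non-empty since $a \neq 0$, and proper because $a$ is a zero divisor: choose $b \neq 0$ with $ab = 0$ and any $j \in I$ with $b \notin \mathfrak{p}_j$ (guaranteed by $b \neq 0$); primality then forces $a \in \mathfrak{p}_j$, so $j \notin S$. For the identity $\bigcap_{i \in S}\mathfrak{p}_i = \text{Ann}(a)$, the inclusion $\text{Ann}(a) \subseteq \bigcap_{i \in S}\mathfrak{p}_i$ is primality (if $ab = 0 \in \mathfrak{p}_i$ and $a \notin \mathfrak{p}_i$, then $b \in \mathfrak{p}_i$), while the reverse inclusion splits $I$ into $S$ and $I \setminus S$: on $S$, $b \in \mathfrak{p}_i$ gives $ab \in \mathfrak{p}_i$; outside $S$, $a \in \mathfrak{p}_i$ gives $ab \in \mathfrak{p}_i$; thus $ab$ lies in every $\mathfrak{p}_i$ and is zero. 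Part (II) is then routine: each $a_j$ is a non-zero zero divisor (since some $a_k \neq 0$ is orthogonal to it), so (I) delivers the $S_j$ properties and the identity $\bigcap_{i \in S_j}\mathfrak{p}_i = \text{Ann}(a_j)$; disjointness of $S_j$ and $S_k$ for $j \neq k$ is immediate from primality, since $i \in S_j \cap S_k$ would give $a_j, a_k \notin \mathfrak{p}_i$ but $a_j a_k = 0 \in \mathfrak{p}_i$.

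For (III)(i), in the local case $\text{rk}(A) = |I|$. One inequality is immediate from (II): $m$ pairwise orthogonal non-zero elements yield $m$ pairwise disjoint non-empty subsets of $I$, so $m \leq |I|$. For the reverse inequality when $|I| = n$ is finite, I would use prime avoidance together with incomparability of distinct minimal primes to produce $a_i \in \bigcap_{j \neq i}\mathfrak{p}_j \setminus \mathfrak{p}_i$; each $a_i$ is non-zero, and for $i \neq j$ the product $a_i a_j$ lies in every $\mathfrak{p}_k$ and hence is zero, producing an $n$-family that truncates to give $m$-families for all $m \leq n$. The main obstacle is the infinite rank case: I would argue by induction, producing the $(m+1)$-th element $c$ as an element of $\bigcap_{j=1}^m \text{Ann}(a_j) = \bigcap_{i \in \bigcup_j S_j}\mathfrak{p}_i$ that escapes some minimal prime outside $\bigcup_j S_j$; this is the delicate step, since the avoidance has to run through a possibly infinite intersection of primes and therefore cannot rely on the standard finite prime-avoidance lemma.

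For (III)(ii), (II) yields $n$ pairwise disjoint non-empty subsets $S_1, \dots, S_n$ of a set $I$ of cardinality $n$, which forces each $S_j$ to be a singleton $\{\tau(j)\}$ and $\tau : [n] \to I$ (which we identify with $[n]$) to be a bijection; setting $\sigma := \tau^{-1}$ yields $\mathfrak{p}_i = \mathfrak{p}_{\tau(\sigma(i))} = \text{Ann}(a_{\sigma(i)})$ for every $i \in [n]$, as required.
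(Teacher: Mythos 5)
Parts (I), (II), and (III)(ii) are correct and essentially identical to the paper's argument; the only cosmetic difference is that in (I) you phrase the reverse inclusion via primality, while the paper phrases it as $ba \in \bigl(\bigcap_{i \in S}\mathfrak{p}_i\bigr) \cap \bigl(\bigcap_{i \in I \setminus S}\mathfrak{p}_i\bigr) = (0)$, which amounts to the same thing.

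For (III)(i) your finite-$|I|$ construction is a genuine alternative to the paper and is correct: since $\prod_{j \neq i}\mathfrak{p}_j \subseteq \bigcap_{j \neq i}\mathfrak{p}_j$ and no $\mathfrak{p}_j$ is contained in $\mathfrak{p}_i$ (incomparability of minimal primes), primality gives $\bigcap_{j\neq i}\mathfrak{p}_j \not\subseteq \mathfrak{p}_i$, so you can pick $a_i$ as you describe. (Calling this prime avoidance is loose terminology, but the argument is fine.) The paper never constructs such elements directly; instead it argues contrapositively, assuming $\kappa := \sup\{m \mid \dots\} = n \in \N^{\geq 2}$ and deriving a contradiction from $\mathrm{rk}(A) > n$, which keeps the argument uniform over finite and infinite $|I|$.

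Where the proposal has a genuine gap is exactly where you flag one: the infinite-rank case. Two issues. First, your diagnosis is off: you say you would need to "escape" a minimal prime $\mathfrak{p}_{i_0}$ outside $\bigcup_j S_j$ but cannot use finite prime avoidance through an infinite intersection. In fact no infinite prime avoidance is needed, because the right tool is a pointwise property of minimal primes in reduced rings (this is \cite[Proposition 1.2 (1)]{matlis.minimal}, which the paper invokes): if $\mathfrak{p}$ is a minimal prime of a reduced ring and $a \in \mathfrak{p}$, then $\mathrm{Ann}(a) \not\subseteq \mathfrak{p}$. Given $i_0 \in I \setminus \bigcup_j S_j$, each $a_j$ lies in $\mathfrak{p}_{i_0}$, so you get $b_j \notin \mathfrak{p}_{i_0}$ with $a_j b_j = 0$, and $b := b_1\cdots b_m \notin \mathfrak{p}_{i_0}$ is the new orthogonal element — one prime at a time, no union or infinite intersection required. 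Second, your induction tacitly assumes $\bigcup_j S_j \subsetneq I$. When $|I|$ is infinite and $m$ is finite, one of the $S_j$ could itself be infinite and the $S_j$ could exhaust $I$; in that situation there is no $i_0$ to escape to and one has to instead split one of the $a_j$ into two orthogonal pieces (this is exactly the paper's Case 2, picking $b \in \mathfrak{p}_{i_1}\setminus\mathfrak{p}_{i_2}$ with $i_1, i_2 \in S_j$, then $c \notin \mathfrak{p}_{i_1}$ with $bc=0$, and replacing $a_j$ by the pair $a_jb, a_jc$). Without both ingredients — the Matlis lemma and the splitting case — the induction does not close, so as written the infinite case remains open.
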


\begin{proof}
	(I).  Since  $ A $ is reduced and $ a \in A $ is a non-zero zero divisor, $ a \notin \bigcap_{i \in I}\mathfrak{p}_i = (0)$ and $ a\in \bigcup_{i \in I} \mathfrak{p}_i  $, therefore $ S:= \{ i \in I \mid a \notin \mathfrak{p}_i \}   $ is a non-empty proper subset of $I $; note in particular that $ a \in \mathfrak{p}_i $ for all $ i \in I \setminus S $. The inclusion $ \text{Ann}(a) \subseteq \bigcap_{i \in S } \mathfrak{p}_i$ is clear; conversely, if $ b \in  \bigcap_{i \in S } \mathfrak{p}_i$, then $ ba \in (\bigcap_{i \in S } \mathfrak{p}_i) \cap (\bigcap_{i \in I \setminus S } \mathfrak{p}_i) = (0)$, hence $ b \in \text{Ann}(a) $. 

	(II). Each $ a_j$  is a non-zero zero divisor, therefore by (I) it remains to show that the sets $ S_1,\dots, S_m  $ defined in (II) are pairwise disjoint. Assume for contradiction that there exist $ j, j' \in [m] $ with $ j \neq j' $ such that there exists $ i \in S_j \cap S_{j'}  $, i.e., $ a_{j}, a_{j'} \notin \mathfrak{p}_i $; since  $ j \neq j' $, $ a_{j}a_{j'} =0 \in \mathfrak{p}_i$, giving the required contradiction.

	(III) (i). Note first that  $ \text{rk}(A) = |\text{Spec}^{\text{min}}(A)| = |I| \in \N \cup  \{ \infty \}  $ by assumption on $ A $. Define $ \kappa:= \sup \{ m\in \N\mid \exists a_1, \dots, a_m \in A \text{  non-zero and pairwise orthogonal} \} \in \N \cup \{ \infty \}   $; if $ \kappa \notin \N $, then $ \text{rk}(A) \notin \N $ by (II), therefore $ \text{rk}(A) = \kappa $, and if $ \kappa = 1 $, then  $ A $ is a domain, therefore $ \text{rk}(A)= 1 = \kappa $. Suppose now that $\kappa = n \in \N^{\geq 2}$; let $ a_1, \dots, a_n \in A $ be non-zero pairwise orthogonal elements witnessing  $ \kappa = n $, and define the subsets $ S_j \subseteq I $ as in item  (II); by (II), $ n \leq \text{rk}(A) $, so assume for contradiction that $ n <  \text{rk}(A)$. 

	\underline{Case 1.} There exists $ i \in I \setminus (S_1\  \dot{\cup}\ \dots \ \dot{\cup}\ S_n)$. In this case $ \mathfrak{p}_i $ is a minimal prime ideal such that $ a_1, \dots, a_n \in \mathfrak{p}_i $; since $ \mathfrak{p}_i $ is a minimal prime ideal, by \cite[Proposition 1.2 (1)]{matlis.minimal}  there exists $ b_j \in A \setminus \mathfrak{p}_i $ such that $ a_jb_j = 0 $ for each $ j \in [n] $, therefore $ b:= b_1 \cdot \ldots \cdot b_n $ is a non-zero element which is orthogonal to each  $ a_i $, a contradiction to  $ \kappa = n $.

	\underline{Case 2.} $ I = S_1\  \dot{\cup}\ \dots \ \dot{\cup}\ S_n$ and there exists $ j \in [n] $ such that $ |S_j | \geq 2 $. Assume without loss of generality that $ |S_1| \geq 2 $ and suppose that $ \mathfrak{p}_{i_1}  $ and $ \mathfrak{p}_{i_2}  $ are distinct minimal prime ideals such that $ i_1, i_2 \in S_1  $, so that $ a_1 \notin \mathfrak{p}_{i_1} $ and $ a_1 \notin \mathfrak{p}_{i_2} $. Pick $ b \in \mathfrak{p}_{i_1} \setminus \mathfrak{p}_{i_2} $; since $ \mathfrak{p}_{i_1}$ is a minimal prime ideal, by \cite[Proposition 1.2 (1)]{matlis.minimal} there exists $c \in A \setminus  \mathfrak{p}_{i_1} $ such that $ bc =0 $, therefore  $ a_1b, a_1c, a_2, \dots, a_n $ are non-zero pairwise orthogonal elements of $ A $, a contradiction to $ \kappa = n $.

	(III) (ii). Combine items (II) and (III) (i).
\end{proof}

\begin{remark}
	Let $ A  $ be a reduced local ring of rank $ n\in \N^{\geq 2} $	and write $ \text{Spec}^{\text{min}}(A):= \{ \mathfrak{p}_i \mid i \in [n] \}  $; the canonical map $ A \lra \prod_{i \in [n]}A/\mathfrak{p}_i$ is an embedding, and it follows from this that (up to re-labelling) any $ n $ non-zero pairwise orthogonal elements  $ a_1 \dots, a_n\in A $ satisfy  $ a_i \in \bigcap_{j \in [n]\setminus \{ i \}}\mathfrak{p}_j \setminus \mathfrak{p}_i $, so that $ \mathfrak{p}_i = \text{Ann}(a_i) $ for all $ i \in [n] $.
\end{remark}

\begin{corollary}\label{C.SV.min_prime_ann}
Let $ A $ and  $ B $ be reduced local rings of rank $ n\in \N^{\geq 2} $ such that $ A \subseteq B $. If $ a_1, \dots, a_n \in A $ are non-zero and pairwise orthogonal, then \[
	\emph{Spec}^{\emph{min}}(A) =\{ \emph{Ann}_A(a_i) \mid i \in [n]\}  \ \text{ and } \ 	\emph{Spec}^{\emph{min}}(B) =\{ \emph{Ann}_B(a_i) \mid i \in [n]\}
.\]  
\end{corollary}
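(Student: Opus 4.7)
The statement is essentially a dual application of Lemma \ref{L.SV.ann_intersection_of_min_prime_id.ii.b}, once to $A$ and once to $B$, so the plan is straightforward.

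First, I would note that since $A$ is local of rank $n$, all minimal prime ideals of $A$ are contained in the unique maximal ideal, so $|\emph{Spec}^{\emph{min}}(A)| = \emph{rk}(A) = n$. The hypotheses of Lemma \ref{L.SV.ann_intersection_of_min_prime_id.ii.b} are therefore satisfied by the pairwise orthogonal system $a_1, \dots, a_n$, so there exists a bijection $\sigma_A : [n] \lra [n]$ with $\mathfrak{p}_i = \emph{Ann}_A(a_{\sigma_A(i)})$ for every enumeration $\emph{Spec}^{\emph{min}}(A) = \{\mathfrak{p}_i \mid i \in [n]\}$. Since $\sigma_A$ is a bijection, this immediately yields the set equality
\[
	\emph{Spec}^{\emph{min}}(A) = \{\emph{Ann}_A(a_i) \mid i \in [n]\}.
\]

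For the second equality the only thing to check is that $a_1, \dots, a_n$, viewed now as elements of $B$ via the inclusion $A \subseteq B$, remain non-zero and pairwise orthogonal; but $A \subseteq B$ preserves both being non-zero and the relations $a_ia_j = 0$ for $i \neq j$. Since $B$ is also local of rank $n$, the same counting argument gives $|\emph{Spec}^{\emph{min}}(B)| = n$, and Lemma \ref{L.SV.ann_intersection_of_min_prime_id.ii.b} applied to $B$ with the same pairwise orthogonal tuple yields the second equality.

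There is no real obstacle here; the whole content of the corollary is the observation that a pairwise orthogonal tuple of length equal to the rank simultaneously \enquote{detects} the minimal prime ideals in any reduced local overring of the same rank. The only minor bookkeeping point is the passage from the indexed statement of Lemma \ref{L.SV.ann_intersection_of_min_prime_id.ii.b} (with the bijection $\sigma$) to the unindexed set statement of the corollary, which is immediate.
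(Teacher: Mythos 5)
Your proof is correct and is precisely the argument the paper intends: the paper's own proof consists of the single line ``Clear by Lemma \ref{L.SV.ann_intersection_of_min_prime_id.ii.b},'' and your write-up simply spells out why that lemma applies to both $A$ and $B$ (rank $n$ and local gives $|\text{Spec}^{\text{min}}|=n$, and orthogonality of the $a_i$ persists under $A \subseteq B$). Same route, just with the bookkeeping made explicit.
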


\begin{proof}	
	Clear by Lemma \ref{L.SV.ann_intersection_of_min_prime_id.ii.b}.
\end{proof}

\begin{remark}\label{R.SV.local_embedd_factor}
	Let $ A $ and  $ B $ be reduced local rings of rank $ n\in \N^{\geq 2} $ such that $ A \subseteq B $. Write $ \text{Spec}^{\text{min}}(A) = \{ \mathfrak{p}_{A, i} \mid i \in [n] \}  $ and $ \text{Spec}^{\text{min}}(B) = \{ \mathfrak{p}_{B, i} \mid i \in [n] \}  $, and assume without loss of generality that $ \mathfrak{p}_{B, i}\cap A = \mathfrak{p}_{A, i}$ for all $ i \in [n] $ (Corollary \ref{C.SV.min_prime_ann}). The embedding $ A \subseteq B $ induces embeddings  $ A/\mathfrak{p}_{A, i} \subseteq B/\mathfrak{p}_{B,i} $ for all $ i \in [n] $, and  $ A \subseteq B $ is a local embedding if and only if  $ A/\mathfrak{p}_{A, i} \subseteq B/\mathfrak{p}_{B,i} $ is a local embedding for all (equivalently, for some) $ i \in [n] $: this follows from the fact that the residue field of $ A$ is isomorphic to the residue field  of $ A/\mathfrak{p}_{A,i} $ for all $ i \in [n] $, and similarly for $ B $.
\end{remark}

\begin{lemma}\label{L.SV.subdirect_min_prime_id} 
	Let $ n \in \N^{\geq 2} $, $ A_1, \dots, A_n $ be domains, and $ A $ be  a subdirect product of  $\{  A_1, \dots, A_n  \} $.   
	\begin{enumerate}[\normalfont(I), ref=\ref{L.SV.subdirect_min_prime_id} (\Roman*)]
		\item\label{L.SV.subdirect_min_prime_id.i} 		For every $ \mathfrak{p} \in \emph{Spec}^{\emph{min}}(A) $ there exists $ i \in [n] $ such that  $ \mathfrak{p} = \emph{ker}(p_i) $  \emph{(}see \emph{\ref{S.conventions.subdirect}} for the definition of $ p_i $\emph{)}; in particular, $ \emph{rk}(A) \leq n $.
	\item\label{L.SV.subdirect_min_prime_id.ii} Suppose that $ A  $ is local, and for each $ S \subseteq [n]  $ let $ \pi_S: \prod_{i=1}^n A_i \lonto \prod_{i \in S}A_i$ be the canonical projection. The following are equivalent:
		\begin{enumerate}[\normalfont(i)]
			\item $ \emph{rk}(A) = n $.
			\item  For  all $ S \subsetneq [n] $, the map $ p_S:= (\pi_S)_{\upharpoonright A} : A \lra \prod_{i \in S}A_i$ is not injective.
			\item $ \emph{ker}(p_i) $ and $ \emph{ker}(p_j) $ are incomparable under subset inclusion  for all $ i , j \in [n] $  with $ i \neq j  $.
		\end{enumerate}
		Moreover, if any of the conditions \emph{(i)} - \emph{(iii)} holds, then $ \emph{Spec}^{\emph{min}}(A) = \{\emph{ker}(p_i) \mid i \in [n]\} $.
	\end{enumerate}
\end{lemma}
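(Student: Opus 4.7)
The plan is to reduce both parts to manipulations with the prime ideals $\ker(p_i)$, using two basic observations: each $\ker(p_i)$ is a prime ideal (since $A/\ker(p_i) \cong A_i$ is a domain, by surjectivity of $p_i$), and $\bigcap_{i=1}^n \ker(p_i) = (0)$ (because $A$ embeds into $\prod_i A_i$). Note also that $\ker(p_S) = \bigcap_{i \in S} \ker(p_i)$ by definition, so the non-injectivity of $p_S$ is equivalent to the failure of this intersection to vanish.

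For part (I), I would start from $\prod_{i=1}^n \ker(p_i) \subseteq \bigcap_{i=1}^n \ker(p_i) = (0) \subseteq \mathfrak{p}$ for every $\mathfrak{p} \in \text{Spec}^{\text{min}}(A)$. Primality of $\mathfrak{p}$ forces $\ker(p_i) \subseteq \mathfrak{p}$ for some $i$, and minimality of $\mathfrak{p}$ then yields $\mathfrak{p} = \ker(p_i)$. The bound $\text{rk}(A) \leq |\text{Spec}^{\text{min}}(A)| \leq n$ follows immediately.

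For part (II), locality gives $\text{rk}(A) = |\text{Spec}^{\text{min}}(A)|$, since every prime sits in $\mathfrak{m}_A$. The implication (i) $\Rightarrow$ (iii) then becomes immediate from (I): $|\text{Spec}^{\text{min}}(A)| = n$ forces the inclusion $\text{Spec}^{\text{min}}(A) \subseteq \{\ker(p_i) \mid i \in [n]\}$ to be an equality of sets of cardinality $n$, so the $\ker(p_i)$ are distinct and all minimal, hence pairwise incomparable. For (iii) $\Rightarrow$ (ii), I would argue contrapositively: if $\bigcap_{i \in S} \ker(p_i) = (0)$ for some $S \subsetneq [n]$, then for any $j \in [n] \setminus S$ this intersection sits inside the prime $\ker(p_j)$, forcing some $\ker(p_i) \subseteq \ker(p_j)$ with $i \in S$, which violates incomparability. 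For (ii) $\Rightarrow$ (i), I would show that the $\ker(p_i)$ are pairwise distinct (if $\ker(p_i) = \ker(p_j)$ with $i \neq j$, then $\bigcap_{k \neq j}\ker(p_k) = (0)$, contradicting (ii) for $S = [n] \setminus \{j\}$) and each of them minimal (if some minimal $\ker(p_j)$ sits strictly inside $\ker(p_i)$, then $\bigcap_{k \neq i}\ker(p_k) \subseteq \ker(p_j) \subsetneq \ker(p_i)$ forces $\bigcap_{k \neq i}\ker(p_k) = \bigcap_k \ker(p_k) = (0)$, again contradicting (ii)); by (I) this yields $|\text{Spec}^{\text{min}}(A)| = n$.

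The \emph{moreover} clause falls out of the arguments above: any one of (i), (ii), (iii) produces $n$ distinct minimal primes of the form $\ker(p_i)$, and (I) ensures that every minimal prime is of this form. I do not expect a substantive obstacle here; the argument is essentially bookkeeping with kernels of projections, and the sole role of locality is to identify $\text{rk}(A)$ with $|\text{Spec}^{\text{min}}(A)|$, after which the equivalences collapse to routine prime-ideal manipulations.
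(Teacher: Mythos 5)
Your proof is correct and follows essentially the same route as the paper: part (I) is the product-of-kernels-inside-a-minimal-prime argument verbatim, and for part (II), which the paper dismisses as ``Straightforward using (I),'' your proposal correctly fills in the missing details — using locality only to identify $\text{rk}(A)$ with $|\text{Spec}^{\text{min}}(A)|$ and then running the routine prime-ideal manipulations.
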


\begin{proof}
	(I). Let $ \mathfrak{p} \in \text{Spec}^{\text{min}}(A) $. Then $ \text{ker}(p_1)\cdot  \ldots \cdot\text{ker}(p_n) = (0) \subseteq \mathfrak{p} $, therefore there exists $ i \in [n]  $ such that $ \text{ker}(p_i) \subseteq \mathfrak{p} $; but $ \text{ker}(p_i) \in \text{Spec}(A)$, hence $ \text{ker}(p_i)= \mathfrak{p} $ by minimality of  $ \mathfrak{p} $.	

(II). Straightforward using (I).
\end{proof}

\subsection{Structure theorem for reduced local SV-rings of finite rank}\label{SUBSEC.str_thm}

The main statement in this subsection is Theorem \ref{T.SV.equiv_red_loc_SV-ring_finite_rank}, which  is just a refined formulation of the observation made in \cite[Remark 3.2]{schwartz.SV} that every reduced local SV-ring of finite rank can be constructed from valuation rings using iterated fibre products.

\begin{lemma}\label{L.SV.fibre}
Let $ A $ be a ring and  $ I, J \subseteq A$ be ideals. The ring homomorphism $ f: A \lra A/I \times A/J $ given by  $ f(a):=(a/I, a/J) $ has image $ \frac{A}{I}\times_{\frac{A}{I+J}} \frac{A}{J} $; in particular, $ \frac{A}{I \cap J} \cong \frac{A}{I}\times_{\frac{A}{I+J}} \frac{A}{J}$.
\end{lemma}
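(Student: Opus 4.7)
The plan is straightforward: verify the image of $f$ agrees with the fibre product as a set of pairs in $A/I \times A/J$, then invoke the first isomorphism theorem.

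First I would fix the notation: write $\pi_I: A/I \lonto A/(I+J)$ and $\pi_J: A/J \lonto A/(I+J)$ for the canonical surjections, so that the fibre product is by definition
\[
	\tfrac{A}{I}\times_{\frac{A}{I+J}}\tfrac{A}{J} = \{(x+I,\, y+J) \in A/I \times A/J \mid x - y \in I+J\}.
\]
For the inclusion $\mathrm{im}(f) \subseteq \frac{A}{I}\times_{\frac{A}{I+J}}\frac{A}{J}$, any pair $(a+I, a+J)$ trivially satisfies $a-a = 0 \in I+J$, so $\pi_I(a+I) = \pi_J(a+J)$.

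For the reverse inclusion, take $(x+I, y+J)$ with $x-y \in I+J$ and write $x-y = i+j$ for some $i \in I$, $j \in J$. Setting $a := x-i = y+j$, one checks $a+I = x+I$ (since $a-x = -i \in I$) and $a+J = y+J$ (since $a-y = j \in J$), so $f(a) = (x+I, y+J)$. This shows that $\mathrm{im}(f) = \frac{A}{I}\times_{\frac{A}{I+J}}\frac{A}{J}$.

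For the ``in particular'' clause, compute the kernel: $f(a) = 0$ iff $a \in I$ and $a \in J$, i.e., iff $a \in I \cap J$. The first isomorphism theorem then gives the desired isomorphism $\frac{A}{I \cap J} \cong \frac{A}{I}\times_{\frac{A}{I+J}}\frac{A}{J}$. There is no real obstacle here; the lemma is essentially unpacking the universal property of the pullback, and the only mildly non-trivial step is recognising how to lift a compatible pair to a single element of $A$ via the decomposition $x - y = i + j$.
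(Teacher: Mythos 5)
Your proof is correct and follows the same route as the paper: both establish the reverse inclusion by decomposing the difference $x-y$ as $i+j$ with $i\in I$, $j\in J$, and then observing that $a:=x-i=y+j$ is a common lift, after which the first isomorphism theorem gives the "in particular" clause. The only difference is cosmetic (you name the projections $\pi_I,\pi_J$ explicitly, the paper does not).
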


\begin{proof}
 Clearly $ \text{im}(f) \subseteq   \frac{A}{I}\times_{\frac{A}{I+J}} \frac{A}{J}$; moreover, if $ (a/I, b/J) \in \frac{A}{I}\times_{\frac{A}{I+J}} \frac{A}{J}$,	then there exist $c\in I  $ and $ d \in J $ such that $ a-b = c+d $, hence $ a-c = b+d $ and  thus $  (a/I, b/J) = f(a-c) = f(b-d) $, therefore $  \text{im}(f) =\frac{A}{I}\times_{\frac{A}{I+J}} \frac{A}{J}$. The last assertion follows by noting that $ \text{ker}(f) = I \cap J $.
\end{proof}

\begin{lemma}[Goursat's lemma for rings]\label{L.SV.goursat}
Let $ A_1 $ and $ A_2 $ be rings and  $ A \subseteq A_1 \times A_2 $ be a subring. The following are equivalent:
\begin{enumerate}[\normalfont(i)]
\item $ A $ is a subdirect product of $ \{ A_1, A_2 \}  $.
\item There exist ideals $ I\subseteq A_1 $, $ J \subseteq A_2 $, and an isomorphism  $ f: A_1/I \overset{\cong}{\lra} A_2/J $ such that $ A = A_1\times_{A_2/J} A_2  $.
\item There exist ideals $ H_1, H_2\subseteq A $ such that  $  H_1 \cap H_2 = (0) $, and there exist isomorphisms $ g_i: A_i \overset{\cong}{\lra} A/H_i $ \emph{(}$ i \in \{ 1, 2 \}  $\emph{)} yielding an isomorphism $ g : A_1 \times A_2 \overset{\cong}{\lra} A/H_1 \times A/H_2 $  such that $ g_{\upharpoonright A}(a) = (a/H_1, a/H_2) $ for all $ a \in A $, and $ g_{\upharpoonright A} $ is an isomorphism $ A \overset{\cong}{\lra} \frac{A}{H_1} \times_{\frac{A}{H_1\oplus H_2}}\frac{A}{H_2} $.
\end{enumerate}
\end{lemma}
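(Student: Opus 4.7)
The plan is to prove (i)$\Leftrightarrow$(ii) and (i)$\Leftrightarrow$(iii) using the kernel ideals $K_i := \ker(p_i) \subseteq A$ as the common thread, where $ p_i := (\pi_i)_{\upharpoonright A}: A \lra A_i$. Throughout, the key observation is that $K_1 \cap K_2 = (0)$ automatically whenever $A \subseteq A_1 \times A_2$, since an element of $A$ projecting to zero in both components must be zero; this makes $K_1, K_2$ the natural candidates for $H_1, H_2$ in (iii).

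For (i)$\Rightarrow$(ii), I would set $I := p_1(K_2) \subseteq A_1$ and $J := p_2(K_1) \subseteq A_2$, checking these are ideals using the surjectivity of $p_1$ and $p_2$. Then the composition $A \overset{p_1}{\lra} A_1 \lonto A_1/I$ has kernel precisely $K_1 + K_2$ (and symmetrically on the $A_2$-side), yielding via the first isomorphism theorem a canonical isomorphism $f: A_1/I \overset{\cong}{\lra} A_2/J$ factoring through $A/(K_1+K_2)$. The identity $A = A_1 \times_{A_2/J} A_2$ then follows by a direct element chase: given $(a_1, a_2) \in A_1 \times A_2$ with $f(a_1+I) = a_2 + J$, lift $a_1$ to some $a \in A$ via $p_1$ and modify $a$ by a suitable element of $K_1$ to match $a_2$ in the second coordinate, showing $(a_1,a_2) \in A$. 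The converse (ii)$\Rightarrow$(i) is immediate: for any $a_1 \in A_1$, the element $f(a_1 + I) \in A_2/J$ lifts to some $a_2 \in A_2$, giving $(a_1,a_2) \in A$, and similarly on the other side.

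For (i)$\Rightarrow$(iii), I would set $H_i := K_i$ and take $g_i: A_i \overset{\cong}{\lra} A/H_i$ to be the inverse of the isomorphism induced by $p_i$ via the first isomorphism theorem. The product map $g := g_1 \times g_2$ is then an isomorphism $A_1 \times A_2 \overset{\cong}{\lra} A/H_1 \times A/H_2$, and unwinding the definitions shows $g_{\upharpoonright A}(a) = (a/H_1, a/H_2)$ for all $a \in A$. Since $H_1 \cap H_2 = (0)$ gives $H_1 + H_2 = H_1 \oplus H_2$, Lemma \ref{L.SV.fibre} identifies the image of $g_{\upharpoonright A}$ with $\frac{A}{H_1} \times_{\frac{A}{H_1 \oplus H_2}}\frac{A}{H_2}$ and the map itself with an isomorphism onto this fibre product. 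For (iii)$\Rightarrow$(i), the compatibility $g_{\upharpoonright A}(a) = (a/H_1, a/H_2)$ forces $g_i \circ p_i: A \lra A/H_i$ to coincide with the quotient map, which is surjective; since $g_i$ is an isomorphism, $p_i$ is surjective, so $A$ is a subdirect product of $\{A_1, A_2\}$.

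The main obstacle I anticipate is bookkeeping rather than conceptual difficulty: one must carefully track which kernel corresponds to which projection, verify that the induced map $f$ in (ii) is well-defined as a ring isomorphism (not merely a group one), and confirm that the fibre product description recovers $A$ exactly rather than just a containment. The proof is essentially the classical Goursat lemma transported to commutative rings, with the extra verification in (iii) that the ambient product isomorphism is compatible with the original embedding $A \subseteq A_1 \times A_2$.
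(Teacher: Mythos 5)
Your proof is correct and uses the same structural choices as the paper: $I := p_1(\ker p_2)$, $J := p_2(\ker p_1)$, and $H_i := \ker p_i$. The organization differs in two ways that are worth noting. First, where the paper defines $f\colon A_1/I \to A_2/J$ by the condition $a_1/I \mapsto a_2/J \Leftrightarrow (a_1,a_2)\in A$ and then verifies well-definedness, injectivity, and surjectivity by hand, you construct $f$ via the first isomorphism theorem by computing that the surjection $A \twoheadrightarrow A_1/I$ has kernel exactly $\ker p_1 + \ker p_2$ (and symmetrically for $A_2/J$), so that $A_1/I \cong A/(\ker p_1 + \ker p_2) \cong A_2/J$ canonically; this sidesteps the ad hoc well-definedness check and makes the isomorphism structural rather than pointwise. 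Second, the paper proves the three implications cyclically \mbox{(i) $\Rightarrow$ (ii) $\Rightarrow$ (iii) $\Rightarrow$ (i)}, using the universal property of the pullback for (i) $\Rightarrow$ (ii) and a commutative diagram of isomorphisms for (iii) $\Rightarrow$ (i), whereas you prove the two biconditionals (i) $\Leftrightarrow$ (ii) and (i) $\Leftrightarrow$ (iii) separately, replacing the universal-property argument with a direct element chase and the diagram argument with the observation that $g_i \circ p_i$ is the quotient map and $g_i$ is bijective. Both routes are sound; yours proves one more implication than strictly necessary but is more self-contained, while the paper's cyclic arrangement is slightly more economical and reuses its construction of $f$ when building the data for (iii).
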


\begin{proof}
	\underline{(i) $ \Ra $  (ii).} The following proof can be found in \cite[Lemma 2]{larson.finitely_1-conv_f-rings}; for future reference it is included here. Let $ I:= p_1(\text{ker}(p_2)) $ and $ J := p_2(\text{ker}(p_1))$, noting that since $ p_1 $ and $ p_2 $ are surjective, $ I\subseteq A_2$ and $ J \subseteq A_2 $ are ideals (see \ref{S.conventions.subdirect} for the definition of $ p_i $). 

	\noindent \textit{Claim.} The assignment $ f: A_1/I \lra A_2/J $ given by \[
		a_1/I \mapsto a_2/J  \overset{\text{def}}{\iff} (a_1, a_2) \in A
	\] 
	for all $ (a_1, a_2) \in A_1 \times A_2 $ is a well-defined ring isomorphism.

	\noindent \textit{Proof of Claim.} To see that $ f $ is indeed a function, pick $ a_1, b_1 \in A_1 $ and $ a_2, b_2 \in A_2 $ such that  $ a_1-b_1 \in I $ and $ (a_1, a_2), (b_1, b_2) \in A $; then $  (a_1-b_1, a_2-b_2) \in A $, and since $ a_1-b_1 \in I=p_1(\text{ker}(p_2)) $, $ (a_1-b_1, 0) \in A $, hence $ 0/J= f((a_1-b_1)/I)= (a_2-b_2)/J$, from which $ a_2/J = b_2/J $ follows. That $ f $ is a ring homomorphism follows from $ A $ being a subring of $ A_1\times A_2 $, and since $ A $ is a subdirect product of $ \{ A_1, A_2 \}  $ it follows that $ f $ is surjective, so it remains to show that $ f $ is injective. Pick $ a_1, b_1 \in A_1 $ and suppose that  $ f(a_1/I) = f(b_1/I) $, hence there exist $ a_2, b_2 \in A_2 $ with  $ a_2-b_2 \in J $ and $ (a_1, a_2), (b_1, b_2) \in A $; then $ (a_1-b_1, 0) = (a_1, a_2)-(b_1, b_2)-(0, a_2-b_2)\in A $, yielding $ a_1/I= b_1/I $. \hfill $ \square_{\text{Claim}} $

	Let $ q_I: A_1\lonto A_1/I $ and $ q_J: A_2 \lonto A_2/J $ be the projection maps. By the claim above, $ (f \circ q_I) \circ p_1 =  q_J \circ p_2 $, so to show that $ A = A_1 \times_{A_2/J}A_2 $, it suffices to show that $ A $ verifies the universal property of the fibre product $   A_1 \times_{A_2/J}A_2 $. Let $ f_i: C \lra A_i $ be ring homomorphisms such that $ (f \circ  q_I) \circ f_1 =  q_J \circ f_2 $, and define $ h: C \lra A_1\times A_2 $ by  $ h(c):= (f_1(c), f_2(c)) $; since $ f(f_1(c)/I) =f_2(c)/J $ by choice of $ f_1 $ and $ f_2 $, it follows by definition of $ f $ that $ (f_1(c),f_2(c)) \in A $ for all $ c \in C $, so $ h(C)\subseteq A $ and thus the corestriction of $ h  $  to  $ A $ verifies the universal property of $   A_1 \times_{A_2/J}A_2 $.

	\underline{(ii) $ \Ra $  (iii).} Let  $ q_I: A_1 \lonto A_1/I $ and $ q_J: A_2\lra A_2/J  $ be the projection maps.  Since $ f\circ q_I :A_1 \lonto A_2/J $ and $ q_J: A_2 \lonto A_2/J $ are surjective and $ A = A_1 \times_{A_2/J} A_2 $, $ p_i:= \pi_{i \upharpoonright A} : A \lonto A_i $ ($ i \in \{ 1, 2 \}  $) are surjective ring homomorphisms such that $ (f\circ q_I) \circ p_1 = q_J\circ p_2$. Set $ H_i:= \text{ker}(p_i) $  for $ i \in \{1,2\}  $; then $ H_1, H_2 \subseteq A $ are ideals of  $ A $ such that  $ H_1 \cap H_2 = (0) $ which induce isomorphisms $ g_i : A_i \overset{\cong}{\lra} A/H_i  $ ($ i \in \{ 1, 2 \}  $) yielding an isomorphism   $ g: A_1\times A_2 \overset{\cong}{\lra} A/H_1 \times A/H_2 $ given by   $ (a_1, a_2) \mapsto (g_1(a_1), g_2(a_2)) $. If $ a:=(a_1, a_2) \in A \subseteq A_1 \times A_2$, then  $ g(a) = (a/H_1, a/H_2) $ by definition of $ g $, therefore $ g_{\upharpoonright A} : A \lra A/H_1 \times A/H_2 $ is an isomorphism  onto its image $  \frac{A}{H_1} \times_{\frac{A}{H_1\oplus H_2}}\frac{A}{H_2}  $ by Lemma \ref{L.SV.fibre}.

	\underline{(iii) $ \Ra $  (i).} Let $ i \in \{1, 2\} $. By (iii), the following diagram  

\noindent\adjustbox{center}{
\begin{tikzpicture}[node distance=2cm, auto]
  \node (lb) {$\frac{A}{H_1}\times_{\frac{A}{H_1 \oplus H_2}}\frac{A}{H_2}$};
  \node (lt) [above of=lb] {$A$};
  \node (rb) [right = 2cm of lb] {$\frac{A}{H_1}\times\frac{A}{H_2}$};
  \node (rt) [above of=rb] {$A_1 \times A_2$};
  \node (rrb) [right = 2cm of rb] {$A/H_i$};
  \node (rrt) [above of=rrb] {$A_i$};
  \draw[<-] (lb) to node [swap] {$g_{\upharpoonright A}$} (lt);
  \draw[{Hooks[right]}->] (lb) to node {$\subseteq$} (rb);
  \draw[<-] (rb) to node [swap] {$g$} (rt);
  \draw[{Hooks[right]}->] (lt) to node {$\subseteq$} (rt);
  \draw[->>] (rt) to node  {} (rrt);
  \draw[->>] (rb) to node  {} (rrb);
  \draw[->] (rrt) to node  {$ g_i $}  (rrb);
\end{tikzpicture}
}

\noindent commutes;  since all the vertical arrows are isomorphisms, the composite top morphism is surjective if and only if the composite bottom morphism is surjective; since $ \frac{A}{H_1} \times_{\frac{A}{H_1\oplus H_2}} \frac{A}{H_2}  $ is a subdirect product of  $ \{ A/H_1, A/H_2  \} $, (i) follows. 
\end{proof}

\begin{remark}\label{R.SV.cofactor}
	Let $ A_1 $ and $ A_2 $ be rings and $ A \subseteq A_1 \times A_2 $ be a subring satisfying any of the items (i) - (iii) in Lemma \ref{L.SV.goursat}; then $ \frac{A}{H_1\oplus H_2} \cong A_1/I \ (\cong A_2/J) $, where $ I \subseteq A_1 $ and $ J \subseteq A_2 $ are ideals as in item  (ii) in Lemma \ref{L.SV.goursat}, and $ H_1, H_2 \subseteq A $ are ideals as in item (iii) in Lemma \ref{L.SV.goursat}. Indeed, by the proofs of the implications (i) $ \Ra $  (ii) and  (ii) $ \Ra $  (iii) in Lemma \ref{L.SV.goursat},  $ I= p_1(\text{ker}(p_2)) $ and $ H_i = \text{ker}(p_i) $ for $ i \in \{ 1, 2 \}  $, hence $ I= p_1(H_2) $; it is easy to check from this that  $ \frac{a}{H_1\oplus H_2} \mapsto p_1(a)/I $ is a well-defined isomorphism $ \frac{A}{H_1\oplus H_2} \lra A_1/I  $.
\end{remark}

\begin{corollary}\label{C.SV.local_goursat}
Let $ A_1 $ and $ A_2 $ be local rings and  $ A \subseteq A_1 \times A_2 $ be a subring. The following are equivalent:
\begin{enumerate}[\normalfont(i)]
\item $ A $ is a local ring and subdirect product of $\{  A_1, A_2  \} $.
\item There exist ideals $ I\subsetneq A_1 $ and  $ J \subsetneq A_2 $ and a ring isomorphism $ f: A_1/I \overset{\cong}{\lra} A_2/J $ such that  $ A = A_1\times_{A_2/J} A_2  $.
\item There exist ideals $ H_1, H_2\subseteq A $  such that  $  H_1 \cap H_2 = (0) $ and $ H_1 \oplus H_2 \neq A $, and there exist isomorphisms $ g_i : A_i \overset{\cong}{\lra} A/H_i $ \emph{(}$ i\in \{1, 2\}  $\emph{)} yielding an isomorphism $ g: A_1\times A_2 \overset{\cong}{\lra} A/H_1 \times A/H_2$ such that $ g_{\upharpoonright A}(a) = (a/H_1, a/H_2) $ for all $ a \in A $, and $ g_{\upharpoonright A} $ is an isomorphism $ A \overset{\cong}{\lra} \frac{A}{H_1} \times_{\frac{A}{H_1\oplus H_2}}\frac{A}{H_2} $.
\end{enumerate} 

\noindent Moreover, if any of the items \emph{(i)} - \emph{(iii)} holds, then one may choose:
\begin{enumerate}[\normalfont(a), ref=\ref{C.SV.local_goursat} (\alph*)]
	\item 	$ I:=  p_1(\emph{ker}(p_2))$ and $ J:=  p_2(\emph{ker}(p_1))$ to be the ideals of $ A_1 $ and $ A_2 $ in item \emph{(ii)} (respectively) and $ f: A_1/I \lra A_2/J $ to be the map $ a_1/I\mapsto a_2/J $; and
\item\label{C.SV.local_goursat.b} $ H_i:= \emph{ker}(p_i)$ to be the ideals of $ A $  \emph{(}$ i \in \{1, 2\}    $\emph{)} in item  \emph{(iii)}, and $ g_i : A_i \lra A/H_i$ to be the corresponding induced isomorphisms. 
\end{enumerate}
\end{corollary}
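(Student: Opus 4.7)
The strategy is to reduce to Lemma \ref{L.SV.goursat} and then inject locality by tracking a single extra condition through all three formulations. First I would invoke Lemma \ref{L.SV.goursat} to obtain, unconditionally, the equivalence of the three conditions with their extra clauses removed (i.e.\ without ``$A$ local'', ``$I\subsetneq A_1,\ J\subsetneq A_2$'', and ``$H_1\oplus H_2\neq A$''), together with the specific choices of $I,J,f$ and $H_1,H_2,g_1,g_2$ asserted in (a) and (b). Then it suffices to show that, whenever these underlying conditions hold, the three extra clauses are equivalent to one another and equivalent to $A$ being local.

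Next I would observe that the two ``proper ideal'' conditions on the different sides are trivially equivalent to each other: via $f:A_1/I\overset{\cong}{\lra} A_2/J$ one has $I=A_1\iff A_1/I=0\iff A_2/J=0\iff J=A_2$, and in that case $A=A_1\times_{A_2/J}A_2$ collapses to the full direct product $A_1\times A_2$. Similarly, under the isomorphism of (iii) and using $H_1\cap H_2=(0)$, one checks that $H_1\oplus H_2=A$ is equivalent to $\frac{A}{H_1\oplus H_2}=0$, which by Remark \ref{R.SV.cofactor} matches $A_1/I=0$. Since $A_1,A_2$ are local (hence non-zero) and $A_1\times A_2$ has the non-trivial idempotents $(1,0),(0,1)$, any of these ``failure'' cases prevents $A$ from being local. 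This handles the implications (i) $\Ra$ (ii)/(iii) with the extra clauses.

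The main work is the converse: assuming $I\subsetneq A_1$, show that $A=A_1\times_{A_2/J}A_2$ is local. Because $A_1$ is local, $I\subseteq\mathfrak{m}_{A_1}$, so $A_1/I$ is local with maximal ideal $\mathfrak{m}_{A_1}/I$; likewise $A_2/J$ is local with maximal ideal $\mathfrak{m}_{A_2}/J$. Since any ring isomorphism between local rings automatically sends the unique maximal ideal to the unique maximal ideal, $f$ satisfies $f(\mathfrak{m}_{A_1}/I)=\mathfrak{m}_{A_2}/J$. Hence for every $(a_1,a_2)\in A$ one has $a_1\in\mathfrak{m}_{A_1}\iff a_2\in\mathfrak{m}_{A_2}$. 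Now an element $(a_1,a_2)\in A$ is a unit in $A$ precisely when $a_1\in A_1^{\times}$ and $a_2\in A_2^{\times}$: the forward direction is immediate from coordinatewise inversion, and the backward direction uses that $f$ commutes with inversion on units, so $(a_1^{-1},a_2^{-1})$ again lies in $A$. Combining these observations, the non-units of $A$ equal $A\cap(\mathfrak{m}_{A_1}\times\mathfrak{m}_{A_2})$, which is visibly an ideal; therefore $A$ is local.

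The moreover-clauses (a) and (b) are inherited verbatim from the explicit constructions in the proof of Lemma \ref{L.SV.goursat}, so no further argument is needed there. I expect the only subtle step to be verifying that non-units of $A$ form an ideal, and the essential input making this go through is precisely the ``locality'' of the gluing isomorphism $f$ between the non-zero local residue rings $A_1/I$ and $A_2/J$—which, as remarked above, is automatic once one knows both quotients are local.
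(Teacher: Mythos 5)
Your proof is correct and takes essentially the same route as the paper: the paper also reduces to Lemma \ref{L.SV.goursat} and then appeals to the observation that a fibre product $B\times_D C$ of local rings along surjections $B\lonto D$ and $C\lonto D$ is local if and only if $D\neq 0$, which is exactly the fact you prove in your third paragraph. The paper states this fact in a single sentence without proof, whereas you supply the argument (units in $A$ are $A\cap(A_1^\times\times A_2^\times)$, locality of $A_1/I\cong A_2/J$ forces non-units to be $A\cap(\mathfrak m_{A_1}\times\mathfrak m_{A_2})$); the content is the same.
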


\begin{proof}
	The equivalence of items (i) - (iii) is immediate from Lemma \ref{L.SV.goursat} noting  that if $ B $ and $ C $ are local rings and $g: B \lonto D   $ and $ h : C \lonto D $ are surjective ring homomorphisms, then   $ B \times_D C $ is  a local ring if and only if $ D  $ is not the zero ring. 	 The last statement in the lemma follows from the proofs of the implications  (i) $ \Ra $  (ii) and  (ii) $ \Ra $  (iii) in Lemma \ref{L.SV.goursat}. 
\end{proof}

\begin{notation}\label{N.SV.pimj}
Let $ n \in \N^{\geq 2} $ and suppose that $ A_1,\dots, A_n $ are rings. For each $ k \in [n] $ and $ j \in [k]  $, let  $ \pi^{k}_{ j}:   \prod_{i=1}^{k}A_i \lonto \prod_{i= 1}^j A_i$  be the projection map onto the first $ j $ factors; in particular, $ \pi^{n}_1 = \pi_1 $ and $ \pi^n_n  $ is the identity map.
\end{notation}

\begin{proposition}\label{P.SV.goursat_fin_many}
Let $ n \in \N^{\geq 2} $, $ A_1, \dots, A_n $ be local rings, and  $ A \subseteq \prod_{i=1}^n A_i$ be a subring.  The following are equivalent:
\begin{enumerate}[\normalfont(i)]
\item $ A $ is a local ring and a subdirect product of $ \{ A_1, \dots,  A_n  \} $.
\item $ \pi^n_1(A)= A_1 $ \emph{(}see \emph{Notation \ref{N.SV.pimj}}\emph{)} and for all $ j \in [n-1]  $ there exist ideals $ I_j\subsetneq \pi^n_{j}(A) $  and  $ J_{j+1}  \subsetneq A_{j+1} $ such that $ \pi^n_{j}(A)/I_j\cong A_{j+1}/J_{j+1}  $ and $$\pi^n_{ j+1}(A) = \pi^n_{j}(A)\times_{A_{j+1}/J_{j+1}} A_{j+1};$$in particular,  $$\hspace{1.3cm} \pi^n_n(A) = A = (((A_1 \times_{A_2/J_2} A_2) \times_{A_3/J_3}A_3) \dots \times_{A_{n-1}/J_{n-1} } A_{n-1})\times_{A_{n}/J_{n}}  A_{n} .$$ 
\item There exist ideals $ H_1, \dots, H_{n} \subseteq A $ such that  $  \bigcap_{i =1}^n H_i  = (0) $ and $ G_j := (\bigcap_{i=1}^{j-1}H_i)+ H_{j} \neq A$ for all $ j \in \{2, \dots, n\}  $, and there exist isomorphisms $ g_i: A_i \overset{\cong}{\lra} A/H_i $ \emph{(}$ i\in [n]  $\emph{)} yielding an isomorphism $ g: \prod_{i =1}^n A_{i} \overset{\cong}{\lra} \prod_{i=1}^n A/H_i$ such that $ g_{\upharpoonright A}(a)= (a/H_1, \dots, a/H_n) $, and $ g_{\upharpoonright A} $ is an isomorphism    $$ \hspace{1.2cm}A \cong \left(\left(\left(\frac{A}{H_1} \times_{\frac{A}{G_2}}\frac{A}{H_2}\right) \times_{\frac{A}{G_3}}\frac{A}{H_3}\right) \dots \times_{\frac{A}{G_{n-1}}}\frac{A}{H_{n-1}}\right)\times_{\frac{A}{G_{n}}}\frac{A}{H_{n}} .$$
\end{enumerate}
\end{proposition}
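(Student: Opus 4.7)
The plan is to prove the proposition by induction on $n \geq 2$, with the base case $n=2$ handled by Corollary \ref{C.SV.local_goursat}. I would establish the chain \emph{(i)} $\Leftrightarrow$ \emph{(ii)} first, then \emph{(ii)} $\Leftrightarrow$ \emph{(iii)} by translating the iterated fibre product description into a statement about kernels, using Lemma \ref{L.SV.fibre} to identify the ideals $G_j$.

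\textbf{Step 1: \emph{(i)} $\Rightarrow$ \emph{(ii)}.} Assume $A$ is a local subdirect product of $A_1, \dots, A_n$. I will show by induction on $j \in [n]$ that $\pi^n_j(A) \subseteq \prod_{i=1}^j A_i$ is a local subdirect product of $\{A_1, \dots, A_j\}$, so that the $(j+1)$-step case of \emph{(ii)} follows from Corollary \ref{C.SV.local_goursat}. For locality: $\pi^n_j(A)$ is a quotient of the local ring $A$ via the surjection $\pi^n_j{\upharpoonright}A$, hence local. For subdirectness: The map $\pi^n_{j+1}(A) \lonto A_{j+1}$ coincides with $p_{j+1}$, which is surjective by hypothesis; likewise, $\pi^n_{j+1}(A) \lonto \pi^n_j(A)$ (projection onto the first $j$ coordinates) is surjective by definition of $\pi^n_j(A)$. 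Viewing $\pi^n_{j+1}(A)$ as a subring of the product $\pi^n_j(A) \times A_{j+1}$, Corollary \ref{C.SV.local_goursat} applied to $A_1 := \pi^n_j(A)$ and $A_2 := A_{j+1}$ yields proper ideals $I_j \subsetneq \pi^n_j(A)$ and $J_{j+1} \subsetneq A_{j+1}$ with $\pi^n_j(A)/I_j \cong A_{j+1}/J_{j+1}$ and $\pi^n_{j+1}(A) = \pi^n_j(A) \times_{A_{j+1}/J_{j+1}} A_{j+1}$, which is the required step.

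\textbf{Step 2: \emph{(ii)} $\Rightarrow$ \emph{(i)}.} Proceeding again inductively on $j \in [n]$, $\pi^n_1(A) = A_1$ is local, and if $\pi^n_j(A)$ is local, then Corollary \ref{C.SV.local_goursat} applied to the fibre product $\pi^n_{j+1}(A) = \pi^n_j(A) \times_{A_{j+1}/J_{j+1}} A_{j+1}$ (whose base $A_{j+1}/J_{j+1}$ is non-zero since $J_{j+1} \subsetneq A_{j+1}$) shows that $\pi^n_{j+1}(A)$ is local and a subdirect product of $\{\pi^n_j(A), A_{j+1}\}$; composing with the projection $\pi^n_j(A) \lonto A_i$ for $i \in [j]$ (itself surjective by induction) gives surjectivity of each $p_i$, and taking $j=n$ yields \emph{(i)}.

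\textbf{Step 3: \emph{(ii)} $\Leftrightarrow$ \emph{(iii)}.} Given the subdirect structure of \emph{(i)}, define $H_i := \ker(p_i)$, so $g_i: A_i \overset{\cong}{\to} A/H_i$ is the canonical induced isomorphism and $\bigcap_{i=1}^n H_i = (0)$ because $A \hookrightarrow \prod A_i$. The crucial identification is
\[
A\bigg/\bigcap_{i=1}^{j} H_i \ \cong\ \pi^n_{j}(A) \qquad \text{for all } j \in [n],
\]
which is immediate since the kernel of $\pi^n_j{\upharpoonright}A: A \lonto \pi^n_j(A)$ is exactly $\{a \in A : p_i(a) = 0 \text{ for all } i \in [j]\}$. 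Applying Lemma \ref{L.SV.fibre} to $I := \bigcap_{i=1}^{j-1} H_i$ and $J := H_j$ gives
\[
\pi^n_j(A) \ \cong\ \frac{A}{\bigcap_{i=1}^{j-1} H_i} \times_{A/G_j} \frac{A}{H_j} \ \cong\ \pi^n_{j-1}(A) \times_{A/G_j} A_j,
\]
where $G_j = \bigcap_{i=1}^{j-1} H_i + H_j$. Comparing with \emph{(ii)} yields $A/G_j \cong A_{j+1}/J_{j+1}$ (in particular $G_j \neq A$ since $J_j \subsetneq A_j$), giving \emph{(iii)}; conversely, iterating this identification produces the fibre-product description of \emph{(ii)}.

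The only subtle point is the bookkeeping in Step 3 — verifying that the ideals $G_j$ from Lemma \ref{L.SV.fibre} correspond exactly to $A/G_j \cong A_j/J_j$ — but this follows transparently from Remark \ref{R.SV.cofactor} applied to each fibre-product step. The induction itself is entirely mechanical once the base case from Corollary \ref{C.SV.local_goursat} is in hand.
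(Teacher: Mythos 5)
Your proof is correct and uses the same inductive skeleton as the paper: iterate Corollary \ref{C.SV.local_goursat} one factor at a time through the chain of truncations $\pi^n_1(A), \pi^n_2(A), \dots, \pi^n_n(A) = A$. The paper phrases this as induction on $n$ (applying the proposition for $n=k$ to $B := \pi^{k+1}_k(A)$ and then Corollary \ref{C.SV.local_goursat} once more), whereas you unroll it as an internal induction on $j$; these are the same mechanism.

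Where you genuinely diverge is Step 3. The paper establishes (ii) $\Rightarrow$ (iii) by an inductive pullback construction: it takes the ideals $H_1', \dots, H_k' \subseteq B$ produced by the inductive hypothesis, pulls them back along $(\pi^{k+1}_k)_{\upharpoonright A}$ to obtain $H_1, \dots, H_k \subseteq A$, sets $H_{k+1} := H_2''$ from Corollary \ref{C.SV.local_goursat}, and then verifies $\bigcap_{i=1}^k H_i = H_1''$, $\bigcap_{i=1}^{k+1} H_i = (0)$, $G_j \neq A$, etc.; most of the paper's proof length is this bookkeeping. You instead define $H_i := \ker(p_i)$ outright (which, once one traces through Corollary \ref{C.SV.local_goursat.b}, is exactly what the paper's pullbacks produce) and apply Lemma \ref{L.SV.fibre} at each level via the identification $A/\bigcap_{i\le j}H_i \cong \pi^n_j(A)$. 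This is cleaner and more conceptual: it makes the ideals $H_i$ canonical from the start rather than emerging from a recursion, and replaces the verification of the intersection/sum identities by a single appeal to Lemma \ref{L.SV.fibre}. Two minor points to tighten: the comparison in Step 3 should read $A/G_j \cong A_j/J_j$ (not $A_{j+1}/J_{j+1}$); and the direction (iii) $\Rightarrow$ (ii) is condensed to "iterating this identification" — that is fine, but it tacitly uses that the hypothesis $g_{\upharpoonright A}(a) = (a/H_1, \dots, a/H_n)$ together with the $g_i$ being isomorphisms forces $H_i = \ker(p_i)$ and $\pi^n_1(A) = A_1$, which deserves a sentence.
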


\begin{proof}
	The proof is by induction on $ n \in \N^{\geq 2} $. The base case $ n = 2 $ is exactly Corollary \ref{C.SV.local_goursat}, so assume now that the proposition holds true for some $n:=k \in \N^{>2}  $. Let $ A_1, \dots, A_{k}, A_{k+1}  $ be local rings, $ A \subseteq \prod_{i=1}^{k+1}A_i $ be a subring, and define $ B := \pi^{k+1}_{ k}(A) \subseteq \prod_{i =1}^k A_i $; note in particular that  $ \pi^k_{j}(B)= \pi^{k+1}_{j}(A) $ for all $ j \in [k]  $.

	\underline{(i)$ \Ra $  (ii).} Since $ A $ is a subdirect product of $ \{ A_1, \dots, A_k, A_{k+1} \}   $, $ \pi_1^{k+1}(A)= A_1 $ and $B$ is a subdirect product of $ \{ A_1, \dots, A_k  \} $, and since $ B $ is a homomorphic image of $ A $ and $ A $ is local, $ B $ is also local,  hence by inductive hypothesis, for all $ j \in [k-1]  $ there exist ideals  $  I_j\subsetneq \pi^k_{ j}(B) $ and  $J_{j+1}  \subsetneq A_{j+1} $ such that $ \pi^k_{j}(B)/I_j\cong A_{j+1}/J_{j+1}  $ and $ \pi^k_{j+1}(B) = \pi^k_{ j}(B) \times_{A_{j+1}/J_{j+1}}A_{j+1} $; since $ A $ is also a subdirect product of $ \{ B, A_{k+1} \}  $ and $ B $ is a local, by the implication (i) $ \Ra $  (ii) in Corollary \ref{C.SV.local_goursat} there exist ideals $ I_{k}\subsetneq B  $ and $  J_{k+1} \subsetneq A_{k+1} $ such that $ B/I_k \cong A_{k+1}/I_{k+1} $ and $ A = B \times_{A_{k+1}/J_{k+1}} A_{k+1}$; (ii) now follows from  $ \pi^k_{j}(B) = \pi^{k+1}_{ j}(A) $ for all $ j \in [k-1]  $ and $ B= \pi^{k+1}_{k}(A)  $.

	\underline{(ii) $ \Ra $ (iii).} Since $ \pi^{k+1}_1(A) = A_1 $ by assumption, $ \pi^k_1(B) = A_1 $. Therefore, by inductive hypothesis there exist  ideals $ H'_1, \dots, H'_{k} \subseteq B $ such that  $  \bigcap_{i =1}^k H'_i  = (0) $,  $ G'_j := (\bigcap_{i=1}^{j-1}H'_i)+ H'_{j} \neq B$ for all $ j \in \{2, \dots, k\}  $, and there exist isomorphisms $ g'_i: A_i \overset{\cong}{\lra} B/H_i' $ ($ i\in [k]  $) yielding an isomorphism $ g': \prod_{i =1}^k A_{i} \lra \prod_{i=1}^k B/H'_i$ such that $ (g')_{\upharpoonright B}(b)= (b/H_1', \dots, b/H_k') $ for all $ b \in B $, and $ (g')_{\upharpoonright B} $ is an isomorphism  $$ \hspace{1.2cm} B \cong \left(\left(\left(\frac{B}{H'_1} \times_{\frac{B}{G'_2}}\frac{B}{H'_2}\right) \times_{\frac{B}{G'_3}}\frac{B}{H'_3}\right) \dots \times_{\frac{B}{G'_{k-1}}}\frac{B}{H'_{k-1}}\right)\times_{\frac{B}{G'_{k}}}\frac{B}{H'_{k}};$$  note in particular that since $ A_i \cong B/H'_i $ is a local ring for all $ i \in [k]  $ and  $ G'_j \neq B $ for all $ j \in \{ 2, \dots, k \}  $, $ B $ is also a local ring.  Since $ A = \pi_{k+1}^{k+1}(A) = \pi^{k+1}_k(A) \times_{A_{k+1}/J_{k+1}}A_{k+1} = B \times_{A_{k+1}/J_{k+1}}A_{k+1} $ by assumption, by the implication (ii) $ \Ra $  (iii) in Corollary \ref{C.SV.local_goursat} there exist ideals $ H''_1, H''_2 \subseteq A  $ such that $ H''_1 \cap H''_2 = (0) $ and $ H_1'' \oplus H_1'' \neq A $, and there exist isomorphisms  $ g_1'' : B \overset{\cong}{\lra}A/H''_1  $ and $  g''_2: A_{k+1} \overset{\cong}{\lra} A/H''_2  $ yielding an isomorphism $ g'': B \times A_{k+1} \lra A/H_1'' \times A/H_2'' $  such that $ (g'')_{\upharpoonright A}(a) = (a/H_1'', a/H''_2) $ for all $  a \in A $, and $ (g'')_{\upharpoonright A}  $ is an isomorphism $ A \cong A/H''_1\times_{A/(H''_1\oplus H''_2)}A/H''_2 $;  moreover, by Corollary \ref{C.SV.local_goursat.b}, $ H''_1= \text{ker}((\pi^{k+1}_{k})_{\upharpoonright A}) $ and $ H''_2= \text{ker}((\pi_{k+1})_{\upharpoonright A})) $. Define $H_i:= ((\pi^{k+1}_{k})_{\upharpoonright A})^{-1}(H'_i)  $ for all $ i \in [k]  $, $ H_{k+1}:=H''_2  $, and $ G_j:= (\bigcap_{i=1}^{j-1}  H_i)+H_j $ for all $ j \in \{2, \dots, k+1\}  $. Then \[
		\bigcap_{i=1}^{k}H_i = \bigcap_{i =1}^k((\pi^{k+1}_{k})_{\upharpoonright A})^{-1}(H'_i) = ((\pi^{k+1}_{k})_{\upharpoonright A})^{-1}\left( \bigcap_{i=1}^k H'_i \right)    = ((\pi^{k+1}_{k})_{\upharpoonright A})^{-1}(0) = \text{ker}((\pi^{k+1}_{k})_{\upharpoonright A}) = H''_1  
	,\]
	hence $ \bigcap_{i=1}^{k+1}H_i = H''_1 \cap H''_2 = (0) $ and $ G_{k+1}= (\bigcap_{i=1}^k H_i ) + H_{k+1} =  H_1''\oplus H_2'' \neq A $.  Moreover, since $ (\pi^{k+1}_k)_{\upharpoonright A}: A \lonto B $ is surjective, it follows from the above  that $ (\pi^{k+1}_k)_{\upharpoonright A} $ induces isomorphisms $  h_i: B/H'_i\lra A/H_i$  given by $ \pi^{k+1}_k(a)/H'_i \mapsto a/H_i $ for all $ i \in [k]  $, and also that $ G_j = ((\pi^{k+1}_{k})_{\upharpoonright A})^{-1}(G_j') $  and  $A/G_j\cong B/G'_j  $ for all $ j \in \{2, \dots, k\}  $, hence $ G_j \neq A $ for all  $ j \in \{2, \dots, k\}  $, and the isomorphisms $ g_i:= (h_i \circ g_i'): A_i \lra A/H_i$ ($ i \in [k]  $) and $ g_{k+1}:= g_2'': A_{k+1} \lra A/H_{k+1} $ yield an isomorphism $ g: \prod_{i =1}^{k+1} A_{i} \lra  \prod_{i =1}^{k+1} A/H_i $ verifying the last statement in item (iii).

	\underline{(iii) $ \Ra $ (i).} Clear using the same argument as in the proof of the implication (iii) $ \Ra $  (i) in Lemma \ref{L.SV.goursat} and noting that $ A  $ is a local ring since $ A/H_i $ is a local ring for all $ i \in [k+1]  $ and $ G_j \neq A $ for all  $ j \in  \{ 2 \dots,k +1  \}  $.  
\end{proof}

\begin{remark}\label{R.SV.pi_1_subdirect}
	The condition $ \pi^{n}_1(A) = A_1 $ is necessary for the equivalence of (i) and  (ii) in Proposition \ref{P.SV.goursat_fin_many} to be true. For example, let $ V $ be a non-trivial valuation ring, and define $ A_1:= \text{qf}(V) $, $ A_2 := V $, and $ A := V \times_{V/\mathfrak{m}_V} V $; then $ \pi^2_1(A) =V \neq A_1 $ and  $ A  $ is a subring of  $ A_1 \times A_2 $ such that $ A = \pi^2_1(A) \times_{A_2/J_2} A_2$ where $ I_{1} = J_2:= \mathfrak{m}_V $, but $ A $ it is not a subdirect product of $ \{ A_1, A_2\}  $.
\end{remark}

\begin{theorem}\label{T.SV.equiv_red_loc_SV-ring_finite_rank}
Let $ n \in  \N^{\geq 2}$ and $ A $ be a ring which is not a field. The following are equivalent:
\begin{enumerate}[\normalfont(i)]
	\item $ A $ is a reduced local  SV-ring of rank  at most $ n $.
	\item $ A $ is a local ring, and there exist non-trivial valuation rings $ A_1, \dots, A_{n}  $ and an injective ring homomorphism $ \epsilon: A \linto \prod_{i=1}^n A_i $ such that $ \epsilon(A)  $ is  a subdirect product of $ \{ A_1, \dots, A_{n} \}   $. 
\item There exist non-trivial valuation rings $ A_1, \dots, A_n $ and an injective ring homomorphism $ \epsilon: A \linto   \prod_{i=1}^n A_i  $, such that  $ (\pi^n_1 \circ \epsilon)(A)= A_1 $ \emph{(}see \emph{Notation \ref{N.SV.pimj}}\emph{)}, and for all $ j \in [n-1]  $ there exist ideals $I_j\subsetneq (\pi^n_j \circ \epsilon)(A) $ and  $  J_{j+1}  \subsetneq A_{j+1} $ such that $ (\pi^n_j \circ \epsilon)(A)/I_j\cong A_{j+1}/J_{j+1}  $  and $$(\pi^n_{j+1} \circ \epsilon)(A) =(\pi^n_j \circ \epsilon)(A)\times_{A_{j+1}/J_{j+1}} A_{j+1};$$in particular,  $$\hspace{1.3cm}  A \cong (\pi^n_n \circ \epsilon)(A) = (((A_1 \times_{A_2/J_2} A_2) \times_{A_3/J_3}A_3) \dots \times_{A_{n-1}/J_{n-1} } A_{n-1})\times_{A_{n}/J_{n}}  A_{n} .$$ 
\item There exist  prime ideals $ \mathfrak{p}_1, \dots, \mathfrak{p}_{n} \subseteq A $ such that  $  \bigcap_{i =1}^n \mathfrak{p}_i  = (0) $, $ G_j := (\bigcap_{i=1}^{j-1}\mathfrak{p}_i)+ \mathfrak{p}_{j} \neq A$ for all $ j \in \{2, \dots, n\}  $, $  A/\mathfrak{p}_i $ is a non-trivial valuation ring for all $ i\in [n]  $,  and the canonical ring homomorphism $ A \lra \prod_{i=1}^n A/\mathfrak{p}_i  $ given by $  a \mapsto (a/\mathfrak{p}_1, \dots, a/\mathfrak{p}_n) $ restricts to an isomorphism $$ \hspace{1.2cm} A \cong \left(\left(\left(\frac{A}{\mathfrak{p}_1} \times_{\frac{A}{G_2}}\frac{A}{\mathfrak{p}_2}\right) \times_{\frac{A}{G_3}}\frac{A}{\mathfrak{p}_3}\right) \dots \times_{\frac{A}{G_{n-1}}}\frac{A}{\mathfrak{p}_{n-1}}\right)\times_{\frac{A}{G_{n}}}\frac{A}{\mathfrak{p}_{n}} .$$ 
	 \end{enumerate} 
	 Moreover, if any of the items \emph{(i) - (iv)} holds, then  the following are equivalent:
\begin{enumerate}[\normalfont(a)]
\item 	 $ A $ has rank exactly $ n $.
\item For all $ S \subsetneq [n]  $, the map $ \pi_S \circ \epsilon: A \lonto \prod_{i \in S}A_i  $ is not injective, where $ A_i $ $ (i \in [n]) $ and $ \epsilon $ are as in items \emph{(ii)} and \emph{(iii)}, and $ \pi_S : \prod_{i =1}^n A_i \lonto \prod_{i \in S}A_i $ is the canonical projection.
\item $ \mathfrak{p}_i $ and $ \mathfrak{p}_j $ are incomparable under subset inclusion for all $ i, j \in [n] $ with $ i \neq  j $, where $ \mathfrak{p}_i $ are the prime ideals of $ A $ in item \emph{(iv)}.
\end{enumerate}
	  \end{theorem}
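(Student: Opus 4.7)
My plan is to establish the cycle (i) $\Rightarrow$ (ii) $\Rightarrow$ (iii) $\Rightarrow$ (iv) $\Rightarrow$ (i) and then read off the moreover part from Lemma \ref{L.SV.subdirect_min_prime_id}. The heavy algebraic lifting is done by Proposition \ref{P.SV.goursat_fin_many} (the iterated Goursat presentation), which is tailor-made for the passages between (ii), (iii) and (iv), and by Proposition \ref{P.SV.constr_SV-ring.iv} for the converse direction (iv) $\Rightarrow$ (i).

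For (i) $\Rightarrow$ (ii), write $\mathrm{Spec}^{\min}(A) = \{\mathfrak{p}_1, \ldots, \mathfrak{p}_m\}$ with $m \leq n$. Each $A/\mathfrak{p}_i$ is a valuation ring by Theorem \ref{T.SV.equiv_SV-ring.min_prime}, and is \emph{non-trivial}: if some $A/\mathfrak{p}_j$ were a field, then $\mathfrak{p}_j$ would be both minimal and maximal in $A$, forcing every $\mathfrak{p}_i \subseteq \mathfrak{m}_A = \mathfrak{p}_j$ to coincide with $\mathfrak{p}_j$ by minimality, hence $m=1$, $(0) = \mathfrak{p}_1 = \mathfrak{m}_A$, and $A$ itself would be a field, contrary to hypothesis. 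Reducedness gives $\bigcap_i \mathfrak{p}_i = (0)$, so $a \mapsto (a/\mathfrak{p}_1, \ldots, a/\mathfrak{p}_m)$ realises $A$ as a subdirect product of the $A/\mathfrak{p}_i$; to reach exactly $n$ factors when $m < n$, pad by repetition, setting $A_i := A/\mathfrak{p}_i$ for $i \leq m$ and $A_i := A/\mathfrak{p}_1$ for $m < i \leq n$, and extending $\epsilon$ diagonally.

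For (ii) $\Rightarrow$ (iii) and (iii) $\Rightarrow$ (iv), apply Proposition \ref{P.SV.goursat_fin_many} to $\epsilon(A) \subseteq \prod_i A_i$: the subring $\epsilon(A) \cong A$ is local, each $A_i$ is a local valuation ring, and $\epsilon(A)$ is subdirect by hypothesis, so the proposition's conditions (ii) and (iii) produce exactly conditions (iii) and (iv) of the theorem. For (iv) one sets $\mathfrak{p}_i := \ker(\pi_i \circ \epsilon)$; then $A/\mathfrak{p}_i \cong A_i$ is a non-trivial valuation ring, and $\bigcap_i \mathfrak{p}_i = (0)$ by injectivity of $\epsilon$, while the ideals $G_j = (\bigcap_{i<j}\mathfrak{p}_i) + \mathfrak{p}_j$ and the iterated fibre product isomorphism are read directly off the proposition.

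For (iv) $\Rightarrow$ (i), the iterated fibre product description together with the elementary observation (used already in the proof of Corollary \ref{C.SV.local_goursat}) that a fibre product of local rings along surjections onto a nonzero ring is local — here $G_j \neq A$ ensures each base is nonzero — gives that $A$ is local; reducedness is immediate from $\bigcap_i \mathfrak{p}_i = (0)$; the SV property follows by induction on the number of factors using Proposition \ref{P.SV.constr_SV-ring.iv} (each $A/\mathfrak{p}_i$ is a valuation ring, hence SV, and every intermediate fibre product is taken along surjections); finally, the subdirect embedding $A \hookrightarrow \prod_i A/\mathfrak{p}_i$ yields $\mathrm{rk}(A) \leq n$ by Lemma \ref{L.SV.subdirect_min_prime_id.i}. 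For the moreover part, identify the $\mathfrak{p}_i$ of (iv) with the kernels $\ker(\pi_i \circ \epsilon)$ from (ii)--(iii); the equivalences (a) $\Leftrightarrow$ (b) $\Leftrightarrow$ (c) are then exactly the three equivalent conditions of Lemma \ref{L.SV.subdirect_min_prime_id.ii}. I expect the main obstacle to be purely bookkeeping: tracking kernels through the embeddings and the Goursat presentation so that the prime ideals of (iv) canonically match the projection kernels used in (ii)--(iii), and handling the slightly artificial padding when $\mathrm{rk}(A) < n$ — everything substantive is a direct citation of the preparatory results.
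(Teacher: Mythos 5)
Your proposal is correct and, in its essentials, matches the paper's proof: the equivalence of (ii), (iii), (iv) is outsourced to Proposition \ref{P.SV.goursat_fin_many}, the direction (i) $\Rightarrow$ (ii) is the explicit embedding into $\prod_{i} A/\mathfrak{p}_i$ with padding when $\mathrm{rk}(A)<n$, and the moreover part is Lemma \ref{L.SV.subdirect_min_prime_id.ii}. The one genuine (if minor) difference is how you close the loop: you go (iv) $\Rightarrow$ (i) by induction on the fibre-product tower, invoking Proposition \ref{P.SV.constr_SV-ring.iv} to propagate the SV property and Lemma \ref{L.SV.subdirect_min_prime_id.i} for the rank bound, whereas the paper instead proves (ii) $\Rightarrow$ (i) directly, defining $\mathfrak{p}_i := \ker(\pi_i\circ\epsilon)$ and appealing to the characterization in Theorem \ref{T.SV.equiv_SV-ring.min_prime} together with Lemma \ref{L.SV.subdirect_min_prime_id.i}. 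Both routes are sound and of comparable length; your choice has the small aesthetic advantage of exercising the fibre-product closure that the theorem is really about, while the paper's avoids the induction entirely. Your spelled-out argument that $A$ not being a field forces each $A/\mathfrak{p}_i$ to be a non-trivial valuation ring (via the local-plus-reduced observation $\mathfrak{m}_A = \mathfrak{p}_j \Rightarrow A$ is a field) is a correct expansion of a step the paper states without elaboration.
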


\begin{proof}
	The equivalence of items (ii) - (iv) follows from Proposition \ref{P.SV.goursat_fin_many}; it therefore remains to show the equivalence of items (i) and  (ii), as well as the equivalence of (a) - (c).

	\underline{(i) $ \Ra $  (ii).} Since $ A $ is local with rank at most $ n $,  $ \text{Spec}^{\text{min}}(A) = \{\mathfrak{p}_1, \dots, \mathfrak{p}_m\} $ for some $ m \in [n]  $, and since $ A $ is reduced, $ \text{Nil}(A) = \bigcap_{\mathfrak{p} \in \text{Spec}(A)}\mathfrak{p} =  \bigcap_{\mathfrak{p} \in \text{Spec}^{\text{min}}(A)}\mathfrak{p} = (0)$, and thus the canonical map $ A \lra \prod_{i = 1}^m A/\mathfrak{p}_i $  is injective. Since $ A $ is not a field and a local SV-ring, each factor ring $ A_i:= A/\mathfrak{p}_i $ is a non-trivial valuation domain, therefore (ii) follows by taking $ A_{m+1}= \dots = A_n := A_{m} $ and $ \epsilon: A \linto \prod_{i=1}^n A_i $ to be the canonical map $ a \mapsto (a/\mathfrak{p}_1, \dots,a/\mathfrak{p}_m, a/\mathfrak{p}_m, \dots,  a/\mathfrak{p}_m) $.

	\underline{(ii) $ \Ra $  (i).} Since each $ A_i $ is a domain and $ A  $ is isomorphic to the subring  $ \epsilon(A) \subseteq \prod_{i =1}^n A_i $, $ A  $ is reduced, so it remains to show that $ A $  is an SV-ring of rank at most $ n $. Since  $ A $ is local, it suffices to show by the implication (ii) $ \Ra $  (i) in  Theorem \ref{T.SV.equiv_SV-ring} that $ A $ has at most $ n $ minimal prime ideals $ \mathfrak{p}_1, \dots, \mathfrak{p}_n $, and that $ A/\mathfrak{p}_i $ is a  non-trivial valuation ring for all $ i \in [n]  $. For each $ i\in [n]  $, define $ \mathfrak{p}_i:= \text{ker}(\pi_i \circ \epsilon) $; then $ \mathfrak{p}_1, \dots, \mathfrak{p}_n $ are  prime ideals of $ A $ such that each $ A/\mathfrak{p}_i $ is a non-trivial valuation ring, and it follows by Lemma  \ref{L.SV.subdirect_min_prime_id.i}   that $ \text{Spec}^{\text{min}}(A) \subseteq \{\mathfrak{p}_1, \dots, \mathfrak{p}_n\}  $, as required. 

If any of the items (i) - (iv) holds, then the equivalence of items (a) - (c) follows by Lemma \ref{L.SV.subdirect_min_prime_id.ii}.
\end{proof}

\begin{remark}\label{R.SV.semi-local}
	Recall that a ring is \textit{semi-local} if it has finitely many maximal ideals; it is not difficult to see from the proofs of Proposition \ref{P.SV.goursat_fin_many} and of Theorem \ref{T.SV.equiv_red_loc_SV-ring_finite_rank} that the class of rings which are isomorphic to subdirect products of finitely many local domains (resp., valuation rings) is exactly the class of reduced semi-local rings (resp., reduced semi-local SV-rings) of finite rank.
\end{remark}

\section{Real Closed Rings}\label{SEC.rcSV}
\subsection{Preliminaries on real closed rings}\label{SUBSEC.prelim_rcr}

Real closed rings can be defined in many equivalent ways; the following definition is the best suited for the purposes of this note (cf. \cite[1]{prestel.schwartz/mod_th_rcr}):

\begin{definition}\label{D.RCSVR.rcr}
	A ring $ A $ is a \textit{real closed ring} if  it satisfies the following conditions:
	\begin{enumerate}[\normalfont(i), ref=\ref{D.RCSVR.rcr} (\roman*)]
	\item $ A $ is reduced;
	\item the set of squares of $ A $ is the set of non-negative elements of a partial order $ \leq $ on  $ A $ and  $ (A, \leq) $ is an $ f $\textit{-ring}, i.e., $ (A, \leq)  $ is a partially ordered ring such that for every $ a, b \in A $ the supremum  $ a \vee  b$ and the infimum $ a \wedge b $ exist in $ A $, and $ c(a \wedge b) = ca \wedge cb $ for all  $ c \geq 0 $;
	\item\label{D.RCSVR.rcr.iii} for all $ a, b \in A $, if $ 0 \leq a \leq b $, then there exists $ c \in A $ such that $ bc = a^2 $; and
	\item  $ \text{qf}(A/\mathfrak{p}) $ is a real closed field and $ A/\mathfrak{p} $ is integrally closed for all $ \mathfrak{p} \in \text{Spec}(A) $.
\end{enumerate}
\end{definition}

It is clear from Definition \ref{D.RCSVR.rcr} that a field is a real closed ring if and only if it is a real closed field; arbitrary convex subrings of real closed fields are also real closed rings (Theorem \ref{T.RCSVR.equiv_rcvr}). The next theorem summarizes the main properties of real closed rings that will be made use of in the rest of the note:

\begin{theorem}\label{T.RCSVR.properties_rcr}
	\begin{enumerate}[\normalfont(I), ref=\ref{T.RCSVR.properties_rcr} (\Roman*)]
	\item\label{T.RCSVR.properties_rcr.I} 	The category of real closed rings together with ring homomorphisms is complete and cocomplete; in particular, direct and fibre products of real closed rings are real closed.
	\item\label{T.RCSVR.properties_rcr.II} Let $ A  $ be a real closed ring.
		\begin{enumerate}[\normalfont(i), ref=\ref{T.RCSVR.properties_rcr.II} (\roman*)]
		\item\label{T.RCSVR.properties_rcr.II.i} If $I \subseteq A $ is an ideal, then  $ A/I $ is real closed if and only if $ I  $ is radical.
		\item\label{T.RCSVR.properties_rcr.II.ii} If $ S\subseteq A$ is a multiplicative subset, then the localization $ S^{-1}A $ is real closed.
		\item\label{T.RCSVR.properties_rcr.II.iii} The poset $(\emph{Spec}(A), \subseteq) $ is a \emph{root system}, i.e., for all $ \mathfrak{p} \in \emph{Spec}(A) $, the principal up-set $ \mathfrak{p}^{\uparrow}:= \{ \mathfrak{q} \in \emph{Spec}(A) \mid \mathfrak{p} \subseteq \mathfrak{q} \}  $ is a chain.
		\item\label{T.RCSVR.properties_rcr.II.iv} If $ I, J \subseteq A$ are radical ideals, then $ I+J $ is a radical ideal. In particular:
			\begin{enumerate}[\normalfont(a), ref=\ref{T.RCSVR.properties_rcr.II.iv} (\alph*)]
				\item\label{T.RCSVR.properties_rcr.II.iv.a} If $ \mathfrak{p}, \mathfrak{q} \in \emph{Spec}(A) $ and $ 1 \notin \mathfrak{p} + \mathfrak{q} $, then $ \mathfrak{p} + \mathfrak{q} \in \emph{Spec}(A) $.
			\item\label{T.RCSVR.properties_rcr.II.iv.b} 	 The poset $ (\mathcal{I}^{\emph{rad}}(A), \subseteq) $ of radical ideals of $ A $ is a distributive lattice with join and meet operations given by sum and intersection of ideals, respectively.
				\end{enumerate}
		\end{enumerate}
	\end{enumerate}
\end{theorem}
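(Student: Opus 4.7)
The plan is to assemble this theorem almost entirely from known results in Schwartz's foundational work on real closed rings (principally \cite{schwartz.basic}, \cite{schwartz.rcr}, \cite{schwartz/madden.safr}, and \cite{tressl.super}), verifying only the small glue needed between them. I do not expect any single step to be hard; the main work is matching the four equivalent characterizations of Definition \ref{D.RCSVR.rcr} against each construction.

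For part (I), the key external input is that the category of real closed rings is a reflective subcategory of the category of reduced commutative rings, the reflector being the real closure functor. Reflectivity immediately gives both completeness (limits in a reflective subcategory are computed in the ambient category, provided they land in it) and cocompleteness (colimits exist by applying the reflector to the colimit taken in reduced rings). Thus for direct products and fibre products, which are concrete limits, it suffices to verify that Definition \ref{D.RCSVR.rcr}(i)--(iv) is preserved under products and under taking equalizers of real closed subrings, each of which is a routine componentwise check using that the partial order, lattice operations, and divisibility condition \ref{D.RCSVR.rcr.iii} all localize to each factor, and that primes in a product or pullback restrict to primes in the factors with real closed residue domain.

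Part (II)(i): The forward direction is immediate from Definition \ref{D.RCSVR.rcr.iii}(i). For the converse, when $I$ is radical one writes $I = \bigcap_{\mathfrak{p} \supseteq I}\mathfrak{p}$, so $A/I$ embeds into $\prod_{\mathfrak{p} \supseteq I}A/\mathfrak{p}$, and I would invoke Schwartz's result that each $A/\mathfrak{p}$ is real closed together with the closure under products from (I); alternatively this is recorded directly as a theorem in \cite{schwartz.basic}. Part (II)(ii) is likewise Schwartz's theorem that localizations of real closed rings are real closed, for which the nontrivial ingredient is that $\mathrm{qf}(S^{-1}A/\mathfrak{q})$ coincides with $\mathrm{qf}(A/\mathfrak{p})$ for $\mathfrak{q}$ the contraction of $\mathfrak{p}$.

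Part (II)(iii) is the root-system property of $\mathrm{Spec}(A)$, a central structural theorem in Schwartz's theory; the conceptual reason is that for each prime $\mathfrak{p}$, the localization $A_{\mathfrak{p}}$ is a real closed valuation ring (a fact I would later invoke in Theorem \ref{T.RCSVR.equiv_rcvr}), and valuation rings have totally ordered spectra, so the up-set $\mathfrak{p}^{\uparrow}$ is a chain. Part (II)(iv) reduces to the assertion that $I+J$ is radical whenever $I$ and $J$ are; this is Schwartz's sum-of-radical-ideals theorem and depends crucially on the $f$-ring structure (it is false already in general reduced rings). Once this is in hand, (a) follows because the proper radical ideal $\mathfrak{p}+\mathfrak{q}$ is an intersection of primes, and by (iii) the primes containing both $\mathfrak{p}$ and $\mathfrak{q}$ form a chain, so their intersection is prime. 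Finally (b) follows from (a) together with the standard fact that the lattice of radical ideals of any ring is distributive (or equivalently, that the open sets of $\mathrm{Spec}(A)$ form a distributive lattice). The only real obstacle is locating the exact citation for the sum-of-radicals fact in (iv), as everything else is either formal or reduces to the single structural input that $\mathrm{Spec}(A)$ is a root system.
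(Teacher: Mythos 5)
Most of your proposal tracks the paper's proof closely: both treat (I) via the reflective-subcategory argument (the paper cites the monoreflection of real closed rings inside reduced \emph{partially ordered} rings from \cite{schwartz/madden.safr}, while you invoke reduced commutative rings — either ambient category works, but the paper's citations are specifically for the po-ring version), and both handle (II)(i), (II)(ii), the sum-of-radicals part of (II)(iv), and (II)(iv)(a) by the same combination of citation and the short chain-argument. For (II)(iv)(b), you appeal to the standard fact that the lattice of radical ideals with join $\sqrt{I+J}$ is distributive and then observe that in a real closed ring $\sqrt{I+J}=I+J$; the paper instead verifies $I\cap(J+K)\subseteq (I\cap J)+(I\cap K)$ directly by squaring. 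Both are fine — yours is slicker, the paper's is self-contained.

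The genuine problem is the ``conceptual reason'' you offer for (II)(iii). You assert that for a real closed ring $A$ and $\mathfrak{p}\in\mathrm{Spec}(A)$, the localization $A_{\mathfrak{p}}$ is a real closed valuation ring, and that this is what Theorem \ref{T.RCSVR.equiv_rcvr} will give you. This is wrong on two counts. First, you have the wrong construction: $\mathrm{Spec}(A_{\mathfrak{p}})$ parametrizes the primes \emph{below} $\mathfrak{p}$, i.e.\ $\mathfrak{p}^{\downarrow}$, whereas the root-system property is about $\mathfrak{p}^{\uparrow}$, which corresponds to $\mathrm{Spec}(A/\mathfrak{p})$. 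Second, and more seriously, even after this correction, $A/\mathfrak{p}$ is a real closed \emph{domain}, and a real closed domain need not be a valuation ring — if it always were, every real closed ring would automatically be an SV-ring (compare Definition \ref{def.SV-ring} and the implication (ii)$\Rightarrow$(i) of Theorem \ref{T.SV.equiv_SV-ring}), and the SV hypothesis pervading this paper would be vacuous. The actual source of the root-system property is not valuation theory but the real spectrum: for any ring $B$, $(\mathrm{Sper}(B),\subseteq)$ is a root system (this is the cited \cite[Theorem 13.1.9(iii)]{dickmann/schwartz/tressl.specbook}), and for a real closed ring $A$ the support map identifies $\mathrm{Spec}(A)$ with $\mathrm{Sper}(A)$ (this is the cited \cite[Theorem 3.10]{schwartz.basic}). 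You would reach a correct proof by citing Schwartz as you propose, but the intuition you've recorded is the intuition for the special case of SV-rings, not for real closed rings in general, and carrying it forward could lead you astray later (e.g.\ in Lemma \ref{lem.rcr_domain_not_rcvr} and Lemma \ref{L.conv_hull_rcd}, which only make sense because real closed domains that are not valuation rings exist).
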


\begin{proof}
	(I). By \cite[Section 12]{schwartz/madden.safr}, the category of real closed rings is a monoreflective subcategory of the category of reduced partially ordered rings; since the latter category is complete and cocomplete by \cite[Theorem 1.7]{schwartz/madden.safr}, so is the category of real closed rings by \cite[Proposition 2.3]{schwartz/madden.safr} and \cite[Proposition 2.7]{schwartz/madden.safr}.
	
	(II). (i)  is \cite[Theorem 4.5]{schwartz.basic}, (ii) is \cite[Proposition 12.6]{schwartz/madden.safr}, (iii) follows from \cite[Theorem 3.10]{schwartz.basic} and \cite[Theorem 13.1.9. (iii)]{dickmann/schwartz/tressl.specbook}, and (iv) is \cite[Corollary 15]{schwartz.rcr}. For (iv) (a), pick $ \mathfrak{p}, \mathfrak{q} \in \text{Spec}(A)$ such that $ 1 \notin \mathfrak{p} + \mathfrak{q} $. Since $ \mathfrak{p}+\mathfrak{q} $ is a radical ideal, $ \mathfrak{p} + \mathfrak{q}= \bigcap \{ \mathfrak{r} \in \text{Spec}(A)\mid \mathfrak{p} + \mathfrak{q} \subseteq \mathfrak{r}\}  $, and since $ 1 \notin \mathfrak{p} +\mathfrak{q} $, $ \mathcal{S}:= \{ \mathfrak{r} \in \text{Spec}(A)\mid \mathfrak{p} + \mathfrak{q} \subseteq \mathfrak{r}\}  \neq \emptyset $. If $ \mathfrak{r}_1, \mathfrak{r}_2\in \mathcal{S} $, then $ \mathfrak{p} \subseteq \mathfrak{r}_1, \mathfrak{r}_2 $, therefore $ \mathfrak{r}_1 $ and $ \mathfrak{r}_2 $ are comparable under subset inclusion  by (iii); it follows that $ \mathcal{S} $ is a chain in $ (\text{Spec}(A), \subseteq) $, therefore $ \mathfrak{p}+\mathfrak{q} = \bigcap_{\mathfrak{r}\in \mathcal{S}}\mathfrak{r} \in \text{Spec}(A)$. For (iv) (b) it remains to show that the lattice $  (\mathcal{I}^{\text{rad}}(A), +, \cap)  $ is distributive, so pick $ I, J, K \in \mathcal{I}^{\text{rad}}(A) $; the inclusion $ (I \cap J) + (I\cap K) \subseteq I \cap(J + K) $ is clear, and if $ a \in  I \cap(J + K)$, then $ a= j+k  $ for some  $ j \in J $ and  $ k \in K $, therefore  $ a^2 = aj + ak \in IJ+IK \subseteq (I \cap J) + (I\cap K)  $, and since $ (I \cap J) + (I\cap K)  \in \mathcal{I}^{\text{rad}}(A) $, $ a \in  (I \cap J) + (I\cap K)$ follows, as required.
\end{proof}

\subsubsection{Real closed valuation rings}

\begin{definition}\label{D.RCSVR.rcvr}
A ring $ A $ is a \textit{real closed valuation ring} if $ A $ is a real closed ring and a valuation ring.	
\end{definition}

\begin{theorem}\label{T.RCSVR.equiv_rcvr}
Let $ A$ be a ring. The following are equivalent:	
\begin{enumerate}[\normalfont(I)]
\item $ A $ is a real closed valuation ring.
\item $ \emph{qf}(A) $ is a real closed field and $ A $ is convex in $ \emph{qf}(A) $.
\item $ \emph{qf}(A) $ is a real closed field and $ A = \{ a \in \emph{qf}(A) \mid v(a) \geq 0\}  $, where  $ v: \emph{qf}(A)\lonto \Gamma$ is an \emph{order-compatible valuation} on $ \emph{qf}(A) $ \emph{(}i.e., for all $ a, b \in A $, $ 0 \leq a \leq b $ implies $ v(b) \leq v(a) $ \emph{)}.
	\item  $ A $ is a valuation ring, and both $ \emph{qf}(A) $  and $ A/\mathfrak{m}_A $ is are real closed fields.
\item $ A $ is a totally ordered domain which satisfies the intermediate value property for polynomials in one variable.
\item $ A $ is a totally ordered domain which satisfies the following conditions:
	\begin{enumerate}[\normalfont(i)]
	\item For all $ a, b \in A $, if $ 0 < a < b $, then  there exists $ c \in A $ such that $ bc=a $.
	\item Every positive element has a square root.
	\item Every monic polynomial of odd degree has a root.
	\end{enumerate}
\item $ A $ is a local real closed SV-ring of rank $ 1 $.
\end{enumerate}
\end{theorem}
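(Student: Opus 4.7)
The plan is to handle the equivalences in three blocks, anchoring on Definition \ref{D.RCSVR.rcr} and on the structural results of Section \ref{SEC.SV}. I would dispatch (I) $\Leftrightarrow$ (VII) immediately using Lemma \ref{L.SV.val_ring_equiv}, which identifies valuation rings with reduced local SV-rings of rank $1$; since reducedness is already part of being real closed (Definition \ref{D.RCSVR.rcr}), the two statements coincide modulo this identification.

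For the classical block (I) $\Leftrightarrow$ (II) $\Leftrightarrow$ (III) $\Leftrightarrow$ (IV), I would prove (I) $\Rightarrow$ (II) by applying Definition \ref{D.RCSVR.rcr} with $\mathfrak{p} = (0)$ (available since a valuation ring is a domain), giving $\text{qf}(A)$ real closed; convexity of $A$ in $\text{qf}(A)$ is forced by condition \ref{D.RCSVR.rcr.iii} together with the fact that the $f$-ring order on $A$ extends to the unique ordering on the real closed field $\text{qf}(A)$. The equivalence (II) $\Leftrightarrow$ (III) is the classical correspondence between convex subrings of an ordered field and its order-compatible Krull valuations. For (II) $\Leftrightarrow$ (IV), convex subrings of ordered fields are automatically valuation rings, and the Baer--Krull theorem ensures the residue field inherits real closedness; conversely, a valuation ring with real closed residue field sitting inside a real closed field must be convex, since the unique ordering on the ambient field is necessarily compatible with any valuation whose residue field is orderable. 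Closing back to (I) is then immediate, since any convex subring of a real closed field satisfies the four axioms of Definition \ref{D.RCSVR.rcr} directly.

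For the algebraic block (IV) $\Leftrightarrow$ (V) $\Leftrightarrow$ (VI), the implication (IV) $\Rightarrow$ (V) is a transfer argument: given $P \in A[X]$ with $P(a) < 0 < P(b)$, IVT in $\text{qf}(A)$ produces a root between $a$ and $b$, which lies in $A$ by convexity. (V) $\Rightarrow$ (VI) is immediate by specializing IVT to the polynomials $bX - a$, $X^2 - a$, and an arbitrary monic polynomial of odd degree (which changes sign at suitable inputs). For (VI) $\Rightarrow$ (IV), condition (i) makes $A$ a valuation ring: for $a, b \in A$ with $a, b > 0$, either $a \leq b$ yielding $a/b \in A$ by (i), or $b \leq a$ yielding $b/a \in A$ by (i); conditions (ii) and (iii) transfer to $\text{qf}(A)$ via the substitution $P(X) \mapsto d^n P(X/d)$, where $d \in A$ is a common denominator of the coefficients, which preserves degree, parity, and monicity while pushing all coefficients into $A$, showing $\text{qf}(A)$ is real closed; the residue field inherits (ii) and (iii) by lifting monic polynomials along the surjection $A \onto A/\mathfrak{m}_A$ and reducing roots.

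The main obstacle I anticipate is the convexity assertion within (II) $\Leftrightarrow$ (IV) -- namely, that a valuation ring of a real closed field with real closed residue field is automatically convex -- which requires the Baer--Krull compatibility criterion not developed earlier in the excerpt. The denominator-clearing step in (VI) $\Rightarrow$ (IV) is delicate but routine once the substitution $X \mapsto X/d$ is set up carefully, and the only other care needed is in verifying that the order on $A$ extends canonically to $\text{qf}(A)$ in the totally ordered domain setting of (V)--(VI).
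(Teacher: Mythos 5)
Your proposal is correct in outline and organizes the equivalences along a cycle that closes, but it departs from the paper's route in a couple of places worth noting.

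The paper proves (I) $\Rightarrow$ (II) and (IV) $\Rightarrow$ (I) directly, cites Knebusch--Scheiderer for (II) $\Leftrightarrow$ (IV), cites Cherlin--Dickmann for the block (II) $\Leftrightarrow$ (III) $\Leftrightarrow$ (V) $\Leftrightarrow$ (VI), and invokes Lemma \ref{L.SV.val_ring_equiv} for (I) $\Leftrightarrow$ (VII) — exactly as you do for those steps. Where you diverge is in proving (IV) $\Leftrightarrow$ (V) $\Leftrightarrow$ (VI) from scratch (the convexity argument for (IV) $\Rightarrow$ (V), the specialization trick for (V) $\Rightarrow$ (VI), and the denominator-clearing substitution for (VI) $\Rightarrow$ (IV)) rather than citing Cherlin--Dickmann, and in appealing to Baer--Krull where the paper appeals to a different textbook result. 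Both are legitimate; your version is more self-contained but longer, and the Baer--Krull step is (as you acknowledge) not developed in the present text, so in practice it is still a citation. One small point in (VI) $\Rightarrow$ (IV): you should record explicitly that condition (i) yields not merely that $A$ is a valuation ring but that $A$ is \emph{convex} in $\text{qf}(A)$ (apply (i) to $0 < a < be$ after writing $x = a/e$), since convexity of $\mathfrak{m}_A$ is what allows the order to descend to the residue field.

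The one place where your sketch genuinely undersells the work is (IV) $\Rightarrow$ (I). You declare this "immediate," but verifying the four clauses of Definition \ref{D.RCSVR.rcr} for a convex subring $A$ of a real closed field requires real content. Clause (iv) demands that $\text{qf}(A/\mathfrak{p})$ be real closed and $A/\mathfrak{p}$ integrally closed for \emph{every} prime $\mathfrak{p}$, not just $(0)$; the paper handles this by observing that each localization $A_{\mathfrak{p}}$ is again convex in $\text{qf}(A)$ and re-applying the equivalence (II) $\Leftrightarrow$ (IV) to identify $A_{\mathfrak{p}}/\mathfrak{p}A_{\mathfrak{p}} = \text{qf}(A/\mathfrak{p})$ as real closed. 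Clause (iii) also needs a direct check (take $0 \leq a \leq b$, observe $0 \leq a^2/b \leq b$, use convexity to get $c := a^2/b \in A$). So this implication runs through (II) $\Leftrightarrow$ (IV) twice and merits a full paragraph rather than a one-liner.
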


\begin{proof}
	\underline{(I) $ \Ra $ (II).} It suffices to show that $ A  $ is convex in $ \text{qf}(A) $, so pick $ a \in A $ and  $  b \in \text{qf}(A)$ such that $ 0 < b < a $ and assume for contradiction that $ b \notin A $; then  $ b^{-1}\in A $, therefore $ 0 < 1< ab^{-1} $ in  $ A $ implies that there exists $ c \in A $ such that $ ab^{-1}c = 1^2=1  $ (Definition \ref{D.RCSVR.rcr.iii}), hence $ b= ac \in A $, a contradiction.

	\underline{(II) $ \Leftrightarrow $ (IV).} By \cite[Proposition 2.2.4]{knebusch/scheiderer.real_algebra} and  \cite[Theorem 2.5.1 (b)]{knebusch/scheiderer.real_algebra}.

	\underline{(IV) $ \Ra $  (I).} Pick $ \mathfrak{p} \in \text{Spec}(A) $. $ A $ is convex in $ \text{qf}(A) $ by the implication (IV) $ \Ra $  (II), therefore $ A_{\mathfrak{p}} $ is also convex in $ \text{qf}(A) $; in particular, $ A_{\mathfrak{p}}/\mathfrak{p}A_{\mathfrak{p}} $ is a real closed field by the implication (II) $ \Ra $ (IV), therefore $ A/\mathfrak{p} $ is a valuation ring (hence integrally closed) of the real closed field $ \text{qf}(A/\mathfrak{p}) = A_{\mathfrak{p}}/\mathfrak{p}A_{\mathfrak{p}}$, and thus it remains to show that the condition in item (iii) of Definition \ref{D.RCSVR.rcr} is satisfied. Pick $ a, b \in A $ such that  $ 0 \leq a \leq b $; then  $0 \leq a^2/b \leq b^2/b=b\in A $, and since  $ A $ is convex in $ \text{qf}(A) $, it follows that $ c:=a^2/b \in A $ is such that  $ bc=a^2$, as required.

	\underline{(II) $ \Leftrightarrow $  (III) $ \Leftrightarrow$ (V) $ \Leftrightarrow $ (VI)} See \cite[Theorem 1 and Lemma 4]{cherlin/dickmann.rcrII}.

	\underline{(I) $ \Leftrightarrow $  (VII).} By Lemma \ref{L.SV.val_ring_equiv}.
\end{proof}

\begin{example}\label{E.RCSVR.rcvr_germs}
	Let $ R $ be a real closed field and $ A  $ be the ring of continuous semi-algebraic functions $ R \lra R $. Pick  $ r \in R $ and define  $ \mathfrak{p}_{r^{-}}:= \{ f \in A \mid \exists \epsilon \in R^{>0}  \text{ such that } f_{\upharpoonright (r-\epsilon, r]} = 0\}  $; it is clear that $ \mathfrak{p}_{r^{-}} \in \text{Spec}(A) $, and it is claimed that $ A/\mathfrak{p}_{r^{-}} $ is a non-trivial real closed valuation ring. By the implication (II) $ \Ra $ (I) in Theorem \ref{T.RCSVR.equiv_rcvr} it suffices to show that $ A/\mathfrak{p}_{r^{-}} $ is isomorphic to a proper convex subring of a real closed field; to this end, order the function field $ R(t) $  by setting $ t $ to be a positive infinitesimal over $ R $, let  $ S $ be the real closure of $ R(t) $, and let $ B $ be the ring of continuous semi-algebraic functions $ S\lra S $. Every $ f \in A $ defines a unique  $ f_{S} \in B $ in the obvious way, and this yields a composite $ R $-algebra  homomorphism $ F: A \into B \onto S $ given by $ f \mapsto f_S(r-t) $; then $ \text{ker}(F) = \mathfrak{p}_{r^-} $ and the image of $ F $ is the convex hull of $ R $ in $ S $.
\end{example}

Therefore by Theorem \ref{T.RCSVR.properties_rcr.II.i} a real closed SV-ring is a real closed ring $ A $ such that $ A/\mathfrak{p} $ is a real closed valuation ring for all $ \mathfrak{p} \in \text{Spec}(A)  $. A particular class of real closed valuation rings  are those valuation  rings corresponding to the canonical valuation of real closed Hahn series fields:

\begin{definition}\label{D.RCSVR.hahn_field}
Let $ \emph{\textbf{k}} $ be a field and $ \Gamma $ be a totally ordered abelian group. Define $ \emph{\textbf{k}}((\Gamma)):= \emph{\textbf{k}}((x^{\Gamma})) $ to be the set of formal series $ a =\sum a_{\gamma}x^{\gamma} := \sum_{\gamma \in \Gamma} a_{\gamma}x^{\gamma} $ where $ \text{supp}(a) := \{\gamma \in \Gamma \mid a_{\gamma} \neq 0\} $ is a well-ordered subset of  $ \Gamma $.
\end{definition}

\begin{theorem}\label{T.RCSVR.Hahn}
	Let $ \textbf{k} $ be a field and $ \Gamma $ be a totally ordered abelian group.
	\begin{enumerate}[\normalfont(i)]
		\item The set $ \textbf{k}((\Gamma)) $ endowed with the operations of pointwise addition and Cauchy product of formal series 
		\[
		\sum a_{\gamma}x^{\gamma}+\sum b_{\gamma}x^{\gamma} := \sum (a_{\gamma} + b_{\gamma})x^{\gamma} 
		\] and 
	\[
			 \left(\sum a_{\gamma}x^{\gamma}\right)\left(\sum b_{\gamma}x^{\gamma} \right):= \sum_{\gamma \in \Gamma} \sum_{\delta + \epsilon= \gamma} (a_{\delta}b_{\epsilon})x^{\gamma}
	,\] 
	respectively, is a field called  \emph{Hahn series field}.
\item The map $ \nu: \textbf{k}((\Gamma)) \lonto \Gamma $ given by $ \nu(a) := \emph{min}(\emph{supp}(a))$ is a valuation with residue field $ \textbf{k} $; write $ \textbf{k}[[\Gamma]]:=  \textbf{k}((\Gamma^{\geq 0}))$ for its corresponding valuation ring.
\item If $ \textbf{k} $ is a totally ordered field, then $ \textbf{k}((\Gamma)) $ has the structure of a totally ordered field by setting $ a >0 $ if and only if $a_{\nu(a)} >0$ for all $ a \in \textbf{k}((\Gamma)) $; under this total order, $ \nu $ is an order-compatible valuation on $ \textbf{k}((\Gamma)) $. 

\item $ \textbf{k}((\Gamma)) $ is a real closed field if and only if $ \textbf{k} $ is real closed and $ \Gamma $ is  divisible; in particular, $ \textbf{k}[[\Gamma]] $ is a real closed valuation ring if and only if $ \textbf{k} $ is real closed and $ \Gamma $ is  divisible.
	\end{enumerate}
\end{theorem}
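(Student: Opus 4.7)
The plan is to dispatch (i)--(iii) by direct arithmetic on formal series and well-ordered subsets of $\Gamma$, and to reduce the substantive content of (iv) to classical facts about Hensel's lemma and maximal valuations. For (i), the crucial step is well-definedness of the Cauchy product. Given $a,b \in \textbf{k}((\Gamma))$ and $\gamma \in \Gamma$, the set of pairs $(\delta,\epsilon) \in \text{supp}(a) \times \text{supp}(b)$ with $\delta + \epsilon = \gamma$ is finite; otherwise one extracts strictly decreasing sequences in one of the supports, contradicting well-ordering. A standard Neumann-style argument then shows that $\text{supp}(ab) \subseteq \text{supp}(a) + \text{supp}(b)$ is itself well-ordered, after which the commutative ring axioms follow coefficient-wise. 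For multiplicative inverses, given nonzero $a$ with $\nu(a) = \gamma$ and leading coefficient $c$, I write $a = cx^{\gamma}(1 - u)$ with $\text{supp}(u) \subseteq \Gamma^{>0}$ and set $(1-u)^{-1} := \sum_{n \geq 0} u^n$; Neumann's lemma on sums of well-ordered subsets of $\Gamma^{>0}$ ensures that $\bigcup_n \text{supp}(u^n)$ is well-ordered and that each $\gamma$ lies in only finitely many $\text{supp}(u^n)$, so this series defines a genuine element of $\textbf{k}((\Gamma))$. For (ii), the equalities $\nu(ab) = \nu(a) + \nu(b)$ and $\nu(a+b) \geq \min\{\nu(a),\nu(b)\}$ are immediate from inspection of supports, and the constant-term map identifies $\textbf{k}[[\Gamma]]/\mathfrak{m}$ with $\textbf{k}$.

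For (iii), I would declare $a > 0$ iff $a_{\nu(a)} > 0$ in $\textbf{k}$, and verify trichotomy together with additive and multiplicative compatibility from the formulae for the leading coefficients of $a+b$ and $ab$; order-compatibility of $\nu$ then follows by contrapositive, since $\nu(a) < \nu(b)$ with $a,b > 0$ would make $b-a$ have negative leading term. The forward direction of (iv) uses Theorem \ref{T.RCSVR.equiv_rcvr}(IV): if $\textbf{k}((\Gamma))$ is real closed, then $\textbf{k}[[\Gamma]]$ is a real closed valuation ring, so its residue field $\textbf{k}$ is real closed; divisibility of $\Gamma$ follows because for every $\gamma \in \Gamma$ and $n \geq 1$, the polynomial $X^n - x^\gamma$ has a positive root $y$ in $\textbf{k}((\Gamma))$ (by the intermediate value property of real closed fields), whose valuation $\delta$ satisfies $n\delta = \gamma$.

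The main obstacle is the reverse direction of (iv). Assuming $\textbf{k}$ is real closed and $\Gamma$ is divisible, I would verify the characterization of real closedness from Theorem \ref{T.RCSVR.equiv_rcvr}(VI): every positive element has a square root and every monic polynomial of odd degree has a root. For a positive $a = cx^{\gamma}(1 + u)$ with $\text{supp}(u) \subseteq \Gamma^{>0}$, divisibility of $\Gamma$ and real closedness of $\textbf{k}$ produce $\sqrt{c}\, x^{\gamma/2}$, reducing the problem to extracting a square root of $1 + u$; this is delivered by the formal binomial series $\sum_{n \geq 0} \binom{1/2}{n} u^n$, which is well-defined in $\textbf{k}((\Gamma))$ by the Neumann argument of (i), or equivalently by Hensel's lemma for the Henselian valued field $\textbf{k}((\Gamma))$. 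For odd-degree monic polynomials, a Newton-polygon argument reduces root-finding to the solution of auxiliary polynomial equations over the residue field $\textbf{k}$, whose real-closedness supplies the required residue-level roots, which are then lifted to $\textbf{k}((\Gamma))$ by Hensel's lemma; the cleanest packaging is via the classical theorem that $\textbf{k}((\Gamma))$ is maximally valued in Krull's sense, whence any maximally valued field with divisible value group and real closed residue field is itself real closed. The $\textbf{k}[[\Gamma]]$ statement of (iv) then follows immediately from Theorem \ref{T.RCSVR.equiv_rcvr}(IV) applied to the field case.
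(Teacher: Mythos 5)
The paper does not actually supply a proof: it cites this theorem as folklore, pointing to Engler--Prestel, Aschenbrenner--van den Dries--van der Hoeven, and Dales--Woodin, and only remarks that the $\textbf{\emph{k}}[[\Gamma]]$ clause of (iv) follows from the equivalence of (I) and (III) in Theorem \ref{T.RCSVR.equiv_rcvr}. Your proposal therefore cannot be compared against a proof in the paper; what you have done is reconstruct the standard argument that those references contain, and your reconstruction is correct in outline.

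A few remarks on the details. For (i), the Neumann lemma (well-ordered subsets of $\Gamma^{>0}$ closed under sums have well-ordered union of $n$-fold sums with each element attained only finitely often) is indeed the right tool, both for the Cauchy product and for the geometric series $\sum_n u^n$; this is exactly how the cited sources argue. Items (ii) and (iii) are routine leading-term inspection as you describe. In (iv), the forward direction is fine: real closedness of the residue field comes from Theorem \ref{T.RCSVR.equiv_rcvr} (I) $\Leftrightarrow$ (IV), and divisibility from extracting $n$-th roots of $x^\gamma$. For the reverse direction, your stated invocation of Theorem \ref{T.RCSVR.equiv_rcvr} (VI) is technically a statement about valuation rings, but since you apply it to the field $\textbf{\emph{k}}((\Gamma))$ itself, condition (i) of (VI) is vacuous and you are left with the usual Artin--Schreier characterization; this is fine. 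The square-root argument via the binomial series is complete. The odd-degree case is the only place where your sketch is genuinely loose — ``a Newton-polygon argument reduces root-finding to the residue field'' conceals a nontrivial normalization step — but your fallback formulation, that $\textbf{\emph{k}}((\Gamma))$ is maximally complete hence Henselian, and that a Henselian valued field with real closed residue field and divisible value group is real closed, is both correct and the cleanest way to finish; this is the packaging Engler--Prestel actually use. Finally, for the $\textbf{\emph{k}}[[\Gamma]]$ clause you use (I) $\Leftrightarrow$ (IV) of Theorem \ref{T.RCSVR.equiv_rcvr} where the paper uses (I) $\Leftrightarrow$ (III); both routes work and the difference is cosmetic.
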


\begin{proof}
	Folklore, see for instance \cite[Exercise 3.5.6]{engler/prestel.valued_fields}, \cite[Section 3.5]{aschenbrenner/vdD/vdHoeven.asympt}, or \cite[Section 2]{dales/woodin.super}; the last statement in (iv) follows from the equivalence (I) $ \Leftrightarrow $  (III) in Theorem \ref{T.RCSVR.properties_rcr}.
\end{proof}

It is well-known that if $ V $ is a non-trivial real closed valuation ring, then there exists a local embedding $ V \linto \textbf{\emph{k}}[[\Gamma]] $, where $ \textbf{\emph{k}}:= V/\mathfrak{m}_V $ and $ \Gamma := \text{qf}(V)^{\times}/V^{\times}$ (\cite[62, Satz 21]{priess-crampe.ord_str} and Theorem \ref{T.APP.rcvf_hahn_embedd}), and $ \textbf{\emph{k}}[[\Gamma]] $ is a real closed valuation ring by the implication (I) $ \Ra $  (IV) in Theorem \ref{T.RCSVR.equiv_rcvr}; in fact, one can do slightly better:

\begin{proposition}\label{P.RCSVR.rcvr_embedd_nice}
	Let $ V\subseteq W $ be a local embedding of non-trivial real closed valuation rings, and set $ \textbf{k}:= V/\mathfrak{m}_V $, $ \textbf{l}:= W/\mathfrak{m}_W $, $ \Gamma:= \emph{qf}(V)^{\times} /V^{\times}$, and $ \Delta := \emph{qf}(W)^{\times}/W^{\times} $. There exist  local embeddings of non-trivial real closed valuation rings $ \epsilon_V: V \linto \textbf{k}[[\Gamma]] $ and $ \epsilon_W: W \linto \textbf{l}[[\Delta]] $ such that $ \epsilon_{W\upharpoonright V} = \epsilon_V $.
\end{proposition}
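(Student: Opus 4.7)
The plan is to reduce to the case of real closed fields via Theorem~\ref{T.RCSVR.equiv_rcvr}, invoke the relative Hahn embedding theorem from the appendix, and then restrict back to the valuation rings. Set $K := \text{qf}(V)$ and $L := \text{qf}(W)$; by Theorem~\ref{T.RCSVR.equiv_rcvr}, both are real closed fields, and $V$, $W$ are the valuation rings of order-compatible valuations $v : K^{\times} \twoheadrightarrow \Gamma$ and $w : L^{\times} \twoheadrightarrow \Delta$ respectively. Since $V \subseteq W$ is a local embedding, the extension $K \subseteq L$ is an extension of valued fields for which $v$ is the restriction of $w$, and in particular the induced maps $\textbf{k} \hookrightarrow \textbf{l}$ and $\Gamma \hookrightarrow \Delta$ on residue field and value group are injective. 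These inclusions canonically produce a valuation-preserving embedding $\textbf{k}((\Gamma)) \hookrightarrow \textbf{l}((\Delta))$ given by extending coefficients by zero, which restricts to $\textbf{k}[[\Gamma]] \hookrightarrow \textbf{l}[[\Delta]]$; throughout the argument, identify $\textbf{k}((\Gamma))$ with its image inside $\textbf{l}((\Delta))$.

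The heart of the proof is to produce a valuation-preserving embedding $\tilde\epsilon_W : L \hookrightarrow \textbf{l}((\Delta))$ whose restriction $\tilde\epsilon_{W\upharpoonright K}$ takes values in the subfield $\textbf{k}((\Gamma))$. This is precisely the content of the relative Hahn embedding theorem for real closed valued fields, namely Theorem~\ref{T.APP.rcvf_hahn_embedd_II}. Informally, the idea is to first apply Theorem~\ref{T.APP.rcvf_hahn_embedd} to obtain an embedding $\tilde\epsilon_V : K \hookrightarrow \textbf{k}((\Gamma))$ built from a choice of sections of the residue map $V \twoheadrightarrow \textbf{k}$ and of the valuation $v : K^\times \twoheadrightarrow \Gamma$, and then to extend these sections to sections of $W \twoheadrightarrow \textbf{l}$ and $w : L^\times \twoheadrightarrow \Delta$ (using real closedness of the residue field extension and divisibility of the value group extension) before assembling them into $\tilde\epsilon_W$.

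Once $\tilde\epsilon_W$ and $\tilde\epsilon_V$ are in hand, both are valuation-preserving, so the equalities $V = \tilde\epsilon_V^{-1}(\textbf{k}[[\Gamma]])$ and $W = \tilde\epsilon_W^{-1}(\textbf{l}[[\Delta]])$ hold. Setting $\epsilon_V := \tilde\epsilon_{V\upharpoonright V}$ and $\epsilon_W := \tilde\epsilon_{W\upharpoonright W}$ yields local embeddings into the respective Hahn valuation rings, and the identity $\epsilon_{W\upharpoonright V} = \epsilon_V$ holds by construction.

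The principal obstacle is the relative Hahn embedding of fields in the middle step, which requires a careful transfinite extension of sections compatible with the already-fixed Hahn embedding of $K$; all other steps are routine once Theorem~\ref{T.RCSVR.equiv_rcvr} and the definitions of Hahn series fields and their canonical valuations are in hand.
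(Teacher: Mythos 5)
Your proposal is correct and takes essentially the same approach as the paper: both reduce the statement to the extension $(\mathrm{qf}(V), V) \subseteq (\mathrm{qf}(W), W)$ of real closed valued fields and then invoke Theorem~\ref{T.APP.rcvf_hahn_embedd_II}, restricting the resulting compatible Hahn embeddings of fields to the valuation rings. The paper's proof is simply the one-line observation that locality of $V \subseteq W$ makes the quotient-field extension an extension of valued fields, after which the appendix theorem does all the work; your write-up spells out the same reduction in more detail.
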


\begin{proof}
	Immediate from Theorem \ref{T.APP.rcvf_hahn_embedd_II}, since $ V \subseteq W $ being a local embedding implies that  $ (\text{qf}(V) ,V) \subseteq  (\text{qf}(W) ,W)$ is an embedding of real closed valued fields. 	
\end{proof}

Conclude this subsection with two a construction which gives rise to  non-trivial real closed valuation rings.

\begin{lemma}\label{lem.rcr_domain_not_rcvr}
	Let $ V  $ be a non-trivial real closed valuation ring, $ \lambda: V\lonto V/\mathfrak{m}_V=: \textbf{k} $ be the residue map, $ B \subseteq \textbf{k} $ be a subring. The ring $ A:= \lambda^{-1}(B) \subseteq V  $ is a real closed valuation ring if and only of $ B $ is a real closed valuation ring.
\end{lemma}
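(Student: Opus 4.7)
My plan is to reduce both sides of the equivalence to the criterion provided by Theorem \ref{T.RCSVR.equiv_rcvr}(I) $\Leftrightarrow$ (IV): a valuation ring is real closed precisely when both its fraction field and its residue field are real closed. The proof then has three steps.

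First, I would establish three basic ambient facts. Since $\lambda(\mathfrak{m}_V) = 0 \in B$ one has $\mathfrak{m}_V \subseteq A$. Non-triviality of $V$ supplies a nonzero $m \in \mathfrak{m}_V$, and for any $v \in V$ the element $vm$ lies in $\mathfrak{m}_V \subseteq A$, so $v = (vm)/m \in \text{qf}(A)$; hence $\text{qf}(A) = \text{qf}(V)$, which is automatically real closed. Moreover $\lambda$ restricts to a surjection $\bar\lambda : A \onto B$ with kernel $\mathfrak{m}_V$.

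Next I would show that $A$ is a valuation ring if and only if $B$ is a valuation subring of $\textbf{k}$ in the sense that every $x \in \textbf{k}^{\times}$ has $x \in B$ or $x^{-1} \in B$ (equivalently, $\text{qf}(B) = \textbf{k}$). For $a \in \text{qf}(V)^{\times}$, a case split does the job: if $a \notin V$ then $a^{-1} \in \mathfrak{m}_V \subseteq A$; if $a \in \mathfrak{m}_V$ then $a \in A$; if $a \in V^{\times}$, then membership of $a^{\pm 1}$ in $A$ is controlled by membership of $\lambda(a)^{\pm 1}$ in $B$, and any $x \in \textbf{k}^{\times}$ arises this way by lifting to $v \in V^{\times}$. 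Under this equivalence, $A^{\times} = \bar\lambda^{-1}(B^{\times})$, so $A$ is local with $\mathfrak{m}_A = \bar\lambda^{-1}(\mathfrak{m}_B)$; composing $\bar\lambda$ with the residue map $B \onto B/\mathfrak{m}_B$ produces an isomorphism $A/\mathfrak{m}_A \cong B/\mathfrak{m}_B$.

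To finish, I apply Theorem \ref{T.RCSVR.equiv_rcvr}(I) $\Leftrightarrow$ (IV) three times. Applied to $V$ it gives that both $\text{qf}(V) = \text{qf}(A)$ and $\textbf{k}$ are real closed fields, so once $B$ is a valuation subring of $\textbf{k}$ its fraction field $\text{qf}(B) = \textbf{k}$ is also real closed. The criterion then reduces ``$A$ is a real closed valuation ring'' to ``$A$ is a valuation ring with $A/\mathfrak{m}_A$ real closed'' and reduces ``$B$ is a real closed valuation ring'' to ``$B$ is a valuation subring of $\textbf{k}$ with $B/\mathfrak{m}_B$ real closed''. By the previous paragraph these two statements coincide via the isomorphism $A/\mathfrak{m}_A \cong B/\mathfrak{m}_B$, yielding the lemma.

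The main step is the valuation-ring characterization of the middle paragraph; the care needed there is to ensure that in the forward direction one really deduces $\text{qf}(B) = \textbf{k}$, not merely that $B$ is a valuation ring in some abstract sense. Once this is in hand, the real-closedness transfer is automatic via the residue-field isomorphism and the fact that $\text{qf}(A)$ and $\text{qf}(B)$ are both equal to the already real closed fields $\text{qf}(V)$ and $\textbf{k}$ respectively.
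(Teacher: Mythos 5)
Your proof takes a genuinely different route from the paper's. The paper identifies $A \cong V \times_{\textbf{k}} B$ and invokes closure of real closed rings under fibre products (Theorem \ref{T.RCSVR.properties_rcr.I}) to conclude that $A$ is a real closed ring, then settles the valuation-ring condition via convexity, i.e.\ the equivalence (I) $\Leftrightarrow$ (II) of Theorem \ref{T.RCSVR.equiv_rcvr}. You instead verify the valuation-ring structure of $A$ directly by a case split on the ambient valuation and then apply (I) $\Leftrightarrow$ (IV); this avoids the category-theoretic closure result and is more elementary and self-contained. Your middle paragraph (the equivalence ``$A$ valuation ring $\Leftrightarrow$ $B$ valuation subring of $\textbf{k}$'' together with $A/\mathfrak{m}_A \cong B/\mathfrak{m}_B$) is the right engine for this route.

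There is, however, a gap in your $(\Leftarrow)$ direction, and it mirrors a gap in the lemma as stated. In the final paragraph you claim that (IV) reduces ``$B$ is a real closed valuation ring'' to ``$B$ is a valuation subring of $\textbf{k}$ with $B/\mathfrak{m}_B$ real closed'', but these are not equivalent: condition (IV) applied to $B$ asks for $B$ to be a valuation ring of its \emph{own} quotient field with $\text{qf}(B)$ and $B/\mathfrak{m}_B$ real closed, which is an intrinsic property of $B$ and says nothing about whether $\text{qf}(B) = \textbf{k}$. Concretely, let $V := \textbf{k}[[\Q]]$ with $\textbf{k} := \R$ and let $B$ be the field of real algebraic numbers; then $B$ is a real closed field, hence a real closed valuation ring, yet $\text{qf}(B) = B \subsetneq \textbf{k}$. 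Picking $a \in V^{\times}$ with $\lambda(a) = \pi$, neither $a$ nor $a^{-1}$ lies in $A = \lambda^{-1}(B)$, though both lie in $\text{qf}(A) = \text{qf}(V)$, so $A$ is not a valuation ring. Thus ``$B$ real closed valuation ring'' does not imply ``$A$ real closed valuation ring'' without the extra hypothesis $\text{qf}(B) = \textbf{k}$ (equivalently, that $B$ is convex in $\textbf{k}$). The paper's terse proof by fibre products has the same issue in this direction, and the missing hypothesis does hold in the single place the lemma is used (Lemma \ref{L.RCSVR.2_square}, where $C$ and $D$ are valuation rings whose quotient fields equal the relevant residue fields). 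Your $(\Rightarrow)$ direction is sound as written — there you correctly \emph{deduce} $\text{qf}(B) = \textbf{k}$ from $A$ being a valuation ring rather than assume it — and the whole argument goes through once the lemma is read with the intended hypothesis made explicit.
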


\begin{proof}
	Straightforward from Theorem \ref{T.RCSVR.properties_rcr.I} and the equivalence (I) $ \Leftrightarrow $  (II) in Theorem \ref{T.RCSVR.equiv_rcvr}, using also the fact that $\lambda $ is an order-preserving map and that $ \lambda^{-1}(B) \cong V \times_{\emph{\textbf{k}}} B $.
\end{proof}

\subsection{Branching ideals in local real closed rings of finite rank}\label{SUBSEC.branch}

In this subsection the notion of a branching ideal in a ring is introduced (Definition \ref{D.RCSVR.branch_id}) and several equivalent characterizations are given for the maximal ideal of a local real closed ring of finite rank to be a branching ideal (Proposition \ref{P.RCSVR.equiv_branching_max_id}); branching ideals are used to define rings of type $ (n,1) $ and of type $ (n,2) $ for $ n \in \N^{\geq 2} $ (Definition \ref{D.RCSVR.type}), whose model theory is studied in Section \ref{SEC.mod_th}. Recall throughout this subsection that if $ A  $ is a local real closed ring, then $ \mathfrak{p} + \mathfrak{q} \in \text{Spec}(A) $ for all $ \mathfrak{p}, \mathfrak{q} \in \text{Spec}(A) $, see Theorem \ref{T.RCSVR.properties_rcr.II.iv.a}.

\begin{definition}\label{D.RCSVR.branch_id}
	Let $ A  $ be a  ring. A prime ideal $ \mathfrak{q}  $ of $ A $	 is a \textit{branching ideal} if there exist  distinct $ \mathfrak{q}_1, \mathfrak{q}_2 \in \text{Spec}(A) $ such that $ \mathfrak{q}_1, \mathfrak{q}_2 \subsetneq \mathfrak{q} $ and $ \mathfrak{q} = \mathfrak{q}_1 +\mathfrak{q}_2 $.
\end{definition}

\begin{remark}\label{R.RCSVR.branch_0}
Let $ A $ be a real closed ring.
\begin{enumerate}[\normalfont(i), ref=\ref{R.RCSVR.branch_0} (\roman*)]
		\item 	If $ \mathfrak{q}\in\text{Spec}(A) $ is a branching ideal, then $ \text{rk}(A, \mathfrak{q}) \geq 2$ by Theorem \ref{T.RCSVR.properties_rcr.II.iii}, but the converse doesn't hold: consider the maximal ideal of $A:= V\times_{V/\mathfrak{p}} V $, where $ V  $ is a non-trivial real closed valuation ring of Krull dimension at least $ 2 $ and $ \mathfrak{p}  $ is a non-zero non-maximal prime ideal of $ V $, noting that $ A  $ is a real closed ring by items (I) and (II) (i) in Theorem \ref{T.RCSVR.properties_rcr}.
		\item\label{R.RCSVR.branch_0.ii}  $ A $ has at least one branching ideal if and only if $ \text{rk}(A) \geq 2 $. One implication is clear; conversely, if $ \mathfrak{q}_1, \mathfrak{q}_2 \in \text{Spec}(A)$ witness that $  \mathfrak{q} \in \text{Spec}(A) $ is a branching ideal, then any  $ \mathfrak{p}_1, \mathfrak{p}_2 \in \text{Spec}^{\text{min}}(A) $ such that $ \mathfrak{p}_1 \subseteq \mathfrak{q}_1 $ and $  \mathfrak{p}_2 \subseteq \mathfrak{q}_2 $ are distinct by Theorem \ref{T.RCSVR.properties_rcr.II.iii}, therefore $ \text{rk}(A) \geq \text{rk}(A, \mathfrak{q}) \geq 2 $. 
	\end{enumerate}
\end{remark}

\begin{lemma}\label{lemilla}
	Let $ A  $ be a ring    and let $ \mathfrak{q}_1, \mathfrak{q}_2  \in \emph{Spec}(A)$ be incomparable prime ideals in $ (\emph{Spec}(A),\subseteq) $. If $ \mathfrak{p}_1 \in \mathfrak{q}_1^{\downarrow}$ and $ \mathfrak{p}_2\in \mathfrak{q}_2^{\downarrow} $ are such that $ \mathfrak{p}_1^{\uparrow} $ and $ \mathfrak{p}_2^{\uparrow} $ are chains in $ (\emph{Spec}(A), \subseteq) $ and $ \mathfrak{p}_1 + \mathfrak{p}_2 \in \emph{Spec}(A) $, then $ \mathfrak{p}_1+\mathfrak{p}_2 = \mathfrak{q}_1 +\mathfrak{q}_2 $.
\end{lemma}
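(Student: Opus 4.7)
The plan is to exploit the chain hypothesis on the up-sets $\mathfrak{p}_1^{\uparrow}$ and $\mathfrak{p}_2^{\uparrow}$ to force the prime ideal $\mathfrak{r} := \mathfrak{p}_1 + \mathfrak{p}_2$ to sit above both $\mathfrak{q}_1$ and $\mathfrak{q}_2$. Once this is established, the non-trivial inclusion $\mathfrak{q}_1 + \mathfrak{q}_2 \subseteq \mathfrak{p}_1 + \mathfrak{p}_2$ is immediate, and the reverse inclusion is free since $\mathfrak{p}_i \subseteq \mathfrak{q}_i$ for $i \in \{1,2\}$.

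First I would fix notation: set $\mathfrak{r} := \mathfrak{p}_1 + \mathfrak{p}_2$, which is in $\text{Spec}(A)$ by assumption, and observe the trivial containment $\mathfrak{r} \subseteq \mathfrak{q}_1 + \mathfrak{q}_2$. Since $\mathfrak{p}_1 \subseteq \mathfrak{r}$, the prime $\mathfrak{r}$ belongs to $\mathfrak{p}_1^{\uparrow}$; as $\mathfrak{q}_1 \in \mathfrak{p}_1^{\uparrow}$ and this up-set is a chain, $\mathfrak{r}$ and $\mathfrak{q}_1$ are $\subseteq$-comparable. The key step is to rule out the case $\mathfrak{r} \subseteq \mathfrak{q}_1$: indeed, such a containment would force $\mathfrak{p}_2 \subseteq \mathfrak{r} \subseteq \mathfrak{q}_1$, so both $\mathfrak{q}_1$ and $\mathfrak{q}_2$ would lie in $\mathfrak{p}_2^{\uparrow}$, which is a chain by hypothesis, contradicting the incomparability of $\mathfrak{q}_1$ and $\mathfrak{q}_2$. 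Hence $\mathfrak{q}_1 \subseteq \mathfrak{r}$, and a symmetric argument (swapping the roles of the two indices) gives $\mathfrak{q}_2 \subseteq \mathfrak{r}$.

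These two containments yield $\mathfrak{q}_1 + \mathfrak{q}_2 \subseteq \mathfrak{r} = \mathfrak{p}_1 + \mathfrak{p}_2$, and combining with the trivial reverse inclusion closes the argument. The only subtle point — and the step most worth double-checking — is the asymmetric use of the chain property: it has to be applied twice, once in $\mathfrak{p}_1^{\uparrow}$ (to compare $\mathfrak{r}$ and $\mathfrak{q}_1$) and once in $\mathfrak{p}_2^{\uparrow}$ (to derive the contradiction); no root-system hypothesis on the whole of $\text{Spec}(A)$ is needed, which is why the lemma is stated for arbitrary rings rather than for real closed rings, even though Theorem \ref{T.RCSVR.properties_rcr.II.iii} is the natural source of the chain hypothesis in the intended applications.
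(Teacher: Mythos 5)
Your proof is correct and follows essentially the same route as the paper's: both note the trivial inclusion $\mathfrak{p}_1+\mathfrak{p}_2 \subseteq \mathfrak{q}_1+\mathfrak{q}_2$, use the chain $\mathfrak{p}_1^{\uparrow}$ to make $\mathfrak{p}_1+\mathfrak{p}_2$ comparable with $\mathfrak{q}_1$, and rule out $\mathfrak{p}_1+\mathfrak{p}_2 \subseteq \mathfrak{q}_1$ by observing that it would place the incomparable primes $\mathfrak{q}_1,\mathfrak{q}_2$ inside the chain $\mathfrak{p}_2^{\uparrow}$. Your closing remark about the two distinct applications of the chain hypothesis (one in $\mathfrak{p}_1^{\uparrow}$, one in $\mathfrak{p}_2^{\uparrow}$) and the absence of a global root-system hypothesis accurately reflects why the lemma is stated for arbitrary rings.
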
 

\begin{proof}
	  The inclusion $ \mathfrak{p}_1 + \mathfrak{p}_2 \subseteq \mathfrak{q}_1 +\mathfrak{q}_2 $ is clear, and for the other inclusion it suffices to show that $ \mathfrak{q}_1  \subseteq \mathfrak{p}_1 +\mathfrak{p}_2$ and $ \mathfrak{q}_2 \subseteq \mathfrak{p}_1+\mathfrak{p}_2 $; in turn, it follows by the assumptions on $ \mathfrak{p}_1  $ and $ \mathfrak{p}_2 $ that it suffices to show that $ \mathfrak{p}_1+\mathfrak{p}_2 \not\subseteq \mathfrak{q}_1$ and $ \mathfrak{p}_1+\mathfrak{p}_2 \not\subseteq \mathfrak{q}_2 $. Assume for contradiction that $ \mathfrak{p}_1+\mathfrak{p}_2 \subseteq \mathfrak{q}_1$; then $ \mathfrak{q}_1$ and $\mathfrak{q}_2  $ are incomparable prime ideals in $ \mathfrak{p}_2^{\uparrow} $, contradicting that $ \mathfrak{p}_2^{\uparrow} $ is a chain, therefore $ \mathfrak{q}_1 \subseteq \mathfrak{p}_1+\mathfrak{p}_2 $. The proof of $ \mathfrak{q}_2 \subseteq \mathfrak{p}_1+\mathfrak{p}_2 $ is analogous.
\end{proof}

\begin{lemma}\label{L.RCSVR.finite_sum_prim_id_is_sum_of_two_prim_id}
	Let $ A $ be a local real closed ring and $ \mathfrak{p}_1, \dots, \mathfrak{p}_n \in \emph{Spec}(A) $  $ (n \in \N^{\geq 2}) $ be pairwise incomparable under subset inclusion. For all $ i \in [n] $  there exists  $  j \in [n]\setminus \{ i \}  $ such that $ \sum_{k=1}^n\mathfrak{p}_k = \mathfrak{p}_i + \mathfrak{p}_j $.	
\end{lemma}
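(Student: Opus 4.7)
The plan is to exploit the root-system structure of $\text{Spec}(A)$ (Theorem \ref{T.RCSVR.properties_rcr.II.iii}) together with the fact that sums of prime ideals stay prime when proper (Theorem \ref{T.RCSVR.properties_rcr.II.iv.a}). Fix $i \in [n]$. First I would observe that since $A$ is local with unique maximal ideal $\mathfrak{m}_A$, every $\mathfrak{p}_k \subseteq \mathfrak{m}_A$, so $\mathfrak{p}_i + \mathfrak{p}_k \subseteq \mathfrak{m}_A$ and in particular $1 \notin \mathfrak{p}_i + \mathfrak{p}_k$ for all $k \in [n] \setminus \{i\}$. By Theorem \ref{T.RCSVR.properties_rcr.II.iv.a}, each $\mathfrak{p}_i + \mathfrak{p}_k$ is therefore a prime ideal of $A$ lying in the principal up-set $\mathfrak{p}_i^{\uparrow}$.

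Next, since $(\text{Spec}(A), \subseteq)$ is a root system, $\mathfrak{p}_i^{\uparrow}$ is a chain, so the finite collection $\{\mathfrak{p}_i + \mathfrak{p}_k \mid k \in [n]\setminus\{i\}\} \subseteq \mathfrak{p}_i^{\uparrow}$ is totally ordered and hence has a maximum element, say $\mathfrak{p}_i + \mathfrak{p}_j$ for some $j \in [n]\setminus\{i\}$. By choice of $j$, for every $k \in [n]\setminus\{i\}$ one has $\mathfrak{p}_k \subseteq \mathfrak{p}_i + \mathfrak{p}_k \subseteq \mathfrak{p}_i + \mathfrak{p}_j$, and trivially $\mathfrak{p}_i \subseteq \mathfrak{p}_i + \mathfrak{p}_j$. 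This yields the inclusion $\sum_{k=1}^n \mathfrak{p}_k \subseteq \mathfrak{p}_i + \mathfrak{p}_j$; the reverse inclusion $\mathfrak{p}_i + \mathfrak{p}_j \subseteq \sum_{k=1}^n \mathfrak{p}_k$ is immediate, giving the required equality.

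There is no real obstacle here: the pairwise incomparability hypothesis is not actually used to extract $j$ (only the root-system property of $\text{Spec}(A)$ and the locality of $A$ are needed), and it merely ensures the problem is non-degenerate in the sense that the $\mathfrak{p}_k$ are genuinely distinct. The only conceptual point worth highlighting is the passage from pairwise sums being prime to the existence of a single dominating pairwise sum, which is precisely where the root-system hypothesis does the work.
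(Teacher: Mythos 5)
Your proof is correct, and it takes a genuinely different route from the paper's. The paper argues by induction on $n$: in the inductive step it applies the hypothesis to the $n$ primes $\{\mathfrak{p}_k : k \neq i\}$ to write $\sum_{k \neq i}\mathfrak{p}_k = \mathfrak{p}_{j_0} + \mathfrak{p}_j$, then compares $\mathfrak{p}_i + \mathfrak{p}_{j_0}$ and $\mathfrak{p}_i + \mathfrak{p}_j$ (both proper prime ideals above $\mathfrak{p}_i$, hence comparable) to absorb one into the other. You bypass the induction entirely: all $n-1$ pairwise sums $\mathfrak{p}_i + \mathfrak{p}_k$ ($k \neq i$) are proper by locality, hence prime by Theorem~\ref{T.RCSVR.properties_rcr.II.iv.a}, and all lie in the chain $\mathfrak{p}_i^{\uparrow}$ by Theorem~\ref{T.RCSVR.properties_rcr.II.iii}, so the finite family has a maximum $\mathfrak{p}_i + \mathfrak{p}_j$; then $\mathfrak{p}_k \subseteq \mathfrak{p}_i + \mathfrak{p}_k \subseteq \mathfrak{p}_i + \mathfrak{p}_j$ for every $k$ gives one inclusion and the other is trivial. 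Both arguments rest on exactly the same two facts about local real closed rings, but yours is shorter, avoids the bookkeeping of the induction, and makes it clearer that the dominating sum can be extracted directly. Your closing observation that incomparability is not needed to produce $j$ is also accurate; the hypothesis is there because that is the form in which the lemma is used downstream, not because the argument requires it.
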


\begin{proof}
	The proof is by induction on $ n \in \N^{\geq 2}$. The base case is clear, so assume that  the statement holds for some $ n \in \N^{> 2} $  and let $ \mathfrak{p}_1, \dots, \mathfrak{p}_{n+1} \in \text{Spec}(A) $ be pairwise incomparable under subset inclusion. Let $ i \in [n+1] $ be arbitrary and pick $ j_0 \in [n+1] \setminus \{ i \}  $.  By inductive hypothesis, there exists   $  j \in [n+1]\setminus \{ i, j_0 \}  $ such that  $  \sum_{k \in [n+1]\setminus \{ i \} }\mathfrak{p}_k = \mathfrak{p}_{j_0} + \mathfrak{p}_j  $, hence  $  \sum_{k=1}^{n+1}\mathfrak{p}_k = \mathfrak{p}_i + \mathfrak{p}_{j_0} +  \mathfrak{p}_{j} $; since $ A  $ is a local real closed ring,   either $ \mathfrak{p}_i + \mathfrak{p}_{j}\subseteq \mathfrak{p}_i + \mathfrak{p}_{j_0}  $ or  $ \mathfrak{p}_i + \mathfrak{p}_{j_0} \subseteq \mathfrak{p}_i + \mathfrak{p}_{j}  $ by items (iii) and (iv) (a) in Theorem \ref{T.RCSVR.properties_rcr}, hence either $  \sum_{k=1}^{n+1}\mathfrak{p}_k = \mathfrak{p}_i + \mathfrak{p}_{j_0} $ or $  \sum_{k=1}^{n+1}\mathfrak{p}_k = \mathfrak{p}_i +  \mathfrak{p}_{j} $, as required.
\end{proof}

\begin{proposition}\label{P.RCSVR.equiv_branching_max_id}
Let $ A $ be a local real closed ring of rank $ n \in \N^{\geq 2} $. The following are equivalent:	
\begin{enumerate}[\normalfont(i)]
\item $ \mathfrak{m}_A $ is a branching ideal.
\item For all $ \mathfrak{q}_1 \in \emph{Spec}(A)  $ there exists $ \mathfrak{q}_2 \in \emph{Spec}(A)\setminus \{ \mathfrak{q}_1 \}  $ such that  $ \mathfrak{q}_1, \mathfrak{q}_2 \subsetneq \mathfrak{m}_A $ and $ \mathfrak{m}_A=\mathfrak{q}_1 +\mathfrak{q}_2 $.
\item There exist distinct $ \mathfrak{p}_1, \mathfrak{p}_2 \in \emph{Spec}^{\emph{min}}(A) $   such that  $ \mathfrak{m}_A = \mathfrak{p}_1 +\mathfrak{p}_2$.
\item For all $ \mathfrak{p}_1 \in \emph{Spec}^{\text{\emph{min}}}(A) $ there exists $ \mathfrak{p}_2 \in  \emph{Spec}^{\text{\emph{min}}}(A)\setminus \{ \mathfrak{p}_1 \}  $ such that  $  \mathfrak{m}_A = \mathfrak{p}_1 + \mathfrak{p}_2 $.
\item Every non-unit of $ A $ is a sum of two zero divisors.
\item There exists a partition $ S_1 \ \dot{\cup} \ S_2 = \emph{Spec}^{\emph{min}}(A) $ such that $( \bigcap_{\mathfrak{p} \in S_1}\mathfrak{p}) + ( \bigcap_{\mathfrak{p} \in S_2}\mathfrak{p}) = \mathfrak{m}_A$.
\item There exist $ r, s \in [n] $ and local real closed rings  $ A_1 $ and $ A_2$ with isomorphic residue field $ K $ such that $ r+s=n $,  $ \emph{rk}(A_1)=r $, $ \emph{rk}(A_2)=s $, and $ A \cong A_1 \times_K A_2 $.
\end{enumerate}
\end{proposition}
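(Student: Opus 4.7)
The proof would rest on the root system structure of $(\text{Spec}(A), \subseteq)$ (Theorem \ref{T.RCSVR.properties_rcr.II.iii}) together with the distributivity of the lattice of radical ideals (Theorem \ref{T.RCSVR.properties_rcr.II.iv.b}). Since $A$ has finite rank, let $\mathfrak{r}_1, \dots, \mathfrak{r}_k$ be the primes maximal in $\text{Spec}(A) \setminus \{\mathfrak{m}_A\}$: these are pairwise incomparable, and every $\mathfrak{p} \in \text{Spec}(A)$ with $\mathfrak{p} \subsetneq \mathfrak{m}_A$ lies below exactly one $\mathfrak{r}_i$ (since $\mathfrak{p}^{\uparrow}$ is a chain). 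I would first show that (i) is equivalent to ``$k \geq 2$'': if $\mathfrak{m}_A = \mathfrak{q}_1 + \mathfrak{q}_2$ with $\mathfrak{q}_1, \mathfrak{q}_2 \subsetneq \mathfrak{m}_A$ distinct, then $\mathfrak{q}_1, \mathfrak{q}_2$ are incomparable (else the sum equals the larger) and must lie below distinct $\mathfrak{r}_i$'s (else $\mathfrak{q}_1 + \mathfrak{q}_2 \subseteq \mathfrak{r}_i \subsetneq \mathfrak{m}_A$); conversely, when $k \geq 2$, $\mathfrak{r}_1 + \mathfrak{r}_2$ is prime by Theorem \ref{T.RCSVR.properties_rcr.II.iv.a} and strictly larger than $\mathfrak{r}_1$, forcing $\mathfrak{r}_1 + \mathfrak{r}_2 = \mathfrak{m}_A$.

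The implications (iv) $\Rightarrow$ (iii) $\Rightarrow$ (i) and (ii) $\Rightarrow$ (i) are then immediate. For (i) $\Rightarrow$ (iv), given $\mathfrak{p}_1 \in \text{Spec}^{\text{min}}(A)$ with $\mathfrak{p}_1 \subseteq \mathfrak{r}_i$, I would pick any $\mathfrak{p}_2 \in \text{Spec}^{\text{min}}(A)$ with $\mathfrak{p}_2 \subseteq \mathfrak{r}_{i'}$ for some $i' \neq i$; applying Lemma \ref{lemilla} to the incomparable pair $\mathfrak{r}_i, \mathfrak{r}_{i'}$ (using that $\mathfrak{p}_j^{\uparrow}$ are chains and that $\mathfrak{p}_1 + \mathfrak{p}_2 \in \text{Spec}(A)$ by Theorem \ref{T.RCSVR.properties_rcr.II.iv.a}) yields $\mathfrak{p}_1 + \mathfrak{p}_2 = \mathfrak{r}_i + \mathfrak{r}_{i'} = \mathfrak{m}_A$, and $\mathfrak{p}_1 \neq \mathfrak{p}_2$ since they lie in different subtrees. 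The same scheme, with $\mathfrak{q}_2 := \mathfrak{r}_{i'}$, gives (i) $\Rightarrow$ (ii). For (iii) $\Leftrightarrow$ (v), I would use that $A$ being reduced forces the zero-divisors to equal $\bigcup_{\mathfrak{p} \in \text{Spec}^{\text{min}}(A)} \mathfrak{p}$ (using Lemma \ref{L.SV.ann_intersection_of_min_prime_id} and \cite[Proposition 1.2 (1)]{matlis.minimal}); (iii) $\Rightarrow$ (v) is then trivial, and (v) gives $\mathfrak{m}_A \subseteq \bigcup_{i,j} (\mathfrak{p}_i + \mathfrak{p}_j)$, a finite cover whose members are primes by Theorem \ref{T.RCSVR.properties_rcr.II.iv.a}, so prime avoidance produces indices $i, j$ with $\mathfrak{p}_i + \mathfrak{p}_j = \mathfrak{m}_A$; these must differ since $\text{rk}(A) \geq 2$.

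For the remaining equivalences, iterated distributivity of the radical-ideal lattice gives, for any partition $S_1 \, \dot{\cup} \, S_2 = \text{Spec}^{\text{min}}(A)$,
\[
\Big(\bigcap_{\mathfrak{p} \in S_1} \mathfrak{p}\Big) + \Big(\bigcap_{\mathfrak{p} \in S_2} \mathfrak{p}\Big) \;=\; \bigcap_{\mathfrak{p}_1 \in S_1,\, \mathfrak{p}_2 \in S_2} (\mathfrak{p}_1 + \mathfrak{p}_2).
\]
For (iv) $\Rightarrow$ (vi), take $S_1$ to be the minimal primes below $\mathfrak{r}_1$ and $S_2$ the rest: every cross-sum on the right equals $\mathfrak{m}_A$ by the subtree argument above. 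For (vi) $\Rightarrow$ (iii), if the left-hand side equals $\mathfrak{m}_A$, each factor $\mathfrak{p}_1 + \mathfrak{p}_2 \subseteq \mathfrak{m}_A$ on the right must itself equal $\mathfrak{m}_A$. For (vi) $\Rightarrow$ (vii), set $H_i := \bigcap_{\mathfrak{p} \in S_i} \mathfrak{p}$: these are radical with $H_1 \cap H_2 = (0)$ (as $A$ is reduced) and $H_1 + H_2 = \mathfrak{m}_A$, so Lemma \ref{L.SV.fibre} delivers $A \cong A/H_1 \times_{A/\mathfrak{m}_A} A/H_2$; each $A/H_i$ is a local real closed ring (Theorem \ref{T.RCSVR.properties_rcr.II.i}) whose minimal primes correspond bijectively to $S_i$, so the ranks sum to $n$. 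Conversely, (vii) produces via Corollary \ref{C.SV.local_goursat} ideals $H_1, H_2 \subseteq A$ with $H_1 \cap H_2 = (0)$ and $A/(H_1 + H_2) \cong K$ a field, hence $H_1 + H_2 = \mathfrak{m}_A$, and selecting any minimal primes $\mathfrak{p}_1 \supseteq H_1,\ \mathfrak{p}_2 \supseteq H_2$ yields (iii). The main obstacle, beyond the bookkeeping for the subtree decomposition, is the rank verification in (vii): one must check that the minimal primes of $A$ are exactly the pullbacks of those of $A/H_1$ and $A/H_2$, which follows from $H_1 H_2 \subseteq H_1 \cap H_2 = (0)$ (every prime contains $H_1$ or $H_2$) together with $A/(H_1 + H_2)$ being a field (which rules out the ``pullback'' being $\mathfrak{m}_A$).
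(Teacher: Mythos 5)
Your proposal has a genuine gap at the very first step: you take $\mathfrak{r}_1, \dots, \mathfrak{r}_k$ to be ``the primes maximal in $\text{Spec}(A) \setminus \{\mathfrak{m}_A\}$'' and deduce from finite rank that these exist, are finitely many, and that every prime strictly below $\mathfrak{m}_A$ lies under exactly one of them. Finite rank only bounds the number of \emph{minimal} primes; the chains $\mathfrak{p}_i^{\uparrow}$ can be infinite and need not have a largest element below $\mathfrak{m}_A$. The paper itself constructs the relevant counterexample in Subsection \ref{SUBSEC.diff}: take $\Gamma$ a divisible totally ordered abelian group with no smallest non-zero convex subgroup (e.g. $\Q^{\N}$ lexicographically ordered), so that $V := \textbf{\emph{k}}[[\Gamma]]$ has no largest non-maximal prime, and set $A := V \times_{\textbf{\emph{k}}} V$. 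This $A$ is a ring of type $(2,1)$, so $\mathfrak{m}_A$ \emph{is} a branching ideal, yet $\mathfrak{m}_A$ has no immediate predecessors in $\text{Spec}(A)$; your set $\{\mathfrak{r}_1, \dots, \mathfrak{r}_k\}$ is empty, $k = 0$, and the claimed equivalence ``(i) $\iff$ $k \geq 2$'' is false. This kills the implications (i) $\Rightarrow$ (ii), (i) $\Rightarrow$ (iv), and your (iv) $\Rightarrow$ (vi), all of which invoke the $\mathfrak{r}_i$'s. The fix is to argue as the paper does in Lemma \ref{lemilla}, which works directly with \emph{minimal} primes and never needs anything above them to be co-maximal below $\mathfrak{m}_A$: from $\mathfrak{m}_A = \mathfrak{q}_1 + \mathfrak{q}_2$ pick minimal $\mathfrak{p}_i \subseteq \mathfrak{q}_i$ and push the sum \emph{down} to $\mathfrak{p}_1 + \mathfrak{p}_2$, rather than trying to push $\mathfrak{q}_1, \mathfrak{q}_2$ \emph{up} to maximal predecessors. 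Likewise the paper's (iv) $\Rightarrow$ (vi) partitions $\text{Spec}^{\text{min}}(A)$ by whether $\mathfrak{p}_i + \mathfrak{p} = \mathfrak{m}_A$, avoiding any reference to intermediate primes.

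For what it is worth, two of your sub-arguments are correct and genuinely different from the paper's. Your (v) $\Rightarrow$ (iii) via prime avoidance on the finite cover $\mathfrak{m}_A \subseteq \bigcup_{i,j}(\mathfrak{p}_i + \mathfrak{p}_j)$ (using Theorem \ref{T.RCSVR.properties_rcr.II.iv.a} to make all members prime) is shorter than the paper's route through Lemma \ref{L.RCSVR.finite_sum_prim_id_is_sum_of_two_prim_id}, and your (vii) $\Rightarrow$ (iii) via Corollary \ref{C.SV.local_goursat}, pulling back minimal primes along the two projections, is cleaner than the paper's spectral-space description of the fibre product. The rank verification in the (vii) direction that you flag as the ``main obstacle'' is actually fine: $H_1 H_2 = 0$ forces each minimal prime to contain $H_1$ or $H_2$ (exactly one, else $\mathfrak{m}_A \subseteq \mathfrak{p}$), and neither set $T_i := \{\mathfrak{p} \mid H_i \subseteq \mathfrak{p}\}$ is empty since $H_1, H_2 \neq (0)$ by $r, s \leq n-1$.
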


\begin{proof} 
	Since $ A $ is local of rank $ n \in \N^{\geq 2} $, $ \text{Spec}^{\text{min}}(A) := \{ \mathfrak{p}_1, \dots, \mathfrak{p}_n  \}  $. 

	\underline{(i) $ \Leftrightarrow $  (ii).} One implication is clear, so suppose that (i) holds and assume for contradiction that there exists $\mathfrak{q} \in \text{Spec}(A)$ such that $ \mathfrak{q}+\mathfrak{q}' \subsetneq \mathfrak{m}_A  $ for all $ \mathfrak{q}'\in \text{Spec}(A) $ with $ \mathfrak{q}\neq \mathfrak{q}' $. By  (i), there exist distinct $ \mathfrak{q}_1, \mathfrak{q}_2\in\text{Spec}(A) $ such that $ \mathfrak{q}_1, \mathfrak{q}_2 \subsetneq \mathfrak{m}_A $ and $ \mathfrak{m}_A = \mathfrak{q}_1+ \mathfrak{q}_2 $; in particular, $ \mathfrak{q} \neq \mathfrak{q}_1 $ and $ \mathfrak{q} \neq \mathfrak{q}_2 $. Since $ A $ is a local real closed ring, either $ \mathfrak{q}+\mathfrak{q}_1\subseteq  \mathfrak{q}+\mathfrak{q}_2$ or $ \mathfrak{q}+\mathfrak{q}_2\subseteq  \mathfrak{q}+\mathfrak{q}_1$, and assuming without loss of generality the former, it follows that $ \mathfrak{m}_A = \mathfrak{q}_1+\mathfrak{q}_2 = \mathfrak{q}+ (\mathfrak{q}_1+\mathfrak{q}_2 )\subseteq \mathfrak{q}+\mathfrak{q}_2 \subsetneq \mathfrak{m}_A$, giving the required contradiction.

	\underline{(i) $ \Leftrightarrow $ (iii)} and \underline{(ii) $ \Leftrightarrow $ (iv).} Clear by Lemma \ref{lemilla} noting that if $ \mathfrak{q}, \mathfrak{q}' \in \text{Spec}(A) $ are distinct prime ideals such that $ \mathfrak{q}, \mathfrak{q}' \subsetneq \mathfrak{m}_A $ and $ \mathfrak{q}+\mathfrak{q}' = \mathfrak{m}_A $, then $ \mathfrak{q} $ and $ \mathfrak{q}' $ are incomparable under subset inclusion.

	\underline{(iii) $ \Ra $  (v).} Clear since $ \mathfrak{m}_A$ consists exactly of the non-units of $ A $ and since the set of zero divisors of  $ A $ is exactly $ \bigcup_{i =1}^n \mathfrak{p}_i $ as $ A $ is reduced.

	\underline{(v) $ \Ra $  (iii).} Since $ A $ is a reduced local ring, (v) is equivalent to the  statement that for all $ \epsilon \in \mathfrak{m}_A $ there exist $ i, j \in [n] $ such that $ \epsilon = b_i + b_j $ for some $ b_i \in \mathfrak{p}_i $ and $ b_j \in \mathfrak{p}_j  $, i.e., $ \mathfrak{m}_A \subseteq \bigcup_{i, j \in [n]}\mathfrak{p}_i +\mathfrak{p}_j $; by Lemma \ref{L.RCSVR.finite_sum_prim_id_is_sum_of_two_prim_id}, there exist $ i', j' \in [n] $ with $ i' \neq j' $ such that  $ \sum_{i=1}^n\mathfrak{p}_i = \mathfrak{p}_{i'}+\mathfrak{p}_{j'} $, therefore \[
	\bigcup_{i, j \in [n]}\mathfrak{p}_i +\mathfrak{p}_j  \subseteq  \sum_{i=1}^n\mathfrak{p}_i = \mathfrak{p}_{i'}+\mathfrak{p}_{j'} \subseteq \mathfrak{m}_A	\subseteq \bigcup_{i, j \in [n]}\mathfrak{p}_i +\mathfrak{p}_j
\] implies $ \mathfrak{m}_A =  \mathfrak{p}_{i'}+\mathfrak{p}_{j'} $, as required (note that $ \mathfrak{p}_{i'}, \mathfrak{p}_{j'}\subsetneq \mathfrak{m}_A $ since $ \text{rk}(A)\geq 2 $).

\underline{(iv) $ \Ra $  (vi).} Pick any $ \mathfrak{p}_i \in \text{Spec}^{\text{min}}(A)$ and define $ S_1:= \{\mathfrak{p} \in \text{Spec}^{\text{min}}(A) \mid \mathfrak{p}_i+\mathfrak{p} = \mathfrak{m}_A \}   $ and $ S_2:= \{\mathfrak{p} \in \text{Spec}^{\text{min}}(A) \mid \mathfrak{p}_i+\mathfrak{p} \neq  \mathfrak{m}_A \}   $. Note that $ \mathfrak{p}_i \in S_2 $, and also  $ S_1 \neq \emptyset $  by (iv), therefore $ S_1 \ \dot{\cup} \ S_2  = \text{Spec}^{\text{min}}(A)$ is a partition.

\noindent \textit{Claim.} If $ \mathfrak{p} \in S_1 $ and $ \mathfrak{q}\in S_2 $, then $ \mathfrak{p} + \mathfrak{q} = \mathfrak{m}_A $.

\noindent \textit{Proof of Claim.}  Since $ \mathfrak{q} \subseteq \mathfrak{p}+\mathfrak{q}, \mathfrak{p}_i + \mathfrak{q} $, either $ \mathfrak{p}+\mathfrak{q} \subseteq \mathfrak{p}_i + \mathfrak{q} $ or $ \mathfrak{p}_i +\mathfrak{q} \subseteq \mathfrak{p} + \mathfrak{q} $; in the former case it follows that $  \mathfrak{m}_A = \mathfrak{p}_i +(\mathfrak{p}+\mathfrak{q})\subseteq \mathfrak{p}_i+\mathfrak{q} $, a contradiction to $ \mathfrak{q} \in S_2 $, therefore  $ \mathfrak{p}_i +\mathfrak{q} \subseteq \mathfrak{p} + \mathfrak{q} $, hence $ \mathfrak{p}_i, \mathfrak{p} \subseteq \mathfrak{p}+\mathfrak{q} $, and thus $\mathfrak{m}_A =\mathfrak{p}_i + \mathfrak{p} \subseteq \mathfrak{p}+\mathfrak{q} $ from which the claim follows. \hfill $ \square_{\text{Claim}} $

\noindent Therefore 
\[
\left( \bigcap_{\mathfrak{p} \in S_1}\mathfrak{p}\right) + \left( \bigcap_{\mathfrak{p} \in S_2}\mathfrak{p}\right) \overset{(1)}{=} \bigcap_{\mathfrak{p} \in S_1,  \mathfrak{q}\in S_2}\mathfrak{p}+\mathfrak{q} \overset{(2)}= \mathfrak{m}_A
,\] 
where (1) follows from Theorem  \ref{T.RCSVR.properties_rcr.II.iv.b} and (2) follows from the claim above.

\underline{(vi) $ \Ra $  (vii).} Let $ S_1 \ \dot{\cup} \ S_2 = \text{Spec}^{\text{min}}(A) $ be a partition such that $( \bigcap_{\mathfrak{p} \in S_1}\mathfrak{p}) + ( \bigcap_{\mathfrak{p} \in S_2}\mathfrak{p}) = \mathfrak{m}_A$. Set $ r:= |S_1| $,  $ s:= |S_2|$,  $ A_1:= A/\bigcap_{\mathfrak{p} \in S_1}\mathfrak{p} $, and $ A_2:= A/\bigcap_{\mathfrak{p} \in S_2}\mathfrak{p} $; then $ A_1 $ and $ A_2 $ are local real closed rings of ranks $ r $ and $ s $ (respectively) with isomorphic residue field $ A/\mathfrak{m}_A=:K $ such that $ r+s=n $ and $ A \cong A_1 \times_K A_2 $, where the latter statement follows from Lemma \ref{L.SV.fibre}.

\underline{(vii) $ \Ra $ (i).} Clear by the description of the Zariski spectrum of the fibre product $ A_1\times_K A_2 $ as $ \text{Spec}(A_1\times_K A_2 ) \cong \text{Spec}(A_1)\amalg_{\text{Spec}(K)}\text{Spec}(A_2) $ and noting that neither $ A_1 $ nor $ A_2  $ are fields, see \cite[Section 12.5.7]{dickmann/schwartz/tressl.specbook} (for an algebraic proof, pick $ \mathfrak{r}_i \in \text{Spec}(A_i)\setminus \{ \mathfrak{m}_{A_i} \}  $ and set $ \mathfrak{q}_i:= \text{ker}(A_1\times_K A_2 \onto A_i/\mathfrak{r}_i) \in \text{Spec}(A_1 \times_K A_2) $; then $ \mathfrak{q}_1 = \mathfrak{r}_1 \times \mathfrak{m}_{A_2} $, $ \mathfrak{q}_2 = \mathfrak{m}_{A_1} \times \mathfrak{r}_{2} $, and $ \mathfrak{m}_A= \mathfrak{m}_{A_1}\times  \mathfrak{m}_{A_2}$ as subsets of $ A_1 \times_K  A_2 \subseteq A_1 \times A_2 $, hence $ \mathfrak{m}_A= \mathfrak{q}_1+\mathfrak{q}_2 $ is a branching ideal).
\end{proof}

\begin{remark}\label{R.RCSVR.branch_1}
	Let $ A $ be a real closed ring of finite rank and $ \mathfrak{q} \in \text{Spec}(A) $; clearly $ \mathfrak{q} $ is a branching ideal if and only if $ \mathfrak{m}_{A_{\mathfrak{q}}} = \mathfrak{q}A_{\mathfrak{q}} $ is a branching ideal of  $ A_{\mathfrak{q}} $, therefore Proposition \ref{P.RCSVR.equiv_branching_max_id} can be applied to give equivalent characterizations for an arbitrary prime ideal of $ A $ to be a branching ideal; in particular, $ \mathfrak{q} \in \text{Spec}(A) $ is a branching ideal if and only if there exist distinct $ \mathfrak{p}_1, \mathfrak{p}_2 \in \text{Spec}^{\text{min}}(A)$ such that $ \mathfrak{q}= \mathfrak{p}_1 +\mathfrak{p}_2 $ (this also follows immediately from Lemma \ref{lemilla}).
\end{remark}

\begin{remark}\label{R.RCSVR.branch_2}
	A local real closed ring  $ A $ of rank  $ n \in \N $ can have at most  $ n-1 $ branching ideals; in particular, the set of branching ideals of $ A $ is finite. If $ n=1 $, then  $ A $ is a domain (Lemma \ref{L.SV.ann_intersection_of_min_prime_id.ii.a}), therefore $ A $ has no branching ideals by   Remark \ref{R.RCSVR.branch_0.ii}; the statement for $ n \in \N^{\geq 2} $ follows by induction using Lemma \ref{L.SV.fibre} and Theorem \ref{T.RCSVR.properties_rcr.II.iii}.
\end{remark}

\begin{lemma}\label{L.RCSVR.embedd_loc_rcr_max_id_branch_implies_local}
	Let $ A $ and $ B $ be local real closed rings of rank $ n \in \N^{\geq 2} $  and $ f: A \linto B $ be an injective ring homomorphism. If $ \mathfrak{m}_A $ is a branching ideal, then $ f $ is a local map.
\end{lemma}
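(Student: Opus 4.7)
The plan is to unpack the conclusion: since $B$ is local and $f^{-1}(\mathfrak{m}_B)$ is a proper ideal of $A$ (it does not contain $1$), and $\mathfrak{m}_A$ is maximal in $A$, it suffices to establish the inclusion $\mathfrak{m}_A \subseteq f^{-1}(\mathfrak{m}_B)$, i.e.\ that $f(\mathfrak{m}_A)\subseteq \mathfrak{m}_B$. The hypothesis that $\mathfrak{m}_A$ is a branching ideal will be fed in via the equivalence (i) $\Leftrightarrow$ (v) of Proposition \ref{P.RCSVR.equiv_branching_max_id}, which says that every non-unit of $A$ is a sum of two zero divisors of $A$.

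The key intermediate observation is that $f$ carries zero divisors of $A$ into $\mathfrak{m}_B$. Indeed, suppose $z\in A$ is a zero divisor, say $zw = 0$ for some $w\in A\setminus\{0\}$. Then $f(z)\cdot f(w) = 0$ in $B$, and by injectivity of $f$ one has $f(w)\neq 0$; this means $f(z)$ is a zero divisor of $B$ (or else $f(z)=0$, in which case it is trivially a non-unit as $B$ is not the zero ring). In either case $f(z)$ cannot be a unit of $B$, since a unit times a nonzero element cannot vanish. Hence $f(z)\in \mathfrak{m}_B$.

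Putting these two ingredients together gives the result: for an arbitrary $a\in\mathfrak{m}_A$, apply condition (v) of Proposition \ref{P.RCSVR.equiv_branching_max_id} to write $a = z_1 + z_2$ with $z_1,z_2\in A$ zero divisors; then $f(a) = f(z_1) + f(z_2) \in \mathfrak{m}_B$ by the previous paragraph, yielding $f(\mathfrak{m}_A)\subseteq \mathfrak{m}_B$ as desired. There is no real obstacle here once the right characterisation of branching is used; the whole argument is essentially a single-paragraph application of Proposition \ref{P.RCSVR.equiv_branching_max_id}(v). Note that the hypothesis $\mathrm{rk}(B)=n$ is not actually needed for this direction — only that $B$ is local and non-trivial, which is built into the notion of a local real closed ring.
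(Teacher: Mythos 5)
Your proof is correct, but it takes a genuinely different route from the paper. The paper invokes characterisation (iii) of Proposition \ref{P.RCSVR.equiv_branching_max_id} (existence of two distinct minimal primes $\mathfrak{p}_1,\mathfrak{p}_2$ with $\mathfrak{m}_A=\mathfrak{p}_1+\mathfrak{p}_2$), then appeals to Corollary \ref{C.SV.min_prime_ann} to lift each $\mathfrak{p}_i$ to a minimal prime $\mathfrak{q}_i$ of $B$ with $f^{-1}(\mathfrak{q}_i)=\mathfrak{p}_i$, and concludes that $\mathfrak{m}_A = f^{-1}(\mathfrak{q}_1)+f^{-1}(\mathfrak{q}_2)\subseteq f^{-1}(\mathfrak{m}_B)$. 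You instead use characterisation (v) (every non-unit of $A$ is a sum of two zero divisors) and a direct elementary observation that an injective ring homomorphism carries zero divisors to non-units. Both are single-paragraph arguments, but yours is more self-contained (it bypasses Corollary \ref{C.SV.min_prime_ann} and the Lemma \ref{L.SV.ann_intersection_of_min_prime_id} machinery behind it) and, as you correctly note, it does not use the hypothesis $\mathrm{rk}(B)=n$ at all — only that $B$ is a non-trivial local ring. The paper's proof does genuinely use that hypothesis, since Corollary \ref{C.SV.min_prime_ann} requires $B$ to be a reduced local ring of rank $n$. So your proof yields a slightly more general statement for free.

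One small stylistic remark: the case split "$f(z)$ is a zero divisor of $B$ or else $f(z)=0$" is unnecessary, since the argument works uniformly — from $f(z)f(w)=0$ with $f(w)\neq 0$ it follows immediately that $f(z)$ is not a unit (a unit would force $f(w)=0$), and that is all you use. Also, only the implication (i) $\Ra$ (v), not the full equivalence, is needed.
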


\begin{proof}
	By assumption and by the implication (i) $ \Ra $  (ii) in Proposition \ref{P.RCSVR.equiv_branching_max_id}, there exist two distinct minimal prime ideals $ \mathfrak{p}_1 , \mathfrak{p}_2 \subseteq A$ such that $ \mathfrak{m}_A = \mathfrak{p}_1 +\mathfrak{p}_2 $; by Corollary \ref{C.SV.min_prime_ann}, there exist $ \mathfrak{q}_1, \mathfrak{q}_2 \in \text{Spec}^{\text{min}}(B)  $ such that  $ f^{-1}(\mathfrak{q}_1) = \mathfrak{p}_1$  and $ f^{-1}(\mathfrak{q}_2) = \mathfrak{p}_2$, therefore\[
	\mathfrak{m}_A =\mathfrak{p}_1 + \mathfrak{p}_2 = f^{-1}(\mathfrak{q}_1)+f^{-1}(\mathfrak{q}_2)\subseteq f^{-1}(\mathfrak{q}_1 + \mathfrak{q}_2)\subseteq  f^{-1}(\mathfrak{m}_B) 
,\] hence $ f^{-1}(\mathfrak{m}_B)  = \mathfrak{m}_A$ by maximality of $ \mathfrak{m}_A $.
\end{proof}

\subsection{\texorpdfstring{Rings of type $ (n,1) $ and of type $ (n,2) $}{Rings of type (n,1) and of type (n,2)}}

The starting point of this subsection is a structure theorem for local real closed SV-rings of finite rank (Theorem \ref{T.RCSVR.equiv_loc_real_closed_SV-ring_finite_rank}) which is directly deduced from the structure theorem for reduced local SV-rings of finite rank (Theorem \ref{T.SV.equiv_SV-ring}); first, a lemma:

\begin{lemma}\label{lem.real_closed_fibre_prod} 
Let $ A $ and $ B $  be real closed rings. Suppose that there exist ideals $ I \subsetneq A $ and $ J \subsetneq B $ such that $ A/I \cong B/J $. The fibre product  $ C:= A \times_{B/J}B $ is real closed if and only if $ I $ and $ J $ are radical ideals of $ A $ and $ B $, respectively; in particular, if either $ A $ or $ B $ are real closed valuation rings, then $ C $ is real closed if and only if both  $ I $ and $ J $ are prime ideals of $ A  $ and $ B $, respectively.
\end{lemma}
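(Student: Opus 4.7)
My plan rests on Theorem \ref{T.RCSVR.properties_rcr.I} (the category of real closed rings is complete, hence has fibre products) and Theorem \ref{T.RCSVR.properties_rcr.II.i} (for a real closed ring $R$ and ideal $K$, $R/K$ is real closed iff $K$ is radical), together with Lemma \ref{L.SV.fibre}, which provides a canonical ring isomorphism $C/(I \times J) \cong A/I$, where $I \times J := \{(a,b) \in C : a \in I,\ b \in J\}$ is an ideal of $C$. A preliminary direct computation gives $\sqrt{I \times J} = \{(a,b) \in C : a \in \sqrt{I}\}$, so the condition ``$I \times J$ is radical in $C$'' is equivalent to ``$I$ is radical in $A$'' (and, via the given isomorphism $A/I \cong B/J$, also to ``$J$ is radical in $B$'').

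For the \emph{if} direction, if $I$ and $J$ are radical, then $A/I \cong B/J$ is real closed by Theorem \ref{T.RCSVR.properties_rcr.II.i}, and the given isomorphism becomes one of real closed rings. The ring-theoretic fibre product $C$ is then a pullback over a real closed base in a diagram of real closed rings; by Theorem \ref{T.RCSVR.properties_rcr.I} this pullback exists in the category of real closed rings, and since the base is real closed, this pullback coincides with the ring-theoretic one. Hence $C$ is real closed.

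For the \emph{only if} direction, assume $C$ is real closed. By Theorem \ref{T.RCSVR.properties_rcr.II.i} applied to $C$ and the ideal $I \times J$, the quotient $A/I \cong C/(I \times J)$ is real closed if and only if $I \times J$ is radical in $C$; by the preliminary computation, it suffices to show $I \times J$ is radical in $C$. Arguing by contradiction, if $I$ is not radical, I choose $a \in A$ with $a^2 \in I$ but $a \notin I$ (possible after replacing a suitable element of $\sqrt{I} \setminus I$ by a power). The element $(a^2, 0)$ lies in $C$ since $a^2 \in I$, and I claim it fails to admit an absolute value in $C$, contradicting the $f$-ring axiom of Definition \ref{D.RCSVR.rcr} (ii). Indeed, in the real closed ring $C$, $|(a^2,0)|$ would have to be a square of $C$ whose square equals $(a^2,0)^2 = (a^4, 0)$; the only candidates are $\pm (a^2, 0)$, since every square root of $(a^4, 0)$ in $A \times B$ has first component of the form $a^2 \cdot e$ with $e^2 = 1$ in $A$, and for such an element to lie in $C$ its first component must lie in $I$. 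But for such a $(u, 0)$ to be itself a square in $C$ one would need $w \in I$ with $w^2 = u$, forcing $\pm a \in I$—contradicting $a \notin I$. So neither $\pm (a^2, 0)$ is non-negative in $C$, so the absolute value does not exist, giving the desired contradiction and forcing $I$ (and symmetrically $J$) to be radical.

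The \emph{in particular} clause follows because in a valuation ring the ideals form a chain: any radical ideal is the intersection of the primes containing it, and an intersection of a chain of primes in a valuation ring is itself prime, so radical ideals coincide with prime ideals. The main obstacle is the square-root analysis in the only-if direction; it is cleanest in the domain case—which covers the \emph{in particular} clause, where $A$ or $B$ is a real closed valuation ring and the square roots in question are exactly $\pm a^2$ and then $\pm a$—while the general case requires mild extra care for elements $e \in A$ with $e^2 = 1$, but the same strategy (showing that each candidate for $|(a^2, 0)|$ is ideal-theoretically incompatible with being a square in $C$) applies.
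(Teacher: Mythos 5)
Your \emph{if} direction and the preliminary identification $C/(I\times J)\cong A/I$ are fine and run parallel to what the paper does. The trouble is in your \emph{only if} direction.

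The paper's route is short and uses a structural fact you did not invoke: since $C/\ker(p_A)\cong A$ and $C/\ker(p_B)\cong B$ are real closed, $\ker(p_A)$ and $\ker(p_B)$ are radical ideals of $C$ by Theorem \ref{T.RCSVR.properties_rcr.II.i}; then by Theorem \ref{T.RCSVR.properties_rcr.II.iv} the \emph{sum} $\ker(p_A)+\ker(p_B)$ (which is exactly your $I\times J$) is again radical in the real closed ring $C$, and Remark \ref{R.SV.cofactor} gives $C/(\ker(p_A)+\ker(p_B))\cong A/I$, so $I$ (hence $J$) is radical by Theorem \ref{T.RCSVR.properties_rcr.II.i} again. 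This is exactly the point of Theorem \ref{T.RCSVR.properties_rcr.II.iv}: in a general reduced ring a sum of radical ideals need not be radical, so some real-closedness input is genuinely needed here.

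Your replacement for that step --- showing that $(a^2,0)$ fails to have an absolute value in $C$ --- does not go through. The decisive claim, ``for such a $(u,0)$ to be itself a square in $C$ one would need $w\in I$ with $w^2=u$, forcing $\pm a\in I$'', is false in general real closed rings, which have zero divisors. Concretely, take $A=B:=C([-1,1])$ (the ring of continuous functions), $I=J:=(|t|)$, $a:=t$, and $w:=|t|$. Then $I$ is not radical (since $|t|^{1/2}\in\sqrt{I}\setminus I$), $a^2=t^2\in I$ with $a\notin I$, and yet $w\in I$ with $w^2=a^2$ even though $\pm a\notin I$. Consequently $(a^2,0)=(|t|,0)^2$ \emph{is} a square in $C$, so $(a^2,0)\geq 0$ in $C$ and its absolute value exists and equals $(a^2,0)$ itself; your proposed contradiction never materialises, even though $C$ is in fact not real closed. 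The difficulty is not, as you suggest, about units $e$ with $e^2=1$; it is that a square root of $a^2$ in a non-domain real closed ring need not be $\pm a$, and there is no reason one of them cannot land in $I$. To make the argument work you would in effect have to reprove Theorem \ref{T.RCSVR.properties_rcr.II.iv}, which is exactly what the paper uses instead.

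The \emph{in particular} clause is handled correctly and matches the paper's observation that radical ideals of valuation rings are prime.
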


\begin{proof}
	If $ I $ and $ J $ are  radical ideals, then  $ C $ is real closed by items (I) and (II) (i) in Theorem \ref{T.RCSVR.properties_rcr}. Conversely, suppose that $ C $ is real closed and let $ p_A: C \lonto A $ and  $ p_B : C \lonto B $ be the canonical surjections. Since both $ A  $ and $ B $ are real closed, $ \text{ker}(p_A) $ and $ \text{ker}(p_B) $ are radical ideals of $ C $ (Theorem \ref{T.RCSVR.properties_rcr.II.i}), and since $ C $ is real closed,  $ \text{ker}(p_A)\oplus \text{ker}(p_B) $ is also a radical ideal (Theorem \ref{T.RCSVR.properties_rcr.II.iv}), therefore $ I  $ and $ J $ are radical ideals by Remark \ref{R.SV.cofactor}; the last statement of the lemma follows from $ B/J \cong A/I $ and from the fact that radical ideals in valuation rings are prime ideals.
\end{proof}

\begin{theorem}\label{T.RCSVR.equiv_loc_real_closed_SV-ring_finite_rank}
	Let $ n \in  \N^{\geq 2}$ and $ A $ be a ring which is not a field. The following are equivalent:
\begin{enumerate}[\normalfont(i)]
	\item $ A $ is a local real closed SV-ring of rank  at most $ n $.
	\item $ A $ is a local real closed ring, and there exist non-trivial real closed valuation rings $ A_1, \dots, A_{n}  $ and an injective ring homomorphism $ \epsilon: A \linto \prod_{i=1}^n A_i $ such that $ \epsilon(A)  $ is  a subdirect product of $ \{ A_1, \dots, A_{n} \}   $. 
\item There exist non-trivial real closed valuation rings $ A_1, \dots, A_n $ and an injective ring homomorphism $ \epsilon: A \linto   \prod_{i=1}^n A_i  $, such that  $ (\pi^n_1 \circ \epsilon)(A)= A_1 $ \emph{(}\emph{Notation \ref{N.SV.pimj}}\emph{)}, and for all $ j \in [n-1]  $ there exist prime ideals $\mathfrak{i}_j\subsetneq (\pi^n_j \circ \epsilon)(A) $ and  $  \mathfrak{j}_{j+1}  \subsetneq A_{j+1} $ such that $ (\pi^n_j \circ \epsilon)(A)/\mathfrak{i}_j\cong A_{j+1}/\mathfrak{j}_{j+1}  $  and $$(\pi^n_{j+1} \circ \epsilon)(A) = (\pi^n_j \circ \epsilon)(A)\times_{A_{j+1}/\mathfrak{j}_{j+1}} A_{j+1};$$in particular,  $$\hspace{1.3cm}  A \cong (\pi^n_n \circ \epsilon)(A) = (((A_1 \times_{A_2/\mathfrak{j}_2} A_2) \times_{A_3/\mathfrak{j}_3}A_3) \dots \times_{A_{n-1}/\mathfrak{j}_{n-1} } A_{n-1})\times_{A_{n}/\mathfrak{j}_{n}}  A_{n} .$$ 
\item There exist prime ideals $ \mathfrak{p}_1, \dots, \mathfrak{p}_{n} \subseteq A $ such that  $  \bigcap_{i =1}^n \mathfrak{p}_i  = (0) $, $ \mathfrak{q}_j := (\bigcap_{i=1}^{j-1}\mathfrak{p}_i)+ \mathfrak{p}_{j} \in \emph{Spec}(A)$ for all $ j \in \{2, \dots, n\}  $, $  A/\mathfrak{p}_i $ is a non-trivial real closed valuation ring for all $ i\in [n]  $,  and the canonical ring homomorphism $ A \lra \prod_{i=1}^n A/\mathfrak{p}_i  $ given by $  a \mapsto (a/\mathfrak{p}_1, \dots, a/\mathfrak{p}_n) $ restricts to an isomorphism $$ \hspace{1.2cm} A \cong \left(\left(\left(\frac{A}{\mathfrak{p}_1} \times_{\frac{A}{\mathfrak{q}_2}}\frac{A}{\mathfrak{p}_2}\right) \times_{\frac{A}{\mathfrak{q}_3}}\frac{A}{\mathfrak{p}_3}\right) \dots \times_{\frac{A}{\mathfrak{q}_{n-1}}}\frac{A}{\mathfrak{p}_{n-1}}\right)\times_{\frac{A}{\mathfrak{q}_{n}}}\frac{A}{\mathfrak{p}_{n}} .$$ 
	 \end{enumerate}
	 Moreover, if any of the items \emph{(i) - (iv)} holds and $ \emph{rk}(A) =n$, then the prime ideals $ \mathfrak{p}_1, \dots, \mathfrak{p}_n  \subseteq A$ in item \emph{(iv)} are pairwise incomparable under subset inclusion and $ \emph{Spec}^{\emph{min}}(A) = \{ \mathfrak{p}_i \mid i \in [n] \} $.
\end{theorem}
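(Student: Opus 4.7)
The plan is to piggyback on Theorem \ref{T.SV.equiv_red_loc_SV-ring_finite_rank}, whose proof via Goursat's lemma already carries most of the content, and to upgrade each statement by injecting real closed information at the relevant place.

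First I would dispatch (i) $ \Leftrightarrow $ (ii). Every real closed ring is reduced by Definition \ref{D.RCSVR.rcr}, so for (i) $ \Ra $ (ii) I would apply the implication (i) $ \Ra $ (ii) of Theorem \ref{T.SV.equiv_red_loc_SV-ring_finite_rank} verbatim; the valuation rings $ A_i $ produced there are quotients $ A/\mathfrak{p}_i $ (with some repetition allowed) at the minimal primes $ \mathfrak{p}_i $ of $ A $, and each such quotient is real closed by Theorem \ref{T.RCSVR.properties_rcr.II.i}, is an SV-ring by Proposition \ref{P.SV.constr_SV-ring}, and is a domain, hence a real closed valuation ring by Lemma \ref{L.SV.val_ring_equiv} combined with the equivalence (I) $ \Leftrightarrow $ (VII) of Theorem \ref{T.RCSVR.equiv_rcvr}. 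Conversely, (ii) $ \Ra $ (i) is immediate: the real closed hypothesis on $ A $ appears explicitly in (ii), reducedness is then free, and the SV/rank conclusion is exactly (ii) $ \Ra $ (i) of Theorem \ref{T.SV.equiv_red_loc_SV-ring_finite_rank}.

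Next I would deduce (ii) $ \Leftrightarrow $ (iii) $ \Leftrightarrow $ (iv) from Proposition \ref{P.SV.goursat_fin_many}, using Lemma \ref{lem.real_closed_fibre_prod} to upgrade \emph{ideal} to \emph{prime ideal}. The crucial observation is that $ (\pi^n_j \circ \epsilon)(A) \cong A/\bigcap_{i \leq j} \ker(\pi_i \circ \epsilon) $ is a quotient of $ A $ by an intersection of prime ideals, hence by a radical ideal, so it is real closed by Theorem \ref{T.RCSVR.properties_rcr.II.i}. Since then $ (\pi^n_{j+1} \circ \epsilon)(A) = (\pi^n_j \circ \epsilon)(A) \times_{A_{j+1}/J_{j+1}} A_{j+1} $ is also real closed and $ A_{j+1} $ is a real closed valuation ring, Lemma \ref{lem.real_closed_fibre_prod} forces $ J_{j+1} \subsetneq A_{j+1} $ (and symmetrically $ I_j $) to be prime. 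Setting $ \mathfrak{p}_i := \ker(\pi_i \circ \epsilon) $ and $ \mathfrak{q}_j := (\bigcap_{i < j}\mathfrak{p}_i) + \mathfrak{p}_j $ then translates (iii) into (iv); conversely, given (iv) one takes $ \epsilon $ to be the canonical diagonal map and recovers (iii).

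The main obstacle is the inductive bookkeeping that each intermediate ring $ (\pi^n_j \circ \epsilon)(A) $ is real closed, because that is precisely the hypothesis Lemma \ref{lem.real_closed_fibre_prod} needs in order to propagate primality through the tower of fibre products; once this is secured, the rest is formal. Finally, the ``moreover'' clause comes for free from the ``moreover'' clause of Theorem \ref{T.SV.equiv_red_loc_SV-ring_finite_rank}: for $ \epsilon $ and $ \mathfrak{p}_i = \ker(\pi_i \circ \epsilon) $ as above, rank $ = n $ is equivalent to pairwise incomparability of the $ \mathfrak{p}_i $ there (condition (c)), and Lemma \ref{L.SV.subdirect_min_prime_id.ii} then identifies $ \emph{Spec}^{\emph{min}}(A) $ with $ \{\mathfrak{p}_i \mid i \in [n]\} $.
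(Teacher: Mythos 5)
Your proposal is correct and follows essentially the same route as the paper's own proof: both deduce the theorem from the reduced SV-version (Theorem \ref{T.SV.equiv_red_loc_SV-ring_finite_rank}) and Proposition \ref{P.SV.goursat_fin_many}, with Lemma \ref{lem.real_closed_fibre_prod} doing the work of upgrading \emph{radical} to \emph{prime} once the intermediate images $(\pi^n_j \circ \epsilon)(A)$ are seen to be real closed quotients of $A$. The only cosmetic difference is that you re-derive that each $A/\mathfrak{p}_i$ is a real closed valuation ring via Lemma \ref{L.SV.val_ring_equiv} and Theorem \ref{T.RCSVR.equiv_rcvr}, whereas the paper already has the valuation-ring structure from the reduced theorem and only needs to add Theorem \ref{T.RCSVR.properties_rcr.II.i} for real closedness.
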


\begin{proof}
	Note first that each of the items (i) - (iv) in the theorem implies the corresponding item in Theorem \ref{T.SV.equiv_red_loc_SV-ring_finite_rank}.

	\underline{(i) $ \Ra $  (ii).} By the implication (i) $ \Ra $  (ii) in Theorem \ref{T.SV.equiv_red_loc_SV-ring_finite_rank}, it suffices to show that the non-trivial valuation rings $A_1, \dots, A_n  $ in item (ii) of Theorem \ref{T.SV.equiv_red_loc_SV-ring_finite_rank} are real closed. Since  $ \pi_i\circ \epsilon : A \lonto A_i $ is surjective and  $ A_i $ is a  domain for each $ i \in [n]  $, $ \text{ker}(\pi_i \circ \epsilon) $ is a prime ideal of $ A $, hence radical, therefore $ A_i \cong A/\text{ker}(\pi_i \circ \epsilon) $ is real closed since $ A  $ is real closed (Theorem \ref{T.RCSVR.properties_rcr.II.i}).

	\underline{(ii) $ \Ra $  (iii).} By the implication  (ii) $ \Ra $  (iii) in  Theorem \ref{T.SV.equiv_red_loc_SV-ring_finite_rank}, it suffices to show that the ideals $ I_j \subsetneq (\pi^n_{ j} \circ \epsilon)(A)$ and $ J_{j+1} \subsetneq A_{j+1}$ in item (iii) of Theorem \ref{T.SV.equiv_red_loc_SV-ring_finite_rank} are prime for all $ j \in [n-1]  $. Pick $ j \in [n-1]  $; since $ (\pi^n_{j+1}\circ \epsilon)(A) $ is a subring of $ \prod_{i =1}^{j+1}A_i$, $ (\pi^n_{j+1}\circ \epsilon)(A) $ is reduced, and thus $ \text{ker}(\pi^n_{ j+1}\circ \epsilon) $ is a radical ideal of $ A $, and since $ A $ is real closed, it follows that $  (\pi^n_{j+1} \circ \epsilon)(A)$  is also real closed for all $ j \in \{1, \dots, n-1\} $ by Theorem \ref{T.RCSVR.properties_rcr.II.i}. Since $ \pi^n_{1}(A) = \pi_1(A) =A_1 $ is real closed, it now follows from $ (\pi_{j+1}^n \circ \epsilon )(A) \cong (\pi_{j}^n \circ \epsilon )(A) \times_{A_{j+1}/J_{j+1}} A_{j+1}$ and  from Lemma \ref{lem.real_closed_fibre_prod} that  $ I_j \subsetneq \pi^n_{j}(A)$ and $ J_{j+1} \subsetneq A_{j+1}$ are prime ideals for all $ j \in \{1, \dots, n-1\}  $.

	\underline{(iii) $ \Ra $  (iv).} By the implication  (iii) $ \Ra $  (iv) in  Theorem \ref{T.SV.equiv_red_loc_SV-ring_finite_rank}, it suffices to show that the non-trivial valuation ring $ A/\mathfrak{p}_i $ is real closed for all $ i \in [n]  $  and that $ G_j \in \text{Spec}(A)  $ for all $ j \in \{ 2, \dots, n \}  $, where $ \mathfrak{p}_i $ and $ G_j  $ are the ideals of $A $ in item  (iv) of Theorem \ref{T.SV.equiv_red_loc_SV-ring_finite_rank}. By (iii) and Lemma \ref{lem.real_closed_fibre_prod}, $ A $ is real closed, therefore $ A/\mathfrak{p}_i $ is real closed by Theorem \ref{T.RCSVR.properties_rcr.II.i}. Since $ A $ is real closed and the intersection of radical ideals is radical, $ G_j = (\bigcap_{i = 1}^{j-1} \mathfrak{p}_i) + \mathfrak{p}_j $  is a radical ideal of $ A $ by Theorem \ref{T.RCSVR.properties_rcr.II.iv}, hence $ G_j/\mathfrak{p}_j$ is a radical ideal of the valuation ring $ A/\mathfrak{p}_j $, therefore it is  prime, from which $ G_j \in \text{Spec}(A) $ follows.

	\underline{(iv) $ \Ra $  (i).} By the implication of (iv) $ \Ra $ (i) in Theorem \ref{T.SV.equiv_red_loc_SV-ring_finite_rank}, it suffices to argue that $ A $ is real closed; but this follows from Lemma \ref{lem.real_closed_fibre_prod}  together with the fact that each $ A/\mathfrak{p}_i  $ is real closed and that each of the ideals $ \mathfrak{q}_j $ are prime.   

	Finally, if any of the items (i) - (iv) holds and $ \mathfrak{p}_1, \dots, \mathfrak{p}_n\subseteq A $ are the prime ideals in item (iv), then $ A $ is a local ring and a subdirect product of the domains  $ \{ A/\mathfrak{p}_1, \dots, A/\mathfrak{p}_n \}  $, therefore if $ \text{rk}(A) =n $, then $ \text{Spec}^{\text{min}}(A)= \{ \mathfrak{p}_i \mid i \in [n] \}  $ follows by Lemma \ref{L.SV.subdirect_min_prime_id.ii}.
\end{proof}

\begin{remark}\label{R.RCSVR.branch_fibre}
	Let $ A  $ be a local real closed SV-ring of rank $ n\in \N^{\geq 2} $ and write $ \text{Spec}^{\text{min}}(A) = \{ \mathfrak{p}_i \mid i \in [n] \} $; the ideals $ \mathfrak{q}_j := (\bigcap_{i=1}^{j-1}\mathfrak{p}_i)+ \mathfrak{p}_{j} \in \text{Spec}(A)$ ($ j \in [n-1] $) in item (iv) of Theorem \ref{T.RCSVR.equiv_loc_real_closed_SV-ring_finite_rank} are exactly the branching ideals of $ A $  (note that it could be the case that $ \mathfrak{q}_i = \mathfrak{q}_j  $ for $ i \neq  j  $). Pick $ j \in [n-1] $; then $ \mathfrak{q}_j = (\bigcap_{i=1}^{j-1}\mathfrak{p}_i)+ \mathfrak{p}_{j} = \bigcap_{i=1}^{j-1}(\mathfrak{p}_i + \mathfrak{p}_j)  = \mathfrak{p}_{k} +\mathfrak{p}_j$ for some $ k \in [j-1] $ by items (II) (iii), (II) (iv) (a), and (II) (iv) (b) in Theorem \ref{T.RCSVR.properties_rcr}, therefore $ \mathfrak{q}_j $ is a branching ideal. Conversely, suppose that $ \mathfrak{q} \in \text{Spec}(A) $  is a branching ideal; By Remark \ref{R.RCSVR.branch_1} there exist distinct $ i , j \in [n] $ such that $ i \neq j $ and $ \mathfrak{q}= \mathfrak{p}_i +\mathfrak{p}_j $.  Let $ i \in [n] $ be minimal with the property that there exists $ j_0 \in [n]  $ such that $ i < j_0 $ and  $ \mathfrak{p}_i + \mathfrak{p}_{j_0}= \mathfrak{q} $, and let $ j \in [n] $ be minimal with the property that $  i <j   $ and $ \mathfrak{p}_i + \mathfrak{p}_j = \mathfrak{q} $. Assume for contradiction that there exists $ k \in [j-1] $ such that $ \mathfrak{p}_k + \mathfrak{p}_j \subsetneq \mathfrak{p}_i +\mathfrak{p}_j $; note in particular that $ \mathfrak{p}_i +\mathfrak{p}_k \subseteq \mathfrak{p}_i+\mathfrak{p}_j $. Then $ \mathfrak{p}_i + \mathfrak{p}_k \not\subseteq \mathfrak{p}_j + \mathfrak{p}_k $ as otherwise $ \mathfrak{p}_i \subseteq  \mathfrak{p}_i + \mathfrak{p}_k \subseteq \mathfrak{p}_j + \mathfrak{p}_k $ implies $ \mathfrak{p}_i + \mathfrak{p}_j \subseteq  \mathfrak{p}_j + \mathfrak{p}_k  \subsetneq \mathfrak{p}_i + \mathfrak{p}_j $, therefore $ \mathfrak{p}_j  + \mathfrak{p}_k \subseteq \mathfrak{p}_i + \mathfrak{p}_k $, and thus $ \mathfrak{p}_i + \mathfrak{p}_j \subseteq \mathfrak{p}_i + \mathfrak{p}_k \subseteq \mathfrak{p}_i + \mathfrak{p}_j $, hence $ \mathfrak{p}_i + \mathfrak{p}_k = \mathfrak{p}_i + \mathfrak{p}_j = \mathfrak{q} $, a contradiction to minimality of $ j $. Therefore  $ \mathfrak{p}_i + \mathfrak{p}_j \subseteq \mathfrak{p}_k + \mathfrak{p}_j$ for all $ k \in [j-1] $, and thus $ \mathfrak{q}_j = (\bigcap_{k \in [j-1]}\mathfrak{p}_k)+ \mathfrak{p}_j = \bigcap_{k \in [j-1]}(\mathfrak{p}_k + \mathfrak{p}_j) = \mathfrak{p}_i +\mathfrak{p}_j=\mathfrak{q}$, as required.
\end{remark}

\begin{remark}\label{R.RCSVR.str_thm_loc_rcr_fin_rk}
	It is clear from the proof of Theorem \ref{T.RCSVR.equiv_loc_real_closed_SV-ring_finite_rank} that a similar structure theorem holds for local real closed rings of finite rank: just replace every occurrence of \enquote{non-trivial real closed valuation ring} by \enquote{non-trivial real closed domain}.
\end{remark}

\begin{remark}
	Any real closed ring has \textit{bounded inversion} (\cite[Example 2.11, Proposition 12.4. (b)]{schwartz/madden.safr}), therefore a real closed domain is a real closed valuation ring if and only if it is \textit{$ 1 $-convex}  by \cite[Lemma 2.2]{larson.SV_and_related_f-rings_and_spaces}; in particular, local real closed SV-rings of finite rank are \textit{finitely $ 1 $-convex $ f $-rings} in the sense of Larson, see \cite{larson.finitely_1-conv_f-rings}. 
\end{remark}

Let $ A $ be a local real closed SV-ring of rank $ n \in \N^{\geq 2} $; by the implication (i) $ \Ra $  (iv) in Theorem \ref{T.RCSVR.equiv_loc_real_closed_SV-ring_finite_rank}, $ A $ is isomorphic to an finite iterated fibre product of non-trivial real closed valuation rings $ A/\mathfrak{p}_i  $ along surjective homomorphisms  $ A/\mathfrak{p}_i \lonto A/\mathfrak{q}_j $ onto real closed valuation rings $ A/\mathfrak{q}_j  $, and the simplest such rings are the ones for which $ A/\mathfrak{q}_i = A/\mathfrak{q}_j  $ for all $ i , j \in [n] $, i.e., local real closed SV-rings of rank $ n $ with exactly one branching ideal (Remark \ref{R.RCSVR.branch_fibre}). The focus of the remaining part of this section is on rings of this latter class.

\begin{notation}\label{N.RCSVR.I-fold_fibr_prod}
Let $ \{ A_i \}_{i \in I} $ be a non-empty family of rings such that there exists a ring $ B $ and surjective ring homomorphisms $ f_i : A_i \lonto B $ for all  $ i \in I $. Let \[
	{\prod}_{B, i \in I}A_i:= \left\{(a_i)_{i\in I} \in \prod_{i\in I} A_i \mid f_i(a_i)=f_j(a_j)\text{ for all } i, j  \in I \right\} 
\] be the \textit{$ I $-fold fibre product of $ \{ A_i \}_{i \in I}  $ over $ B $ \emph{(}along $ \{ f_i \}_{i \in I}  $\emph{)}}. If  there exists a ring $ A $  such that $ A_i = A $ for all $ i \in I $, then set $ {\prod}_B^I A:= {\prod}_{B, i \in I}A_i $; if moreover $ I = [n] $ for some $ n \in \N $, then set $ {\prod}_B^n A:=  {\prod}_B^{[n]} A  $.
\end{notation}

\begin{lemma}\label{L.RCSVR.equiv_loc_rcsvr_rk_n_exactly_one_branching_ideal}
Let $ A $ be a local real closed SV-ring of rank $ n \in \N^{\geq 2} $ and write $ \emph{Spec}^{\emph{min}}(A) = \{ \mathfrak{p}_i \mid i \in [n] \}  $. The following are equivalent:
\begin{enumerate}[\normalfont(i), ref=\ref{L.RCSVR.equiv_loc_rcsvr_rk_n_exactly_one_branching_ideal} (\roman*)]
\item $ A $ has exactly one branching ideal.  
\item $ \mathfrak{p}_i+ 	\mathfrak{p}_j = \mathfrak{p}_k +\mathfrak{p}_{\ell} $ for all $ i, j , k , \ell \in [n] $ such that $ i \neq j $ and $ k \neq \ell $.  
\item\label{L.RCSVR.equiv_loc_rcsvr_rk_n_exactly_one_branching_ideal.iii}	There exist non-trivial real closed valuation rings $ A_1, \dots, A_n $	and surjective ring homomorphisms $ f_i: A_i \lonto C $ \emph{(}$ i \in [n] $\emph{)} onto a real closed valuation ring  $ C $ such that $ \emph{ker}(f_i)\neq (0) $ for all $ i \in [n]	$ and $ A \cong  {\prod}_{C, i \in [n]}A_i$.
\end{enumerate}
In particular, if any of the conditions \emph{(i) - (iii)} holds and $ \mathfrak{b}_A \in \emph{Spec}(A) $ is the unique branching ideal of $ A $, then
 \begin{enumerate}[\normalfont(a), , ref=\ref{L.RCSVR.equiv_loc_rcsvr_rk_n_exactly_one_branching_ideal} (\alph*)]
\item\label{L.RCSVR.equiv_loc_rcsvr_rk_n_exactly_one_branching_ideal.a} 	$ \mathfrak{b}_A = \mathfrak{p}_i + \mathfrak{p}_j$ for all $ i, j \in [n] $ such that $ i \neq j $, and
\item\label{L.RCSVR.equiv_loc_rcsvr_rk_n_exactly_one_branching_ideal.b} the canonical embedding $ A \linto \prod_{i \in [n]}A/\mathfrak{p}_i $ given by $ a \mapsto (a/\mathfrak{p}_i)_{i \in [n]} $ corestricts to an isomorphism $ A \cong  {\prod}_{A/\mathfrak{b}_A, i \in [n]}A/\mathfrak{p}_i $
\end{enumerate}
\end{lemma}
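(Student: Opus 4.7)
The plan is to derive (i) $\Leftrightarrow$ (ii) together with assertion (a) as an immediate consequence of Remark \ref{R.RCSVR.branch_1}, and then close the loop via (ii) $\Rightarrow$ (iii) $\Rightarrow$ (ii), with assertion (b) falling out of the (ii) $\Rightarrow$ (iii) step. By Remark \ref{R.RCSVR.branch_1}, the set of branching ideals of $A$ coincides with $\{\mathfrak{p}_i + \mathfrak{p}_j \mid i, j \in [n], i \neq j\}$, so $A$ has exactly one branching ideal $\mathfrak{b}_A$ if and only if all such sums agree, which is precisely (ii); in that case $\mathfrak{b}_A = \mathfrak{p}_i + \mathfrak{p}_j$ for all $i \neq j$, yielding (a).

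For (ii) $\Rightarrow$ (iii) and (b), I would set $A_i := A/\mathfrak{p}_i$, $C := A/\mathfrak{b}_A$, and let $f_i : A_i \lonto C$ be the canonical surjection. By Theorem \ref{T.RCSVR.equiv_loc_real_closed_SV-ring_finite_rank} (iv) each $A_i$ is a non-trivial real closed valuation ring, and $C$ is a real closed valuation ring as a prime quotient of $A_i$. Since $\text{rk}(A) = n$, the primes $\mathfrak{p}_i$ are pairwise incomparable, so $\mathfrak{p}_j \not\subseteq \mathfrak{p}_i$ and hence $\mathfrak{b}_A = \mathfrak{p}_i + \mathfrak{p}_j \supsetneq \mathfrak{p}_i$ for $j \neq i$, giving $\text{ker}(f_i) \neq (0)$. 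Now I would invoke Theorem \ref{T.RCSVR.equiv_loc_real_closed_SV-ring_finite_rank} (iv) to express $A$ as the iterated fibre product along the canonical surjections onto $A/\mathfrak{q}_j$ ($j \in \{2, \dots, n\}$), where $\mathfrak{q}_j = (\bigcap_{i<j} \mathfrak{p}_i) + \mathfrak{p}_j$. By Remark \ref{R.RCSVR.branch_fibre} each $\mathfrak{q}_j$ is a branching ideal of $A$, hence equal to $\mathfrak{b}_A$ under (i); a routine unwinding then identifies an iterated fibre product over a constant base with the corresponding $n$-fold fibre product, so the canonical embedding $a \mapsto (a/\mathfrak{p}_i)_{i \in [n]}$ corestricts to the stated isomorphism $A \cong {\prod}_{C, i \in [n]} A_i$.

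For (iii) $\Rightarrow$ (ii), I would identify $A$ with ${\prod}_{C, i \in [n]} A_i \subseteq \prod_{i \in [n]} A_i$ and set $\mathfrak{P}_i := \text{ker}(A \lra A_i)$ and $\mathfrak{B} := \text{ker}(A \lra C)$, where the latter map $a = (a_k) \mapsto f_i(a_i)$ is independent of $i$ by the fibre condition. Using any non-zero $x_j \in \text{ker}(f_j)$ one builds an element of $A$ supported only at the $j$-th coordinate, which lies in $\mathfrak{P}_i \setminus \mathfrak{P}_j$ for $i \neq j$, so the $\mathfrak{P}_i$'s are pairwise incomparable; by Lemma \ref{L.SV.subdirect_min_prime_id} (II) they exhaust $\text{Spec}^{\text{min}}(A)$, and upon relabelling $\mathfrak{P}_i = \mathfrak{p}_i$. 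The inclusion $\mathfrak{p}_i + \mathfrak{p}_j \subseteq \mathfrak{B}$ is immediate, and conversely, given $(a_k) \in \mathfrak{B}$ the decomposition $(a_k) = u + v$ with $u_i = 0$, $u_j = a_j$, $u_k = 0$ for $k \notin \{i, j\}$, and $v := (a_k) - u$ exhibits elements $u \in \mathfrak{p}_i$ and $v \in \mathfrak{p}_j$ (the fibre conditions for both being verified directly since $(a_k) \in \mathfrak{B}$); this yields $\mathfrak{B} = \mathfrak{p}_i + \mathfrak{p}_j$ for every $i \neq j$, which is (ii). The main obstacle I anticipate is the bookkeeping in the (ii) $\Rightarrow$ (iii) step: verifying that the iterated fibre product from the structure theorem coincides with the naive $n$-fold fibre product ${\prod}_{C, i \in [n]} A_i$ once all intermediate bases $A/\mathfrak{q}_j$ collapse to $C$, and tracking that the canonical embedding $a \mapsto (a/\mathfrak{p}_i)_i$ realizes the identification.
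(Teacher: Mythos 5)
Your proposal is correct, and the overall skeleton matches the paper's --- establishing (i) $\Leftrightarrow$ (ii) (and (a)) from Remark \ref{R.RCSVR.branch_1}, running (ii) $\Rightarrow$ (iii) through Theorem \ref{T.RCSVR.equiv_loc_real_closed_SV-ring_finite_rank}, and closing the loop back from (iii). The genuine divergence is in how you close the loop. The paper proves (iii) $\Rightarrow$ (i) by a topological argument: $\text{Spec}(C)$ is a closed subspace of $\text{Spec}(A)$ with unique generic point $\mathfrak{q} = \text{ker}(A \onto C)$, and the description of the Zariski spectrum of a fibre product from \cite[Section 12.5.7]{dickmann/schwartz/tressl.specbook} then identifies $\mathfrak{q}$ as the sole branching ideal. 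You instead prove (iii) $\Rightarrow$ (ii) by a direct algebraic computation inside $\prod_{C,i\in[n]}A_i$: set $\mathfrak{P}_i := \text{ker}(A \to A_i)$ and $\mathfrak{B} := \text{ker}(A \to C)$, use $\text{ker}(f_j) \neq (0)$ to separate the $\mathfrak{P}_i$'s and identify them with $\text{Spec}^{\text{min}}(A)$ via Lemma \ref{L.SV.subdirect_min_prime_id}, and then, for $(a_k) \in \mathfrak{B}$ and any $i \neq j$, write $(a_k) = u + v$ with $u$ supported only in the $j$-th coordinate (which lies in $A$ precisely because $f_j(a_j) = 0$), so that $u \in \mathfrak{P}_i$, $v \in \mathfrak{P}_j$, and hence $\mathfrak{B} = \mathfrak{P}_i + \mathfrak{P}_j$. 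This is a valid and self-contained argument that avoids the external reference; it buys a more elementary, hands-on verification at the cost of a slightly longer computation. A minor remark on your (ii) $\Rightarrow$ (iii) step: the paper shows the $\mathfrak{q}_j$'s coincide directly from (ii) via the lattice structure of radical ideals (Theorem \ref{T.RCSVR.properties_rcr.II.iv.b}), whereas you observe via Remark \ref{R.RCSVR.branch_fibre} that each $\mathfrak{q}_j$ is a branching ideal and then invoke (i); both are fine, though your route technically leans on the already-proved equivalence (i) $\Leftrightarrow$ (ii), which your stated proof order accommodates. The ``routine unwinding'' of the iterated fibre product into the $n$-fold fibre product once the intermediate bases collapse to $C$ is left implicit in both your proof and the paper's, so that is not a gap relative to the paper's level of detail.
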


\begin{proof} 
	\underline {(i) $ \Ra $  (ii).} Clear by Remark \ref{R.RCSVR.branch_1}.

	\underline{(ii) $ \Ra $  (iii).}  Since $ A $ is local of rank $ n  $,	$ \text{Spec}^{\text{min}}(A):= \{ \mathfrak{p}_1, \dots, \mathfrak{p}_n \}  $. Define  $ A_i := A/\mathfrak{p}_i $ and $ \mathfrak{q}_j := (\bigcap_{i=1}^{j-1}\mathfrak{p}_i)+ \mathfrak{p}_{j} $ for all $ j \in \{2, \dots, n\}  $. If $ j\in \{ 2, \dots, n \}  $, then  $ \mathfrak{q}_j = \bigcap_{i=1}^{j-1}(\mathfrak{p}_i+ \mathfrak{p}_{j} )$ by Theorem \ref{T.RCSVR.properties_rcr.II.iv.b}, and since $ \mathfrak{p}_i+ \mathfrak{p}_{j}  \in \text{Spec}(A)$ for all $ i \in [j-1] $ by Theorem \ref{T.RCSVR.properties_rcr.II.iv.a}, there exists $ j' \in [j-1] $  such that $ \mathfrak{q}_j = \bigcap_{i=1}^{j-1}(\mathfrak{p}_i+ \mathfrak{p}_{j} )=\mathfrak{p}_{j'}+\mathfrak{p}_j  $ by Theorem \ref{T.RCSVR.properties_rcr.II.iii}, therefore it follows by (ii) that $ \mathfrak{q}_j =\mathfrak{q}_i $ for all $ i, j \in \{ 2, \dots, n \}  $. Define $ \mathfrak{q}:= \mathfrak{q}_j  $ for some (equivalently, all) $ j \in \{ 2, \dots, n \}   $; then $ A \cong {\prod}_{A/\mathfrak{q}, i \in [n]}A/\mathfrak{p}_i $ by the implication (i) $ \Ra $  (iv) and the moreover part in Theorem \ref{T.RCSVR.equiv_loc_real_closed_SV-ring_finite_rank}, from which (iii) follows.

	\underline{(iii) $ \Ra $  (i).} $ \text{Spec}(C) $ is a closed subspace of $ \text{Spec}(A) $ with unique generic point $\mathfrak{q} = \text{ker}(A \onto C)$, therefore follows from the description in \cite[Section 12.5.7]{dickmann/schwartz/tressl.specbook} of $ \text{Spec}(A) $ that the unique branching
ideal of $ A $ is $ \mathfrak{q}$.  

The last statement in the lemma follows from the proof of the equivalences (I) - (iii).
\end{proof}

\begin{definition}\label{D.RCSVR.type}
Let $ A $ be a ring and $ n \in \N^{\geq 2} $.
\begin{enumerate}[\normalfont(i)]
	\item $ A $ is of \textit{type $ (n,1) $}  if $ A $ is a local real closed SV-ring of rank $ n $ with  exactly one branching ideal $ \mathfrak{b}_A $, which moreover is maximal.
	\item $ A $ is of \textit{type $ (n,2) $} if $ A $ is a  local real closed SV-ring of rank $ n $ with exactly one branching ideal $ \mathfrak{b}_A $, which moreover is not maximal.
\end{enumerate}
\end{definition}

\begin{example}\label{ex.germs}
	Let $ R $ be a real closed field, $ X \subseteq R^m $ be a $ 1  $-dimensional semi-algebraic subset, and $ A$ be the ring of continuous semi-algebraic functions $ X \lra R $. Pick $ x \in X $ and suppose that there are  $ n \in \N^{\geq 2} $ half-branches of $ X $ at $ x $  (\cite[Definition 9.5.2]{bcr}); then the ring of germs of functions $ f \in A$ at  $ x $ is a ring of type $ (n,1) $. This has been noted in \cite[Example 2.6 (c)]{schwartz.SV}, but another proof of this fact will be given here; for notational simplicity consider the case $m =1 $ and  $ X= R $, so that each  $ r \in R $ has exactly two  half-branches, denoted by $ r^{-} $ and $ r^{+} $. First note the ring of germs of functions $ f \in A $ at $ r \in R  $ is exactly the localization $ A_{\mathfrak{m}_r} $, where $ \mathfrak{m}_r $ is the maximal ideal of all functions $ g \in A $ such that $ g(r)= 0 $. The kernel $ I_r $ of the localization map $ A \lonto A_{\mathfrak{m}_r} $ consists of the functions that vanish on an open neighbourhood of $ r $; in particular, $ f \in A $ belongs to $ I_r $ if and only if  $ f $ vanishes on the two half-branches $ r^- $ and $ r^+ $ of  $ r $, i.e., $  I_r = \mathfrak{p}_{r^-} \cap  \mathfrak{p}_{r^+}$, where \[
		\mathfrak{p}_{r^-}:= \{ f \in A \mid \exists \epsilon \in R^{>0} \text{ such that } f_{\upharpoonright (r-\epsilon, r]}= 0 \} \text{ and } \mathfrak{p}_{r^+}:= \{ f \in A \mid \exists \epsilon \in R^{>0} \text{ such that } f_{\upharpoonright [r, r+\epsilon)}= 0 \}.
	\] Note also that if $ f \in \mathfrak{m}_r$, then the functions $ f_1, f_2 : R \lra R $ defined by \[
	f_1(x):= \begin{cases}
		0 &\text{ if } x\leq r\\
		f &\text{ if } r\leq x
		\end{cases} \ \ \text{ and } \ \ f_2(x):= \begin{cases}
		f &\text{ if } x\leq r\\
		0 &\text{ if } r\leq x
		\end{cases}
	\] are continuous and semi-algebraic, $ f_1 \in \mathfrak{p}_{r^-} $, $ f_2 \in \mathfrak{p}_{r^+} $, and $ f=f_1 + f_2 $, therefore  $ \mathfrak{m}_r = \mathfrak{p}_{r^-} + \mathfrak{p}_{r^+}$, and \[
	A_{\mathfrak{m}_r} \cong A/ (\mathfrak{p}_{r^-} \cap  \mathfrak{p}_{r^+}) \cong A/\mathfrak{p}_{r^-} \times_{A/\mathfrak{m}_r} A/\mathfrak{p}_{r^+},
\] where the second isomorphism follows from Lemma \ref{L.SV.fibre}; conclude that  $ A_{\mathfrak{m}_r} $ is a ring of type $ (2, 1) $ by  Example \ref{E.RCSVR.rcvr_germs}.
\end{example}

\subsubsection{\texorpdfstring{Embeddings of rings of type $ (n,1) $ and of type $ (n,2) $}{Embeddings of rings of type (n,1) and of type (n,2)}} This section concludes with embedding statements about rings of type $ (n,1) $ and of type $ (n,2) $ which have a key impact on the model theory of these rings; in particular, Lemmas \ref{L.RCSVR.1_square} and  \ref{L.RCSVR.2_square} (which can be thought of as a higher-rank version of Proposition \ref{P.RCSVR.rcvr_embedd_nice}) are essential for the  model completeness proofs in Subsection \ref{SUBSEC.mod_compl}. Start with the following and almost trivial:

\begin{lemma}\label{L.RCSVR.canonical_embedd}
	Let $ A  $ be a local real closed SV-ring of rank $ n \in \N^{\geq 2} $. There exists a ring $ A' $ of type $ (n,1) $ and a local embedding $ A \linto A' $.
\end{lemma}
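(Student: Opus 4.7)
The plan is to take the canonical embedding from the structure theorem (Theorem \ref{T.RCSVR.equiv_loc_real_closed_SV-ring_finite_rank}) and simply ``collapse'' the iterated fibre-product description of $A$ along the residue field. More concretely, write $\text{Spec}^{\text{min}}(A) = \{\mathfrak{p}_1, \dots, \mathfrak{p}_n\}$ and set $\textbf{\emph{k}} := A/\mathfrak{m}_A$. By the equivalence (i) $\Leftrightarrow$ (iv) in Theorem \ref{T.RCSVR.equiv_loc_real_closed_SV-ring_finite_rank}, each $A/\mathfrak{p}_i$ is a non-trivial real closed valuation ring, and since $A$ is local with maximal ideal $\mathfrak{m}_A$, each quotient $A/\mathfrak{p}_i$ has residue field $\textbf{\emph{k}}$; let $\lambda_i : A/\mathfrak{p}_i \lonto \textbf{\emph{k}}$ denote the corresponding surjection, and note $\textbf{\emph{k}}$ is a real closed field by Theorem \ref{T.RCSVR.equiv_rcvr}. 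Define
\[
A' := {\prod}_{\textbf{\emph{k}}, i \in [n]} A/\mathfrak{p}_i
\]
(Notation \ref{N.RCSVR.I-fold_fibr_prod}, using the maps $\lambda_i$).

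The next step is to verify that $A'$ is a ring of type $(n,1)$. It is an iterated fibre product of non-trivial real closed valuation rings along surjections onto a common real closed field, so it is real closed (Theorem \ref{T.RCSVR.properties_rcr.I}), an SV-ring (Proposition \ref{P.SV.constr_SV-ring.iv}, applied iteratively), and local with maximal ideal $\mathfrak{m}_{A'} = \ker(A' \lonto \textbf{\emph{k}})$. That $A'$ has rank exactly $n$ follows from Lemma \ref{L.SV.subdirect_min_prime_id.ii} once one checks that the projections $p_i: A' \lonto A/\mathfrak{p}_i$ have pairwise incomparable kernels; this is easy, since for $i \neq j$ one can exhibit a tuple in $A'$ supported only on the $j$-th coordinate (take any non-zero element of $\mathfrak{m}_{A/\mathfrak{p}_j}$ in the $j$-th slot and zero elsewhere—this lies in $A'$ because all coordinates map to $0$ in $\textbf{\emph{k}}$). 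Finally, the implication (iii) $\Ra$ (i) of Lemma \ref{L.RCSVR.equiv_loc_rcsvr_rk_n_exactly_one_branching_ideal} gives that $A'$ has exactly one branching ideal, and by Lemma \ref{L.RCSVR.equiv_loc_rcsvr_rk_n_exactly_one_branching_ideal.a} this ideal is $\ker(p_i) + \ker(p_j) = \mathfrak{m}_{A'}$, so $A'$ is of type $(n,1)$.

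It remains to produce the local embedding. Consider the canonical map $\phi : A \lra \prod_{i=1}^n A/\mathfrak{p}_i$ given by $a \mapsto (a/\mathfrak{p}_1, \dots, a/\mathfrak{p}_n)$. Its image lies in $A'$ since $\lambda_i(a/\mathfrak{p}_i) = a/\mathfrak{m}_A$ is independent of $i$; it is injective because $A$ is reduced, so $\bigcap_i \mathfrak{p}_i = (0)$; and it is local because $\phi(a) \in \mathfrak{m}_{A'}$ iff $a/\mathfrak{m}_A = 0$ iff $a \in \mathfrak{m}_A$. This gives the required local embedding $A \linto A'$.

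There is no real obstacle here: the hard work is packaged in the structure theorem and in Lemma \ref{L.RCSVR.equiv_loc_rcsvr_rk_n_exactly_one_branching_ideal}. The only point that requires a moment's care is checking that the target $A'$ has rank exactly $n$ (and not less), which is where incomparability of the kernels of the projections enters via Lemma \ref{L.SV.subdirect_min_prime_id.ii}.
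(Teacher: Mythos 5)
Your proof is correct and follows exactly the same route as the paper: take the canonical injection $A \hookrightarrow \prod_i A/\mathfrak{p}_i$, observe each factor is a non-trivial real closed valuation ring with residue field $A/\mathfrak{m}_A$, and corestrict to $A' := \prod_{A/\mathfrak{m}_A, i \in [n]} A/\mathfrak{p}_i$. You simply spell out in more detail the verification (rank exactly $n$, application of Lemma \ref{L.RCSVR.equiv_loc_rcsvr_rk_n_exactly_one_branching_ideal}) that the paper leaves implicit, and that verification is sound.
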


\begin{proof}
	Set $ \text{Spec}^{\text{min}}(A) = \{ \mathfrak{p}_i \mid i \in [n]\}   $. The canonical map $ A \linto \prod_{i \in [n]}A/\mathfrak{p}_{i} $ given by $ a \mapsto (a/\mathfrak{p}_1, \dots, a/\mathfrak{p}_n) $ is an embedding, and  each $A/\mathfrak{p}_i $ is a non-trivial real closed valuation ring with residue field $ A/\mathfrak{m}_A $, from which follows that $ A':=\prod_{A/\mathfrak{m}_A, i \in [n]}A/\mathfrak{p}_i $ is a ring of type $ (n,1) $ and the embedding $ A \linto \prod_{i \in [n]}A_i $ corestricts to a local embedding $ A \linto A' $.
\end{proof}

In fact, Lemma \ref{L.RCSVR.canonical_embedd} still holds even if $ A $ is not an SV-ring; first, a lemma:

\begin{lemma}\label{L.conv_hull_rcd}
	Let $ A $ be a real closed domain.
	\begin{enumerate}[\normalfont(i), ref=\ref{L.conv_hull_rcd} (\roman*)]
\item 		$A= \emph{qf}(A) $ if and  only if $ A  $ is cofinal in $ \emph{qf}(A) $.
\item\label{L.conv_hull_rcd.ii} Suppose that $ A $ is non-trivial, i.e., $ A \neq \emph{qf}(A) $. The convex hull $ V_A$ of $ A  $ in $ \emph{qf}(A) $ is a non-trivial real closed valuation ring such that $ A \cap \mathfrak{m}_{V_A}= \mathfrak{m}_A $.
\end{enumerate}
\end{lemma}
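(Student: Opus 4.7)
The plan for \textnormal{(i)} is as follows. The forward direction is immediate since $\text{qf}(A) = A$ is trivially cofinal in itself. For the converse, I will show that every positive element of $A$ is a unit, forcing $A$ to equal $\text{qf}(A)$. Given $0 < b \in A$, cofinality in $\text{qf}(A)$ supplies some $a \in A$ with $1/b \leq a$, so $1 \leq ab$ in $A$ (the order on $A$ is inherited from $\text{qf}(A)$). Applying axiom \ref{D.RCSVR.rcr.iii} of Definition \ref{D.RCSVR.rcr} to the pair $0 \leq 1 \leq ab$ yields $c \in A$ with $(ab)c = 1^2 = 1$, so that $b(ac) = 1$ exhibits $b$ as a unit of $A$.

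For \textnormal{(ii)}, I first verify that $V_A = \{x \in \text{qf}(A) \mid |x| \leq a \text{ for some } a \in A\}$ is a valuation ring of $K := \text{qf}(A)$ with $\text{qf}(V_A) = K$: closure under the ring operations and convexity in $K$ are immediate, and for any $x \in K^{\times}$ either $|x| \leq 1$ (so $x \in V_A$) or $|x^{-1}| < 1$ (so $x^{-1} \in V_A$). Since $K$ is a real closed field by item (iv) of Definition \ref{D.RCSVR.rcr} applied to $\mathfrak{p} = (0)$, and $V_A$ is convex in $K$, the implication (II) $\Rightarrow$ (I) of Theorem \ref{T.RCSVR.equiv_rcvr} upgrades $V_A$ to a real closed valuation ring. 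Non-triviality of $V_A$ then follows from part (i): $V_A = K$ would force $A$ to be cofinal in $K$ and hence $A = K$, contradicting the hypothesis.

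Finally, for the equality $A \cap \mathfrak{m}_{V_A} = \mathfrak{m}_A$, the inclusion $\subseteq$ is formal: any $a \in A^{\times}$ has $a^{-1} \in A \subseteq V_A$, so $a \in V_A^{\times}$; contrapositively, a non-unit of $V_A$ lying in $A$ is a non-unit of $A$. The main step, which I anticipate as the principal technical point, is the reverse inclusion: I must show that any $a \in A$ which is a unit in $V_A$ is already a unit in $A$. Writing $a^{-1} \in V_A$ gives some $b \in A$ with $b > 0$ and $|a^{-1}| \leq b$, i.e.\ $1 \leq |a|\, b$ in $A$, where $|a| = a \vee (-a) \in A$ is available by the $f$-ring structure of $A$. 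A second application of \ref{D.RCSVR.rcr.iii} to $0 \leq 1 \leq |a|\, b$ produces $c \in A$ with $(|a|\, b) c = 1$, proving that $|a|$ and hence $a$ is a unit in $A$. The square-divisibility axiom \ref{D.RCSVR.rcr.iii} is thus the central technical input throughout both parts, substituting for bounded inversion in an explicit and directly usable form.
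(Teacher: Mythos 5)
Your proposal is correct and follows essentially the same route as the paper: in (i) the converse is established by finding an upper bound $b$ (or $a$) for a reciprocal and then invoking the square-divisibility axiom \ref{D.RCSVR.rcr.iii} applied to $0 \leq 1 \leq ab$ to produce the inverse inside $A$; in (ii) the $\supseteq$ inclusion is again a rerun of the same argument. The only cosmetic differences are that you fill in the routine verification that $V_A$ is a valuation ring and that the $\subseteq$ inclusion is formal (both of which the paper leaves as "clearly"), and that you use $|a| = a \vee (-a)$ to avoid the paper's reduction to the case $a > 0$; both are harmless variants of the same idea.
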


\begin{proof}
	(i). Suppose that $ A $ is cofinal in $ \text{qf}(A) $ and pick $ a \in A^{>0} $. Then there exists $ b \in A $ such that $ 0 < a^{-1} < b $, therefore $ 0< 1< ab $ and thus there exists $ c \in A $  such that $ abc=1^2 = 1 $ (Definition \ref{D.RCSVR.rcr.iii}), from which $ a^{-1} =bc\in A $ follows.

	(ii). $ V_A $ is a non-trivial real closed valuation ring because it is a proper convex subring of $ \text{qf}(A) $ by item (i). Clearly $ A \cap \mathfrak{m}_{V_A} \subseteq \mathfrak{m}_A $. Conversely, pick $ a \in \mathfrak{m}_A $ such that $ a >0 $ and assume for contradiction that $ a\notin \mathfrak{m}_{V_A} $; then $ a^{-1} \in V_A$, therefore there exists $ b \in A $ such that  $ 0 < a^{-1}< b $, and thus arguing as in the proof of item  (i) it follows that $ a^{-1}\in A $, yielding the required contradiction.
\end{proof}

\begin{proposition}\label{P.RCSVR.embedd_loc_rcr_fin_rank}
	Let $ A  $ be a local real closed ring of rank $ n \in \N^{\geq 2} $. There exists a ring $ B $ of type $ (n,1) $ and a local embedding $ A \linto B $.
\end{proposition}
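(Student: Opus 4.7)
The plan is to mimic the proof of Lemma \ref{L.RCSVR.canonical_embedd}, inserting an extra step to replace the real closed domains $A/\mathfrak{p}_i$ by real closed valuation rings with a common residue field. Enumerate the minimal primes $\emph{Spec}^{\emph{min}}(A) = \{ \mathfrak{p}_1, \dots, \mathfrak{p}_n \}$ using Remark \ref{R.RCSVR.str_thm_loc_rcr_fin_rk}; each $R_i := A/\mathfrak{p}_i$ is then a non-trivial local real closed domain with residue field $k := A/\mathfrak{m}_A$, and the canonical map $\iota \colon A \linto \prod_{i=1}^n R_i$, $a \mapsto (a/\mathfrak{p}_i)_i$, is injective (since $\bigcap_i \mathfrak{p}_i = (0)$) and local in the sense that $a \in \mathfrak{m}_A$ iff each component of $\iota(a)$ lies in the respective $\mathfrak{m}_{R_i}$. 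Non-triviality of each $R_i$ follows because $\emph{rk}(A) = n \geq 2$ forces $\mathfrak{p}_i \neq \mathfrak{m}_A$.

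Next, for each $i$ let $V_i$ denote the convex hull of $R_i$ in $\emph{qf}(R_i)$; by Lemma \ref{L.conv_hull_rcd.ii} this $V_i$ is a non-trivial real closed valuation ring, the inclusion $R_i \linto V_i$ is a local embedding, and $K_i := V_i/\mathfrak{m}_{V_i}$ is a real closed field containing $k$. I would then fix a real closed field $K$ admitting embeddings $K_i \linto K$ over $k$ for every $i \in [n]$ (for instance, a sufficiently saturated real closed extension of $k$, or the real closure of the compositum of the $K_i$ inside some ambient algebraically closed field). Applying Proposition \ref{P.RCSVR.rcvr_embedd_nice} to the trivial inclusion $V_i \subseteq V_i$ gives a local embedding $V_i \linto K_i[[\Gamma_i]]$ with $\Gamma_i := \emph{qf}(V_i)^{\times}/V_i^{\times}$; composing with the natural coefficient-extension map $K_i[[\Gamma_i]] \linto K[[\Gamma_i]] =: W_i$ (which is visibly a local embedding of non-trivial real closed valuation rings with residue field $K$) produces local embeddings $R_i \linto V_i \linto W_i$.

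Finally set $B := {\prod}_{K,\, i \in [n]} W_i$. By Lemma \ref{L.RCSVR.equiv_loc_rcsvr_rk_n_exactly_one_branching_ideal.iii}, $B$ is a local real closed SV-ring of rank $n$ with unique branching ideal $\emph{ker}(B \lonto K)$; since $K$ is a field this branching ideal coincides with $\mathfrak{m}_B$, so $B$ is of type $(n,1)$. The residue of the image of $a/\mathfrak{p}_i$ in $W_i$ is computed by the chain of local embeddings $R_i \linto V_i \linto W_i$, hence factors through the shared composite $A \lonto k \linto K$; in particular the $K$-residues of the various components of the image of $\iota(a)$ agree across $i$, so the composite $A \linto \prod_i R_i \linto \prod_i W_i$ actually lands in $B$. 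Injectivity is inherited from $\iota$, and locality follows because each component lying in $\mathfrak{m}_{W_i}$ is equivalent (by locality of $R_i \linto W_i$) to $a/\mathfrak{p}_i \in \mathfrak{m}_{R_i}$ for every $i$, and this is equivalent to $a \in \mathfrak{m}_A$. The main obstacle is equalising the residue fields $K_i$ of the convex hulls $V_i$ without losing locality of the embeddings $R_i \linto W_i$; the Hahn series embedding from Proposition \ref{P.RCSVR.rcvr_embedd_nice} combined with the benign coefficient-extension map is the cleanest way to handle this, and it is what distinguishes the argument from the SV case treated in Lemma \ref{L.RCSVR.canonical_embedd}.
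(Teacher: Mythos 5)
Your proof is correct and follows essentially the same route as the paper's: pass to the convex hulls $V_i$ of the residue domains $A/\mathfrak{p}_i$, amalgamate the residue fields $K_i$ of the $V_i$ over $k = A/\mathfrak{m}_A$ into a real closed field $K$, use Hahn series embeddings (Proposition \ref{P.RCSVR.rcvr_embedd_nice}) to land in valuation rings with residue field $K$, and take the fibre product over $K$. The only difference is cosmetic: the paper additionally embeds all the value groups $\Gamma_i$ into a common divisible ordered abelian group $\Gamma$ so that the target ring $\prod^n_{\textbf{\emph{k}}}\textbf{\emph{k}}[[\Gamma]]$ is homogeneous, whereas you keep the $\Gamma_i$ separate and take $B = \prod_{K,\,i}K[[\Gamma_i]]$; since the proposition only demands a ring of type $(n,1)$, not a homogeneous one, your shorter endgame works just as well.
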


\begin{proof}
 Set $ \text{Spec}^{\text{min}}(A) = \{ \mathfrak{p}_i\mid i \in [n] \}  $; each $ A/\mathfrak{p}_i $ is a non-trivial real closed domain with residue field $ A/\mathfrak{m}_A $, therefore the canonical embedding $ A \linto \prod_{i \in [n]} A/\mathfrak{p}_i $ given by $ a \mapsto (a/\mathfrak{p}_i)_{i \in [n]} $ corestricts to a local embedding $ A \linto \prod_{A/\mathfrak{m}_A, i \in [n]} A/\mathfrak{p}_i =:B_0 $. For each $ i \in [n] $, the convex hull $ V_i $ of $ A/\mathfrak{p}_i $ in $ \text{qf}(A/\mathfrak{p}_i) $ is a non-trivial real closed valuation ring by Lemma \ref{L.conv_hull_rcd.ii}. Let $ \textbf{\emph{k}}_i:= V_i/\mathfrak{m}_{V_i} $ and $ \Gamma_i:= \text{qf}(V_i)^{\times}/V_i^{\times} $ for all $ i \in [n] $; note that since $ A \cap \mathfrak{m}_{V_i} = \mathfrak{m}_A $ (Lemma \ref{L.conv_hull_rcd.ii}), the residue field $ A/\mathfrak{m}_A $ embeds into $ \textbf{\emph{k}}_i $ for all $  i \in [n] $. Let $ \emph{\textbf{k}} $ be a real closed field amalgamating all the  $  \emph{\textbf{k}}_i$  over $ A/\mathfrak{m}_A $,  let $ \Gamma $ be a divisible  totally ordered abelian group into which all the $ \Gamma_i $ embed, and let $ \epsilon_i: V_i \linto \emph{\textbf{k}}_i[[\Gamma_i]] $ be a local embedding for all $ i \in [n] $ (these exist by Proposition \ref{P.RCSVR.rcvr_embedd_nice}); all this data fits into commutative diagrams

\noindent\adjustbox{center}{
\begin{tikzpicture}[node distance=2cm, auto]
   \node (lm)  {$ A/\mathfrak{m}_A $};
 \node (llb) [left=1.9cm of lm] {$ A/\mathfrak{p}_i $};
  \node (rrb) [right=1.9cm of lm] {$ A/\mathfrak{p}_j $};
  \node (llt) [above of=llb] {$ V_i $};
  \node (rrt) [above of=rrb] {$ V_j $};
  \node (mm) [above=3.45cm of lm] {$ \textbf{\emph{k}} $};
  \node (lt) [right of=llt] {$\emph{\textbf{k}}_i$};
  \node (rt) [left of=rrt] {$\emph{\textbf{k}}_j$}; 
  \node (lltt) [above of=llt] {$ \emph{\textbf{k}}_i[[\Gamma_i]] $};
  \node (rrtt) [above of=rrt] {$ \emph{\textbf{k}}_j[[\Gamma_j]] $};
  \node (llttt) [above of=lltt] {$ \emph{\textbf{k}}_i[[\Gamma]] $};
  \node (rrttt) [above of=rrtt] {$ \emph{\textbf{k}}_j[[\Gamma]] $};
  \node (tm) [above of=mm] {$ \textbf{\emph{k}}[[\Gamma]] $};
  \draw[{Hooks[right]}->] (lm) to node {} (lt);
  \draw[{Hooks[right]}->] (lm) to node [swap] {} (rt);
  \draw[{Hooks[right]}->] (llb) to node {} (llt);
 \draw[{Hooks[right]}->] (rrb) to node {} (rrt);
  \draw[{Hooks[right]}->] (lt) to node {} (mm);
  \draw[{Hooks[right]}->] (rt) to node [swap] {} (mm);
  \draw[{Hooks[right]}->] (llt) to node {$ \epsilon_i $} (lltt);
  \draw[{Hooks[right]}->] (rrt) to node [swap] {$ \epsilon_j $} (rrtt);
  \draw[{Hooks[right]}->] (lltt) to node {} (llttt);
  \draw[{Hooks[right]}->] (rrtt) to node [swap] {} (rrttt);
  \draw[{Hooks[right]}->] (llttt) to node {} (tm);
  \draw[{Hooks[right]}->] (rrttt) to node  {} (tm); 
 \draw[->>] (llb) to node {} (lm);
 \draw[->>] (rrb) to node {} (lm);
\draw[->>] (llt) to node {} (lt);
 \draw[->>] (rrt) to node {} (rt);
\draw[->>] (lltt) to node {} (lt);
 \draw[->>] (rrtt) to node {} (rt);
 \draw[->>] (tm) to node {} (mm);
\end{tikzpicture}
}
for all $ i, j \in [n] $. Commutativity of the diagrams above for all $ i , j \in [n] $ implies that the resulting composite local embeddings  \[
A/\mathfrak{p}_i \linto V_i \overset{\epsilon_i}{\linto} \emph{\textbf{k}}_i[[\Gamma_i]]\into \emph{\textbf{k}}_i[[\Gamma]] \into \emph{\textbf{k}}[[\Gamma]] 
\]
for all $ i \in [n] $ yield a local embedding $ B_0 \linto {\prod}_{\textbf{\emph{k}}}^n \textbf{\emph{k}}[[\Gamma]]=:B $, therefore  the composite map $ A \into B_0 \into B $ is a local embedding of $ A $ into the ring $ B $ of type $ (n,1) $, as required.
\end{proof}

\begin{convention}\label{C.RCSVR.factors}
	Let $ j \in [2]$,  and let $ A $ and $ B $ be rings of type $ (n, j)  $ such that $ A \subseteq B $; write also $ \text{Spec}^{\text{min}}(A):= \{ \mathfrak{p}_i \mid i \in [n] \}  $ and $ \text{Spec}^{\text{min}}(B):= \{ \mathfrak{q}_i \mid i \in [n] \}  $. By Corollary \ref{C.SV.min_prime_ann} it can be assumed that  $ \mathfrak{q}_i \cap A=\mathfrak{p}_i $ for all $ i \in [n] $, and thus 	by the implication (i) $ \Ra $  (iii) and item (b) in Lemma \ref{L.RCSVR.equiv_loc_rcsvr_rk_n_exactly_one_branching_ideal} it can be assumed that $ A=  {\prod}_{C, i \in [n]}A_i $ and $  B=  {\prod}_{D, i \in [n]}B_i  $, where $ A_i:= A/\mathfrak{p}_i $ and $ B_i:= B/\mathfrak{q}_i $ for all $ i\in [n] $, $ C:= A/\mathfrak{b}_A $, and $ D:= B/\mathfrak{b}_B $; in particular, the embedding $ A \subseteq B $ induces embeddings $ A_i \subseteq B_i $ for all  $ i\in [n] $.
\end{convention}

\begin{lemma}\label{L.RCSVR.embedd_1n_local_factors}
	Let $ A  $ and $ B $ be rings of type $ (n,1) $. Any embedding $ A \subseteq B $ is local and it induces local embeddings  $ A_i \subseteq B_i  $ for all  $ i \in [n] $ \emph{(Convention \ref{C.RCSVR.factors})}. 
\end{lemma}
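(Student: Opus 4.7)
The plan is to appeal directly to two earlier results: Lemma \ref{L.RCSVR.embedd_loc_rcr_max_id_branch_implies_local} to see that the given embedding is local, and then Remark \ref{R.SV.local_embedd_factor} to transfer this locality to the induced embeddings $A_i \subseteq B_i$.

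First, since $A$ is a ring of type $(n,1)$, its unique branching ideal is $\mathfrak{b}_A = \mathfrak{m}_A$, so in particular $\mathfrak{m}_A$ is a branching ideal. Both $A$ and $B$ are local real closed rings of rank $n \in \N^{\geq 2}$, so Lemma \ref{L.RCSVR.embedd_loc_rcr_max_id_branch_implies_local} applies to the embedding $A \subseteq B$ and yields that it is local, i.e., $\mathfrak{m}_B \cap A = \mathfrak{m}_A$.

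By Convention \ref{C.RCSVR.factors}, after re-indexing the minimal primes one has $\mathfrak{q}_i \cap A = \mathfrak{p}_i$ for all $i \in [n]$, so that the embedding $A \subseteq B$ induces embeddings $A_i = A/\mathfrak{p}_i \subseteq B/\mathfrak{q}_i = B_i$ for all $i \in [n]$. Finally, by Remark \ref{R.SV.local_embedd_factor}, $A \subseteq B$ being a local embedding of reduced local rings of rank $n$ is equivalent to $A_i \subseteq B_i$ being a local embedding for all (equivalently, for some) $i \in [n]$, so we conclude that each of the induced embeddings $A_i \subseteq B_i$ is local, as required.

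There is no real obstacle here: the lemma is essentially a bookkeeping consequence of the previously established algebraic facts about branching ideals (Proposition \ref{P.RCSVR.equiv_branching_max_id} via Lemma \ref{L.RCSVR.embedd_loc_rcr_max_id_branch_implies_local}) together with the general observation about local embeddings of reduced local rings of finite rank recorded in Remark \ref{R.SV.local_embedd_factor}.
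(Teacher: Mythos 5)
Your proof is correct and follows exactly the same route as the paper: apply Lemma \ref{L.RCSVR.embedd_loc_rcr_max_id_branch_implies_local} to get that the embedding is local, then transfer to the factors via Remark \ref{R.SV.local_embedd_factor}. The extra detail you spell out (identifying $\mathfrak{b}_A=\mathfrak{m}_A$, the re-indexing from Convention \ref{C.RCSVR.factors}) is exactly what the paper leaves implicit.
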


\begin{proof}
	That any embedding  $ A \subseteq B $ is local follows from Lemma \ref{L.RCSVR.embedd_loc_rcr_max_id_branch_implies_local}; since $ A \subseteq B $ is local, so are each of the embeddings $ A_i \subseteq B_i $ (Remark \ref{R.SV.local_embedd_factor}).
\end{proof}

Lemma \ref{L.RCSVR.embedd_1n_local_factors} says that any embedding $ A \subseteq B $ of rings of type $ (n,1) $ sends the unique branching ideal $ \mathfrak{b}_B$ of $ B $ to the unique branching ideal $ \mathfrak{b}_A $ of $ A $, i.e., $ \mathfrak{b}_B \cap A =\mathfrak{b}_A $, and since the branching ideals of these rings are exactly the maximal ideals, this means that  $ A\subseteq B $ is a local embedding; the next example shows that this property of arbitrary embeddings of rings of type $ (n,1) $ does not carry over to rings to type $ (n,2) $:

\begin{example}\label{E.RCSVR.non_branch_pres_embedd_type_2,2}
	Let $  V$ be a real closed valuation ring of Krull dimension $  4 $ such that $ (\text{Spec}(V), \subseteq) $ is the chain $ (0) \subsetneq \mathfrak{p}  \subsetneq \mathfrak{q} \subsetneq \mathfrak{r} \subsetneq \mathfrak{m}_V$; define $ A:= V \times_{V/\mathfrak{p}} V $, $ B:=V_{\mathfrak{r}} \times_{V_{\mathfrak{r}}/\mathfrak{p}V_{\mathfrak{r}}} V_{\mathfrak{r}} $ and $ C:= V_{\mathfrak{r}}\times_{V_{\mathfrak{r}}/\mathfrak{q}V_{\mathfrak{r}}} V_{\mathfrak{r}} $. Then $ A \subseteq B $ and  $ B \subseteq C $ are embeddings of rings of type  $ (2,2) $ such that the embedding $ A \subseteq B $ is not local but  $ \mathfrak{b}_B \cap A = \mathfrak{b}_A $, and the embedding $ B \subseteq C $ is local but $ \mathfrak{b}_C \cap B \supsetneq \mathfrak{b}_B $; in particular, the embedding $ A \subseteq C $ is not local and  $ \mathfrak{b}_C \cap A \supsetneq \mathfrak{b}_A $. The diagram below represents the composite spectral map $ \text{Spec}(C) \rightarrow \text{Spec}(B) \rightarrow \text{Spec}(A) $ induced by the composite embedding $ A \subseteq B \subseteq C $

\noindent\adjustbox{center}{
\begin{tikzpicture}
[
    level distance=10mm,
    sibling distance=17mm
]
\node  (r_1) at (0,0)   {$ \mathfrak{m}_C $}
	child {node  {}edge from parent [draw=none]
		child {node (c_1) {$ \mathfrak{b}_C $}edge from parent [draw=none]
			child {node (dd_1) {$ \bullet $} [sibling distance=7mm]
			child {node  {$ \text{Ann}_C(a_1) $}}
			child {node  {}edge from parent [draw=none]}}
			child {node  (dd_2) {$ \bullet $}[sibling distance=7mm]
			child {node  {}edge from parent [draw=none]}
			child {node  {$ \text{Ann}_C(a_2) $}}}
	}};

\node  (r_2) at (5,0)   {$ \mathfrak{m}_B $}
	child {node  {}edge from parent [draw=none]
		child {node  (c_2) {$ \bullet $}edge from parent [draw=none]
			child {node (D_2)  {$ \mathfrak{b}_B$} 
			child {node  {$ \text{Ann}_B(a_1) $}}
			child {node  {$ \text{Ann}_B(a_2) $}}
}}};
\node at (10,0)   {$ \mathfrak{m}_A $}
	child {node  (e_1) {$ \bullet $}
		child {node  (e_2) {$ \bullet $}
			child {node  (e_3) {$\mathfrak{b}_A$}
			child {node  {$ \text{Ann}_A(a_1) $}}
			child {node  {$ \text{Ann}_A(a_2) $}}
}}};
\draw[->, dashed] (r_1) -- (r_2);
\draw[->, dashed] (c_1) -- (c_2);
\draw[->, dashed] (dd_1) to [bend right=17] (D_2);
\draw[->, dashed] (dd_2) to [bend left=13] (D_2);
\draw[-] (r_1) -- (c_1);
\draw[-] (r_2) -- (c_2);
\draw[->, dashed] (r_2) --  (e_1);
\draw[->, dashed] (c_2) --  (e_2);
\draw[->, dashed] (D_2) --  (e_3);
\end{tikzpicture}	
}
where $ a_1, a_2 \in A$ are any non-zero orthogonal elements (Corollary \ref{C.SV.min_prime_ann}); an analogous construction shows that for all $ n \in \N^{\geq 2} $ there exists  embeddings $ A \subseteq B  $ and  $ B \subseteq C $ of rings of type $ (n, 2) $  such that $ A \subseteq B $ is not a local embedding and such that $ \mathfrak{b}_C \cap B \supsetneq \mathfrak{b}_B $.
\end{example}

Embeddings $ A \subseteq B $ of rings of type $ (n,2) $ which satisfy  $ \mathfrak{m}_B \cap A = \mathfrak{m}_A$ and $ \mathfrak{b}_B \cap A = \mathfrak{b}_B $ play an important role in the model-theoretic analysis of this class of rings; to this end, make the following: 

\begin{definition}\label{D.RCSVR.good}
	An embedding  $ A \subseteq B $ of rings of type $ (n,2) $ is \textit{good} if	$ \mathfrak{m}_B \cap A = \mathfrak{m}_A$ and $ \mathfrak{b}_B \cap A = \mathfrak{b}_B $.
\end{definition}

\begin{remark}\label{R.RCSVR.good}
	Let $ A $ and $ B $ be rings of type $ (n,j  ) $ such that $ A \subseteq B $ and write $ A=  {\prod}_{C, i \in [n]}A_i $ and $  B=  {\prod}_{D, i \in [n]}B_i  $ (Convention \ref{C.RCSVR.factors}). If $ j =1 $, then $ C $ and $ D  $ are real closed fields and the embedding $ A \subseteq B $ induces an embedding  $ C \subseteq D $ by Lemma \ref{L.RCSVR.embedd_1n_local_factors}; if $ j =2 $, then $ C $ and $ D $ is are non-trivial real closed valuation rings and the embedding $ A \subseteq B $ is good if and only if it induces a local embedding $ C \subseteq D $.  
\end{remark}

\begin{definition}\label{D.RCSVR.sharp_embedd}
	Let $f: A \linto B $ be an injective ring homomorphism and  $ \mathfrak{p} \in \text{Spec}(B) $. Say that $ f $ is  \textit{sharp at $ \mathfrak{p} $} if the induced embedding $ A/f^{-1}(\mathfrak{p}) \linto B/\mathfrak{p}   $ is an isomorphism. 
\end{definition}

\begin{definition}[cf. Definition 6 in \cite{larson.finitely_1-conv_f-rings}]\label{D.RCVSR.homogeneous}
	Let $ j \in [2] $. A ring $ A $ of type $ (n,j) $ is \textit{homogeneous}  if there exists a non-trivial real closed valuation ring $ V $ and a surjective ring homomorphism $ f: V \lonto C $ onto a real closed valuation ring  $ C $ such that  $ A \cong {\prod}_B^n V $ (Notation \ref{N.RCSVR.I-fold_fibr_prod}).
\end{definition}

\begin{example}
	Let $ V $ and $ W  $ be non-trivial real closed valuation rings with isomorphic residue field $ \emph{\textbf{k}} $. If there exists a local embedding $ \epsilon: V \linto W $, then  $ \epsilon $ induces a local embedding  $ f: V\times_{\emph{\textbf{k}}}W  \linto W \times_{\emph{\textbf{k}}} W =:B$, therefore $ f $ is an embedding of a ring of type $ (2,1) $ into a  homogeneous ring of type $ (2,1) $, and $ f $ is sharp at $ \mathfrak{m}_B $. This construction cannot be done in general. More precisely, let $ \emph{\textbf{k}} $ be a real closed field, $ \Gamma  $ be a countable non-archimedean totally ordered divisible abelian group, and define $ A:= \textbf{\emph{k}}[[\Gamma]]\times_{\emph{\textbf{k}}} \textbf{\emph{k}}[[\R]] $; there doesn't exist any embedding $A \linto  \textbf{\emph{k}}[[\Gamma]]\times_{\emph{\textbf{k}}} \textbf{\emph{k}}[[\Gamma]]$ nor any embedding $A \linto  \textbf{\emph{k}}[[\R]]\times_{\emph{\textbf{k}}} \textbf{\emph{k}}[[\R]]$. Otherwise, such embeddings would induce either a local embedding $ \textbf{\emph{k}}[[\R]] \linto   \textbf{\emph{k}}[[\Gamma]]$ or a local embedding $ \textbf{\emph{k}}[[\Gamma]] \linto   \textbf{\emph{k}}[[\R]]$ (Lemma \ref{L.RCSVR.embedd_1n_local_factors}); the latter embeddings give rise to embeddings of divisible totally ordered abelian groups $ \R \linto \Gamma $ or $ \Gamma \linto  \R$, which is impossible by choice of $ \Gamma $. 
\end{example}

\begin{lemma}\label{L.RCSVR.1_square}
	Let $ A$ and $ B $ be rings of type $ (n,1) $  such that $ A \subseteq B $. There exist rings $ A' $ and $ B' $ of type $ (n, 1) $  and embeddings  $ \epsilon_A: A \linto A' $ and $ \epsilon_B: B \linto B' $ such that $ A' \subseteq B' $,  $ \epsilon_A $ is sharp at  $ \mathfrak{b}_{A'} \ (= \mathfrak{m}_{A'})  $, $ \epsilon_B $ is sharp at  $ \mathfrak{b}_{B'} \ (= \mathfrak{m}_{B'})  $, $ A' $ and $ B' $ are homogeneous, and  $ \epsilon_{B\upharpoonright A} = \epsilon_A $.
\end{lemma}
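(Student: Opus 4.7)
The plan is to reduce, via the structure theorem for rings of type $(n,1)$, to the problem of embedding each valuation ring factor of $A$ and $B$ into a common Hahn series ring with a prescribed residue field, and then to assemble the homogeneous rings $A'$ and $B'$ as $n$-fold fibre products over these residue fields.

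First, by Lemma \ref{L.RCSVR.equiv_loc_rcsvr_rk_n_exactly_one_branching_ideal.b} and Convention \ref{C.RCSVR.factors}, I write $A = {\prod}_{C, i \in [n]} A_i$ and $B = {\prod}_{D, i \in [n]} B_i$, where $A_i := A/\mathfrak{p}_i$, $B_i := B/\mathfrak{q}_i$, $C := A/\mathfrak{m}_A$ and $D := B/\mathfrak{m}_B$ are real closed fields (since the branching ideal in type $(n,1)$ is the maximal ideal). By Lemma \ref{L.RCSVR.embedd_1n_local_factors}, the embedding $A \subseteq B$ is automatically local, it induces an embedding $C \subseteq D$ and, for each $i \in [n]$, a local embedding $A_i \subseteq B_i$ of non-trivial real closed valuation rings. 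Denote the respective value groups by $\Gamma_i^A$ and $\Gamma_i^B$, so that $\Gamma_i^A \hookrightarrow \Gamma_i^B$ is the embedding induced on value groups.

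Next, I choose a single divisible totally ordered abelian group $\Gamma$ amalgamating all $\Gamma_i^A$ and $\Gamma_i^B$ compatibly with the embeddings $\Gamma_i^A \hookrightarrow \Gamma_i^B$ (such an amalgam exists since divisible totally ordered abelian groups have the amalgamation property). Setting $V_A := C[[\Gamma]]$ and $V_B := D[[\Gamma]]$, Theorem \ref{T.RCSVR.Hahn} ensures these are non-trivial real closed valuation rings with residue fields $C$ and $D$, and the inclusion $C \subseteq D$ produces a canonical local embedding $V_A \subseteq V_B$ which is the identity on the value group $\Gamma$. By Definition \ref{D.RCVSR.homogeneous} the rings $A' := {\prod}_C^n V_A$ and $B' := {\prod}_D^n V_B$ are homogeneous of type $(n,1)$, and the inclusion $V_A \subseteq V_B$ induces an embedding $A' \subseteq B'$.

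Finally, I apply Proposition \ref{P.RCSVR.rcvr_embedd_nice} to each local embedding $A_i \subseteq B_i$ to obtain compatible local embeddings $\epsilon_{A_i}: A_i \hookrightarrow C[[\Gamma_i^A]]$ and $\epsilon_{B_i}: B_i \hookrightarrow D[[\Gamma_i^B]]$ with $\epsilon_{B_i\upharpoonright A_i} = \epsilon_{A_i}$; post-composing with the natural inclusions $C[[\Gamma_i^A]] \hookrightarrow V_A$ and $D[[\Gamma_i^B]] \hookrightarrow V_B$ yields local embeddings $A_i \hookrightarrow V_A$ and $B_i \hookrightarrow V_B$ inducing the identity on residue fields. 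Taking the corresponding map into the fibre product, these glue to the desired embeddings $\epsilon_A: A \hookrightarrow A'$ and $\epsilon_B: B \hookrightarrow B'$ satisfying $\epsilon_{B \upharpoonright A} = \epsilon_A$. Sharpness of $\epsilon_A$ at $\mathfrak{b}_{A'} = \mathfrak{m}_{A'}$ is then immediate: since $\epsilon_A$ is local, the induced map on residue fields is $A/\mathfrak{m}_A = C \to V_A/\mathfrak{m}_{V_A} = C$, which is the identity, and likewise for $\epsilon_B$. The main delicate point is the coherence across different indices $i$: I need the Hahn series embeddings produced by Proposition \ref{P.RCSVR.rcvr_embedd_nice} to induce the identity on residue fields and to respect the common $C \subseteq D$, so that the per-factor embeddings glue consistently into the fibre products; this is guaranteed by the canonical nature of the embeddings (cf.\ Theorem \ref{T.APP.rcvf_hahn_embedd_II}), which fixes the residue field and value group data by construction.
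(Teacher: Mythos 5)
Your proof follows essentially the same route as the paper's: decompose $A$ and $B$ into their non-trivial real closed valuation ring factors via Convention \ref{C.RCSVR.factors}, apply Proposition \ref{P.RCSVR.rcvr_embedd_nice} factor-wise to get compatible Hahn-series embeddings, push all value groups into a common divisible $o$-group, and assemble the homogeneous targets as $n$-fold fibre products over the residue fields. The only cosmetic difference is that you amalgamate both the $\Gamma_i^A$ and the $\Gamma_i^B$ (via amalgamation for DOAG) whereas the paper simply embeds the $\Delta_i$ (the larger groups) into a single $\Delta$, but since $\Gamma_i^A\subseteq\Gamma_i^B$ this is the same construction.
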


\begin{proof}
	Write $ A= {\prod}_{\emph{\textbf{k}}, i \in [n]}A_i$ and $ B= {\prod}_{\emph{\textbf{l}}, i \in [n]}B_i$, where $ A_i $ and $ B_i $ are non-trivial real closed valuation rings with residue fields $ \textbf{\emph{k}} $ and \textbf{\emph{l}} (respectively) for all $ i \in [n] $, see Convention \ref{C.RCSVR.factors}; set also $ \Gamma_i:= \text{qf}(A_i)^{\times}/A_i^{\times} $ and $ \Delta_i:=  \text{qf}(B_i)^{\times}/B_i^{\times}  $, noting that the local embedding  $ A_i \subseteq B_i $ (Lemma \ref{L.RCSVR.embedd_1n_local_factors})  induces a local embedding $ \emph{\textbf{k}}[[\Gamma_i]]\subseteq \textbf{\emph{l}}[[\Delta_i]] $ for all $ i \in [n] $. By Proposition \ref{P.RCSVR.rcvr_embedd_nice} there exist local embeddings $  \eta_i : A_i \linto \textbf{\emph{k}}[[\Gamma_i]]$ and $ \delta_i : B_i \linto \textbf{\emph{l}}[[\Delta_i]]$ such that $ \delta_{i \upharpoonright A_i} = \eta_{i} $ for all $ i \in [n] $; by embedding all the divisible $ o $-groups $\Delta_i  $ into a divisible $ o $-group $ \Delta $, it follows that the local embeddings $ \eta_i $ and  $ \delta_i $ induce local embeddings  $ \eta_i': A_i \linto \emph{\textbf{k}} [[\Delta]]$ and $ \delta_i': B_i \linto \emph{\textbf{l}}[[\Delta]] $ such that $ \delta'_{i\upharpoonright A_i} = \eta'_i $. Define $ A':= {\prod}_{\emph{\textbf{k}}}^n \emph{\textbf{k}}[[\Delta]] $ (Notation \ref{N.RCSVR.I-fold_fibr_prod}), $ B':= {\prod}_{\emph{\textbf{l}}}^n \emph{\textbf{l}}[[\Delta]] $, $ \epsilon_A $ to be given by $ \epsilon_A(a):= (\eta_1'(a_1), \dots, \eta_n'(a_n)) $, and $ \epsilon_B $ to be given by  $ \epsilon_B(b):= (\delta_1'(b_1), \dots, \delta_n'(b_n)) $; it is clear by construction that this choice of data satisfies the requirements in the statement of the lemma.
\end{proof}

\begin{lemma}\label{L.RCSVR.2_square}
	Let $ A$ and $ B $ be rings of type $ (n, 2) $ such that $ A \subseteq B $ and such that $ A \subseteq B $ is a good embedding \emph{(Definition \ref{D.RCSVR.good})}. There exist rings  $ A' $ and $ B'  $ of type $ (n, 2) $, and good embeddings  $ \epsilon_A: A \linto A' $ and $ \epsilon_B: B \linto B' $, such that $ A' \subseteq B' $, the embedding $ A' \subseteq B' $ is good,  $ \epsilon_A $ is sharp at  $ \mathfrak{b}_{A'}   $, $ \epsilon_B $ is sharp at  $ \mathfrak{b}_{B'}   $, $ A' $ and $ B' $ are homogeneous, and  $ \epsilon_{B\upharpoonright A} = \epsilon_A $.
\end{lemma}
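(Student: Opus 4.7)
The plan is to adapt the proof of Lemma \ref{L.RCSVR.1_square} by working one valuation layer deeper and invoking Lemma \ref{lem.rcr_domain_not_rcvr} to build real closed valuation rings surjecting onto the prescribed bases $C$ and $D$. Write $A = {\prod}_{C,\,i\in[n]} A_i$ and $B = {\prod}_{D,\,i\in[n]} B_i$ as in Convention \ref{C.RCSVR.factors}; by Remark \ref{R.RCSVR.good}, the goodness of $A\subseteq B$ provides a local embedding $C\hookrightarrow D$ of non-trivial real closed valuation rings together with local embeddings $A_i\hookrightarrow B_i$ commuting with the surjections $A_i\twoheadrightarrow C$ and $B_i\twoheadrightarrow D$.

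Set $\mathfrak{b}_{A_i}:=\mathfrak{b}_A/\mathfrak{p}_i\in\text{Spec}(A_i)$, $\mathfrak{b}_{B_i}:=\mathfrak{b}_B/\mathfrak{q}_i\in\text{Spec}(B_i)$, and consider the localizations $\tilde{A}_i:=(A_i)_{\mathfrak{b}_{A_i}}$ and $\tilde{B}_i:=(B_i)_{\mathfrak{b}_{B_i}}$. By Theorem \ref{T.RCSVR.properties_rcr.II.ii} these are non-trivial real closed valuation rings with residue maps $\rho_i:\tilde{A}_i\twoheadrightarrow \text{qf}(C)$ and $\sigma_i:\tilde{B}_i\twoheadrightarrow\text{qf}(D)$ satisfying $A_i=\rho_i^{-1}(C)$ and $B_i=\sigma_i^{-1}(D)$; since $\mathfrak{b}_B\cap A=\mathfrak{b}_A$, each $A_i\hookrightarrow B_i$ extends to a local embedding $\tilde{A}_i\hookrightarrow\tilde{B}_i$ commuting with the inclusion $\text{qf}(C)\hookrightarrow\text{qf}(D)$. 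Following the strategy of Lemma \ref{L.RCSVR.1_square}, apply Proposition \ref{P.RCSVR.rcvr_embedd_nice} to each $\tilde{A}_i\hookrightarrow\tilde{B}_i$ and then embed the resulting value groups compatibly into a single sufficiently large divisible totally ordered abelian group $\Omega$; this produces, for each $i\in[n]$, a commuting square of local embeddings
\[
\begin{array}{ccc}
\tilde{A}_i & \hookrightarrow & \tilde{B}_i \\
\downarrow & & \downarrow \\
\text{qf}(C)[[\Omega]] & \hookrightarrow & \text{qf}(D)[[\Omega]]
\end{array}
\]
whose vertical arrows commute with the canonical residue maps $\lambda:\text{qf}(C)[[\Omega]]\twoheadrightarrow\text{qf}(C)$ and $\mu:\text{qf}(D)[[\Omega]]\twoheadrightarrow\text{qf}(D)$.

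Now set $V:=\lambda^{-1}(C)\subseteq\text{qf}(C)[[\Omega]]$ and $W:=\mu^{-1}(D)\subseteq\text{qf}(D)[[\Omega]]$. By Lemma \ref{lem.rcr_domain_not_rcvr}, $V$ and $W$ are non-trivial real closed valuation rings with surjections $V\twoheadrightarrow C$ and $W\twoheadrightarrow D$, and $V\hookrightarrow W$ is a local embedding commuting with these surjections. Because $A_i=\rho_i^{-1}(C)$ and $V=\lambda^{-1}(C)$, the embedding $\tilde{A}_i\hookrightarrow\text{qf}(C)[[\Omega]]$ restricts to a local embedding $A_i\hookrightarrow V$ commuting with the surjections to $C$, and analogously $B_i\hookrightarrow W$ commuting with the surjections to $D$; passing to fibre products yields embeddings $\epsilon_A:A\hookrightarrow A':={\prod}_C^n V$ and $\epsilon_B:B\hookrightarrow B':={\prod}_D^n W$ into homogeneous rings of type $(n,2)$ (Definition \ref{D.RCVSR.homogeneous}), with $\epsilon_{B\upharpoonright A}=\epsilon_A$ by the commutativity built into the construction.

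Goodness of $\epsilon_A$, $\epsilon_B$, and $A'\subseteq B'$ then reduces to the factor-level local embeddings together with the identifications $\mathfrak{b}_{A'}=\ker(A'\twoheadrightarrow C)$ and $\mathfrak{b}_{B'}=\ker(B'\twoheadrightarrow D)$, and these are controlled by the local embeddings $V\hookrightarrow W$ and $C\hookrightarrow D$; sharpness of $\epsilon_A$ at $\mathfrak{b}_{A'}$ and of $\epsilon_B$ at $\mathfrak{b}_{B'}$ is immediate from $A/\mathfrak{b}_A=C=A'/\mathfrak{b}_{A'}$ and $B/\mathfrak{b}_B=D=B'/\mathfrak{b}_{B'}$. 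The main technical obstacle is the simultaneous compatibility across all $i\in[n]$ in the Hahn series amalgamation step: this requires a careful choice of the common value group $\Omega$ (and implicitly uses that the embeddings from Proposition \ref{P.RCSVR.rcvr_embedd_nice} act as the identity on residue fields, as provided by Theorem \ref{T.APP.rcvf_hahn_embedd_II}), and is handled exactly as in the last step of the proof of Lemma \ref{L.RCSVR.1_square}.
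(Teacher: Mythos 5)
Your proof is correct and follows essentially the same route as the paper: pass to the localizations $(A_i)_{\mathfrak{b}_{A_i}}$ and $(B_i)_{\mathfrak{b}_{B_i}}$, embed these compatibly into Hahn series fields over $\text{qf}(C)$ and $\text{qf}(D)$, pull back along the residue maps to $C$ and $D$ using Lemma \ref{lem.rcr_domain_not_rcvr}, and assemble the $n$-fold fibre products. The only cosmetic difference is that the paper packages the localizations into the type-$(n,1)$ rings $\widehat{A}$ and $\widehat{B}$ and invokes Lemma \ref{L.RCSVR.1_square} as a black box, whereas you inline that lemma's construction by applying Proposition \ref{P.RCSVR.rcvr_embedd_nice} directly to each $\tilde{A}_i\hookrightarrow\tilde{B}_i$; you have also correctly isolated the point on which the pullback step depends, namely that the Hahn embeddings can be chosen to commute with the residue maps (a consequence of the coefficient-field construction in Theorem \ref{T.APP.rcvf_hahn_embedd_II}), which is implicit in the paper's use of Lemma \ref{L.RCSVR.1_square}.
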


\begin{proof}
	Write $ A= {\prod}_{C, i \in [n]}A_i$ and $ B= {\prod}_{D, i \in [n]}B_i$, where $ A_i $ and $ B_i $ are non-trivial real closed valuation rings having non-trivial real closed valuation rings $ C $ and  $ D $ as homomorphic images (respectively) for all $ i \in [n] $, see Convention \ref{C.RCSVR.factors}; set also $ \mathfrak{b}_{A_i}:= \text{ker}(A_i \onto C) $  and $ \mathfrak{b}_{B_i}:= \text{ker}(B_i \onto D) $, noting that neither $ \mathfrak{b}_{A_i} $ nor $ \mathfrak{b}_{B_i} $ are the zero ideal by the implication (i) $ \Ra $  (iii) in Lemma \ref{L.RCSVR.equiv_loc_rcsvr_rk_n_exactly_one_branching_ideal}. For each $ i \in [n] $, the localization  $ (A_i)_{\mathfrak{b}_{A_i}} $ is a non-trivial real closed valuation ring properly containing $ A_i $ with residue field  $ (A_i)_{\mathfrak{b}_{A_i}}/\mathfrak{b}_{A_i}(A_i)_{\mathfrak{b}_{A_i}}=  (A_i)_{\mathfrak{b}_{A_i}}/\mathfrak{b}_{A_i}= \text{qf}(A_i/\mathfrak{b}_{A_i})= \text{qf}(C)$, therefore $ \widehat{A}:=\prod_{\text{qf}(C), i \in [n]}(A_{i})_{\mathfrak{b}_{A_i}} $ is a ring of type  $ (n,1) $  such that $ A \subseteq  \widehat{A}$; similarly,  $ \widehat{B} := {\prod}_{\text{qf}(D), i \in [n]}(B_i)_{\mathfrak{b}_{B_i}}$ is a ring of type $ (n,1) $ such that $ B \subseteq \widehat{B} $, and since $ A \subseteq B $ a good embedding, it induces an embedding $ \widehat{A} \subseteq \widehat{B} $ making the obvious square commutes (cf. Remark \ref{R.RCSVR.good}). 

	By Lemma \ref{L.RCSVR.1_square}, there exist rings $ \widehat{A}' $ and $ \widehat{B}' $ of type $ (n,1) $ and embeddings $ \epsilon_{\widehat{A}} : \widehat{A} \linto  \widehat{A}'$ and $ \epsilon_{\widehat{B}} : \widehat{B} \linto  \widehat{B}'$ such that $ \widehat{A}' \subseteq \widehat{B}' $, $ \epsilon_{\widehat{A}}$ is sharp at $ \mathfrak{b}_{\widehat{A}'} \ (= \mathfrak{m}_{\widehat{A}'}) $, $ \epsilon_{\widehat{B}}$ is sharp at $ \mathfrak{b}_{\widehat{B}'} \ (= \mathfrak{m}_{\widehat{B}'}) $, $ \widehat{A}' $ and $ \widehat{B}' $ are homogeneous, and $ \epsilon_{\widehat{B}' \upharpoonright \widehat{A}'}= \epsilon_{\widehat{A}'} $; in particular, there exist non-trivial real closed valuation rings $ V $ and $ W $ with residue fields $ \text{qf}(C) $ and $ \text{qf}(D) $ (respectively)  such that $ V \subseteq W $ and making the diagram

\noindent\adjustbox{center}{
\begin{tikzpicture}[node distance=2cm, auto]
  \node (lb) {$C$};
  \node (lt) [above of=lb] {$\text{qf}(C)$};
  \node (rb) [right of=lb] {$D$};
  \node (rt) [above of=rb] {$\text{qf}(D)$};
  \node (llb) [left=2cm of lb] {$ A_i $};
  \node (llt) [above of = llb] {$ (A_i)_{\mathfrak{b}_{A_i}} $};
  \node (rrb) [right=2cm of rb] {$ B_i $};
  \node (rrt) [above of=rrb] {$ (B_i)_{\mathfrak{b}_{B_i}} $};
  \node (lmt) [above of=llt, right=0.3cm of llt] {$ V $};
   \node (rmt) [above of=rrt, left=0.3cm of rrt] {$ W $};
  \draw[{Hooks[right]}->] (lb) to node {} (lt);
  \draw[{Hooks[right]}->] (lb) to node [swap] {} (rb);
  \draw[{Hooks[right]}->] (rb) to node [swap] {} (rt);
  \draw[{Hooks[right]}->] (lt) to node {} (rt);
  \draw[{Hooks[right]}->] (llb) to node {} (llt);
 \draw[{Hooks[right]}->] (rrb) to node [swap] {} (rrt);
 \draw[{Hooks[right]}->] (llt) to node {$ \epsilon_{\widehat{A}, i} $} (lmt);
 \draw[{Hooks[right]}->] (rrt) to node [swap] {$ \epsilon_{\widehat{B}, i} $} (rmt);
 \draw[{Hooks[right]}->] (lmt) to node {} (rmt);
 \draw[->>] (llb) to node {} (lb);
 \draw[->>] (rrb) to node {} (rb);
\draw[->>] (llt) to node {} (lt);
 \draw[->>] (rrt) to node {} (rt);
\draw[->>] (lmt) to node  {$ \lambda_{V} $} (lt);
 \draw[->>] (rmt) to node [swap] {$ \lambda_W $} (rt);
\end{tikzpicture}
}
\noindent commute for all $ i \in [n] $, where $ \lambda_V $ and $ \lambda_W $ are the residue field maps and $ \epsilon_{\widehat{A}, i}  $ and $ \epsilon_{\widehat{B}, i}  $ are the embeddings induced by $ \epsilon_{\widehat{A}}  $ and $ \epsilon_{\widehat{B}}  $ (respectively). Define $ V':= \lambda_V^{-1}(C) $ and $ W':= \lambda_W^{-1}(D) $; then $ V'$ and $ W' $ are non-trivial real closed valuation rings (Lemma \ref{lem.rcr_domain_not_rcvr}), and using the fact that  $ \lambda_V^{-1}(C) \cong V \times_{\text{qf}(C)} C $ and $ \lambda_W^{-1}(D) \cong W \times_{\text{qf}(D)} D $, it follows from the universal property of the pullback that the diagram

\noindent\adjustbox{center}{
\begin{tikzpicture}[node distance=2cm, auto]
  \node (lt) {$C$};
  \node (rt) [right of=lt] {$D$};
  \node (llt) [left = 2cm of lt] {$ A_i $};
  \node (rrt) [right = 2cm of rt] {$  B_i $};
  \node (lmt) [above of=llt, right=0.6cm of llt] {$ V' $};
   \node (rmt) [above of=rrt, left=0.6cm of rrt] {$ W' $};
  \draw[{Hooks[right]}->] (lt) to node {} (rt);
 \draw[{Hooks[right]}->] (llt) to node {$ \epsilon_{A, i} $} (lmt);
 \draw[{Hooks[left]}->] (rrt) to node [swap] {$ \epsilon_{B, i} $} (rmt);
 \draw[{Hooks[right]}->] (lmt) to node {} (rmt);
\draw[->>] (llt) to node {} (lt);
 \draw[->>] (rrt) to node {} (rt);
\draw[->>] (lmt) to node  {} (lt);
 \draw[->>] (rmt) to node [swap] {} (rt);
\end{tikzpicture}
}

\noindent commutes for all $ i \in [n] $, where $ \epsilon_{A, i} $ and $ \epsilon_{B,i} $ are the restrictions of $ \epsilon_{\widehat{A}, i} $ and $ \epsilon_{\widehat{B}, i}  $ to $ A_i $ and $ B_i $ (respectively), and $  V' \subseteq W'$ is induced by $ V \subseteq W $. Define $ A':= {\prod}_{C}^n V' $, $ B':= {\prod}_{D}^n W' $, $ \epsilon_A: A \linto A' $ to be given by $ \epsilon_A(a):= (\epsilon_{A,1}(a_1), \dots, \epsilon_{A,n}(a_n)) $, and $ \epsilon_B: B \linto B' $ to be given by $ \epsilon_B(b):= (\epsilon_{B,1}(b_1), \dots, \epsilon_{B,n}(b_n)) $;  it is clear by construction that this choice of data satisfies the requirements in the statement of the lemma.
\end{proof}

\section{Model Theory}\label{SEC.mod_th}
Throughout this section fix $ n \in \N^{\geq 2} $ and let $ \mathscr{L}:= \{ +, -, \cdot, 0, 1 \}  $ be the language of rings; unless stated otherwise, all model-theoretic statements are assumed to be phrased with respect to the language $ \mathscr{L} $.  If $S_1, \dots, S_m$ are new predicate or function symbols, write $ \mathscr{L}(S_1, \dots, S_m):= \mathscr{L} \ \dot{\cup} \ \{S_1, \dots, S_m\} $, and if an $ \mathscr{L} $-structure $ A $ can be expanded to an $  \mathscr{L}(S_1, \dots, S_m) $-structure, write  $ A(S_1(A), \dots, S_m(A)) $ for the resulting expansion.

\subsection{\texorpdfstring{The theories $ T_n $, $ T_{n,1} $, and $ T_{n,2} $}{The theories Tₙ, Tₙ,₁ and Tₙ,₂}}

\begin{lemma}\label{L.MOD_TH.rk_n_first-order}
		There exists an $ \mathscr{L} $-sentence $ \phi_{\emph{rk}=n} $ such that for all reduced local rings $ A $, $ A \models \phi_{\emph{rk}= n} $ if and only if $ A $ has rank $ n $. 
\end{lemma}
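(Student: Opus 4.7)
The plan is to exploit the purely algebraic characterisation of rank for reduced local rings given in Lemma \ref{L.SV.ann_intersection_of_min_prime_id.ii.a}, which states that for a reduced local ring $A$,
\[
\text{rk}(A) = \sup \{m \in \N \mid \exists a_1,\dots,a_m \in A \text{ non-zero and pairwise orthogonal}\}.
\]
The right-hand side is manifestly elementary: being non-zero is expressed by $x \neq 0$ and orthogonality of $x_i$ and $x_j$ is expressed by the atomic formula $x_i \cdot x_j = 0$. So the main work is to package this into a single sentence.

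Concretely, for each $k \in \N^{\geq 1}$ I would define the $\mathscr{L}$-sentence
\[
\psi_k \; := \; \exists x_1 \cdots \exists x_k \left( \bigwedge_{i=1}^{k} x_i \neq 0 \;\wedge\; \bigwedge_{1 \leq i < j \leq k} x_i \cdot x_j = 0 \right),
\]
and then set
\[
\phi_{\text{rk}=n} \; := \; \psi_n \;\wedge\; \neg \psi_{n+1}.
\]
For a reduced local ring $A$, the characterisation above gives $\text{rk}(A) \geq k$ if and only if $A \models \psi_k$, so $\text{rk}(A) = n$ if and only if $A \models \psi_n$ and $A \not\models \psi_{n+1}$, i.e., if and only if $A \models \phi_{\text{rk}=n}$. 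There is no serious obstacle here; the only subtlety is that the equivalence of rank with the supremum of orthogonal tuples uses the hypothesis that $A$ is reduced and local (this is precisely Lemma \ref{L.SV.ann_intersection_of_min_prime_id.ii.a}), so the statement of the present lemma is restricted to that class of rings accordingly.
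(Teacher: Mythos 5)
Your proof is correct and uses the same key ingredient as the paper, namely Lemma \ref{L.SV.ann_intersection_of_min_prime_id.ii.a}, which identifies $\text{rk}(A)$ with the supremum of lengths of nonzero pairwise orthogonal tuples for reduced local $A$. The sentence you write down, $\psi_n \wedge \neg\psi_{n+1}$, is the clean and correct way to package this: $\psi_k$ captures $\text{rk}(A) \geq k$, so the conjunction captures $\text{rk}(A) = n$ exactly.

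Worth noting: the paper instead paraphrases the sentence as \enquote{there exist nonzero pairwise orthogonal $a_1, \dots, a_n$ such that no nonzero $b$ distinct from the $a_i$ is orthogonal to all $a_i$}, attempting to express both the lower and upper bound inside a single existential block over $a_1, \dots, a_n$. Read literally, that version is actually too weak: in a reduced local ring of rank $m > n$ one can choose $n$ nonzero pairwise orthogonal elements whose annihilators already intersect in $(0)$ (for example, in $A = \prod_{\emph{\textbf{k}}}^3 V$ take $a_1 = (\eta, \eta, 0)$, $a_2 = (0,0,\eta)$ for $\eta \in \mathfrak{m}_V \setminus \{0\}$, so that no nonzero $b$ is orthogonal to both even though $\text{rk}(A) = 3$). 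Your two-part formulation $\psi_n \wedge \neg\psi_{n+1}$ sidesteps this pitfall by making the upper bound a genuine universal statement over all $(n+1)$-tuples, which is exactly what Lemma \ref{L.SV.ann_intersection_of_min_prime_id.ii.a} licenses. So your version is not only correct but more careful than the paper's phrasing.
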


\begin{proof}
By Lemma \ref{L.SV.ann_intersection_of_min_prime_id.ii.a} one may define $ \phi_{\text{rk}=n} $ to be the $ \mathscr{L} $-sentence expressing the following statement about $ A $: \enquote{there exist non-zero pairwise orthogonal  elements $ a_1, \dots, a_n \in A $ such that if  $ b\in A $ is a non-zero element distinct from all the $ a_i $, then  $ b $ is not orthogonal to some $ a_i$}.
\end{proof}

\begin{lemma}\label{L.MOD_TH.SV_first-order}
	There exists an $ \mathscr{L} $-sentence $ \phi_{\emph{SV},n} $ such that for all reduced local rings $ A $ of rank $ n  $, $ A \models \phi_{\emph{SV}, n} $ if and only if $ A $ is an SV-ring; moreover, $ \phi_{\emph{SV},n} $ can be chosen to be a universal sentence in the language $ \mathscr{L}(\emph{div}) $, where $ \emph{div} $ is a binary predicate interpreted as divisibility.
\end{lemma}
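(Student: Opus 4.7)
The plan is to exploit the annihilator description of minimal primes in reduced local rings of rank $n$ to recast the SV property as a universal statement. By Theorem \ref{T.SV.equiv_SV-ring.min_prime}, $A$ is an SV-ring if and only if $A/\mathfrak{p}$ is a valuation ring for each $\mathfrak{p}\in\text{Spec}^{\text{min}}(A)$; by Corollary \ref{C.SV.min_prime_ann} (applied with $B=A$), if $A$ is a reduced local ring of rank $n$ and $a_1,\dots,a_n\in A$ are any non-zero pairwise orthogonal elements, then $\text{Spec}^{\text{min}}(A)=\{\text{Ann}(a_i) : i\in[n]\}$. Thus, once such a tuple has been chosen, the SV property reduces to a condition on the $n$ explicit quotient domains $A/\text{Ann}(a_i)$.

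A domain $D$ is a valuation ring precisely when divisibility on $D$ is total: for all $x,y\in D$, either $x\mid y$ or $y\mid x$. Pulling this criterion back through $A\lonto A/\text{Ann}(a_i)$, it says that for every $b,c\in A$ there exists $d\in A$ with $a_i(bd-c)=0$ or there exists $d\in A$ with $a_i(cd-b)=0$, which in the language $\mathscr{L}(\text{div})$ is the \emph{atomic} disjunction $\text{div}(a_ib,a_ic)\vee\text{div}(a_ic,a_ib)$. I would therefore take $\phi_{\text{SV},n}$ to be the $\mathscr{L}(\text{div})$-sentence
\[
\forall a_1,\dots,a_n,b,c\,\left[\Bigl(\bigwedge_{1\leq i<j\leq n}a_ia_j=0\,\wedge\,\bigwedge_{i=1}^n a_i\neq 0\Bigr)\;\rightarrow\;\bigwedge_{i=1}^n\bigl(\text{div}(a_ib,a_ic)\vee\text{div}(a_ic,a_ib)\bigr)\right],
\]
which is manifestly universal.

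The claimed equivalence is now routine. If $A$ is a reduced local SV-ring of rank $n$, then any pairwise orthogonal non-zero tuple of length $n$ realises the minimal primes as $\text{Ann}(a_i)$ by Corollary \ref{C.SV.min_prime_ann}, the quotients $A/\text{Ann}(a_i)$ are valuation rings by hypothesis, and the divisibility disjunction follows, so $A\models\phi_{\text{SV},n}$. Conversely, if $A$ is a reduced local ring of rank $n$ with $A\models\phi_{\text{SV},n}$, then Lemma \ref{L.SV.ann_intersection_of_min_prime_id.ii.a} supplies non-zero pairwise orthogonal $a_1,\dots,a_n\in A$, for which the universal sentence forces total divisibility in each $A/\text{Ann}(a_i)$; since these quotients exhaust $\text{Spec}^{\text{min}}(A)$, $A$ is an SV-ring by Theorem \ref{T.SV.equiv_SV-ring.min_prime}.

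No genuine obstacle arises; the key observation making the argument go through is that the rank-$n$ hypothesis lets one dispense with existential quantification over minimal prime ideals and replace it by a universal quantification over $n$-tuples of pairwise orthogonal elements, while the valuation-ring criterion itself is quantifier-free in $\mathscr{L}(\text{div})$.
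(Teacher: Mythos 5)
Your proof is correct and follows essentially the same approach as the paper: both reduce the SV condition, via the annihilator description of minimal primes (Lemma \ref{L.SV.ann_intersection_of_min_prime_id.ii.b} / Corollary \ref{C.SV.min_prime_ann}) and Theorem \ref{T.SV.equiv_SV-ring.min_prime}, to total divisibility in the quotients $A/\text{Ann}(a_i)$, and then pull that condition back into $A$ as the atomic disjunction $\text{div}(a_ib,a_ic)\vee\text{div}(a_ic,a_ib)$, quantifying universally over non-zero pairwise orthogonal $n$-tuples. The only difference is cosmetic: you display the universal $\mathscr{L}(\text{div})$-sentence explicitly, whereas the paper describes it in prose.
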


\begin{proof}
	Let $ a_1, \dots, a_n \in A$ be any non-zero pairwise orthogonal elements, so that $ \text{Spec}^{\text{min}}(A)= \{ \text{Ann}_A(a_i) \mid i \in [n] \}  $ by Lemma \ref{L.SV.ann_intersection_of_min_prime_id.ii.b}. By the implication (i) $ \Ra $  (ii) in Theorem \ref{T.SV.equiv_SV-ring}, $ A  $ is an SV-ring if and only if for all $ b, c \in A $ and for all $ i \in [n] $,  $ b/\text{Ann}(a_i) $ divides  $ c/\text{Ann}(a_i) $ or $ c/\text{Ann}(a_i) $  divides $ b/\text{Ann}(a_i) $ in $ A/\text{Ann}_A(a_i) $; moreover, 
	\[	A/\text{Ann}_A(a_i)  \models \text{div}( b/\text{Ann}(a_i) ,  c/\text{Ann}(a_i) )\iff A \models \exists x [(bx-c)a_i = 0] \iff  A \models \text{div}(ba_i, c a_i)
	,\] therefore one may define $ \phi_{\text{SV},n} $ to be the $ \mathscr{L} $-sentence expressing the following statement about $ A  $: \enquote{for all non-zero pairwise orthogonal  $ a_1, \dots, a_n\in A $ and for all  $ b, c \in A $, either  $ ba_i $ divides $ ca_i $ or $ ca_i $ divides $ ba_i $}.	
\end{proof}

\begin{definition}\label{D.MOD_TH.T_n}
	Let $ T_n $ to be the $ \mathscr{L} $-theory of local real closed SV-rings of rank $ n $; explicitly, an axiomatization for $ T_n $ consists of the $ \mathscr{L} $-axioms for local real closed rings (\cite[5, 9]{prestel.schwartz/mod_th_rcr}) together with the sentences $ \phi_{\text{rk}=n} $ and $ \phi_{\text{SV},n} $ defined in Lemmas \ref{L.MOD_TH.rk_n_first-order} and \ref{L.MOD_TH.SV_first-order}, respectively.
\end{definition}

\begin{definition}\label{D.MODTH.phi_br,n}
Let $ \phi_{\text{br}, n} $ to be the $ \mathscr{L}$-sentence expressing the following statement about a ring $ A $: \enquote{$ \text{Ann}(a_i)+ \text{Ann}(a_j) = \text{Ann}(a_k)+ \text{Ann}(a_{\ell})$ for all pairwise orthogonal non-zero elements $ a_1, \dots, a_n \in A $ and for all $ i, j , k ,\ell \in [n] $ such that $ i \neq j $ and $ k \neq \ell $.}
\end{definition}

\begin{lemma}\label{L.MODTH.phi_br,n}
The following are equivalent for all local real closed rings  $ A $ of rank $ n$:
\begin{enumerate}[\normalfont(i)]
\item 	$ A \models \phi_{\emph{br}, n}$.
\item  $ A  $ has exactly one branching ideal.
\end{enumerate}
\end{lemma}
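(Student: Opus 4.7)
The plan is to reduce the claim to a statement about sums of pairs of minimal primes, using two earlier results: the translation of pairwise orthogonal elements into minimal prime ideals via annihilators (Lemma \ref{L.SV.ann_intersection_of_min_prime_id.iii}), and the characterization of branching ideals in local real closed rings of finite rank as sums of pairs of distinct minimal primes (Remark \ref{R.RCSVR.branch_1}).

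First I would note that a local real closed ring $A$ of rank $n \geq 2$ is reduced (Definition \ref{D.RCSVR.rcr}) and has exactly $n$ minimal prime ideals; by Lemma \ref{L.SV.ann_intersection_of_min_prime_id.iii}, there exist non-zero pairwise orthogonal elements $a_1, \dots, a_n \in A$, and for any such tuple we have $\emph{Spec}^{\emph{min}}(A) = \{\emph{Ann}(a_i) \mid i \in [n]\}$. Consequently, $\phi_{\emph{br},n}$ is equivalent to the following assertion: for every enumeration $\mathfrak{p}_1, \dots, \mathfrak{p}_n$ of $\emph{Spec}^{\emph{min}}(A)$ and for all $i, j, k, \ell \in [n]$ with $i \neq j$ and $k \neq \ell$, one has $\mathfrak{p}_i + \mathfrak{p}_j = \mathfrak{p}_k + \mathfrak{p}_\ell$.

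Next I would invoke Remark \ref{R.RCSVR.branch_1}, which identifies the branching ideals of $A$ precisely with the ideals of the form $\mathfrak{p}_i + \mathfrak{p}_j$ for distinct $\mathfrak{p}_i, \mathfrak{p}_j \in \emph{Spec}^{\emph{min}}(A)$. Since $\emph{rk}(A) = n \geq 2$, Remark \ref{R.RCSVR.branch_0.ii} guarantees that $A$ has at least one branching ideal, so ``exactly one branching ideal'' simply means that all sums $\mathfrak{p}_i + \mathfrak{p}_j$ (with $i \neq j$) coincide.

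Both implications then fall out immediately. For (i) $\Rightarrow$ (ii), fix non-zero pairwise orthogonal $a_1, \dots, a_n \in A$; by the reformulation above, all the ideals $\emph{Ann}(a_i) + \emph{Ann}(a_j) = \mathfrak{p}_i + \mathfrak{p}_j$ ($i \neq j$) coincide, and these exhaust the branching ideals of $A$. For (ii) $\Rightarrow$ (i), pick any non-zero pairwise orthogonal $a_1, \dots, a_n \in A$; each $\emph{Ann}(a_i) + \emph{Ann}(a_j)$ with $i \neq j$ is a branching ideal of $A$, and since there is only one such ideal, all these sums agree, so $\phi_{\emph{br},n}$ holds. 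There is no substantive obstacle here; the entire argument is a bookkeeping exercise that consists of stringing together Lemma \ref{L.SV.ann_intersection_of_min_prime_id.iii}, Remark \ref{R.RCSVR.branch_0.ii}, and Remark \ref{R.RCSVR.branch_1}.
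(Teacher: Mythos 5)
Your proof is correct and follows essentially the same route as the paper's one-line proof, which cites the equivalence (i) $\Leftrightarrow$ (ii) of Lemma \ref{L.RCSVR.equiv_loc_rcsvr_rk_n_exactly_one_branching_ideal} together with Lemma \ref{L.SV.ann_intersection_of_min_prime_id.ii.b}. The one substantive difference is that you bypass Lemma \ref{L.RCSVR.equiv_loc_rcsvr_rk_n_exactly_one_branching_ideal} (which is stated under an SV hypothesis not present in the statement being proved) and instead invoke the underlying facts directly, namely Remark \ref{R.RCSVR.branch_1} (branching ideals in a local real closed ring of finite rank are exactly the sums $\mathfrak{p}_i+\mathfrak{p}_j$ of distinct minimal primes) and Remark \ref{R.RCSVR.branch_0.ii} (at least one branching ideal exists once the rank is at least $2$). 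This is actually the cleaner citation pattern, since Lemma \ref{L.MODTH.phi_br,n} is stated for arbitrary local real closed rings of rank $n$, and the content of the equivalence (i) $\Leftrightarrow$ (ii) in Lemma \ref{L.RCSVR.equiv_loc_rcsvr_rk_n_exactly_one_branching_ideal} does not in fact require the SV assumption---which is precisely what your argument demonstrates by routing through the Remarks instead.
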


\begin{proof}
By the equivalence (i) $ \Leftrightarrow $  (ii) in Lemma \ref{L.RCSVR.equiv_loc_rcsvr_rk_n_exactly_one_branching_ideal} together with Lemma \ref{L.SV.ann_intersection_of_min_prime_id.ii.b}.
\end{proof}

\begin{definition}
\begin{enumerate}[\normalfont(I)]
	\item Let $ T_{n,1} $ be the $ \mathscr{L} $-theory $ T_n $ together with  $ \phi_{\text{br},n} $ and the $ \mathscr{L} $-sentence expressing the following statement about a ring: \enquote{every unit is a sum of two zero divisors}.
\item Let $ T_{n,2} $ be the $ \mathscr{L} $-theory  $ T_n $ together with $ \phi_{\text{br},n} $ and the $ \mathscr{L} $-sentence expressing the following statement about a ring: \enquote{there exists a unit which is not a sum of two zero divisors}.
\end{enumerate}
\end{definition}

\begin{lemma}
$ A \models T_{n,1} $ if and only if $ A $ is a ring of type $ (n, 1) $ and 	$ A \models T_{n,2} $ if and only if $ A $ is a ring of type $ (n, 2) $.
\end{lemma}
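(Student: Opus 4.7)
The plan is to combine Lemma \ref{L.MODTH.phi_br,n} with the equivalence (i) $\Leftrightarrow$ (v) in Proposition \ref{P.RCSVR.equiv_branching_max_id}. Both $T_{n,1}$ and $T_{n,2}$ extend $T_n \cup \{\phi_{\text{br},n}\}$, so by Definition \ref{D.MOD_TH.T_n} and Lemma \ref{L.MODTH.phi_br,n} any model $A$ of either theory is a local real closed SV-ring of rank $n$ with exactly one branching ideal $\mathfrak{b}_A$; conversely, every ring of type $(n,1)$ or $(n,2)$ is such a model (Definition \ref{D.RCSVR.type}). It therefore remains only to show that, within this class of rings, the additional axiom in $T_{n,1}$ isolates those with $\mathfrak{b}_A = \mathfrak{m}_A$ while that in $T_{n,2}$ isolates those with $\mathfrak{b}_A \subsetneq \mathfrak{m}_A$.

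The additional axiom in $T_{n,1}$ is (in the natural reading compatible with it having any models at all) precisely condition (v) of Proposition \ref{P.RCSVR.equiv_branching_max_id}: every non-unit of $A$ is a sum of two zero divisors. By the equivalence (i) $\Leftrightarrow$ (v) there, this holds if and only if $\mathfrak{m}_A$ is itself a branching ideal of $A$. Since $A$ has a unique branching ideal $\mathfrak{b}_A \subseteq \mathfrak{m}_A$, the condition that $\mathfrak{m}_A$ is a branching ideal is equivalent to $\mathfrak{b}_A = \mathfrak{m}_A$. The additional axiom in $T_{n,2}$ is the negation of this existential statement, and by the same equivalence it corresponds to $\mathfrak{m}_A$ not being a branching ideal, i.e.\ to $\mathfrak{b}_A \subsetneq \mathfrak{m}_A$.

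Putting these two steps together yields the claim: $A \models T_{n,1}$ iff $A$ is a local real closed SV-ring of rank $n$ with a unique branching ideal equal to $\mathfrak{m}_A$, which is the definition of type $(n,1)$, and $A \models T_{n,2}$ iff $A$ is such a ring whose unique branching ideal is strictly contained in $\mathfrak{m}_A$, which is the definition of type $(n,2)$. There is no genuine obstacle in this proof: the first-order expressibility of rank $n$ and of the SV property were handled in Lemmas \ref{L.MOD_TH.rk_n_first-order} and \ref{L.MOD_TH.SV_first-order}, that of \enquote{having exactly one branching ideal} was handled in Lemma \ref{L.MODTH.phi_br,n}, and the algebraic content distinguishing the two types is exactly the equivalence (i) $\Leftrightarrow$ (v) of Proposition \ref{P.RCSVR.equiv_branching_max_id}, so the lemma is a direct bookkeeping consequence of these earlier results.
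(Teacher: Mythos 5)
Your proof is correct and is essentially the paper's: the paper's own argument is the one-line "By Lemma \ref{L.MODTH.phi_br,n} together with the equivalence (i) $\Leftrightarrow$ (v) in Proposition \ref{P.RCSVR.equiv_branching_max_id}." You also rightly flag, in passing, that the extra axioms for $T_{n,1}$ and $T_{n,2}$ must be read as quantifying over \emph{non-units} rather than units (as the text literally says, which would leave $T_{n,1}$ with no models); that reading is forced by Proposition \ref{P.RCSVR.equiv_branching_max_id} (v) and is evidently a typo in the paper.
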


\begin{proof}
	By Lemma \ref{L.MODTH.phi_br,n}	together with the equivalence (i) $ \Leftrightarrow $  (v) in Proposition \ref{P.RCSVR.equiv_branching_max_id}.
\end{proof}

\begin{remark}\label{R.MODTH.def_branch_id}
	\begin{enumerate}[\normalfont(i), ref=\ref{R.MODTH.def_branch_id} (\roman*)]
		\item\label{R.MODTH.def_branch_id.i} 	By \cite[5, 9]{prestel.schwartz/mod_th_rcr}, the $ \mathscr{L} $-theory of real closed rings has a recursive axiomatization; in particular, both of the theories $ T_{n,1} $ and $ T_{n,2} $ also have recursive axiomatizations. 
		\item	By Lemma \ref{L.SV.ann_intersection_of_min_prime_id.ii.b}, every minimal prime ideal is parametrically definable in models of $ T_{n,1} $ and in models of $ T_{n,2} $, but minimal prime ideals cannot be defined without parameters in models of either of these theories: for instance,  if $ V $ is a non-trivial real closed valuation ring with residue field $ \emph{\textbf{k}} $, then $ V\times_{\emph{\textbf{k}}} V \models T_{2,1} $ and the map $ V\times_{\emph{\textbf{k}}} V \lra  V\times_{\emph{\textbf{k}}} V $ given by $ (a_1, a_2) \mapsto (a_2, a_1) $ is an automorphism which swaps the two minimal prime ideals.
		\item\label{R.MODTH.def_branch_id.ii} If $ A \models T_{n,1} $, then $ \mathfrak{b}_A = \mathfrak{m}_A $ and thus the branching ideal $ \mathfrak{b}_A $ is definable without parameters. If  $ A \models T_{n,2} $, then $ \mathfrak{b}_A \subsetneq \mathfrak{m}_A $ and the branching ideal $ \mathfrak{b}_A  $ is also definable  without parameters by the formula expressing the following about an element $ a \in A $: \enquote{$ a \in \text{Ann}(a_i)+\text{Ann}(a_j) $ for all non-zero pairwise orthogonal   $ a_1,\dots, a_n \in A$ and for all $ i , j \in [n] $ such that $ i \neq j $} (see  Lemma \ref{L.SV.ann_intersection_of_min_prime_id.ii.b} and Lemma \ref{L.RCSVR.equiv_loc_rcsvr_rk_n_exactly_one_branching_ideal.a}).
	\end{enumerate}
\end{remark}

\subsection{Model completeness}\label{SUBSEC.mod_compl}

The core of this section consists of Theorems \ref{T.MODTH.T_1n_mod_compl} and \ref{T.MODTH.T_2n_mod_compl}. The first key idea for these model completeness results is that if $ A \subseteq B $ is an embedding  (resp. a good embedding, see Definition \ref{D.RCSVR.good}) of rings of type $ (n, 1) $ (resp. of rings of type $ (n,2) $), then the corresponding embeddings $ A_i \subseteq B_i $  of non-trivial real closed valuation rings (see Convention \ref{C.RCSVR.factors}) are elementary in suitable expansions of the language of rings (Proposition \ref{P.MODTH.RCVR_mod_compl}); the second key idea is using the first key idea together with Lemma \ref{L.MODTH.qfformula_factors} to show that under some hypotheses, $ A $ is existentially closed in $ B $  (Lemma \ref{L.MODTH.1_embedd_imm} and Lemma \ref{L.MODTH.2_embedd_imm}), and then put everything together using Lemma \ref{L.RCSVR.1_square} and Lemma \ref{L.RCSVR.2_square}. First, some preliminaries are needed:

\begin{lemma}\label{L.MOD_TH.embedd_rcvr}
If $ V  $ and  $ W $ are valuation rings such that $ V \subseteq W $, then $ V \subseteq W $ as  $ \mathscr{L}(\emph{\texttt{m}}) $-structures if and only if $ V \subseteq W $ as $ \mathscr{L}( \emph{div}) $-structures, where $ \emph{\texttt{m}} $ is a unary predicate interpreted as the maximal ideal and $ \emph{div} $ is a binary  predicate interpreted as the divisibility relation.
\end{lemma}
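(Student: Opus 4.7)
The plan is to unpack both embedding conditions and verify the two implications directly. Being an $\mathscr{L}(\texttt{m})$-embedding amounts to the locality condition $\mathfrak{m}_V = V \cap \mathfrak{m}_W$, while being an $\mathscr{L}(\text{div})$-embedding amounts to: for all $a, b \in V$, $a$ divides $b$ in $V$ if and only if $a$ divides $b$ in $W$.

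For the forward implication, the only nontrivial task is showing that if $b = ac$ holds for some $c \in W$ with $a, b \in V$, then such a $c$ can be found in $V$. The idea will be to form $c = b/a$ inside $\text{qf}(V) \subseteq \text{qf}(W)$ (after disposing of the trivial case $a = 0$) and then use the valuation-ring dichotomy in $V$: either $c \in V$, in which case we are done, or $c^{-1} \in V$ is a non-unit of $V$. In the latter case, the assumption $\mathfrak{m}_V = V \cap \mathfrak{m}_W$ forces $c^{-1} \in \mathfrak{m}_W$, which contradicts the fact that $c \in W$ makes $c^{-1}$ a unit in $W$.

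For the reverse implication, the strategy is to express membership in $\mathfrak{m}_U$ (for any valuation ring $U$) purely in divisibility terms, namely $u \in \mathfrak{m}_U$ if and only if $u \nmid 1$ in $U$, using that $U$ is local. Applied to $U = V$ and $U = W$, the preservation of divisibility then translates immediately into $\mathfrak{m}_V = V \cap \mathfrak{m}_W$.

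No substantive obstacle is expected; the whole argument is essentially a translation exercise between the two standard presentations of a valuation-ring embedding, with the valuation-ring axioms doing all the work.
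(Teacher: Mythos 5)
Your proposal is correct and follows essentially the same route as the paper's proof: the nontrivial direction (locality implies preservation of divisibility) in both cases boils down to invoking the valuation-ring dichotomy on $c = b/a$ (the paper applies it as the statement "$a\mid b$ or $b\mid a$ in $V$" and then solves for $c^{-1}$, while you apply it directly to $c \in \operatorname{qf}(V)$, a cosmetic rephrasing) and then deriving the same contradiction from $c^{-1}\in\mathfrak{m}_V=V\cap\mathfrak{m}_W$ together with $c\in W$. The converse direction, reading $\mathfrak{m}_U$ as $\{u : u\nmid 1\}$, is likewise identical.
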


\begin{proof}
	If $V \subseteq W $ as $ \mathscr{L}(\text{div}) $-structures and $ a \in \mathfrak{m}_V $, then $ V \models \neg \text{div}(a, 1) $, hence $ W \models \neg \text{div}(a, 1) $ and thus $ V \cap \mathfrak{m}_W = \mathfrak{m}_V $. Suppose now that $ V \subseteq W $ as $ \mathscr{L}(\texttt{m}) $-structures and let $ a, b \in V $ be such that  $ W \models \text{div}(a, b) $, so that there exists $ c \in W $ such that  $ ac =b $, and assume for contradiction that  $ V \models \neg \text{div}(a,b) $; in particular, $ b \neq 0 $ and $ c \notin V $. Since $ V  $ is a valuation ring, $ V \models \text{div}(b,a) $, therefore there exists $ d\in V $ such that  $ bd=a $; in particular,  $ ac=bdc=b $, hence  $b(1-dc)=0  $, therefore $ 1=dc $  and thus  $ c^{-1}= d\in V $, so $ c^{-1} \in \mathfrak{m}_V = \mathfrak{m}_W \cap V$, a contradiction to $ c, c^{-1}\in W $.
\end{proof}

\begin{remark}\label{R.MODTH.div_embedd_local}
	The proof of Lemma	\ref{L.MOD_TH.embedd_rcvr} also shows that if $ A $ and $ B $ are local rings such that $ A \subseteq B $ as  $ \mathscr{L}(\text{div}) $-structures, then $ A\subseteq B $ as  $ \mathscr{L}(\texttt{m}) $-structures, i.e., $ A\subseteq B $ is a local embedding.
\end{remark}

In \cite{cherlin/dickmann.rcrII} the model theory of non-trivial real closed rings is studied in the languages $ \mathscr{L}(\leq) $ and $ \mathscr{L}(\leq, \text{div}) $; in particular, since the class of non-trivial real closed valuation rings is elementary in the language $ \mathscr{L}(\leq) $ and the total order  $ x \leq y $ in structures of this class is defined by the existential formula $ \exists z [z= (y-x)^2] $ (Theorem \ref{T.RCSVR.equiv_rcvr}), the class of non-trivial real closed valuation rings is also elementary in the language of rings.

\begin{proposition}\label{P.MODTH.RCVR_mod_compl}
Let $ \sf{RCVR} $ be the $ \mathscr{L} $-theory of non-trivial real closed valuation rings, and $ \emph{\texttt{p}} $ and $ \emph{\texttt{m}} $  be unary predicate symbols.
\begin{enumerate}[\normalfont(i), ref=\ref{P.MODTH.RCVR_mod_compl} (\roman*)]
	\item\label{P.MODTH.RCVR_mod_compl.i} Define 	$ \sf{RCVR}(\emph{\texttt{m}}) $ to be the $ \mathscr{L}(\emph{\texttt{m}}) $-theory $ \sf{RCVR} $ together with the sentence expressing  that $ \emph{\texttt{m}} $ is the set of non-units. Then $ \sf{RCVR}(\emph{\texttt{m}}) $ is complete and model complete.
	\item\label{P.MODTH.RCVR_mod_compl.ii} Define 	$ \sf{RCVR}(\emph{\texttt{b}},\emph{\texttt{m}}) $ to be the $ \mathscr{L}(\emph{\texttt{b}}, \emph{\texttt{m}}) $-theory $ \sf{RCVR}(\emph{\texttt{m}}) $ together with the sentence expressing that $ \emph{\texttt{b}} $ is a non-zero prime ideal properly contained in $ \emph{\texttt{m}} $. Then $ \sf{RCVR}(\emph{\texttt{b}}, \emph{\texttt{m}}) $ is complete and model complete.
\end{enumerate}
\end{proposition}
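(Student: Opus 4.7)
The plan for both parts is to reduce to model-theoretic properties of real closed valued fields via the canonical identification of a non-trivial real closed valuation ring $V$ with the valuation ring of an order-compatible valuation on the real closed field $\text{qf}(V)$ (Theorem \ref{T.RCSVR.equiv_rcvr}). In both parts the scheme is the same: convert the given $\mathscr{L}(\texttt{m})$- or $\mathscr{L}(\texttt{b},\texttt{m})$-embedding into an embedding of a suitably enriched valued-field structure, apply a model completeness result for that enriched theory, and then relativize back down to the ring.

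For part (i), I would begin by invoking Lemma \ref{L.MOD_TH.embedd_rcvr} to identify $\mathscr{L}(\texttt{m})$-embeddings between non-trivial real closed valuation rings with $\mathscr{L}(\text{div})$-embeddings, and hence lift them canonically to embeddings of the associated real closed valued fields $(\text{qf}(V), V) \hookrightarrow (\text{qf}(W), W)$. Since the theory of real closed valued fields is model complete in the natural language with a predicate (or equivalently a divisibility relation) for the valuation ring, which is essentially the Cherlin--Dickmann result \cite{cherlin/dickmann.rcrII}, such an embedding is elementary; relativizing any $\mathscr{L}$-formula on $V$ to the corresponding formula about the definable subset $V \subseteq \text{qf}(V)$ then yields $V \preceq W$ as $\mathscr{L}$-structures, and hence as $\mathscr{L}(\texttt{m})$-structures because $\mathfrak{m}$ is $\mathscr{L}$-definable in any local ring. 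Completeness of $\sf{RCVR}(\texttt{m})$ is then immediate from the completeness of the $\mathscr{L}$-theory of non-trivial real closed valuation rings recalled in the introduction, combined with the $\mathscr{L}$-definability of $\texttt{m}$.

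For part (ii) the same strategy applies after packaging a model $(V, \texttt{b}(V))$ of $\sf{RCVR}(\texttt{b},\texttt{m})$ as a real closed field with two nested proper convex subrings. Concretely, the localization $V_{\mathfrak{b}_V}$ at the prime $\mathfrak{b}_V := \texttt{b}(V)$ is a non-trivial real closed valuation ring properly containing $V$, convex in $\text{qf}(V)$, with maximal ideal $\mathfrak{b}_V V_{\mathfrak{b}_V}$ contracting to $\mathfrak{b}_V$; this gives a chain $V \subsetneq V_{\mathfrak{b}_V} \subsetneq \text{qf}(V)$. An $\mathscr{L}(\texttt{b},\texttt{m})$-embedding $(V, \texttt{b}(V)) \hookrightarrow (W, \texttt{b}(W))$ lifts through the functoriality of localization to an embedding of such three-layered structures, compatible with the divisibility predicate at each level. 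Granting model completeness and completeness of the resulting theory of real closed fields with two nested convex subrings, relativization back to the bottom ring $V$ transfers these properties to $\sf{RCVR}(\texttt{b},\texttt{m})$.

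The main technical obstacle is precisely the model completeness and completeness of the ambient two-nested-convex-subring theory used in part (ii): the one-step version is Cherlin--Dickmann, but the two-step version does not follow verbatim from any single cited result. I expect to obtain it by iterating the Cherlin--Dickmann back-and-forth along the value groups and residue fields at both levels, using the Hahn-series embeddings provided by Proposition \ref{P.RCSVR.rcvr_embedd_nice} and Theorem \ref{T.APP.rcvf_hahn_embedd_II} to realize partial isomorphisms in sufficiently saturated models; given the Hahn-series machinery already developed in Subsection \ref{SUBSEC.prelim_rcr}, a self-contained proof along these lines looks quite tractable.
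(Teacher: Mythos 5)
Your outline matches the paper's: part (i) is settled by Cherlin--Dickmann plus Lemma \ref{L.MOD_TH.embedd_rcvr}, and part (ii) by passing from $(V, \mathfrak{b}_V, \mathfrak{m}_V)$ to the real closed field $(\mathrm{qf}(V), V, V_{\mathfrak{b}_V})$ with two nested proper convex subrings and transferring completeness and model completeness back through the interpretation. The one genuine gap is in part (ii): you identify ``model completeness and completeness of the theory of real closed fields with two nested convex subrings'' as the main obstacle and propose to establish it yourself by iterating a Cherlin--Dickmann back-and-forth along both levels of value groups and residue fields, using the Hahn-series embeddings of Proposition \ref{P.RCSVR.rcvr_embedd_nice} and Theorem \ref{T.APP.rcvf_hahn_embedd_II}. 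That sketch is not carried out, and the details of a two-level back-and-forth (joint embedding for completeness, the inductive step of the back-and-forth for model completeness, and the bookkeeping of when partial isomorphisms extend compatibly at both levels) are not automatic. In fact no new proof is required: the paper simply cites \cite[Corollary 6.3]{tressl.heirs}, which gives completeness and model completeness of $\mathsf{RCF}_{\mathrm{convex},n}$ for all $n$; once you invoke that, your reduction — that $(V, \mathfrak{b}_V, \mathfrak{m}_V)$ is definable in $(\mathrm{qf}(V), V, V_{\mathfrak{b}_V})$ and that $\mathscr{L}(\texttt{b},\texttt{m})$-embeddings of rings correspond to embeddings of the three-layer field structures — closes the argument cleanly, and the Hahn-series machinery is not needed at this step. (In part (i) you also take a mild detour through the valued field $(\mathrm{qf}(V),V)$ rather than quoting Theorems 4A and 4B of \cite{cherlin/dickmann.rcrII} directly for the ring with divisibility; both are correct and amount to the same application of Cherlin--Dickmann.)
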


\begin{proof}
	(i) follows from \cite[Theorems 4A and 4B]{cherlin/dickmann.rcrII} together with Lemma \ref{L.MOD_TH.embedd_rcvr}; (ii) follows from \cite[Corollary 6.3]{tressl.heirs}, since every model $ (V, \mathfrak{b}, \mathfrak{m}_V) \models \sf{RCVR}(\texttt{b},\texttt{m}) $ is definable in  $ (\text{qf}(V), V, V_{\mathfrak{b}}) \models \sf{RCF}_{\text{convex}, 2} $.
\end{proof}

\begin{lemma}\label{L.MODTH.amalg_rcvrs}
	Let $ V_1 $ and $ V_2  $ be non-trivial real closed valuation rings regarded as $ \mathscr{L}(\leq, \emph{\texttt{m}}) $-structures, where $ \leq  $ is a binary predicate interpreted as the total order relation and $ \emph{\texttt{m}} $ is a unary predicate interpreted as the maximal ideal. If $ A $ is a totally ordered valuation ring such that $A\subseteq V_1, V_2  $ as  $ \mathscr{L}(\leq, \emph{\texttt{m}}) $-structures, then there exists a non-trivial real closed valuation ring $W  $ amalgamating $ V_1 $ and $ V_2 $ over $ A  $ as $ \mathscr{L}(\leq, \emph{\texttt{m}}) $-structures.
\end{lemma}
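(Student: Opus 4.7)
The plan is to reduce to an amalgamation problem for real closed valued fields over a common ordered valued subfield, and to solve it via compatible Hahn-series embeddings.

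First I will unpack the hypothesis. Since $A \subseteq V_i$ as $\mathscr{L}(\leq, \texttt{m})$-structures, each inclusion $A \linto V_i$ is an order-preserving local embedding of valuation rings. As valuation rings of $\text{qf}(A)$ containing $A$ are in bijection with prime ideals of $A$ via localization, the condition $\mathfrak{m}_{V_i} \cap A = \mathfrak{m}_A$ forces $V_i \cap \text{qf}(A) = A$, so that $(\text{qf}(A), A)$ is a common ordered valued subfield of the non-trivial real closed valued fields $(\text{qf}(V_1), V_1)$ and $(\text{qf}(V_2), V_2)$.

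Next I will amalgamate the residue-field and value-group data. Set $\textbf{\emph{k}}_i := V_i/\mathfrak{m}_{V_i}$ and $\Gamma_i := \text{qf}(V_i)^{\times}/V_i^{\times}$; then $A/\mathfrak{m}_A$ embeds canonically into each $\textbf{\emph{k}}_i$ as an ordered field, and the value group associated to $A$ in $\text{qf}(A)$ embeds into each $\Gamma_i$ as an ordered abelian group (both statements being trivial in the degenerate case $A = \text{qf}(A)$). I will amalgamate $\textbf{\emph{k}}_1$ and $\textbf{\emph{k}}_2$ over $A/\mathfrak{m}_A$ in the category of ordered fields and replace the amalgam by a real closure $\textbf{\emph{k}}$, and analogously amalgamate $\Gamma_1$ and $\Gamma_2$ over the value group of $A$ in the category of ordered abelian groups and take a divisible hull $\Gamma$. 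Define $W := \textbf{\emph{k}}[[\Gamma]]$, which is a non-trivial real closed valuation ring by Theorem \ref{T.RCSVR.Hahn}.

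Finally, I will invoke Theorem \ref{T.APP.rcvf_hahn_embedd_II} to obtain compatible local embeddings $(\text{qf}(V_i), V_i) \linto (\textbf{\emph{k}}((\Gamma)), W)$ of real closed valued fields whose restrictions to $(\text{qf}(A), A)$ coincide, producing two local embeddings $V_1, V_2 \linto W$ that agree on $A$. By Lemma \ref{L.MOD_TH.embedd_rcvr} these translate to embeddings of $\mathscr{L}(\leq, \texttt{m})$-structures, so $W$ is the desired amalgam.

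The main obstacle is the compatibility of the two Hahn-series embeddings on $A$. Proposition \ref{P.RCSVR.rcvr_embedd_nice} deals only with a single chain $V \subseteq W$, so one needs the more flexible appendix result allowing prescribed residue-field and value-group targets to enforce that both embeddings extend the same embedding of $A$; the success of the plan rests on Theorem \ref{T.APP.rcvf_hahn_embedd_II} being tailored precisely for this purpose.
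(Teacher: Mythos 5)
Your route is genuinely different from the paper's. The paper's argument is a three-line model-theoretic citation: Lemma \ref{L.MOD_TH.embedd_rcvr} converts the $\mathscr{L}(\leq, \texttt{m})$-embeddings $A \subseteq V_1, V_2$ into $\mathscr{L}(\leq, \text{div})$-embeddings, then the Cherlin--Dickmann quantifier-elimination theorem for non-trivial real closed valuation rings in $\mathscr{L}(\leq, \text{div})$ together with Chang--Keisler Proposition 3.5.19 yields amalgamation over any common substructure, and Lemma \ref{L.MOD_TH.embedd_rcvr} translates back. Your proof instead builds an explicit amalgam $\textbf{\emph{k}}[[\Gamma]]$ from separate ordered-field and ordered-group amalgamations, which is constructively informative and indeed parallel in spirit to the Hahn-series apparatus used elsewhere in the paper (e.g.\ Proposition \ref{P.RCSVR.embedd_loc_rcr_fin_rank} and Lemmas \ref{L.RCSVR.1_square}, \ref{L.RCSVR.2_square}). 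The paper's route is much shorter and avoids the bookkeeping below; yours produces an explicit target field and makes the residue-field/value-group structure visible.

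There is however a genuine gap in the final step that you flag but do not resolve. Theorem \ref{T.APP.rcvf_hahn_embedd_II} is stated only for a single chain $K \subseteq L$: it produces a \emph{pair} $\epsilon_K$, $\epsilon_L$ with $\epsilon_{L\upharpoonright K}=\epsilon_K$, but nothing in the statement forces the $\epsilon_K$ obtained from one application (to $K \subseteq (\text{qf}(V_1), V_1)$) to coincide with the $\epsilon_K$ obtained from a second application (to $K \subseteq (\text{qf}(V_2), V_2)$). To repair this you must open the proof: first fix a monomial group $G$ and coefficient field $\textbf{\emph{k}}_0$ for $K$, apply Theorem \ref{T.APP.rcvf_hahn_embedd} once to pin down a single $\epsilon_K$, and then observe that the extension procedure in the proof of Theorem \ref{T.APP.rcvf_hahn_embedd_II} extends \emph{this given} $\epsilon_K$ to each $(\text{qf}(V_i),V_i)$ in turn; only then do the two composites with the inclusions $\textbf{\emph{k}}_i((\Gamma_i)) \linto \textbf{\emph{k}}((\Gamma))$ agree on the image of $K$. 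You should also note that Theorem \ref{T.APP.rcvf_hahn_embedd_II} requires $K$ real closed, so $K$ must be taken to be the real closure $\rho(\text{qf}(A), A)$ (identified inside both $\text{qf}(V_i)$ via uniqueness of real closure) rather than $(\text{qf}(A), A)$ itself; correspondingly the residue-field and value-group amalgamations are over $\rho(A/\mathfrak{m}_A)$ and the divisible hull of the value group of $A$, not over $A/\mathfrak{m}_A$ and the raw value group.
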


\begin{proof}
	By Lemma \ref{L.MOD_TH.embedd_rcvr}, $ A \subseteq V_1, V_2 $ as  $ \mathscr{L}(\leq, \text{div}) $-structures, therefore by the Cherlin-Dickmann theorem \cite[Section 2]{cherlin/dickmann.rcrII} and \cite[Proposition 3.5.19]{chang/keisler.model_theory} there exists a non-trivial real closed valuation ring $ W $ amalgamating $ V_1 $ and $ V_2 $ over $ A  $ as $ \mathscr{L}(\leq, \text{div}) $-structures; conclude by appealing to Lemma \ref{L.MOD_TH.embedd_rcvr} again.
\end{proof}

\begin{notation}\label{N.MODTH.tuple_factors}
	Let $ \{ A_i \}_{i \in I} $ be a non-empty family of rings such that there exists a ring $ B $ and surjective ring homomorphisms $ f_i : A_i \lonto B $ for all  $ i \in I $, and set $ A:= {\prod}_{B, i \in I}A_i $ (see Notation \ref{N.RCSVR.I-fold_fibr_prod}); in particular, $ A  $ is a subdirect product of $ \prod_{i \in I}A_i $ with canonical projection maps $ p_i : A \lonto A_i $ for all $ i \in I $.
	\begin{enumerate}[\normalfont(i)]
		\item If $ a\in A $, then write  $ a_i := p_i(a) $, and if  $ r \in \N^{\geq 2} $ and  $ \overline{a} \in A^r $, then write $ \overline{a}_i:= (a_{1 i}, \dots, a_{r i}) \in A_i^r$ for all $ i \in I $.
		\item If $ F(\overline{x}) \in A[\overline{x}] $, then write $ F_i(\overline{x}) $ for the polynomial in $ A_i[\overline{x}] $ obtained by replacing each coefficient  $ a \in A  $ appearing in $ F(\overline{x}) $ by $ a_i $.
	\end{enumerate}
\end{notation}

\begin{remark}\label{R.MODTH.formula_factors}
	Let $ \{ A_i \}_{i \in I} $ and $ B $ be as in 	Notation \ref{N.MODTH.tuple_factors} and set $ A:=  {\prod}_{B, i \in I}A_i $. If $ F(\overline{x}) \in A[x_1, \dots, x_r] $ and $ \overline{a} \in  A^r$, then $ F(\overline{a})_i = F_i(\overline{a}_i) $ for all $ i \in I $, therefore $ A \models F(\overline{a}) =0$ if and only if $A_i\models F_i(\overline{a}_i) =0$ for all $ i \in I $.
\end{remark}

\begin{lemma}\label{L.MODTH.qfformula_factors}	
	Let $ \{ A_i \}_{i \in I} $ and $ B $ be as in 	\emph{Notation \ref{N.MODTH.tuple_factors}} and set $ A:=  {\prod}_{B, i \in I}A_i $. Let $ \phi(x_1, \dots, x_r) $ be a quantifier-free $ \mathscr{L}$-formula with parameters from $S \subseteq A $, and $ \overline{a} \in A^r $ be such that $ A \models \phi(\overline{a}) $. There exist quantifier-free $ \mathscr{L}$-formulas $ \phi_{\overline{a}, i}(x_1, \dots, x_r) $ with parameters from $ S_i:= p_{i} (S) \subseteq A_i $ $( i \in I )$ such that 
\begin{enumerate}[\normalfont(i)]
\item $  A \models \phi(\overline{a})$ if and only if $ A_i \models \phi_{\overline{a}, i} (\overline{a}_i) $ for all $ i \in I $, and
\item if  $ \overline{a'} \in A^r $ is such that $ A_i \models \phi_{\overline{a}, i} (\overline{a'}_i) $ for all $ i \in I $, then $ A \models \phi(\overline{a'}) $.
\end{enumerate}
\end{lemma}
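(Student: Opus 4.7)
The plan is to reduce to atomic literals via disjunctive normal form and exploit Remark \ref{R.MODTH.formula_factors}, handling equalities ``globally'' across all coordinates and inequalities ``locally'' at chosen witnessing coordinates. First I would write $\phi$ in disjunctive normal form as
\[
\phi(\overline{x}) \ \equiv \ \bigvee_{k=1}^m \bigwedge_{j=1}^{n_k} \theta_{k,j}(\overline{x}),
\]
where each literal $\theta_{k,j}$ is of the form $F_{k,j}(\overline{x}) = 0$ or $F_{k,j}(\overline{x}) \neq 0$ for some $F_{k,j} \in A[\overline{x}]$ with coefficients in $S$. Since $A \models \phi(\overline{a})$, fix $k_0 \in [m]$ with $A \models \bigwedge_{j=1}^{n_{k_0}} \theta_{k_0, j}(\overline{a})$.

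Next I would exploit the asymmetry between the two types of literal: by Remark \ref{R.MODTH.formula_factors}, $A \models F(\overline{a}) = 0$ if and only if $A_i \models F_i(\overline{a}_i) = 0$ for \emph{every} $i \in I$, whereas $A \models F(\overline{a}) \neq 0$ if and only if $A_i \models F_i(\overline{a}_i) \neq 0$ for \emph{some} $i \in I$. For each inequality literal $\theta_{k_0, j}$ of the chosen disjunct, fix a witness index $i(j) \in I$ such that $A_{i(j)} \models F_{k_0, j, i(j)}(\overline{a}_{i(j)}) \neq 0$, and for each $i \in I$ set
\[
\phi_{\overline{a}, i}(\overline{x}) \ := \ \bigwedge_{\substack{j \leq n_{k_0}\\ \theta_{k_0,j}\text{ equality}}} F_{k_0, j, i}(\overline{x}) = 0 \ \wedge \bigwedge_{\substack{j \leq n_{k_0}\\ \theta_{k_0,j}\text{ inequality},\ i(j) = i}} F_{k_0, j, i}(\overline{x}) \neq 0,
\]
with the convention that an empty conjunction is read as $0 = 0$. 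The coefficients of $F_{k_0, j, i}$ are the images under $p_i$ of the coefficients of $F_{k_0, j}$, so each $\phi_{\overline{a}, i}$ is a quantifier-free $\mathscr{L}$-formula with parameters in $S_i = p_i(S)$.

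Finally, I would verify (i) and (ii) by a single uniform argument: given any $\overline{b} \in A^r$ with $A_i \models \phi_{\overline{a}, i}(\overline{b}_i)$ for every $i \in I$, each equality literal $F_{k_0, j}(\overline{b}) = 0$ of the selected disjunct holds in $A$ by coordinatewise gluing (Remark \ref{R.MODTH.formula_factors}), and each inequality literal $F_{k_0, j}(\overline{b}) \neq 0$ is witnessed at its designated index $i(j)$, so $A \models \bigwedge_j \theta_{k_0, j}(\overline{b})$ and hence $A \models \phi(\overline{b})$. Taking $\overline{b} := \overline{a}$ yields the reverse implication in (i) (the forward implication being immediate from the construction of the $\phi_{\overline{a}, i}$), while taking $\overline{b} := \overline{a'}$ yields (ii). There is no genuine obstacle; the only subtle point is the asymmetric behaviour of equalities and inequalities under the embedding $A \hookrightarrow \prod_{i \in I} A_i$, which is precisely what forces the formulas $\phi_{\overline{a}, i}$ to depend on $\overline{a}$ through the choice of the witness indices $i(j)$.
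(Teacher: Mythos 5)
Your proposal is correct and takes essentially the same approach as the paper: pass to a conjunction via DNF and a satisfied disjunct, keep every equality at every coordinate, and keep each inequality only at coordinates where $\overline{a}$ witnesses it, invoking Remark \ref{R.MODTH.formula_factors} for the gluing. The only cosmetic difference is that you fix a single witness index $i(j)$ per inequality, whereas the paper retains each inequality at \emph{all} coordinates where $\overline{a}_i$ witnesses it; both choices work and yield the same verification of (i) and (ii).
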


\begin{proof}
	Since $ \phi(x_1, \dots, x_r) $ is quantifier-free and $ A \models \phi(\overline{a}) $, it can be assumed that $ \phi $  is of the form
	\[	
		\bdwedge_{\lambda \in \Lambda} F^+_{\lambda}(\overline{x})= 0   \ \& \ F^-_{\lambda}(\overline{x})\neq 0   
	,\] 
	\noindent where $ \Lambda $ is a finite index set   and $ F_{\lambda}^{\pm}\in S[x_1, \dots, x_r] $ for all $ \lambda \in \Lambda $.    For each $ i \in  I $,  define  $ \phi_{\overline{a}, i}(x_1, \dots, x_r) $ to be the $ \mathscr{L}$-formula (with parameters from $ S_i $)
\[
	\bdwedge_{\lambda \in \Lambda} F^+_{\lambda i}(\overline{x})= 0   \ \& \  \bdwedge_{\lambda \in \Lambda}\{ F^-_{\lambda i}(\overline{x})\neq 0 \mid A_i \models F^-_{\lambda i}(\overline{a}_i) \neq 0 \}   
;\] note that  for each $ i \in I $, if $ A_i \models F^-_{\lambda i}(\overline{a}_i) = 0 $ for all $ \lambda \in \Lambda $, then $ \phi_{\overline{a}, i}(\overline{x})$ is logically equivalent to $ \bdwedge_{\lambda \in \Lambda} F^+_{\lambda i}(\overline{x})= 0   $. Items (i) and (ii) in the statement of the lemma now follow by Remark \ref{R.MODTH.formula_factors} and by construction of the formulas $ \phi_{\overline{a}, i}(x_1, \dots, x_r) $.
\end{proof}

\begin{remark}\label{rem.fmlas_in_rcsvr_and_factors_R_1} 
	The converse of item (ii) in Lemma \ref{L.MODTH.qfformula_factors} doesn't hold in general. For example, let  $ A:= A_1 \times_{\emph{\textbf{k}}}  A_2 $, where $ A_1$ and $ A_2 $ are non-trivial local domains with residue field $ \emph{\textbf{k}} $,  $ \phi(x) $ be the $ \mathscr{L} $-formula $ cx \neq0 $ with parameter $ c $, where  $ c:=(c_1, c_2)\in A \subseteq A_1\times A_2 $ is  such that $ c_i \in \mathfrak{m}_{A_i} \setminus \{0\}  $, and  define $ a := (c_1, 0) \in A $; clearly $ A \models \phi(a) $. By the construction in the proof of Lemma  \ref{L.MODTH.qfformula_factors}, $ \phi_{a,1}(x) $ is the $ \mathscr{L}$-formula $ c_1x \neq 0 $ with parameter  $ c_1 $,  $ \phi_{a,2}(x)  $ is an empty conjunct (hence logically equivalent to $ x = x $), and clearly $ A \models \phi(a) $ if and only if $ A_1 \models \phi_{a, 1}(a_1) $ and $ A_2 \models \phi_{a, 1}(a_2) $; but $ A \models \phi(b) $ and $ A_1 \nvDash \phi_{a,1}(b_1) $ for $ b := (0, c_2)\in A $.
\end{remark}

\subsubsection{\texorpdfstring{Model completeness for $ T_{n,1} $}{Model completeness for Tₙ,₁}}

For the next lemma recall that any embedding $ A \subseteq B $ of models of $ T_{n,1} $ is a local embedding and it induces local embeddings $ A_i \subseteq B_i $ for all  $ i \in [n] $, see Convention \ref{C.RCSVR.factors} and Lemma \ref{L.RCSVR.embedd_1n_local_factors}; moreover, if $ A \subseteq B $ is sharp at $ \mathfrak{b}_B \ (= \mathfrak{m}_B) $, then $(A/\mathfrak{m}_A\cong)\ A_i/\mathfrak{m}_{A_i} \cong B_i/\mathfrak{m}_{B_i} \ (\cong B/\mathfrak{m}_B) $ for all $ i \in [n] $, see Definition  \ref{D.RCSVR.sharp_embedd}.

\begin{lemma}\label{L.MODTH.1_embedd_imm}
	Let $ A, B \models T_{n,1} $ be such that $ A \subseteq B $.
	\begin{enumerate}[\normalfont(I), ref=\ref{L.MODTH.1_embedd_imm} (\Roman*)]
\item\label{L.MODTH.1_embedd_imm.I} 		If $ A \subseteq B $ is sharp at  $ \mathfrak{b}_B \ (= \mathfrak{m}_B)$, then $ A $ is existentially closed in $ B $. 
\item\label{L.MODTH.1_embedd_imm.II} If both $ A $ and $ B $ are homogeneous \emph{(Definition \ref{D.RCVSR.homogeneous})}, then $ A $ is existentially closed in $ B $.
\end{enumerate}
\end{lemma}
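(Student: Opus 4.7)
Both parts of the lemma follow a three-step template: (1) decompose an existential statement in the fibre product ring via Lemma \ref{L.MODTH.qfformula_factors}; (2) solve the resulting factor-level statements using model completeness of $\sf{RCVR}(\texttt{m})$ from Proposition \ref{P.MODTH.RCVR_mod_compl} (i); and (3) reassemble the factor-witnesses into a single tuple in $A$. The crux is step (3), where one must ensure the component witnesses have matching residues in the residue field of $A$ so that they define an element of the fibre product.

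Writing $A = {\prod}_{\textbf{\emph{k}}, i \in [n]} A_i$ and $B = {\prod}_{\textbf{\emph{l}}, i \in [n]} B_i$ as in Convention \ref{C.RCSVR.factors}, each induced embedding $A_i \subseteq B_i$ is a local embedding of non-trivial real closed valuation rings by Lemma \ref{L.RCSVR.embedd_1n_local_factors}, hence an $\mathscr{L}(\texttt{m})$-embedding of models of $\sf{RCVR}(\texttt{m})$, which model completeness upgrades to $A_i \preceq B_i$ in $\mathscr{L}(\texttt{m})$. Given $B \models \exists \overline{y}\, \phi(\overline{a}, \overline{y})$ with $\phi$ quantifier-free and with parameters from $A$, fix a witness $\overline{b} \in B^s$ and apply Lemma \ref{L.MODTH.qfformula_factors} to $B$ to obtain quantifier-free formulas $\phi_{(\overline{a}, \overline{b}), i}(\overline{x}, \overline{y})$ with parameters in $p_i^B(A) = A_i$ such that $B_i \models \phi_{(\overline{a}, \overline{b}), i}(\overline{a}_i, \overline{b}_i)$ for every $i$.

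For part (I), sharpness at $\mathfrak{b}_B = \mathfrak{m}_B$ forces $\textbf{\emph{k}} = \textbf{\emph{l}}$, so the common residue $\kappa_j \in \textbf{\emph{l}}$ of $b_j$ in $B$ already lies in $\textbf{\emph{k}}$; lift it to $e_j \in A$, so that $p_i^A(e_j) \in A_i$ lifts $\kappa_j$ in each factor, and augment the decomposed formula by the $\mathscr{L}(\texttt{m})$-conjunction $\bigwedge_j \texttt{m}(y_j - p_i^A(e_j))$. This augmented formula is satisfied in $B_i$ by $\overline{b}_i$, so by $A_i \preceq B_i$ it is satisfied in $A_i$ by some $\overline{c}_i \in A_i^s$ whose residues satisfy $\lambda_{A_i}(c_{ji}) = \kappa_j$ for every $i$. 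The tuples $c_j := (c_{j1}, \dots, c_{jn})$ thus lie in $A$, and Lemma \ref{L.MODTH.qfformula_factors} (ii) yields $A \models \phi(\overline{a}, (c_1, \dots, c_s))$, as required.

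For part (II), write $A \cong {\prod}_{\textbf{\emph{k}}}^n V$ and $B \cong {\prod}_{\textbf{\emph{l}}}^n W$; the embedding induces local embeddings $\iota_i : V \hookrightarrow W$ for each $i$, and inspecting the fibre product condition on the diagonal $\{(v, \dots, v) : v \in V\} \subseteq A$ shows that the $\iota_i$ all induce the \emph{same} residue-field embedding $\iota' : \textbf{\emph{k}} \hookrightarrow \textbf{\emph{l}}$. The plan is to run the argument of (I), exploiting homogeneity in place of sharpness: since every factor equals $V$ (resp.\ $W$), the decomposed formulas $\phi_{(\overline{a}, \overline{b}), i}$ have parameters in one and the same ring $V$, and one wants a single residue tuple $\overline{\kappa} \in \textbf{\emph{k}}^s$ and witnesses $\overline{c}_i \in V^s$ with $\lambda_V(c_{ji}) = \kappa_j$ realizing each $\phi_{(\overline{a}, \overline{b}), i}$. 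The main obstacle, and what I expect to be the technical heart of the proof, is that $\lambda_B(\overline{b}) \in \textbf{\emph{l}}^s$ need not lie in $\iota'(\textbf{\emph{k}}^s)$, so residues of $\overline{b}$ cannot be lifted directly; this must be circumvented either by a saturation argument over $V$ (realizing a type that pins down residues in $\iota'(\textbf{\emph{k}})$ using $V \preceq W$ uniformly in $i$) or by constructing an amalgamation that enlarges $B$ to a homogeneous $B''$ making the embedding sharp, so that part (I) applies and the conclusion descends back to $B$ via existential preservation along $B \hookrightarrow B''$.
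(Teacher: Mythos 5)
Your treatment of part (I) is essentially the paper's argument, modulo the order of bookkeeping: the paper chooses $c_{ji} \in A_i$ factor-by-factor and assembles them into $c_j \in A$, whereas you choose $e_j \in A$ first and project; both are correct.

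Your treatment of part (II), however, has a genuine gap. You correctly identify the obstacle (the residues $\lambda_B(b_j) \in \textbf{\emph{l}}$ need not lie in $\iota'(\textbf{\emph{k}})$, so there is no natural $A$-side anchor to conjoin), but the two workarounds you sketch do not resolve it. The amalgamation route fails outright in general: if $A \subseteq B''$ is sharp at $\mathfrak{m}_{B''}$ then the induced map $\textbf{\emph{k}} = A/\mathfrak{m}_A \to B''/\mathfrak{m}_{B''}$ is an isomorphism; since it factors through $\textbf{\emph{l}} = B/\mathfrak{m}_B$, this forces $\textbf{\emph{k}} \to \textbf{\emph{l}}$ to be surjective, hence $\textbf{\emph{k}} = \textbf{\emph{l}}$, which homogeneity does not give you. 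The saturation route is left at the level of a suggestion and it is not clear what type you would realize or how $V \preceq W$ would pin residues into $\iota'(\textbf{\emph{k}})$, since $\iota'(\textbf{\emph{k}})$ is in general not a definable subset of $W$.

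The key observation you are missing is that membership in $A = \prod_{\textbf{\emph{k}}}^n V$ does not require knowing which elements of $V$ have residue in some distinguished subfield: for a tuple $(a_{j1}, \dots, a_{jn}) \in V^n$, the fibre-product condition is \emph{only} that the residues $\lambda_V(a_{ji})$ agree across $i$ (they automatically lie in $\textbf{\emph{k}} = V/\mathfrak{m}_V$). This is the single $\mathscr{L}(\texttt{m})$-condition $\bigwedge_{i,i'} \texttt{m}(a_{ji} - a_{ji'})$, a first-order formula internal to $V$. So what the paper does is exploit homogeneity ($A_i = V$, $B_i = W$ for all $i$) to pool all $n$ decomposed formulas into one $\mathscr{L}(\texttt{m})$-formula over $W$ in $rn$ free variables, conjoin the pairwise-agreement condition $\bigwedge_{j \in [r],\, i, i' \in [n]} \texttt{m}(y_{ji} - y_{ji'})$ (which is satisfied by the $b_{ji}$ because $\overline{b} \in B^r$), and then make a \emph{single} application of $V \preceq W$ to pull witnesses $a_{ji} \in V$ back. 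The agreement condition is what guarantees the reassembled tuples land in $A$; no residue-field tracking in $\textbf{\emph{k}}$ versus $\textbf{\emph{l}}$ is needed. This is both simpler than either of your proposed workarounds and the reason the homogeneity hypothesis enters: it is precisely what lets you treat all $n$ factor-formulas as formulas over the \emph{same} pair $V \subseteq W$ and transfer them simultaneously.
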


\begin{proof}
	Let $ \phi(x_1,\dots, x_r) $ be a quantifier-free $ \mathscr{L} $-formula with parameters from $ A$ and let  $ \overline{b} \in B^r $ be such that $ B \models \phi(\overline{b}) $; by Lemma \ref{L.MODTH.qfformula_factors} there exist quantifier-free $ \mathscr{L}$-formulas $ \phi_{\overline{b}, i}(x_1, \dots, x_r)$ with parameters from $  A_i  $ $ (i \in [n]) $ such that  

\begin{enumerate}[(i)]
\item $  B \models \phi(\overline{b})$ if and only if $ B_i\models \phi_{\overline{b}, i} (\overline{b}_i) $ for all $ i \in [n] $, and
\item if $ \overline{a} \in B^r $ is such that $ B_i \models \phi_{\overline{b}, i} (\overline{a}_i) $ for all $ i \in [n] $, then $ B \models \phi(\overline{a}) $.
\end{enumerate}

	(I). For each $ i \in [n] $ and  each  $ j \in [r] $, pick  $ c_{ji}\in A_i $ such that $ c_{ji}/\mathfrak{m}_{A_i}  $ is the image of $ b_{ji}/\mathfrak{m}_{B_i} $ under the  isomorphism $A_i/\mathfrak{m}_{A_i}   \cong B_i/\mathfrak{m}_{B_i}$; since $ A, B \models T_{n,1} $ and $ \overline{b} \in B^r $, it follows by choice of $ c_{ji}\in A_i $ that $ c_j:=(c_{j1},\dots, c_{jn}) \in A$ for all $ j \in [r] $,  and thus $ \overline{c}:= (c_1, \dots, c_r) \in A^r $. Again by choice of $ c_{j i} \in A_i $ and by item (i) above,
\begin{align*}
	(B_i, \mathfrak{m}_{B_i}) \models \phi_{\overline{b}, i}(\overline{b}_i) \ \& \ \bdwedge_{j \in [r]}\texttt{m}(b_{ji}-c_{ji});
\end{align*}
 since  $ A_i \subseteq B_i $ is a local embedding for all $ i \in [n] $, by Proposition \ref{P.MODTH.RCVR_mod_compl.i} there exist $ a_{ji}  \in A_i $ ($ j \in [r] $) such that 
\begin{align*}
		(A_i, \mathfrak{m}_{A_i}) \models \phi_{\overline{b}, i}(\overline{a}_i) \ \& \ 
&\bdwedge_{j \in [r]}\texttt{m}(a_{ji}-c_{ji})
\end{align*}
for all $ i \in [n] $, where $ \overline{a}_i:= (a_{1i}, \dots a_{ri}) $. Once again by choice of $ c_{ji} \in A_i $, $ a_j := (a_{j1},\dots, a_{jn}) \in A$ for all $ j \in [r] $, and thus $ \overline{a}:=(a_1, \dots, a_r)\in A^r \subseteq B^r$; since $ A_i \models \phi_{\overline{b}, i}(\overline{a}_i) $ and $ \phi_{\overline{b}, i}(\overline{x}) $ is quantifier-free, $ B_i \models \phi_{\overline{b}, i}(\overline{a_i}) $ for $ i \in [n] $, so by item (ii) above it follows that $ B \models \phi(\overline{a}) $, and since $ A $ is a substructure of  $  B$ and $ \phi(\overline{x}) $ is quantifier-free, $ A \models \phi(\overline{a}) $ follows.

(II). Since $ A $ and $ B $ are homogeneous, it can be assumed that there exist  non-trivial real closed valuation rings $ V  $ and $ W $ such that $ A_i = V $ and  $ B_i= W $ for all  $ i \in [n] $; in particular, $ b_{ji} \in W $ for all $ i \in [n] $ and $ j \in [r] $.  By item (i) above,  
 \begin{align*}
	 (W, \mathfrak{m}_W)  \models \bdwedge_{i \in [n]}\phi_{\overline{b}, i}(b_{1i},\dots, b_{ri}) \ \& \ \bdwedge_{j \in [r], \ i, i' \in [n]}\texttt{m}(b_{ji}-b_{ji'}),
 \end{align*}
\noindent  and since  $ V \subseteq W$ is a local embedding, by Proposition \ref{P.MODTH.RCVR_mod_compl.i} there exist $ a_{ji}  \in V$ ($ i \in [n] $, $ j \in [r] $) such that 
\begin{align*}
	 (V, \mathfrak{m}_V)  \models \bdwedge_{i \in [n]}\phi_{\overline{b}, i}(a_{1i},\dots, a_{ri}) \ \& \ \bdwedge_{j \in [r], \ i, i' \in [n]}\texttt{m}(a_{ji}-a_{ji'}). 
\end{align*}
 It follows that for each  $j \in [r] $,  $ a_j:= (a_{j1}, \dots, a_{jn}) \in A$ and thus $ \overline{a}:= (a_1, \dots, a_r)\in A^r \subseteq B^r $; since $ V \models \phi_{\overline{b}, i}(\overline{a}_i) $ and $ \phi_{\overline{b}, i}(\overline{x}) $ is quantifier-free, $ W \models \phi_{\overline{b}, i}(\overline{a}_i) $ for $ i \in [n] $, so by item (ii) above it follows that $ B \models \phi(\overline{a}) $, and since $ A $ is a substructure of  $  B$ and $ \phi(\overline{x}) $ is quantifier-free, $ A \models \phi(\overline{a}) $ follows.
\end{proof}

\begin{remark}
	The proof of Lemma \ref{L.MODTH.1_embedd_imm.I} can be used \textit{mutatis mutandis} to show that given arbitary collections of non-trivial real closed valuation rings  $  \{ V_i \}_{i \in I}  $ and $ \{ W_i \}_{i \in I} $ with $ V_i/\mathfrak{m}_{V_i} \cong W_i/\mathfrak{m}_{W_i} =: \emph{\textbf{k}} $  and $ V_i \subseteq W_i $ for all  $ i \in I $, then $ {\prod}_{\emph{\textbf{k}}, i \in I}V_i $ is existentially closed in $ {\prod}_{\emph{\textbf{k}}, i \in I}W_i $.
\end{remark}

\begin{theorem}\label{T.MODTH.T_1n_mod_compl}
	$ T_{n,1} $ is model complete.
\end{theorem}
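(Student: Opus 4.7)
The plan is to apply Robinson's test: it suffices to show that every embedding $A \subseteq B$ of models of $T_{n,1}$ is existentially closed, i.e., every existential $ \mathscr{L} $-formula with parameters from $A$ satisfied in $B$ is already satisfied in $A$. The two preparatory lemmas already do most of the work: Lemma~\ref{L.RCSVR.1_square} produces a commutative square of embeddings, and Lemma~\ref{L.MODTH.1_embedd_imm} handles the two edges that end up being accessible.

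First I would fix an arbitrary embedding $A \subseteq B$ with $A, B \models T_{n,1}$ and invoke Lemma~\ref{L.RCSVR.1_square} to obtain homogeneous models $A', B' \models T_{n,1}$, together with embeddings $\epsilon_A: A \linto A'$ and $\epsilon_B: B \linto B'$ such that $A' \subseteq B'$, $\epsilon_{B\upharpoonright A} = \epsilon_A$, $\epsilon_A$ is sharp at $\mathfrak{b}_{A'}$, and $\epsilon_B$ is sharp at $\mathfrak{b}_{B'}$. Identifying $A$ with $\epsilon_A(A) \subseteq A'$ and $B$ with $\epsilon_B(B) \subseteq B'$, the compatibility $\epsilon_{B\upharpoonright A} = \epsilon_A$ guarantees that parameters from $A$ have the same interpretation whether regarded as sitting in $A'$ or in $B'$, so the diagram is coherent for formula-satisfaction purposes.

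Now a diagram chase. Let $\overline{a} \in A^r$ and let $\phi(\overline{x}, \overline{y})$ be a quantifier-free $\mathscr{L}$-formula with $B \models \exists \overline{x}\, \phi(\overline{x}, \overline{a})$. Since existential formulas are preserved under the embedding $B \subseteq B'$, we get $B' \models \exists \overline{x}\, \phi(\overline{x}, \overline{a})$. By Lemma~\ref{L.MODTH.1_embedd_imm.II}, $A'$ is existentially closed in $B'$ (both are homogeneous), hence $A' \models \exists \overline{x}\, \phi(\overline{x}, \overline{a})$. By Lemma~\ref{L.MODTH.1_embedd_imm.I}, $A$ is existentially closed in $A'$ (because $\epsilon_A$ is sharp at $\mathfrak{b}_{A'}$), therefore $A \models \exists \overline{x}\, \phi(\overline{x}, \overline{a})$, as required.

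The only delicate point is bookkeeping of parameters across the square: one must verify that $\epsilon_A$ and $\epsilon_B$ genuinely agree on $A$ (secured by the last clause of Lemma~\ref{L.RCSVR.1_square}) so that the instance of $\phi$ with parameters $\overline{a}$ has an unambiguous interpretation in all four rings. Once that is set, the argument is essentially a three-step relay. In particular, the hard technical content — embedding arbitrary pairs of rings of type $(n,1)$ into homogeneous rings with sharp residue — lives in Lemmas~\ref{L.RCSVR.1_square} and~\ref{L.MODTH.1_embedd_imm}, so the model-completeness statement itself follows with no further calculation.
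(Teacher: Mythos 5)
Your proof is correct and is exactly the argument the paper intends by its one-line \enquote{Combine Lemmas~\ref{L.MODTH.1_embedd_imm} and \ref{L.RCSVR.1_square}}: build the commutative square from Lemma~\ref{L.RCSVR.1_square}, then relay the existential statement from $B$ up to $B'$, across to $A'$ via Lemma~\ref{L.MODTH.1_embedd_imm.II}, and down to $A$ via Lemma~\ref{L.MODTH.1_embedd_imm.I}. You have merely written out the diagram chase that the paper leaves implicit, including the (correct) observation that $\epsilon_{B\upharpoonright A}=\epsilon_A$ is what keeps the parameters unambiguous.
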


\begin{proof}
	Combine Lemmas \ref{L.MODTH.1_embedd_imm} and \ref{L.RCSVR.1_square}.
\end{proof}

\subsubsection{\texorpdfstring{Model completeness for $ T_{n,2} $}{Model completeness for Tₙ,₂}}

The main difference between embeddings of models of $ T_{n,1} $ and embeddings of models of $ T_{n,2} $ is that every embedding $ A \subseteq B $ of models of $ T_{n,1} $ is local (hence also $ \mathfrak{b}_B \cap A = \mathfrak{b}_A $), but this is not the case for models of $ T_{n,2} $ (i.e., not every embedding of models of $ T_{n,2} $ is a good embedding, Example \ref{E.RCSVR.non_branch_pres_embedd_type_2,2} and Definition \ref{D.RCSVR.good}); this fact, together with the next lemma implies that $ T_{n,2} $ is not model complete in the language of rings:

\begin{lemma}\label{L.MODTH.ec_embedd_implies_banch}
	Let $ A $ and $ B $ be local real closed rings of rank $ n  $ such that $ A \subseteq B $. If $ A$ is existentially closed in $ B $, then the embedding $ A \subseteq B $ is local and $ \mathfrak{q} \cap A $ is a branching ideal of $ A $ for every   branching ideal $\mathfrak{q} \in \emph{Spec}(B)  $.
\end{lemma}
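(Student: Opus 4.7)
The proof has two parts, and the key tool throughout will be that since $A$ and $B$ have the same finite rank $n$, by Corollary \ref{C.SV.min_prime_ann} there exist non-zero pairwise orthogonal elements $a_1, \dots, a_n \in A$ such that
\[
\emph{Spec}^{\emph{min}}(A) = \{ \emph{Ann}_A(a_i) \mid i \in [n]\} \quad \text{and} \quad \emph{Spec}^{\emph{min}}(B) = \{ \emph{Ann}_B(a_i) \mid i \in [n]\},
\]
with $\emph{Ann}_B(a_i) \cap A = \emph{Ann}_A(a_i)$ for each $i$. This gives a common parametrization of the minimal primes on both sides, which is what makes existential statements transfer cleanly.

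\textbf{Locality of the embedding.} First I would note that the inclusion $\mathfrak{m}_B \cap A \subseteq \mathfrak{m}_A$ is automatic: any $a \in A \setminus \mathfrak{m}_A$ is a unit of $A$, hence of $B$, and so lies outside $\mathfrak{m}_B$. For the reverse inclusion $\mathfrak{m}_A \subseteq \mathfrak{m}_B$, I would argue by contraposition: if $a \in A$ satisfies $a \notin \mathfrak{m}_B$, then $a$ is a unit in $B$, so the existential $\mathscr{L}$-formula $\exists x [ax = 1]$ holds in $B$; since $A$ is existentially closed in $B$, the formula holds in $A$, so $a$ is a unit in $A$, i.e.\ $a \notin \mathfrak{m}_A$.

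\textbf{Transfer of branching ideals.} Let $\mathfrak{q} \in \emph{Spec}(B)$ be a branching ideal. By Remark \ref{R.RCSVR.branch_1}, there exist distinct minimal primes of $B$ whose sum is $\mathfrak{q}$; using the common parametrization above, after relabelling we may write $\mathfrak{q} = \emph{Ann}_B(a_1) + \emph{Ann}_B(a_2)$ with $a_1 \neq a_2$. The natural candidate for $\mathfrak{q} \cap A$ is then $\emph{Ann}_A(a_1) + \emph{Ann}_A(a_2)$, and I claim this equality holds. The inclusion $\emph{Ann}_A(a_1) + \emph{Ann}_A(a_2) \subseteq \mathfrak{q} \cap A$ is immediate from $\emph{Ann}_A(a_i) \subseteq \emph{Ann}_B(a_i)$. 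For the opposite inclusion, given $c \in \mathfrak{q} \cap A$, the existential $\mathscr{L}$-formula with parameters $a_1, a_2, c \in A$,
\[
\exists y_1 \exists y_2 \, [\, y_1 a_1 = 0 \ \wedge \ y_2 a_2 = 0 \ \wedge \ y_1 + y_2 = c\,],
\]
holds in $B$ by choice of $\mathfrak{q}$; by existential closedness, it holds in $A$, witnessing $c \in \emph{Ann}_A(a_1) + \emph{Ann}_A(a_2)$.

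\textbf{Finishing.} It remains to check that $\emph{Ann}_A(a_1) + \emph{Ann}_A(a_2)$ is genuinely a branching ideal of $A$, for which I invoke Remark \ref{R.RCSVR.branch_1} once more: I need to know it is a proper prime ideal and that the two summands are distinct. Distinctness of $\emph{Ann}_A(a_1)$ and $\emph{Ann}_A(a_2)$ as minimal primes of $A$ follows from Lemma \ref{L.SV.ann_intersection_of_min_prime_id.ii.b}. Properness is inherited from $\mathfrak{q}$ via $\emph{Ann}_A(a_1) + \emph{Ann}_A(a_2) \subseteq \mathfrak{q}$, and primality follows from Theorem \ref{T.RCSVR.properties_rcr.II.iv.a} applied to the two minimal primes $\emph{Ann}_A(a_1)$ and $\emph{Ann}_A(a_2)$ of the local real closed ring $A$. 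I do not anticipate any real obstacle here; the only delicate point is remembering that existential closedness lets one pull back not merely the relation $c \in \mathfrak{q}$ but the concrete decomposition $c = y_1 + y_2$ with specified annihilation conditions, which is precisely what an existential $\mathscr{L}$-formula can express.
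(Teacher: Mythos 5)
Your proof is correct and follows essentially the same route as the paper's: parametrize the minimal primes of both $A$ and $B$ by a common family of orthogonal elements via Corollary \ref{C.SV.min_prime_ann}, then pull back the decomposition $b = y_1 + y_2$ with $y_1 a_i = 0$, $y_2 a_j = 0$ through the existential formula. The only difference is that you spell out the locality of the embedding (which the paper dismisses as ``clear'') and the final verification that the sum of the two pulled-back minimal primes is a branching ideal; one small shortcut you could take there is to observe that $\mathfrak{q}\cap A$ is automatically prime as the contraction of the prime $\mathfrak{q}$, which avoids the appeal to Theorem \ref{T.RCSVR.properties_rcr.II.iv.a}.
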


\begin{proof}
That $ A\subseteq B $ must be a local embedding is clear. Let now $ a_1, \dots, a_n \in A $ be non-zero pairwise orthogonal elements, so that  $ \text{Spec}^{\text{min}}(A) = \{ \text{Ann}_A(a_i) \mid i \in [n]\}  $ and $ \text{Spec}^{\text{min}}(B) = \{ \text{Ann}_B(a_i) \mid i \in [n]\}  $ (Corollary \ref{C.SV.min_prime_ann}). Pick a branching ideal $ \mathfrak{q}  \in \text{Spec}(B) $; by Remark \ref{R.RCSVR.branch_1} there exist $i, j \in [n]  $ with $ i \neq j  $ such that $ \mathfrak{q} = \text{Ann}_B(a_i) + \text{Ann}_B(a_j) $, therefore $ \text{Ann}_A(a_i) + \text{Ann}_A(a_j) \subseteq \mathfrak{q} \cap A $. Pick now $ b \in  \mathfrak{q} \cap A$; then 	
	\[	B \models \exists x y [xa_i=0 \ \& \ ya_j=0 \ \& \ b = x+y],
	\] and since $ a_i, a_j, b \in A $ and  $ A $ is existentially closed in $ B $, it follows that 
	\[
	A \models \exists  x y [xa_i=0 \ \& \ ya_j=0 \ \& \ b = x+y],
\] therefore $ b \in  \text{Ann}_A(a_i) + \text{Ann}_A(a_j) $ and thus $ \mathfrak{q} \cap A = \text{Ann}_A(a_i) + \text{Ann}_A(a_j) \in \text{Spec}(A)$ is a branching ideal.
\end{proof}

Example \ref{E.RCSVR.non_branch_pres_embedd_type_2,2} shows that there exist embeddings of models of $ T_{n, 2} $ which are not local, and also that there exist embeddings of models of $ T_{n,2} $ which don't map the branching ideal to the branching ideal; in view of Lemma \ref{L.MODTH.ec_embedd_implies_banch}, to obtain model completeness for $ T_{n,2} $ one must enlarge the language of rings is such a way that every embedding of models of $ T_{n,2} $ in the resulting language is a good embedding.

\begin{definition}
\begin{enumerate}[\normalfont(i)]
	\item  Let $  \texttt{b}$ and $ \texttt{m}$ are two unary predicates and define $ \mathscr{L}^*:= \mathscr{L} ( \texttt{b}, \texttt{m} )  $.
	\item Let $ T^*_{n,2} $ be the $ \mathscr{L}^* $-theory $ T_{n,2} $ together with the sentence expressing that $ \texttt{b} $ and $ \texttt{m}$ are interpreted as the branching ideal (Remark \ref{R.MODTH.def_branch_id.ii}) and as the set of non-units, respectively.
\end{enumerate}
\end{definition}

\begin{remark}\label{R.MODTH.factors}
	Let  $ A:=  {\prod}_{C, i \in [n]}A_i \models T^*_{n,2}$, so that $ A_1, \dots, A_n$ and $ C $ are non-trivial real closed valuation rings  such that for each $ i \in [n] $ there exists a surjective ring homomorphism $ A_i \lonto C $ onto a non-trivial real closed valuation ring $ C $.
	\begin{enumerate}[\normalfont(i), ref=\ref{R.MODTH.factors} (\roman*)]
		\item 	For each $ i \in [n] $, $ A_i $ is regarded as an $ \mathscr{L}^* $-structure in the canonical way, that is, $ \texttt{b}(A_i) = \mathfrak{b}_{A_i}:=\text{ker}(A_i \onto C) $ and $ \texttt{m}(A_i) := \mathfrak{m}_{A_i} $; in particular, the projection map $ A \lonto A_i $ is an  $ \mathscr{L}^* $-homomorphism.
		\item\label{R.MODTH.factors.ii} Note that $ \mathfrak{b}_A = \mathfrak{b}_{A_1} \times \dots \times \mathfrak{b}_{A_n}$ and $ \mathfrak{m}_A= \mathfrak{m}_{A_1} \times \dots \times \mathfrak{m}_{A_n}$ when $ \mathfrak{b}_A $ and $ \mathfrak{m}_A $ are regarded as subsets of $ \prod_{i=1}^n A_i $; in particular, if $ F(\overline{x}) \in A[x_1, \dots, x_r] $ and $ \overline{a} \in  A^r$, then $ A \models \texttt{b}(F(\overline{a})) $ if and only if $A_i\models \texttt{b}(F_i(\overline{a}_i) )$ for all $ i \in [n] $, and $ A \models \texttt{m}(F(\overline{a})) $ if and only if $A_i\models \texttt{m}(F_i(\overline{a}_i) )$ for all $ i \in [n] $ (cf. Remark \ref{R.MODTH.formula_factors}).
	\end{enumerate}
\end{remark}

\begin{lemma}\label{L.MODTH.2_qfformula_factors}	
	Let  $ A:=  {\prod}_{C, i \in [n]}A_i \models T^*_{n,2}$ and  $ \phi(x_1, \dots, x_r) $ be a quantifier-free $ \mathscr{L}^*$-formula with parameters from $S \subseteq A $, and $ \overline{a} \in A^r $ be such that $ A \models \phi(\overline{a}) $. There exist quantifier-free $ \mathscr{L}^*$-formulas $ \phi_{\overline{a}, i}(x_1, \dots, x_r) $ with parameters from $ S_i:= p_{i} (S) \subseteq A_i $ $( i \in [n] )$ such that 
\begin{enumerate}[\normalfont(i)]
\item $  A \models \phi(\overline{a})$ if and only if $ A_i \models \phi_{\overline{a}, i} (\overline{a}_i) $ for all $ i \in [n] $, and
\item if  $ \overline{b} \in A^r $ is such that $ A_i \models \phi_{\overline{a}, i} (\overline{b}_i) $ for all $ i \in [n] $, then $ A \models \phi(\overline{b}) $.
\end{enumerate}
\end{lemma}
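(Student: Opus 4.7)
The plan is to mimic the proof of Lemma \ref{L.MODTH.qfformula_factors}, using Remark \ref{R.MODTH.factors.ii} to handle the extra atomic formulas built from the predicates $\texttt{b}$ and $\texttt{m}$. First I would put $\phi(\overline{x})$ in disjunctive normal form and, since $A \models \phi(\overline{a})$, pick a disjunct $\psi(\overline{x})$ that $A$ satisfies at $\overline{a}$; it then suffices to prove the lemma with $\phi$ replaced by $\psi$, so I may assume $\phi$ has the shape
\[
\bdwedge_{\lambda \in \Lambda^{+}} F^{+}_{\lambda}(\overline{x}) = 0 \ \& \ \bdwedge_{\lambda \in \Lambda^{-}} F^{-}_{\lambda}(\overline{x}) \neq 0 \ \& \ \bdwedge_{\mu \in M^{+}} \texttt{b}(G^{+}_{\mu}(\overline{x})) \ \& \ \bdwedge_{\mu \in M^{-}} \neg \texttt{b}(G^{-}_{\mu}(\overline{x})) \ \& \ \bdwedge_{\nu \in N^{+}} \texttt{m}(H^{+}_{\nu}(\overline{x})) \ \& \ \bdwedge_{\nu \in N^{-}} \neg \texttt{m}(H^{-}_{\nu}(\overline{x})),
\]
with all index sets finite and all polynomials having coefficients in $S$.

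Next, for each $i \in [n]$, I would let $\phi_{\overline{a}, i}(\overline{x})$ be the quantifier-free $\mathscr{L}^{*}$-formula (with parameters from $S_i$) consisting of:
\begin{enumerate}[\normalfont(a)]
\item the positive equalities $\bdwedge_{\lambda \in \Lambda^{+}} F^{+}_{\lambda i}(\overline{x}) = 0$, the positive branching atoms $\bdwedge_{\mu \in M^{+}} \texttt{b}(G^{+}_{\mu i}(\overline{x}))$, and the positive maximality atoms $\bdwedge_{\nu \in N^{+}} \texttt{m}(H^{+}_{\nu i}(\overline{x}))$; and
\item only those negative atoms whose witnessing component actually falls at $i$, i.e.\ the conjunction of $F^{-}_{\lambda i}(\overline{x}) \neq 0$ for those $\lambda \in \Lambda^{-}$ with $A_{i} \models F^{-}_{\lambda i}(\overline{a}_{i}) \neq 0$, of $\neg \texttt{b}(G^{-}_{\mu i}(\overline{x}))$ for those $\mu \in M^{-}$ with $A_{i} \models \neg \texttt{b}(G^{-}_{\mu i}(\overline{a}_{i}))$, and of $\neg \texttt{m}(H^{-}_{\nu i}(\overline{x}))$ for those $\nu \in N^{-}$ with $A_{i} \models \neg \texttt{m}(H^{-}_{\nu i}(\overline{a}_{i}))$.
\end{enumerate}
(If any of these conjunctions is empty, replace it by $x_{1} = x_{1}$.)

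To verify (i), combine Remark \ref{R.MODTH.formula_factors} (for equalities/inequalities) with Remark \ref{R.MODTH.factors.ii} (for $\texttt{b}$ and $\texttt{m}$): a positive atom $F^{+}(\overline{a}) = 0$, $\texttt{b}(G^{+}(\overline{a}))$, or $\texttt{m}(H^{+}(\overline{a}))$ holds in $A$ iff the corresponding componentwise atom holds in $A_{i}$ for every $i \in [n]$, whereas a negative atom $F^{-}(\overline{a}) \neq 0$, $\neg \texttt{b}(G^{-}(\overline{a}))$, or $\neg \texttt{m}(H^{-}(\overline{a}))$ holds in $A$ iff it holds in $A_{i}$ for at least one $i \in [n]$; by construction, for each negative atom of $\phi$ at least one $i$ contributes the corresponding conjunct to $\phi_{\overline{a}, i}$, so both directions of (i) follow. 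Item (ii) is then immediate: if $\overline{b} \in A^{r}$ satisfies $A_{i} \models \phi_{\overline{a}, i}(\overline{b}_{i})$ for every $i \in [n]$, then the positive atoms of $\phi$ hold in $A$ at $\overline{b}$ by the componentwise characterization, and for each negative atom there is (by construction) some $i$ at which the corresponding conjunct appears in $\phi_{\overline{a}, i}$ and is therefore satisfied by $\overline{b}_{i}$ in $A_{i}$, which again by Remarks \ref{R.MODTH.formula_factors} and \ref{R.MODTH.factors.ii} forces the negative atom to hold in $A$ at $\overline{b}$.

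The only subtlety beyond Lemma \ref{L.MODTH.qfformula_factors} is the bookkeeping for the two new predicates, and this is entirely taken care of by Remark \ref{R.MODTH.factors.ii}; there is no real obstacle.
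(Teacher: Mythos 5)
Your proof is correct and follows exactly the approach the paper intends: the paper's own proof consists of a one-line remark that the argument is analogous to Lemma \ref{L.MODTH.qfformula_factors} using Remark \ref{R.MODTH.factors.ii}, and you have simply spelled out that analogy, with the same decomposition into a single disjunct, the same choice of $\phi_{\overline{a},i}$ (all positive atoms, and only those negative atoms witnessed at component $i$), and the same use of the componentwise characterization of $\texttt{b}$ and $\texttt{m}$. No gaps.
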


\begin{proof}
Analogous to the proof of Lemma \ref{L.MODTH.qfformula_factors}	using Remark \ref{R.MODTH.factors.ii}.
\end{proof}

\begin{lemma}\label{L.MODTH.2_embedd_imm}
Let $ A, B \models T^*_{n,2} $ be such that $ A \subseteq B $ as $ \mathscr{L}^* $-structures.
\begin{enumerate}[\normalfont(I), ref=\ref{L.MODTH.2_embedd_imm} (\Roman*)]
\item\label{L.MODTH.2_embedd_imm.I} 		If $ A \subseteq B $ is sharp at  $ \mathfrak{b}_B $, then $ A $ is existentially closed in $ B $ as an $ \mathscr{L}^* $-structure. 
\item If both $ A $ and $ B $ are homogeneous, then $ A $ is existentially closed in $ B $ as an $ \mathscr{L}^* $-structure.
\end{enumerate}
\end{lemma}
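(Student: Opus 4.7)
The plan is to mirror the argument of Lemma \ref{L.MODTH.1_embedd_imm} step by step, but with two adjustments: replace the quantifier-free decomposition of Lemma \ref{L.MODTH.qfformula_factors} by its $ \mathscr{L}^* $-version (Lemma \ref{L.MODTH.2_qfformula_factors}), and replace the model completeness of $ \sf{RCVR}(\texttt{m}) $ (Proposition \ref{P.MODTH.RCVR_mod_compl.i}) by the model completeness of $ \sf{RCVR}(\texttt{b},\texttt{m}) $ (Proposition \ref{P.MODTH.RCVR_mod_compl.ii}). Write $ A = {\prod}_{C, i\in[n]}A_i $ and $ B = {\prod}_{D, i\in[n]}B_i $ as in Convention \ref{C.RCSVR.factors}. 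Because $ A\subseteq B $ as $ \mathscr{L}^* $-structures, one has $ \mathfrak{m}_B\cap A=\mathfrak{m}_A $ and $ \mathfrak{b}_B\cap A = \mathfrak{b}_A $, so the embedding is good; by Remarks \ref{R.SV.local_embedd_factor} and \ref{R.RCSVR.good} this yields local embeddings $ A_i\subseteq B_i $ and $ C\subseteq D $, and each $ A_i\subseteq B_i $ becomes an embedding of models of $ \sf{RCVR}(\texttt{b},\texttt{m}) $.

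Given a quantifier-free $ \mathscr{L}^* $-formula $ \phi(\overline{x}) $ with parameters from $ A $ and $ \overline{b}\in B^r $ with $ B\models \phi(\overline{b}) $, Lemma \ref{L.MODTH.2_qfformula_factors} produces quantifier-free $ \mathscr{L}^* $-formulas $ \phi_{\overline{b},i} $ over $ A_i $ with $ B_i\models \phi_{\overline{b},i}(\overline{b}_i) $ for each $ i $. For part (I), sharpness of $ A\subseteq B $ at $ \mathfrak{b}_B $ means $ A/\mathfrak{b}_A\cong B/\mathfrak{b}_B $, i.e. the canonical map $ C\to D $ is an isomorphism, and therefore each induced map $ A_i/\mathfrak{b}_{A_i}\to B_i/\mathfrak{b}_{B_i} $ is also an isomorphism. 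For every $ j\in[r] $ and $ i\in[n] $ I pick $ c_{ji}\in A_i $ whose class modulo $ \mathfrak{b}_{A_i} $ corresponds to the class of $ b_{ji} $ modulo $ \mathfrak{b}_{B_i} $; since $ \overline{b}_j\in B $ forces the classes of the $ b_{ji} $ in $ D $ to coincide across $ i $, the tuple $ c_j:=(c_{j1},\dots,c_{jn}) $ genuinely lies in $ A $. Then $$ (B_i,\mathfrak{b}_{B_i},\mathfrak{m}_{B_i})\models \phi_{\overline{b},i}(\overline{b}_i)\;\&\;\bdwedge_{j\in[r]}\texttt{b}(b_{ji}-c_{ji}), $$ and model completeness of $ \sf{RCVR}(\texttt{b},\texttt{m}) $ applied to $ A_i\subseteq B_i $ produces $ a_{ji}\in A_i $ with $ A_i\models \phi_{\overline{b},i}(\overline{a}_i)\,\&\, \bigwedge_j \texttt{b}(a_{ji}-c_{ji}) $. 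The condition $ \texttt{b}(a_{ji}-c_{ji}) $ forces $ a_{ji}/\mathfrak{b}_{A_i}=c_{ji}/\mathfrak{b}_{A_i} $, which is constant in $ C $ as $ i $ varies, so again $ a_j:=(a_{j1},\dots,a_{jn})\in A $, and Lemma \ref{L.MODTH.2_qfformula_factors}(ii) yields $ A\models \phi(\overline{a}) $.

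Part (II) is handled by the same template, replacing sharpness by homogeneity: under homogeneity $ A_i=V $ and $ B_i=W $ for fixed non-trivial real closed valuation rings $ V\subseteq W $, so $$ (W,\mathfrak{b}_W,\mathfrak{m}_W)\models \bdwedge_{i\in[n]}\phi_{\overline{b},i}(\overline{b}_i)\;\&\;\bdwedge_{j\in[r],\,i,i'\in[n]}\texttt{b}(b_{ji}-b_{ji'}), $$ and model completeness of $ \sf{RCVR}(\texttt{b},\texttt{m}) $ returns $ a_{ji}\in V $ satisfying the analogous statement, which forces $ a_j\in A $ as before. The only place requiring care is the coherence step that turns a tuple $ (a_{j1},\dots,a_{jn}) $ produced in the fibres into an element of $ A $; both in (I) and (II) this coherence is encoded into the existential sentence one feeds into $ \sf{RCVR}(\texttt{b},\texttt{m}) $ via the predicate $ \texttt{b} $, so the main (mild) obstacle is precisely the bookkeeping to ensure that this predicate is available in the language, which is exactly why the argument is done in $ \mathscr{L}^* $ rather than $ \mathscr{L} $.
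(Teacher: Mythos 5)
Your proof is correct and coincides with what the paper intends; the paper itself just says the argument is analogous to that of Lemma \ref{L.MODTH.1_embedd_imm} with Lemma \ref{L.MODTH.2_qfformula_factors} and Proposition \ref{P.MODTH.RCVR_mod_compl.ii} in place of Lemma \ref{L.MODTH.qfformula_factors} and Proposition \ref{P.MODTH.RCVR_mod_compl.i}, which is exactly the replacement you carry out. The one point worth underlining (which you handle correctly but somewhat in passing) is that the $\mathscr{L}^*$-embedding $A\subseteq B$ is automatically good, and hence induces $\mathscr{L}(\texttt{b},\texttt{m})$-embeddings $A_i\subseteq B_i$ of models of $\sf{RCVR}(\texttt{b},\texttt{m})$; without this the appeal to Proposition \ref{P.MODTH.RCVR_mod_compl.ii} would not be legitimate, and indeed Example \ref{E.RCSVR.non_branch_pres_embedd_type_2,2} shows why passing to $\mathscr{L}^*$ is unavoidable.
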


\begin{proof}
Analogous to the proof of Lemma \ref{L.MODTH.1_embedd_imm} using Lemma \ref{L.MODTH.2_qfformula_factors} and Proposition \ref{P.MODTH.RCVR_mod_compl.ii}.
\end{proof}

\begin{theorem}\label{T.MODTH.T_2n_mod_compl}
	$ T^*_{n,2} $ is model complete.
\end{theorem}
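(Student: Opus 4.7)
The plan is to follow the exact same template used for Theorem \ref{T.MODTH.T_1n_mod_compl}: combine the embedding-square lemma (here Lemma \ref{L.RCSVR.2_square}) with the existentially-closed-embedding lemma (here Lemma \ref{L.MODTH.2_embedd_imm}). Recall that a theory is model complete if and only if every embedding between its models is existentially closed, so it suffices to prove this for arbitrary $\mathscr{L}^*$-embeddings $A \subseteq B$ of models of $T_{n,2}^*$.

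First I would observe that any $\mathscr{L}^*$-embedding $A \subseteq B$ of models of $T_{n,2}^*$ is automatically a good embedding in the sense of Definition \ref{D.RCSVR.good}: since in both $A$ and $B$ the predicates $\texttt{b}$ and $\texttt{m}$ are interpreted as the branching ideal and the set of non-units respectively, preservation of these predicates by the embedding is exactly the condition $\mathfrak{b}_B \cap A = \mathfrak{b}_A$ and $\mathfrak{m}_B \cap A = \mathfrak{m}_A$. This is the reason the predicate $\texttt{b}$ had to be added to the language in the first place (cf.\ Lemma \ref{L.MODTH.ec_embedd_implies_banch} and Example \ref{E.RCSVR.non_branch_pres_embedd_type_2,2}).

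Given such an embedding $A \subseteq B$, I would then invoke Lemma \ref{L.RCSVR.2_square} to produce a commutative diagram of good embeddings
\[
\begin{array}{ccc}
A & \subseteq & B \\
{\scriptstyle \epsilon_A}\downarrow & & \downarrow{\scriptstyle \epsilon_B} \\
A' & \subseteq & B'
\end{array}
\]
in which $A'$ and $B'$ are homogeneous rings of type $(n,2)$, $\epsilon_A$ is sharp at $\mathfrak{b}_{A'}$, $\epsilon_B$ is sharp at $\mathfrak{b}_{B'}$, and the bottom embedding $A' \subseteq B'$ is itself good. Applying Lemma \ref{L.MODTH.2_embedd_imm.I} to $\epsilon_A$ and to $\epsilon_B$ shows that $A$ (resp.\ $B$) is existentially closed in $A'$ (resp.\ in $B'$) as an $\mathscr{L}^*$-structure, while Lemma \ref{L.MODTH.2_embedd_imm} (II) applied to the homogeneous pair $A' \subseteq B'$ shows that $A'$ is existentially closed in $B'$.

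The final step is a routine diagram chase. Given a quantifier-free $\mathscr{L}^*$-formula $\phi(\overline{x})$ with parameters from $A$ and a tuple $\overline{b} \in B^r$ with $B \models \phi(\overline{b})$, apply $\epsilon_B$ to obtain $B' \models \phi(\epsilon_B(\overline{b}))$ (using that $\epsilon_B$ is an $\mathscr{L}^*$-embedding and $\phi$ is quantifier-free, and that parameters from $A$ are identified via $\epsilon_B \circ \iota_A = \epsilon_A$ with parameters from $A' \subseteq B'$). Existential closedness of $A' \subseteq B'$ yields a witness $\overline{a}' \in (A')^r$ with $A' \models \phi(\overline{a}')$, and existential closedness of $A \subseteq A'$ then produces $\overline{a} \in A^r$ with $A \models \phi(\overline{a})$, as required. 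This proof itself is purely formal; the genuine work—the amalgamation into homogeneous rings in Lemma \ref{L.RCSVR.2_square} and the application of the model completeness of $\sf{RCVR}(\texttt{b},\texttt{m})$ factor-by-factor in Lemma \ref{L.MODTH.2_embedd_imm}—has already been done in the preceding sections.
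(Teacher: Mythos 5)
Your proof is correct and takes exactly the approach the paper intends: the paper's proof simply reads ``Combine Lemmas \ref{L.MODTH.2_embedd_imm} and \ref{L.RCSVR.2_square},'' and your write-up is the natural unpacking of that one-liner, including the (implicit but important) observation that $\mathscr{L}^*$-embeddings of models of $T_{n,2}^*$ are automatically good. The only superfluous detail is the remark that $B$ is existentially closed in $B'$, which is never used in the diagram chase.
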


\begin{proof}
Combine Lemmas \ref{L.MODTH.2_embedd_imm} and 	\ref{L.RCSVR.2_square}.
\end{proof}

\subsection{Consequences of model completeness}

\subsubsection{\texorpdfstring{The model companion of local real closed (SV-) rings of rank $ n $}{The model companion of local real closed (SV-) rings of rank n}}

For the next result, recall that the class of local real closed rings of rank $ n  $ is elementary in the language of rings $ \mathscr{L} $; explicitly, an axiomatization for this class of rings is given by the axioms for local real closed rings (\cite{prestel.schwartz/mod_th_rcr}) together with the $ \mathscr{L} $-sentence $ \phi_{\text{rk}=n} $ defined in Lemma \ref{L.MOD_TH.rk_n_first-order}.

\begin{corollary}\label{C.MODTH.T_1n_mod_companinon}
$ T_{n,1} $ is the model companion of $ T_n $ and also of the $ \mathscr{L} $-theory of local real closed rings of rank $ n $.
\end{corollary}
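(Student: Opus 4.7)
The plan is to verify the three defining conditions for a model companion: model completeness of $T_{n,1}$, mutual model-consistency, and matching the underlying theory. Since $T_{n,1}$ extends both $T_n$ and the $\mathscr{L}$-theory of local real closed rings of rank $n$, every model of $T_{n,1}$ is already a model of either background theory, giving one half of model-consistency for free.

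For the converse direction of model-consistency, I would invoke the embedding results already in place. Given any local real closed ring $A$ of rank $n$, Proposition~\ref{P.RCSVR.embedd_loc_rcr_fin_rank} supplies a local embedding $A \linto B$ into a ring $B$ of type $(n,1)$, i.e., a model of $T_{n,1}$; this handles the theory of local real closed rings of rank $n$. For $T_n$ itself, the same proposition applies since every model of $T_n$ is \emph{a fortiori} a local real closed ring of rank $n$; alternatively, one can cite Lemma~\ref{L.RCSVR.canonical_embedd}, which gives the embedding directly for local real closed SV-rings of rank $n$.

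The third ingredient, model completeness of $T_{n,1}$, is precisely Theorem~\ref{T.MODTH.T_1n_mod_compl}. Combining these three facts establishes that $T_{n,1}$ satisfies the defining conditions of a model companion for both $T_n$ and the $\mathscr{L}$-theory of local real closed rings of rank $n$. There is essentially no obstacle here, since all of the work has been concentrated in the preceding structural and model-completeness results; the corollary is a direct packaging of Proposition~\ref{P.RCSVR.embedd_loc_rcr_fin_rank} together with Theorem~\ref{T.MODTH.T_1n_mod_compl}.
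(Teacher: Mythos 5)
The proposal is correct and follows exactly the paper's route: model completeness is Theorem~\ref{T.MODTH.T_1n_mod_compl}, every model of $T_{n,1}$ is already a model of the background theories, and the reverse embeddings come from Lemma~\ref{L.RCSVR.canonical_embedd} and Proposition~\ref{P.RCSVR.embedd_loc_rcr_fin_rank}. You simply unpack the word \enquote{combine} that the paper's one-line proof uses.
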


\begin{proof}
Combine Theorem \ref{T.MODTH.T_1n_mod_compl} together with Lemma \ref{L.RCSVR.canonical_embedd} and Proposition \ref{P.RCSVR.embedd_loc_rcr_fin_rank}.
\end{proof}

\begin{example}
	$ T_n $ does not have the amalgamation property; in particular,  $ T_{n,1} $ is not the model completion of $ T_{n} $ (\cite[Proposition 3.5.18]{chang/keisler.model_theory}). Indeed, let $ V $ be a non-trivial real closed valuation ring of Krull dimension at least 2 and with residue field $ \emph{\textbf{k}} $, and let $ \mathfrak{p} $ be a non-zero non-maximal prime ideal of $ V $. Define $ A:= {\prod}^n_{V/\mathfrak{p}}V $, $ B:=  {\prod}^n_{V_{\mathfrak{p}}/\mathfrak{p}V_{\mathfrak{p}}}V_{\mathfrak{p}}$,  and $C:= {\prod}^n_{\emph{\textbf{k}}}V$; note in particular that $ A \models T_{n,2} $, so that the branching ideal $ \mathfrak{b}_A $ of $ A  $ is property contained in $ \mathfrak{m}_A $, and also $ B,C \models T_{n,1} $. Clearly $ A \subseteq B, C $, and if  $ T_n $ has the amalgamation property, then by Corollary \ref{C.MODTH.T_1n_mod_companinon} there exists $ D \models T_{n,1} $ amalgamating $ B $ and $ C  $ over $ A $; by Lemma \ref{L.RCSVR.embedd_1n_local_factors}, $ B \cap \mathfrak{m}_D = \mathfrak{m}_B $ and $ C \cap \mathfrak{m}_D = \mathfrak{m}_C $, but $ A \cap \mathfrak{m}_B = \text{ker}(A\onto V/\mathfrak{p}) = \mathfrak{b}_A$ and $ A \cap \mathfrak{m}_C = \text{ker}(A\onto \emph{\textbf{k}})=\mathfrak{m}_A$, therefore $ \mathfrak{b}_A = A \cap \mathfrak{m}_D =\mathfrak{m}_A$, a contradiction to $ \mathfrak{b}_A \neq \mathfrak{m}_A $.
\end{example}

\begin{lemma}\label{L.MODTH.amalg_T_n_into_T_n_1}
 Let $ A \models T_n $ and $ B, C \models T_{n,1} $ be such that $ A \subseteq B,C $. If $ A \subseteq B $ and  $ A \subseteq C $ are local embeddings, then there exists  $ D\models T_{n,1}  $ amalgamating $ B $ and $ C $ over $ A $.
\end{lemma}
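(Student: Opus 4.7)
The plan is to reduce the amalgamation over $A$ to the amalgamation of two elementary extensions of a common model of $T_{n,1}$, using the model completeness of $T_{n,1}$ (Theorem \ref{T.MODTH.T_1n_mod_compl}) and a standard compactness argument.

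The first step is to produce a ring $A' \models T_{n,1}$ and a commutative diagram of embeddings $A \linto A' \linto B$ and $A \linto A' \linto C$ extending the two given embeddings. Writing $\text{Spec}^{\text{min}}(A) = \{\mathfrak{p}_i \mid i \in [n]\}$ and $A_i := A/\mathfrak{p}_i$, each $A_i$ is a non-trivial real closed valuation ring (Theorem \ref{T.RCSVR.equiv_loc_real_closed_SV-ring_finite_rank}) with residue field $k_A := A/\mathfrak{m}_A$, and by Lemma \ref{L.RCSVR.canonical_embedd} the ring $A' := {\prod}_{k_A, i \in [n]} A_i$ is of type $(n,1)$ with a canonical local embedding $A \linto A'$ sending $a$ to $(a/\mathfrak{p}_i)_{i \in [n]}$. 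Using Corollary \ref{C.SV.min_prime_ann} to arrange the minimal primes of $B$ to contract to those of $A$ and Convention \ref{C.RCSVR.factors} to write $B = {\prod}_{K_B, i \in [n]} B_i$, the locality of $A \subseteq B$ yields local embeddings $A_i \linto B_i$ for each $i$ (Remark \ref{R.SV.local_embedd_factor}) which all induce the same residue-field embedding $k_A \linto K_B$ (namely, the one coming intrinsically from $A \subseteq B$); hence they assemble coordinate-wise into a ring homomorphism $A' \linto B$ which is injective and factors $A \linto B$. The same recipe produces $A' \linto C$ factoring $A \linto C$.

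The second step invokes model theory. By Theorem \ref{T.MODTH.T_1n_mod_compl} the embeddings $A' \linto B$ and $A' \linto C$ are elementary, so $B$ and $C$ are two elementary extensions of the same $\mathscr{L}$-structure $A'$. A routine compactness argument shows that the $\mathscr{L}(B \cup C)$-theory $\text{ElDiag}(B) \cup \text{ElDiag}(C)$, with constant symbols for elements of $A'$ identified, is consistent: every finite subset involves only finitely many elements $\bar{b} \subseteq B$ and $\bar{c} \subseteq C$ together with parameters from $A'$, and any existential sentence over $A'$ witnessed in $B$ holds in $A'$ by $A' \preceq B$ and hence in $C$ by $A' \preceq C$, allowing joint realisation in $C$. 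A model $D$ of this theory is a common elementary extension of $B$ and $C$ over $A'$; in particular $D \models T_{n,1}$ and $D$ amalgamates $B$ and $C$ over $A$ as required.

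The only delicate point is the construction of the factoring embeddings $A' \linto B$ and $A' \linto C$, and the hypothesis that $A \subseteq B$ and $A \subseteq C$ are local is indispensable there: without locality, the residue field $k_A$ does not map as a ring into $K_B$ or $K_C$, so the fibre-product structure of $A'$ cannot be transferred into $B$ or $C$ coherently. Everything else is essentially formal from model completeness and first-order compactness.
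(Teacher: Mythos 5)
Your proof is correct, but it takes a genuinely different route from the paper's. The paper works directly at the level of the valuation-ring factors: it identifies minimal primes via Corollary \ref{C.SV.min_prime_ann}, amalgamates each pair $B/\mathfrak{p}_{B,i}$ and $C/\mathfrak{p}_{C,i}$ over $A/\mathfrak{p}_{A,i}$ into a real closed valuation ring $V_i$ via Lemma \ref{L.MODTH.amalg_rcvrs} (which itself rests on Cherlin--Dickmann), then uses completeness of $\mathsf{RCVR}(\texttt{m})$ to embed all the $V_i$ into a single $V$, and finally builds the amalgam explicitly as the homogeneous ring $D:=\prod_{\textbf{\emph{k}}}^n V$. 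You instead interpose the canonical extension $A'=\prod_{A/\mathfrak{m}_A,\,i\in[n]}A/\mathfrak{p}_i\models T_{n,1}$ between $A$ and each of $B$, $C$ --- which is in fact the prime extension construction the paper uses in the proposition immediately following this lemma --- and then cite model completeness of $T_{n,1}$ (Theorem \ref{T.MODTH.T_1n_mod_compl}) to make $A'\preceq B$ and $A'\preceq C$, finishing with the general elementary-amalgamation theorem for a fixed complete theory. Your coherence argument for the factorisation $A\hookrightarrow A'\hookrightarrow B$ is correct: since $A\subseteq B$ is local it induces, for each $i$, the same residue-field embedding $A/\mathfrak{m}_A\hookrightarrow B/\mathfrak{m}_B$ under the canonical identifications, so the coordinate-wise map $\prod_i A/\mathfrak{p}_{A,i}\to\prod_i B/\mathfrak{p}_{B,i}$ restricts to $A'\to B$. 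What each approach buys: yours is shorter, modular, and avoids re-invoking the $\mathsf{RCVR}$-level amalgamation already encapsulated in the proof of model completeness, and it even yields the stronger conclusion that $B$ and $C$ embed elementarily into $D$; the paper's is constructive and exhibits the amalgam as a concrete homogeneous ring of type $(n,1)$, which is in the spirit of the embedding lemmas used elsewhere in Section \ref{SEC.mod_th}. Both arguments are non-circular since Theorem \ref{T.MODTH.T_1n_mod_compl} precedes this lemma.
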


\begin{proof}
	Write $ \text{Spec}^{\text{min}}(A) = \{ \mathfrak{p}_{A, i}  \mid i \in [n]\}  $, $ \text{Spec}^{\text{min}}(B) = \{ \mathfrak{p}_{B, i}  \mid i \in [n]\}  $,  and $ \text{Spec}^{\text{min}}(C) = \{ \mathfrak{p}_{C, i}  \mid i \in [n]\}  $, and assume without loss of generality that $ \mathfrak{p}_{B, i} \cap A =\mathfrak{p}_{A, i} =   \mathfrak{p}_{C, i} \cap A$ for all $ i \in [n] $ (Corollary \ref{C.SV.min_prime_ann}). Since $ A\subseteq B $ and $ A \subseteq C $ are local embeddings, $ A/\mathfrak{p}_{A,i} \subseteq B/\mathfrak{p}_{B,i}$ and $ A/\mathfrak{p}_{A,i} \subseteq C/\mathfrak{p}_{C,i}$ are local embeddings for all $ i \in [n] $ (Remark \ref{R.SV.local_embedd_factor}), therefore by Lemma \ref{L.MODTH.amalg_rcvrs} there exist non-trivial real closed valuation rings $ V_i $ amalgamating   $ B/\mathfrak{p}_{B,i} $ and $ C/\mathfrak{p}_{C,i} $ over $ A/\mathfrak{p}_{A,i} $  as $ \mathscr{L}(\texttt{m}) $-structures.  Since $ \sf{RCVR}(\texttt{m}) $ is complete (Proposition \ref{P.MODTH.RCVR_mod_compl.i}), there exists a  non-trivial real closed valuation ring $ V $ with residue field $ \textbf{\emph{k}} $ such that $ (V_i, \mathfrak{m}_{V_i})\subseteq (V, \mathfrak{m}_V) $ for all  $ i \in [n] $, therefore it follows that the canonical composite embeddings \[
	A \subseteq B \into \prod_{i \in [n]}B/\mathfrak{p}_{B, i}\subseteq\prod_{i \in [n]} V_i \subseteq\prod_{i \in [n]} V	\ \ \text{ and  } \ \ 	A \subseteq C \into \prod_{i \in [n]}C/\mathfrak{p}_{C,i}\subseteq\prod_{i \in [n]} V_i \subseteq\prod_{i \in [n]} V\] 
	corestrict to composite embeddings $ A \subseteq B \into \prod_{\emph{\textbf{k}}}^n V $ and $ A \subseteq C \into \prod_{\emph{\textbf{k}}}^n V $ (respectively) witnessing that $ \prod_{\emph{\textbf{k}}}^n V \models T_{n,1} $ amalgamates $ B  $ and $ C $ over $ A $.
\end{proof}

\begin{proposition}
	Let $ T_{n,1}(\emph{\texttt{m}}) $ and $ T_{n}(\emph{\texttt{m}}) $ be the $ \mathscr{L}(\emph{\texttt{m}}) $-theories $ T_{n,1} $ and $ T_{n} $ together with the sentence expressing that $ \emph{\texttt{m}} $ is the set of non-units, respectively.
	\begin{enumerate}[\normalfont(i)]
	\item The models of $ T_{n}(\emph{\texttt{m}}) $  have prime extensions in the class of models of $ T_{n,1}(\emph{\texttt{m}}) $.
	\item $ T_{n,1}(\emph{\texttt{m}}) $ is the model completion of $ T_{n}(\emph{\texttt{m}}) $.
	\end{enumerate}
\end{proposition}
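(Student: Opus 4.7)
The plan is to prove (i) by constructing an explicit prime extension, and then to derive (ii) by combining this prime extension with the model completeness of $T_{n,1}$ established in Theorem~\ref{T.MODTH.T_1n_mod_compl}. For (i), given $A \models T_n(\texttt{m})$ with $\text{Spec}^{\text{min}}(A) = \{\mathfrak{p}_1, \dots, \mathfrak{p}_n\}$ and residue field $\textbf{\emph{k}} := A/\mathfrak{m}_A$, I would take
\[
A^* := {\prod}_{\textbf{\emph{k}}, i \in [n]} A/\mathfrak{p}_i
\]
together with the canonical map $\iota_A : A \linto A^*$ sending $a \mapsto (a/\mathfrak{p}_i)_{i \in [n]}$. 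By Theorem~\ref{T.RCSVR.equiv_loc_real_closed_SV-ring_finite_rank}, each $A/\mathfrak{p}_i$ is a non-trivial real closed valuation ring with residue field $\textbf{\emph{k}}$, and Lemma~\ref{L.RCSVR.equiv_loc_rcsvr_rk_n_exactly_one_branching_ideal} presents $A^*$ as a ring of type $(n,1)$ whose unique branching ideal is $\mathfrak{m}_{A^*}$, so $A^* \models T_{n,1}(\texttt{m})$; the map $\iota_A$ is visibly a local embedding.

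The primeness of $A^*$ is the substance of (i). To verify it, I would take any local embedding $f : A \linto B$ into a model $B \models T_{n,1}(\texttt{m})$. By Corollary~\ref{C.SV.min_prime_ann} the minimal primes of $B$ can be enumerated as $\text{Spec}^{\text{min}}(B) = \{\mathfrak{q}_1, \dots, \mathfrak{q}_n\}$ with $f^{-1}(\mathfrak{q}_i) = \mathfrak{p}_i$, and by Lemma~\ref{L.RCSVR.equiv_loc_rcsvr_rk_n_exactly_one_branching_ideal}(b) $B$ is canonically isomorphic to ${\prod}_{B/\mathfrak{m}_B, i \in [n]} B/\mathfrak{q}_i$. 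The map $f$ induces local embeddings $f_i : A/\mathfrak{p}_i \linto B/\mathfrak{q}_i$ (Remark~\ref{R.SV.local_embedd_factor}) and a local embedding $\bar{f} : \textbf{\emph{k}} \linto B/\mathfrak{m}_B$, all compatible with the residue field projections. These $f_i$ then assemble into a local ring embedding $g : A^* \linto B$ satisfying $g \circ \iota_A = f$.

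For (ii), I would apply the standard characterization of model completion as a model companion for which $T^* \cup \text{Diag}(A)$ is complete for every $A \models T$ (cf.\ \cite[Proposition 3.5.15]{chang/keisler.model_theory}). The model companion property is immediate: $T_{n,1}(\texttt{m})$ is model complete because $T_{n,1}$ is so by Theorem~\ref{T.MODTH.T_1n_mod_compl} and $\texttt{m}$ is $\mathscr{L}$-definable without parameters in local rings; Proposition~\ref{P.RCSVR.embedd_loc_rcr_fin_rank} supplies the local embeddings of models of $T_n(\texttt{m})$ into models of $T_{n,1}(\texttt{m})$; and the reverse inclusion is trivial. For the completeness of $T_{n,1}(\texttt{m}) \cup \text{Diag}(A)$, if $B$ and $C$ are any two models, then both contain $A$ as an $\mathscr{L}(\texttt{m})$-substructure, so by (i) both contain the prime extension $A^*$ over $A$; the model completeness of $T_{n,1}(\texttt{m})$ then forces both embeddings $A^* \linto B$ and $A^* \linto C$ to be elementary, yielding $B \equiv_A C$.

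The main obstacle is the well-definedness of $g$ in the second paragraph: one must verify that $(f_i(a_i))_{i \in [n]}$ actually lies in the fibre product ${\prod}_{B/\mathfrak{m}_B, i \in [n]} B/\mathfrak{q}_i$ whenever $(a_i)_{i \in [n]} \in A^*$, i.e., that all $f_i(a_i)$ coincide in $B/\mathfrak{m}_B$. This reduces to the commutativity of the square formed by the residue field projections $A/\mathfrak{p}_i \lonto \textbf{\emph{k}}$, $B/\mathfrak{q}_i \lonto B/\mathfrak{m}_B$ with $f_i$ and $\bar{f}$, which is automatic because $\bar{f}$ is induced from $f$ by the assumption that $f$ is local. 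Everything else in the argument is bookkeeping with the fibre-product decomposition and with the identification of minimal primes across $A \linto A^* \linto B$.
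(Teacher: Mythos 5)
Part (i) of your proof follows essentially the same path as the paper's: you build the same fibre product $A^* = \prod_{\textbf{k}, i \in [n]} A/\mathfrak{p}_i$ and verify primeness via Corollary~\ref{C.SV.min_prime_ann}, Lemma~\ref{L.RCSVR.equiv_loc_rcsvr_rk_n_exactly_one_branching_ideal.b}, and Remark~\ref{R.SV.local_embedd_factor}; your explicit remark on the well-definedness of $g$ fills in a step the paper leaves implicit, but the content is the same. For part (ii) you take a genuinely different route. The paper establishes the model completion via the amalgamation-property criterion (\cite[Proposition 3.5.18]{chang/keisler.model_theory}) and proves the required amalgamation in a free-standing lemma, Lemma~\ref{L.MODTH.amalg_T_n_into_T_n_1}, by amalgamating the residue valuation rings $A/\mathfrak{p}_i$ through the Cherlin--Dickmann theory $\sf{RCVR}(\texttt{m})$ and reassembling the fibre products. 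You instead invoke the completeness-of-diagrams characterization and deduce it directly from (i): if $B, C \models T_{n,1}(\texttt{m}) \cup \text{Diag}(A)$, then each contains an elementary copy of the prime extension $A^*$ over $A$ (by (i) together with model completeness of $T_{n,1}(\texttt{m})$), whence $B \equiv_A C$. This is correct, and it is the more economical packaging once (i) is in hand, since the prime extension does the amalgamation's work and the separate amalgamation lemma is avoided. The tradeoff is that the paper's Lemma~\ref{L.MODTH.amalg_T_n_into_T_n_1} is reused in the remark that follows this proposition, to show $T_{n,1}$ is the model completion of $T'_n$ in the plain language $\mathscr{L}$, so the extra work there is not wasted.
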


\begin{proof}
	Note first that since $ T_{n,1}(\texttt{m}) $ is an extension by definitions of  $ T_{n,1} $ and this latter theory is model complete, $ T_{n,1}(\texttt{m}) $ is also model complete. 

	(i). Let  $ A \models T_{n}(\texttt{m}) $ and set $ \text{Spec}^{\text{min}}(A)= \{ \mathfrak{p}_i \mid i \in [n] \}  $. By Lemma \ref{L.RCSVR.canonical_embedd}, $ A $ embeds into $ A':=  \prod_{A/\mathfrak{m}_A, i \in [n]}A/\mathfrak{p}_i \models T_{n,1}(\texttt{m})$ as an $ \mathscr{L}(\texttt{m}) $-structure, and it is claimed that $ A' $ is the prime extension of $ A $ in the class of models of $ T_{n,1}(\texttt{m}) $. Let  $ B \models T_{n,1}(\texttt{m}) $ and suppose that $ A \subseteq B $ as  $ \mathscr{L}(\texttt{m}) $-structures; set $ \text{Spec}^{\text{min}}(B)= \{ \mathfrak{q}_i \mid i \in [n] \}  $, so that the canonical embedding $  B \linto \prod_{i \in [n]}B/\mathfrak{q}_{i} $  corestricts to an isomorphism $  B \cong \prod_{B/\mathfrak{m}_B, i \in [n]}B/\mathfrak{q}_{i} $ (Lemma \ref{L.RCSVR.equiv_loc_rcsvr_rk_n_exactly_one_branching_ideal.b}); since the embedding $ A \subseteq B $ is local, it induces local embeddings  $ A/\mathfrak{p}_i \subseteq B/\mathfrak{q}_i $ for all $ i \in [n] $, from which it follows that the induced embedding $ \prod_{i \in [n]}A/\mathfrak{p}_i \linto \prod_{i \in [n]}B/\mathfrak{q}_i $ restricts to an $ \mathscr{L}(\texttt{m}) $-embedding $ A' \linto B $ over  $ A $.

	(ii). By Corollary \ref{C.MODTH.T_1n_mod_companinon} and \cite[Proposition 3.5.18]{chang/keisler.model_theory} it suffices to show that  if $ B, C \models T_{n,1}(\texttt{m}) $ contain a common $ \mathscr{L}(\texttt{m}) $-substructure $ A \subseteq B,C $ such that  $ A \models T_{n}(\texttt{m}) $, then there exists $ D \models T_{n,1 } (\texttt{m})$ amalgamating $ B $ and $ C $ over $ A $; the existence of such $ D $ follows by Lemma \ref{L.MODTH.amalg_T_n_into_T_n_1}.
\end{proof}

\begin{remark}
	Let $ T_{n}'  $ be the $ \mathscr{L} $-theory $ T_n $ together with the sentence expressing that every non-unit is a sum of two zero divisors. By the equivalence  (i) $ \Leftrightarrow $  (v) in Proposition \ref{P.RCSVR.equiv_branching_max_id}, models of $ T_{n}' $ are local real closed SV-rings of rank  $ n $ whose maximal ideal is a branching ideal; clearly any model of $ T_{n, 1} $ is a model of  $ T_{n}' $, therefore $ T_{n,1} $ is also the model companion of $ T_{n}' $ by Corollary \ref{C.MODTH.T_1n_mod_companinon}. But in this case, $ T_{n,1} $ is even the model completion of $ T_{n}' $ in the language of rings: indeed, if $ B, C \models T_{n,1} $ contain a common $ \mathscr{L} $-substructure $ A \subseteq B,C $ such that  $ A \models T_{n}'$, then by Lemma \ref{L.RCSVR.embedd_loc_rcr_max_id_branch_implies_local} both $ A \subseteq B $ and  $ A \subseteq C $ are local embeddings, therefore by Lemma \ref{L.MODTH.amalg_T_n_into_T_n_1} there exists $ D \models T_{n,1} $ amalgamating $ B $ and $ C  $ over $ A $.
\end{remark}

\subsubsection{Completeness, decidability, and NIP}
	
\begin{corollary}\label{C.MOD_TH.complete}
	The theories $ T_{n,1} $ and $ T_{n,2} $ are complete.
\end{corollary}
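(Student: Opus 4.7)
The plan is to derive completeness from the model completeness results already established together with the joint embedding property (JEP): recall that a model complete theory with JEP is complete, since any two models then share a common extension into which they embed elementarily. For $T_{n,1}$, Theorem \ref{T.MODTH.T_1n_mod_compl} supplies model completeness, so only JEP remains. For $T_{n,2}$, I first reduce to $T_{n,2}^*$: by Remark \ref{R.MODTH.def_branch_id.ii} the predicates $\texttt{b}$ and $\texttt{m}$ are $\emptyset$-definable in every model of $T_{n,2}$, so $T_{n,2}^*$ is an extension by definitions of $T_{n,2}$ and the two theories are equi-complete; since $T_{n,2}^*$ is model complete by Theorem \ref{T.MODTH.T_2n_mod_compl}, completeness of $T_{n,2}$ reduces to JEP for $T_{n,2}^*$.

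For JEP of $T_{n,1}$: given $A, B \models T_{n,1}$, I would invoke Lemma \ref{L.RCSVR.equiv_loc_rcsvr_rk_n_exactly_one_branching_ideal} to write $A \cong {\prod}_{\textbf{\emph{k}}_A,\, i \in [n]}^{} A_i$ and $B \cong {\prod}_{\textbf{\emph{k}}_B,\, i \in [n]}^{} B_i$ with $A_i, B_i$ non-trivial real closed valuation rings. By completeness of $\sf{RCF}$ pick a real closed field $\textbf{\emph{k}}$ containing both $\textbf{\emph{k}}_A$ and $\textbf{\emph{k}}_B$, and by completeness of the theory of non-trivial divisible ordered abelian groups pick a divisible ordered abelian group $\Gamma$ containing the value groups of all the $A_i$ and $B_i$. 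Setting $V := \textbf{\emph{k}}[[\Gamma]]$, I apply Theorem \ref{T.APP.rcvf_hahn_embedd_II} to construct, for each $i \in [n]$, local embeddings $\eta_i: A_i \linto V$ all inducing the same prescribed embedding $\textbf{\emph{k}}_A \linto \textbf{\emph{k}}$ on residue fields, and analogously $\theta_i : B_i \linto V$ inducing a single embedding $\textbf{\emph{k}}_B \linto \textbf{\emph{k}}$. The uniformity of the residue field embeddings across $i$ is exactly what makes the tuple $(\eta_1, \dots, \eta_n)$ corestrict to a ring embedding $A \linto D := {\prod}_{\textbf{\emph{k}}}^n V$, and likewise for $B$; since $D$ is of type $(n,1)$, JEP follows.

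For JEP of $T_{n,2}^*$: an entirely parallel construction applies. Write $A \cong {\prod}_{C_A,\, i \in [n]}^{} A_i$ and $B \cong {\prod}_{C_B,\, i \in [n]}^{} B_i$ with $C_A, C_B$ non-trivial real closed valuation rings, so each factor $A_i$ (resp.\ $B_i$) is a model of $\sf{RCVR}(\texttt{b}, \texttt{m})$ where $\texttt{b}$ is interpreted as the non-zero non-maximal prime $\mathfrak{b}_{A_i} = \emph{ker}(A_i \onto C_A)$ (resp.\ $\mathfrak{b}_{B_i}$). I first choose, via Lemma \ref{L.MODTH.amalg_rcvrs} or directly via Hahn series, a non-trivial real closed valuation ring $W$ containing both $C_A$ and $C_B$ as local $\mathscr{L}(\texttt{m})$-substructures; then I take $V := \textbf{\emph{k}}[[\Gamma]]$ with $\textbf{\emph{k}} = W/\mathfrak{m}_W$ and $\Gamma$ a divisible ordered abelian group carrying a distinguished convex subgroup $\Delta$ large enough to force $V/\mathfrak{b} \cong W$ for the corresponding convex prime $\mathfrak{b} \subseteq V$, and large enough to accommodate all relevant value groups. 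Theorem \ref{T.APP.rcvf_hahn_embedd_II} then supplies good $\mathscr{L}^*$-embeddings $A_i, B_i \linto V$ whose induced maps on $V/\mathfrak{b}$ realize the pre-selected embeddings $C_A, C_B \linto W$; these assemble into $\mathscr{L}^*$-embeddings $A, B \linto D := {\prod}_{W}^n V$, a ring of type $(n,2)$.

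The main technical hurdle is the coherence requirement in both constructions: the $n$ separate embeddings of the factor valuation rings into $V$ must induce a single common map on the residue field (case $T_{n,1}$) or on the branching quotient (case $T_{n,2}^*$), for only then do they amalgamate into a homomorphism of the fibre product. This is precisely the strength afforded by the refined Hahn embedding theorem \ref{T.APP.rcvf_hahn_embedd_II}, which allows the residue field embedding and value group embedding of the target to be prescribed in advance; once the target data are fixed at the outset for each side, the requisite embeddings exist and the argument proceeds smoothly.
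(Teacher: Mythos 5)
Your overall strategy coincides with the paper's: deduce completeness from model completeness (Theorems \ref{T.MODTH.T_1n_mod_compl} and \ref{T.MODTH.T_2n_mod_compl}) together with the joint embedding property, and reduce $T_{n,2}$ to $T_{n,2}^*$ via extension by definitions. Where you differ slightly is in how you establish JEP: the paper applies Lemma \ref{L.RCSVR.1_square} (resp.\ \ref{L.RCSVR.2_square}) to pass to homogeneous rings $A = {\prod}_{\textbf{\emph{k}}}^n V$ and $B = {\prod}_{\textbf{\emph{k}}'}^n V'$, and then invokes completeness of $\sf{RCVR}(\texttt{m})$ (resp.\ $\sf{RCVR}(\texttt{b},\texttt{m})$) to jointly embed the single valuation rings $V$ and $V'$ into a common $W$, whereas you unroll this into a direct Hahn-series construction, embedding every factor $A_i$, $B_i$ separately into a single $V = \textbf{\emph{k}}[[\Gamma]]$. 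Both routes rest on the same underlying Hahn embedding technology and are correct; yours is more self-contained but reproves the content of the homogenization lemmas inline. Your emphasis on the coherence of the residue field (resp.\ branching quotient) maps across the $n$ factors is exactly the right thing to flag, since that is what makes the tuple of embeddings corestrict to an embedding of fibre products.

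Two small points. First, for JEP you do not actually need the relative Theorem \ref{T.APP.rcvf_hahn_embedd_II}, which is designed for an \emph{extension} $K \subseteq L$; the absolute Theorem \ref{T.APP.rcvf_hahn_embedd} already gives $A_i \linto \textbf{\emph{k}}_A[[\Gamma_i]]$ with residue map the canonical identification of $\textbf{\emph{k}}_A$, and composing with the inclusions $\textbf{\emph{k}}_A[[\Gamma_i]] \linto \textbf{\emph{k}}[[\Gamma]]$ gives the coherence automatically, since all the induced residue maps then factor through the single inclusion $\textbf{\emph{k}}_A \linto \textbf{\emph{k}}$. Second, in the $T_{n,2}^*$ case, your description of how to produce $V$ with a convex prime $\mathfrak{b}$ satisfying $V/\mathfrak{b} \cong W$ (``carrying a distinguished convex subgroup $\Delta$ large enough'') is sketchier than the rest; the clean construction, as carried out in the proof of Lemma \ref{L.RCSVR.2_square}, is the pullback $V := \lambda^{-1}(W)$ of $W$ along the residue map of a suitable real closed valuation ring with residue field $\text{qf}(W)$, which Lemma \ref{lem.rcr_domain_not_rcvr} guarantees is again a non-trivial real closed valuation ring. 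With that filled in, your argument goes through.
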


\begin{proof}
	By Theorems \ref{T.MODTH.T_1n_mod_compl} and \ref{T.MODTH.T_2n_mod_compl} to prove completeness it suffices to show that each of the theories $ T_{n,1} $ and $ T^*_{n,2} $ have the joint embedding property; note that $ T^*_{n,2} $ is an extension by definitions of  $ T_{n,2} $, so completeness of  $ T^*_{n,2} $ entails completeness of $ T_{n,2} $.  Let $ A, B \models T_{n,1}  $; by Lemma \ref{L.RCSVR.1_square}, it can be assumed that both $ A  $ and $ B $ are homogeneous, so that there exist non-trivial real closed valuation rings $ V $ and $ V' $ with residue fields $ \emph{\textbf{k}}  $ and $ \emph{\textbf{k}}' $, respectively, such that $ A = {\prod}_{\emph{\textbf{k}}}^n V $ and $ B= {\prod}_{\emph{\textbf{k}}'}^{n}V' $. Since $ \sf{RCVR}(\texttt{m}) $ is complete (Proposition \ref{P.MODTH.RCVR_mod_compl.i}), there exists a non-trivial real closed valuation ring $ W $ with residue field $ \emph{\textbf{l}} $ and local embeddings $ V, V' \subseteq W $, from which it follows that both $ A $ and $ B $ embed into $ {\prod}_{\emph{\textbf{l}}}^n W \models T_{n,1}$; the joint embedding property for $ T^*_{n,2} $ follows  in a similar manner appealing to Lemma \ref{L.RCSVR.2_square} and Proposition \ref{P.MODTH.RCVR_mod_compl.ii}. 
\end{proof}

\begin{corollary}
	The theories $ T_{n,1} $ and $ T_{n,2} $ are decidable.
\end{corollary}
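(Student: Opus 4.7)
The plan is to invoke the standard folklore principle that any complete, recursively axiomatized theory in a recursive language is decidable. Both ingredients are already available in the excerpt: completeness of $T_{n,1}$ and $T_{n,2}$ is Corollary \ref{C.MOD_TH.complete}, and recursive axiomatizability was observed in Remark \ref{R.MODTH.def_branch_id.i} (the axiomatization for real closed rings from \cite{prestel.schwartz/mod_th_rcr} is recursive, and the extra sentences $\phi_{\text{rk}=n}$, $\phi_{\text{SV},n}$, $\phi_{\text{br},n}$, together with the sentences expressing \enquote{every non-unit is a sum of two zero divisors} or its negation, are all concrete, explicitly written $\mathscr{L}$-sentences).

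Concretely, I would proceed as follows. Let $T \in \{T_{n,1}, T_{n,2}\}$ and fix a recursive axiomatization $\Sigma_T$ of $T$ produced by concatenating the recursive axiomatization of real closed rings with the finitely many explicit sentences listed in the definitions of $T_{n,1}$ and $T_{n,2}$. Given an arbitrary $\mathscr{L}$-sentence $\phi$, run in parallel the enumeration of proofs from $\Sigma_T$ of $\phi$ and of $\neg\phi$; by completeness of $T$ (Corollary \ref{C.MOD_TH.complete}) and Gödel's completeness theorem, exactly one of these searches terminates, which yields a decision procedure for $T$. This is the standard argument and no additional obstacle arises.

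I do not anticipate any hard step: the work has already been done in establishing completeness and in writing down explicit (hence recursive) axiomatizations. One only needs to be slightly careful that the sentences $\phi_{\text{rk}=n}$, $\phi_{\text{SV},n}$, and $\phi_{\text{br},n}$ are uniformly recursive in $n$, which is clear from their explicit definitions in Lemmas \ref{L.MOD_TH.rk_n_first-order}, \ref{L.MOD_TH.SV_first-order} and Definition \ref{D.MODTH.phi_br,n}. The proof in the paper should therefore be a single sentence invoking completeness together with recursive axiomatizability.
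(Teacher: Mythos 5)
Your proposal is correct and matches the paper's proof exactly: the paper likewise invokes completeness (Corollary \ref{C.MOD_TH.complete}) and recursive axiomatizability (Remark \ref{R.MODTH.def_branch_id.i}) and concludes decidability in a single sentence. The extra detail you supply about running parallel proof searches is just the standard unpacking of that folklore fact.
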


\begin{proof}
	Since $ T_{n,1} $ and $ T_{n,2} $  are complete by Corollary \ref{C.MOD_TH.complete} and recursively axiomatizable (Remark \ref{R.MODTH.def_branch_id.i}), they are decidable.
\end{proof}

For the last result of this subsection, recall that if $ A $ is an NIP $  \mathscr{L}_1$-structure (\cite{simon.guide})   and $ B $  is an $ \mathscr{L}_2 $-structure which is interpretable in $ A $  (\cite[Section 5.3]{hodges}), then $ B$ is also NIP.

\begin{corollary}
The theories $ T_{n,1} $ and $ T_{n,2} $ are NIP.
\end{corollary}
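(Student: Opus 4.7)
The plan is to reduce NIP of each of $T_{n,1}$ and $T_{n,2}$ to NIP of a single non-trivial real closed valuation ring, equipped with a suitable predicate, by exhibiting an explicit interpretation. Since both theories are complete by Corollary \ref{C.MOD_TH.complete}, it will suffice to produce a single NIP model of each; NIP will then transfer to the whole theory by elementary equivalence.

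For $T_{n,1}$ I would fix any non-trivial real closed valuation ring $V$ with residue field $\textbf{\emph{k}}$ and set $A := {\prod}^n_{\textbf{\emph{k}}} V$, which is a model of $T_{n,1}$ by Lemma \ref{L.RCSVR.equiv_loc_rcsvr_rk_n_exactly_one_branching_ideal}. As a subset of $V^n$, the underlying set of $A$ is cut out by the $\mathscr{L}(\texttt{m})$-formula $\bdwedge_{i,j \in [n]} \texttt{m}(x_i - x_j)$, and its ring operations are inherited coordinate-wise from $V^n$; hence $A$ is interpretable in the $\mathscr{L}(\texttt{m})$-structure $(V, \mathfrak{m}_V) \models \sf{RCVR}(\texttt{m})$. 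Since $\sf{RCVR}(\texttt{m})$ is a definitional expansion of $\sf{RCVR}$ (the maximal ideal being the definable set of non-units), and $\sf{RCVR}$ is interpretable in the theory of real closed valued fields -- a weakly o-minimal, hence NIP, theory -- $V$ is NIP and therefore so is $A$.

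For $T_{n,2}$ the argument is analogous but uses the enrichment by the branching-ideal predicate $\texttt{b}$. I would fix a non-trivial real closed valuation ring $V$ admitting a surjective ring homomorphism $V \onto C$ onto a non-trivial real closed valuation ring $C$, set $\mathfrak{b} := \text{ker}(V \onto C)$, and let $A := {\prod}^n_C V$; then $A \models T_{n,2}$ by Lemma \ref{L.RCSVR.equiv_loc_rcsvr_rk_n_exactly_one_branching_ideal}. The enriched $\mathscr{L}^*$-structure $(A, \mathfrak{b}_A, \mathfrak{m}_A) \models T_{n,2}^*$ is defined on the subset of $V^n$ cut out by $\bdwedge_{i, j \in [n]} \texttt{b}(x_i - x_j)$ (cf. Remark \ref{R.MODTH.factors.ii}), with ring operations and with $\texttt{b}$, $\texttt{m}$ all interpreted coordinate-wise; hence $(A, \mathfrak{b}_A, \mathfrak{m}_A)$ is interpretable in $(V, \mathfrak{b}, \mathfrak{m}_V) \models \sf{RCVR}(\texttt{b}, \texttt{m})$. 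By the observation used in the proof of Proposition \ref{P.MODTH.RCVR_mod_compl.ii}, this latter structure is itself definable in the model $(\text{qf}(V), V, V_{\mathfrak{b}}) \models \sf{RCF}_{\text{convex}, 2}$, which is NIP (being weakly o-minimal). Therefore $(A, \mathfrak{b}_A, \mathfrak{m}_A)$ is NIP, and so is its reduct $A \models T_{n,2}$.

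The only non-routine inputs are the NIP properties of the theory of real closed valued fields and of $\sf{RCF}_{\text{convex}, 2}$; both are standard results in the model theory of tame expansions of o-minimal structures and will simply be cited rather than reproved. Everything else -- namely the explicit interpretation of the fibre product in powers of the base valuation ring and the transfer of NIP across interpretations and reducts -- is purely formal.
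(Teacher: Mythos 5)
Your proof is correct and shares the paper's essential skeleton: reduce to a single homogeneous model by completeness, interpret that model in a suitably enriched real closed valuation ring, and push NIP through the interpretation. Where you diverge is in establishing NIP of the enriched valuation ring. For $T_{n,1}$ you simply observe that $\mathfrak m_V$ is definable in $V$ (the non-units), so $(V,\mathfrak m_V)$ is a definitional expansion of an NIP structure; this is in fact simpler than the paper, which invokes the Shelah expansion $V^{\mathrm{Sh}}$ and the external definability of prime ideals even in the $T_{n,1}$ case, machinery that is unnecessary when the predicate is internally definable. For $T_{n,2}$ you go through $\mathsf{RCF}_{\mathrm{convex},2}$, using the same observation the paper makes in the proof of Proposition \ref{P.MODTH.RCVR_mod_compl.ii} that $(V,\mathfrak b,\mathfrak m_V)$ is definable in $(\mathrm{qf}(V),V,V_{\mathfrak b})$; the paper instead uses Shelah expansions and external definability of $\mathfrak p$. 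Both routes are sound. The one place where your write-up is thinner than the paper's is the unbacked claim that $\mathsf{RCF}_{\mathrm{convex},2}$ is weakly o-minimal (hence NIP): this is true, but is not as readily available in the literature as the Cherlin--Dickmann result for one convex subring, which is why the paper prefers the Shelah-expansion route — it trades the weak o-minimality of the two-ring theory for the more robustly citable facts that NIP is preserved under Shelah expansion and that prime ideals of rings are externally definable. You should replace the ``standard results'' remark with a concrete citation (e.g.\ a $T$-convexity quantifier-elimination reference in the language with two convex-subring predicates) or fall back on the Shelah expansion argument for $T_{n,2}$.
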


\begin{proof}
	Since IP is preserved under elementary equivalence, it suffices to show by Corollary \ref{C.MOD_TH.complete} that $ {\prod}_{\emph{\textbf{k}}}^n V \models T_{n,1}$ and  $ {\prod}_{V/\mathfrak{p}}^n V\models T_{n,2} $ are NIP, where $ V  $ is a non-trivial real closed valuation ring of Krull dimension at least 2 with residue field $ \emph{\textbf{k}} $ and $ \mathfrak{p} $ is a non-zero  non-maximal prime ideal of $ V $. Since weakly o-minimal theories are NIP (\cite[Appendix A.1.3]{simon.guide}), $ \sf{RCVR}(\leq, \text{div}) $ is NIP by \cite[Corollary 1.6 (c)]{dickmann.elim_covr}, therefore so is $ V \models \sf{RCVR} $.   By \cite[Proposition 3.23]{simon.guide}, the Shelah expansion $ V^{\text{Sh}} $ is also NIP, and since prime ideals of rings are externally definable  (\cite[Fact 4.1]{delbée2024classificationdpminimalintegraldomains}), it follows that both structures $ (V,\mathfrak{m}_V) $ and $ (V, \mathfrak{p}) $ are NIP; since $ {\prod}_{\emph{\textbf{k}}}^n V $ is interpretable in  $ (V, \mathfrak{m}_V) $ and  $ {\prod}_{V/\mathfrak{p}}^n V $ is interpretable in  $ (V, \mathfrak{p}) $, it follows that both $ {\prod}_{\emph{\textbf{k}}}^n V $ and $ {\prod}_{V/\mathfrak{p}}^n V $ are NIP, as required.
\end{proof}

\subsection{\texorpdfstring{Quantifier elimination for $ T_{n,1} $}{Quantifier elimination for Tₙ,₁}}

For this last section recall that any real closed ring $ A $ is an $ f $-ring  $ (A, \vee, \wedge, \leq) $, and since $ a \geq 0 $ in  $ A $ if and only if $ a  $ is a square, the partial order $ \leq $ on $ A $ is definable, and thus so are the lattice operations $ \vee $ and $ \wedge $. In particular, any $ \mathscr{L} $-theory $ T $ of real closed rings can be regarded as an  $\mathscr{L}(\vee, \wedge, \leq)$-theory via extension by definitions; moreover, if $ A \subseteq B$ is an $ \mathscr{L} $-embedding of real closed rings, then $ A \subseteq B $ is also an  $ \mathscr{L}(\vee, \wedge, \leq) $-embedding (the proof of this fact is contained in \cite[11]{prestel.schwartz/mod_th_rcr}). The next example, which is inspired by the example in \cite[19]{prestel.schwartz/mod_th_rcr}, shows failure of quantifier elimination for the theory $ T_{n,1} $ in various languages. 

\begin{example}
	If  $ \mathscr{L}' $ is any language such that $ \mathscr{L} \subseteq \mathscr{L}' \subseteq \mathscr{L}(\vee, \wedge, \leq, \text{div}) $, then $ T_{n,1} $ does not have quantifier elimination in $ \mathscr{L}' $; for notational simplicity only the case $ n=2 $ will be considered here, but the construction can be easily adapted for arbitrary $ n \in  \N^{\geq 2}$.  Assume  for contradiction that $ T_{2,1} $ has quantifier elimination in $ \mathscr{L}' $ and let $ V  $ be a non-trivial real closed valuation ring of Krull dimension at least $ 2 $ with residue field $ \emph{\textbf{k}} $. Let $ \mathfrak{p} $ be a non-zero non-maximal prime ideal of $ V $,  and define $ B:= V\times_{\emph{\textbf{k}}} V $ and $ C:= V\times_{\emph{\textbf{k}}} V/\mathfrak{p} $; then the maps $ \epsilon_B: V \lra B $ and $ \epsilon_C: V \lra C $ given by  $ \epsilon_B(v):= (v, v) $ and  $ \epsilon_C(v, v/\mathfrak{p})$ are $ \mathscr{L}' $-embeddings, and since $ T_{2,1} $ is model complete (Theorem \ref{T.MODTH.T_1n_mod_compl}) there exist $ D \models T_{2,1}$ and $ \mathscr{L}' $-embeddings $ f_B: B \linto D $ and $ f_C:C \linto D $ such that the diagram

\noindent\adjustbox{center}{
\begin{tikzpicture}[node distance=2cm, auto]
  \node (lb) {$V$};
  \node (lt) [above of=lb] {$B$};
  \node (rb) [right of=lb] {$C$};
  \node (rt) [above of=rb] {$D$};
  \draw[{Hooks[right]}->] (lb) to node {$\epsilon_B$} (lt);
  \draw[{Hooks[right]}->] (lb) to node [swap] {$\epsilon_C$} (rb);
  \draw[{Hooks[right]}->] (rb) to node [swap] {$f_C$} (rt);
  \draw[{Hooks[right]}->] (lt) to node {$f_B$} (rt);
\end{tikzpicture}
}
	commutes (\cite[Proposition 3.5.19]{chang/keisler.model_theory}). Pick $ \eta\in \mathfrak{m}_V\setminus \mathfrak{p} $; then $ b:= (0, \eta) \in B$ and $ b':= (\eta, 0)\in B $ are  non-zero orthogonal elements, and $ c:= (0, \eta/\mathfrak{p}) \in C$ and $ c':= (\eta, 0/\mathfrak{p})\in C $ are non-zero orthogonal elements, therefore $ \text{Spec}^{\text{min}}(B) = \{ \text{Ann}_B(b), \text{Ann}_B(b') \}  $, $ \text{Spec}^{\text{min}}(C) = \{ \text{Ann}_C(c), \text{Ann}_C(c') \}  $ and \[
 \text{Spec}^{\text{min}}(D) = \{ \text{Ann}_D(b), \text{Ann}_D(b') \}= \{ \text{Ann}_D(c), \text{Ann}_D(c') \} 
\] 
by Corollary \ref{C.SV.min_prime_ann}. But then \[
	\text{Spec}(f_B \circ \epsilon_B)(\text{Ann}_D(b)) =\text{Spec}(f_B \circ \epsilon_B)(\text{Ann}_D(b')) =\text{Spec}(f_C \circ \epsilon_C)(\text{Ann}_D(c')) = (0)
\] and $ \text{Spec}(f_C \circ \epsilon_C)(\text{Ann}_D(c)) = \mathfrak{p} $, therefore the square of Zariski spectra induced by the commutative square above does not commute, giving the required contradiction.
\end{example}

The example above shows that $ T_{n,1} $ fails to have quantifier elimination in any language $ \mathscr{L}' $ such that $ \mathscr{L}\subseteq \mathscr{L}' \subseteq \mathscr{L}(\vee, \wedge, \leq, \text{div})  $ due to the fact that any $ \mathscr{L}' $-structure $ A $ such  that $ A \models T_{n,1} $  has $ \mathscr{L}' $-substructures of smaller rank; enlarging the language to force all substructures to have rank $ n $ turns out to be sufficient to obtain quantifier elimination.

\begin{definition}
	Let $ e_1, \dots, e_n$ be new constant symbols and let $\mathscr{L}^{\dag}:= \mathscr{L}(\vee, \wedge, \leq, \text{div}, e_1, \dots, e_n) $. Define $ T^{\dag}_{n,1}$ to be the canonical extension by definitions of $ T_{n,1} $ to $ \mathscr{L}(\vee, \wedge, \leq, \text{div}) $ together with the statement expressing that $ e_1 , \dots, e_n$ are non-zero and pairwise orthogonal elements.
\end{definition}

\begin{theorem}
	$ T_{n,1}^{\dag} $ has quantifier elimination.
\end{theorem}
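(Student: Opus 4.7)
The plan is to combine model completeness of $T_{n,1}$ (Theorem \ref{T.MODTH.T_1n_mod_compl}) with an amalgamation-over-substructures property in the enriched language $\mathscr{L}^{\dag}$: a model-complete theory admits quantifier elimination if and only if any two models containing a common substructure can be jointly embedded into a third model over that substructure. So I would reduce the task to proving that if $A, B \models T_{n,1}^{\dag}$ share a common $\mathscr{L}^{\dag}$-substructure $D$, then there exists $C \models T_{n,1}^{\dag}$ together with $\mathscr{L}^{\dag}$-embeddings $A, B \linto C$ fixing $D$ pointwise; model completeness then upgrades each embedding into $C$ to an elementary one, forcing the complete $\mathscr{L}^{\dag}$-type of $D$ in $A$ to agree with its complete $\mathscr{L}^{\dag}$-type in $B$, which is exactly quantifier elimination.

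The first step would be to extract the right structural data from $D$. Since $e_1, \dots, e_n \in D$ are non-zero and pairwise orthogonal by axiom, Corollary \ref{C.SV.min_prime_ann} gives $\text{Spec}^{\text{min}}(A) = \{\text{Ann}_A(e_i) : i \in [n]\}$ and $\text{Spec}^{\text{min}}(B) = \{\text{Ann}_B(e_i) : i \in [n]\}$. Writing $A_i := A/\text{Ann}_A(e_i)$, $B_i := B/\text{Ann}_B(e_i)$, item (b) of Lemma \ref{L.RCSVR.equiv_loc_rcsvr_rk_n_exactly_one_branching_ideal} yields canonical presentations $A \cong {\prod}_{\textbf{k}_A, i \in [n]} A_i$ and $B \cong {\prod}_{\textbf{k}_B, i \in [n]} B_i$, where $\textbf{k}_A$ and $\textbf{k}_B$ are the respective residue fields. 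The kernel of the projection $D \lra A_i$ is the quantifier-free definable set $\{d \in D : d e_i = 0\}$, which is the same whether computed inside $A$ or inside $B$; hence there is a canonical common quotient $D_i$ embedding as an $\mathscr{L}(\vee, \wedge, \leq, \text{div})$-substructure into both $A_i$ and $B_i$. Since the maximal ideal is quantifier-free definable by $\neg\text{div}(1, x)$, one also has $D \cap \mathfrak{m}_A = D \cap \mathfrak{m}_B$, so the image of $D$ in $\textbf{k}_A$ (via any $A_i$) matches its image in $\textbf{k}_B$ as an ordered subring.

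The core of the argument is then a two-stage amalgamation in the language $\mathscr{L}(\vee, \wedge, \leq, \text{div})$. First I would amalgamate $\textbf{k}_A$ and $\textbf{k}_B$ over the common ordered subring image of $D$ into a real closed field $\textbf{l}$. Then, for each $i \in [n]$ separately, I would amalgamate $A_i$ with $B_i$ over $D_i$ using Lemma \ref{L.MODTH.amalg_rcvrs} (via the language translation of Lemma \ref{L.MOD_TH.embedd_rcvr}), producing a non-trivial real closed valuation ring $W_i$; using completeness of $\sf{RCVR}(\texttt{m})$ (Proposition \ref{P.MODTH.RCVR_mod_compl}), one can arrange, after absorbing into a further elementary extension if necessary, that each $W_i$ has residue field $\textbf{l}$ and that the residue map of $W_i$ extends the fixed embeddings $\textbf{k}_A, \textbf{k}_B \linto \textbf{l}$. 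Finally, $C := {\prod}_{\textbf{l}, i \in [n]} W_i$ is a ring of type $(n,1)$, and the compatibly chosen embeddings $A_i \linto W_i$ and $B_i \linto W_i$ assemble through the universal property of the fibre product into $\mathscr{L}^{\dag}$-embeddings $A, B \linto C$ fixing $D$ (the constants $e_i$ are sent to $e_i$ since these are named).

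The main obstacle I expect is maintaining residue-field coherence across all $n$ components simultaneously: in isolation each pair $(A_i, B_i)$ can be amalgamated over $D_i$ by Cherlin--Dickmann, but assembling the amalgams back into a fibre product requires a shared residue field extension $\textbf{l}$ for all $W_i$, and this shared field has to be fixed \emph{before} the componentwise amalgamations and be compatible with the residue-field embeddings induced on each component. Managing this coherently is the technical heart of the argument, made tractable by completeness of $\sf{RCVR}(\texttt{m})$, which allows the residue-field image of each amalgam $W_i$ to be absorbed into the fixed $\textbf{l}$. A minor preliminary check is that $D_i$ is indeed an $\mathscr{L}(\vee, \wedge, \leq, \text{div})$-substructure of $A_i$, which follows from the projection $A \lonto A_i$ preserving lattice operations, order, and divisibility, a consequence of the $f$-ring structure on $A \subseteq \prod_{i \in [n]} A_i$ being inherited componentwise.
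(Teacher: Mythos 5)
Your overall strategy matches the paper's proof: reduce quantifier elimination to amalgamation over common $\mathscr{L}^{\dag}$-substructures via model completeness of $T_{n,1}$, decompose all three rings along the minimal primes $\text{Ann}(e_i)$ (which are quantifier-free definable from the named constants), amalgamate componentwise via the Cherlin--Dickmann theorem (Lemma \ref{L.MODTH.amalg_rcvrs}, after the $\mathscr{L}(\texttt{m})$-to-$\mathscr{L}(\text{div})$ translation of Lemma \ref{L.MOD_TH.embedd_rcvr}), and reassemble as an iterated fibre product. Your preliminary checks (that $D_i$ is a totally ordered valuation ring embedded as an $\mathscr{L}(\leq,\texttt{m})$-substructure in both $A_i$ and $B_i$) do follow from $D$ being an $\mathscr{L}^{\dag}$-substructure, essentially for the reasons you sketch; the paper records these as its Claim~1 and Claim~2, being rather more careful about showing $D$ is a reduced local SV-$f$-ring using the universal axioms for SV and bounded inversion.

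Where the proposal has a genuine gap is in the residue-field coherence step, which you rightly flag as the technical heart but do not actually resolve. You fix a real closed field $\textbf{l}$ amalgamating $\textbf{k}_A$ and $\textbf{k}_B$ over the residue field of $D$ \emph{before} doing the componentwise amalgamations, and then assert that ``using completeness of $\sf{RCVR}(\texttt{m})$, one can arrange, after absorbing into a further elementary extension if necessary, that each $W_i$ has residue field $\textbf{l}$ and that the residue map of $W_i$ extends the fixed embeddings.'' This does not follow. The Cherlin--Dickmann amalgam $W_i$ of $A_i$ and $B_i$ over $D_i$ comes with its \emph{own} amalgam of $\textbf{k}_A$ and $\textbf{k}_B$ over the residue field of $D_i$ sitting inside $W_i/\mathfrak{m}_{W_i}$, and two amalgams of ordered fields over a common subfield need not be isomorphic over the union of the two factors (already the relative ordering of two transcendentals can differ). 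Passing to an elementary extension of $W_i$ does not change the embeddings $\textbf{k}_A, \textbf{k}_B \linto W_i/\mathfrak{m}_{W_i}$ and hence cannot reconcile a mismatch between the amalgam inside $W_i$ and the pre-chosen $\textbf{l}$. To control the residue fields you have to do so \emph{during} the componentwise amalgamation, not afterwards. The paper's proof instead amalgamates componentwise first to obtain $V_i$'s, then chooses a single non-trivial real closed valuation ring $V$ receiving local embeddings $V_i \subseteq V$ and builds the \emph{homogeneous} target $\prod_{\textbf{k}}^n V$; the induced residue embeddings into the single $\textbf{k} = V/\mathfrak{m}_V$ can then be arranged to agree across $i$, which is precisely what is needed for the composite maps $B, C \linto \prod_i V$ to corestrict to $\prod_{\textbf{k}}^n V$. (Minor slip: the maximal ideal is $\{x : \neg\text{div}(x,1)\}$, not $\{x : \neg\text{div}(1,x)\}$, since $\text{div}(1,x)$ always holds.)
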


\begin{proof}
	Let $ A, B, C\models (T_{n,1}^{\dag})_{\forall} $ be such that $ A\subseteq B, C$ as $ \mathscr{L}^{\dag} $-structures. Since $ T_{n,1} $ is model complete by Theorem \ref{T.MODTH.T_1n_mod_compl}, so is $ T_{n,1}^{\dag} $, therefore it suffices to show that there exists $ D \models T_{n,1}^{\dag} $ amalgamating $ B $ and $ C  $ over $ A  $ as $ \mathscr{L}^{\dag} $-structures; moreover, since $  B, C\models (T_{n,1}^{\dag})_{\forall}  $, it can be assumed that $ B, C \models T_{n,1}^{\dag} $.

	\noindent \textit{Claim 1}. $ A $  is a reduced local SV-$ f $-ring of rank $ n $ (\cite[Definition 4.1]{schwartz.SV}).

	\noindent \textit{Proof of Claim 1}. $ A $ is a reduced $ f $-ring by the equivalence  (i) $ \Leftrightarrow  $ (ii) in \cite[Theoreme 9.1.2]{bkw.groupes}, and $ A$ is local because $ B $ is local and this property can be expressed by the universal $ \mathscr{L}^{\dag} $-sentence $\forall x[\text{div}(x,1) \text{ or }\text{div}(1-x,1)] $. Since $ A $ is  reduced and local, its rank is the supremum of the lengths of sequences of non-zero pairwise orthogonal elements (Lemma \ref{L.SV.ann_intersection_of_min_prime_id.ii.a}), and as $ A $ is an $ \mathscr{L}^{\dag} $-substructure of $ B\models T_{n,1}^{\dag} $, it follows that $ \text{rk}(A)=n $; finally, $ A $ is an SV-ring by  Lemma \ref{L.MOD_TH.SV_first-order}, and $ A$ has bounded inversion because $ B $ has bounded inversion (\cite[Proposition 12.4]{schwartz/madden.safr}) and this property can be expressed by the universal $ \mathscr{L}^{\dag} $-sentence $ \forall x[1 \leq x \ra \text{div}(x, 1)] $, therefore $ A $ is an SV-$ f $-ring by \cite[Proposition 3.2]{delzell/madden.lattice_ordered_rings} and \cite[Proposition 3.3]{schwartz.convex_subr}.  \hfill $ \square_{\text{Claim 1}} $

	Set $ \mathfrak{p}_{A,i}:= \text{Ann}_A(a_i) $, $ \mathfrak{p}_{B,i}:= \text{Ann}_B(a_i) $, and $ \mathfrak{p}_{C,i}:= \text{Ann}_C(a_i) $ for all $ i \in [n] $, where each $ a_i \in A$ is the interpretation of the constant symbol $ e_i \in \mathscr{L}^{\dag} $, so that $ \text{Spec}^{\text{min}}(A)=\{\mathfrak{p}_{A,i}\mid i\in[n] \}$, $ \text{Spec}^{\text{min}}(B)=\{\mathfrak{p}_{B,i}\mid i\in[n] \}$, and $ \text{Spec}^{\text{min}}(C)=\{\mathfrak{p}_{C,i}\mid i\in[n] \}$ by Corollary \ref{C.SV.min_prime_ann}.  Since $ \mathfrak{p}_{A, i} $ is a minimal prime ideal of the reduced $ f $-ring $ A $, $ \mathfrak{p}_{A, i} $ an irreducible $ \ell $-ideal (\cite[Sections 8.4 and 8.5; Theoreme 9.3.2]{bkw.groupes}), therefore $ A/\mathfrak{p}_{A,i}  $  is a totally ordered domain  and the residue map $ A\lonto A/\mathfrak{p}_{A,i} $ a homomorphism of lattice-ordered rings (see also \cite[31-33]{schwartz/madden.safr}).

	\noindent \textit{Claim 2.} The ring embeddings $ A/\mathfrak{p}_{A,i} \subseteq B/\mathfrak{p}_{B, i} , C/\mathfrak{p}_{C, i} $ are $ \mathscr{L}(\leq, \texttt{m}) $-embeddings for all $ i \in [n] $.

	\noindent \textit{Proof of Claim 2.} The $ \mathscr{L}^{\dag} $-embeddings $ A\subseteq B, C $ are local embeddings (Remark \ref{R.MODTH.div_embedd_local}), therefore they induce local embeddings  $ A/\mathfrak{p}_{A,i}\subseteq B/\mathfrak{p}_{B, i}, C/\mathfrak{p}_{C,i}$ for all $ i \in [n] $ (Remark \ref{R.SV.local_embedd_factor}), and thus it remains to show that these latter embeddings are $ \mathscr{L}(\leq) $-embeddings. Pick $ a \in A $; then  $ a \vee 0 \in A \subseteq B$, $ (a \vee 0)/\mathfrak{p}_{A,i} = (a/\mathfrak{p}_{A,i}) \vee (0/\mathfrak{p}_{A,i}) $, and $ (a \vee 0)/\mathfrak{p}_{B,i} = (a/\mathfrak{p}_{B,i}) \vee (0/\mathfrak{p}_{B,i}) $, from which it follows that $ a/\mathfrak{p}_{A,i} \geq 0/\mathfrak{p}_{A,i}$ if and only if $ a/\mathfrak{p}_{B,i} \geq 0/\mathfrak{p}_{B,i}$, as required. \hfill $ \square_{\text{Claim 2}} $

 By Claim 1 and by the above, $ A/\mathfrak{p}_{A,i} $ is a totally ordered valuation ring for all $ i \in [n] $, and since $ B, C \models T_{n, 1}^{\dag} $, $ B/\mathfrak{p}_{B,i} $ and $ C/\mathfrak{p}_{C,i} $  are non-trivial real closed valuation rings, therefore by Claim 2 and Lemma \ref{L.MODTH.amalg_rcvrs} there exist non-trivial real closed valuation rings $ V_i $ and local embeddings $ B/\mathfrak{p}_{B, i}, C/\mathfrak{p}_{C, i} \subseteq V_i $ such that the diagram

\noindent\adjustbox{center}{
 \begin{tikzpicture}[node distance=2cm, auto]
	\node (lb) {$A/\mathfrak{p}_{A, i} $};
  \node (lt) [above of=lb] {$B/\mathfrak{p}_{B, i}$};
  \node (rb) [right of=lb] {$C/\mathfrak{p}_{C,i}$};
  \node (rt) [above of=rb] {$V_i$};
  \draw[{Hooks[right]}->] (lb) to node {} (lt);
  \draw[{Hooks[right]}->] (lb) to node [swap] {} (rb);
  \draw[{Hooks[right]}->] (rb) to node [swap] {} (rt);
  \draw[{Hooks[right]}->] (lt) to node {} (rt);
\end{tikzpicture}	
}
 
\noindent commutes for all $ i \in [n] $. Amalgamate all $ V_i $ into a single non-trivial real closed valuation ring $ V $ with residue field $ \emph{\textbf{k}} $ in such a way that all the embeddings $ V_i \subseteq V  $ are local and define $ D_0:= {\prod}^n_{\emph{\textbf{k}}} V \models T_{n, 1}$. Note that $ a_i \in \mathfrak{p}_{A, j} $ if and only if $ j = i $ for all $ i , j \in [n] $ and the images of $ a_i/\mathfrak{p}_{A, i} $ and $ a_i/\mathfrak{p}_{C, i} $ in $ V $ coincide;  it follows that the ring $ D_0 $  can be expanded to a model $D \models T_{n, 1}^{\dag} $ in such a way that  the composite local embeddings $ B/\mathfrak{p}_{B,i} \subseteq V_i \subseteq V $ and $ C/\mathfrak{p}_{C,i} \subseteq V_i \subseteq V $ induce $ \mathscr{L}^{\dag} $-embeddings $ B \subseteq D	  $ and $ C \subseteq D$ (note that the ring embeddings $ B, C \subseteq D $ are  $ \mathscr{L}(\text{div}) $-embeddings by Theorem \ref{T.MODTH.T_1n_mod_compl})  making the diagram

\noindent\adjustbox{center}{
\begin{tikzpicture}[node distance=2cm, auto]
	\node (lb) {$A$};
  \node (lt) [above of=lb] {$B$};
  \node (rb) [right of=lb] {$C$};
  \node (rt) [above of=rb] {$D$};
  \draw[{Hooks[right]}->] (lb) to node {} (lt);
  \draw[{Hooks[right]}->] (lb) to node [swap] {} (rb);
  \draw[{Hooks[right]}->] (rb) to node [swap] {} (rt);
  \draw[{Hooks[right]}->] (lt) to node {} (rt);
\end{tikzpicture}
}
\noindent commute, concluding thus the proof.
\end{proof}

\section{Towards a Uniform Approach to the Model Theory of Local Real Closed SV-Rings of Finite Rank}\label{SEC.concl_rem}
The aim of this section is twofold. To start with, Subsection \ref{SUBSEC.diff} contains the key difficulties that arise in trying to carry the same the model-theoretic analysis as done in Section \ref{SEC.mod_th} to those local real closed SV-rings of finite rank which have  two or more branching ideals; then, Subsection \ref{SUBSEC.branching} introduces the notion of the branching spectrum of a local real closed ring of finite rank in order to connect the results of the previous section with  the model theory of real closed rings with radical relations as developed in \cite{prestel.schwartz/mod_th_rcr} (see also \cite{guier.elim}) with a view towards overcoming the obstacles described Subsection \ref{SUBSEC.diff} as well as  laying down the path towards a possible proof of Conjecture \ref{conj_1}.  

As in Section \ref{SEC.mod_th}, fix $ n \in \N^{\geq 2} $ for what remains;  all model-theoretic statements are again assumed to be phrased with respect to the language of rings $ \mathscr{L} $ unless stated otherwise. 

\subsection{Difficulties in the presence of more than one branching ideal}\label{SUBSEC.diff}

A recurring theme  in the study of real closed rings is that their Zariski spectrum serves as a rough measure of their complexity, and this has already been seen in the previous two sections with the differences between rings of type $ (n,1) $ and  of type $ (n,2) $.  Note that by Remarks \ref{R.RCSVR.branch_0.ii} and \ref{R.RCSVR.branch_2}, a local real closed ring of rank $ 2 $ has exactly one branching ideal, and if $ A $ is a local real closed ring of rank $ 3 $, then all the possible configurations of its minimal prime ideals, branching ideals, and its maximal ideal can be summarized in the following  four Hasse diagrams, where the dotted lines indicate that there could be more prime ideals between the two nodes that each dotted line connects:

\noindent\adjustbox{center}{
\begin{tikzpicture}
[
    level distance=10mm,
    sibling distance=13mm
]
\node (root1) at (0,0) {$ \mathfrak{m}_A$}
child {node  {}edge from parent [draw=none]
		child {node  {}edge from parent [draw=none]
			child {node {$ \mathfrak{p}_1$}edge from parent [draw=none]
}
			child {node {$ \mathfrak{p}_2 $}edge from parent [draw=none]
}}
		child {node {}edge from parent [draw=none]
			child {node {}edge from parent [draw=none]}
			child {node {$ \mathfrak{p}_3  $}edge from parent [draw=none]}}
};
\draw[dot diameter=1pt, dot spacing=2pt, dots] (root1) -- (root1-1-1-1);
\draw[dot diameter=1pt, dot spacing=2pt, dots] (root1) -- (root1-1-1-2);
\draw[dot diameter=1pt, dot spacing=2pt, dots] (root1) -- (root1-1-2-2);
\node at (4,0) {$ \mathfrak{m}_A$}
child {node (child2) {$ \mathfrak{b}_A $} edge from parent [dot diameter=1pt, dot spacing=2pt, dots]
		child {node  {}edge from parent [draw=none]
			child {node {$ \mathfrak{p}_1 $}edge from parent [draw=none]
}
			child {node {$ \mathfrak{p}_2$}edge from parent [draw=none]
}}
		child {node {}edge from parent [draw=none]
			child {node {}edge from parent [draw=none]}
			child {node {$ \mathfrak{p}_3 $}edge from parent [draw=none]}}
};
\draw[dot diameter=1pt, dot spacing=2pt, dots] (child2) -- (child2-1-1); 
\draw[dot diameter=1pt, dot spacing=2pt, dots] (child2) -- (child2-1-2);
\draw[dot diameter=1pt, dot spacing=2pt, dots] (child2) -- (child2-2-2);
\node (root3) at (8,0) {$ \mathfrak{m}_A $}
	child {node {}edge from parent [draw=none]
		child {node (child3) {$ \mathfrak{q}$}edge from parent [draw=none]
			child {node {$ \mathfrak{p}_1 $} edge from parent [dot diameter=1pt, dot spacing=2pt, dots]
}
			child {node {$ \mathfrak{p}_2 $} edge from parent [dot diameter=1pt, dot spacing=2pt, dots]}}
		child {node {}edge from parent [draw=none]
			child {node {}edge from parent [draw=none]}
			child {node {$\mathfrak{p}_3 $}edge from parent [draw=none]}}
};
\draw[dot diameter=1pt, dot spacing=2pt, dots] (root3) -- (root3-1-2-2);
\draw[dot diameter=1pt, dot spacing=2pt, dots] (root3) -- (child3);
\node  at (12,0) {$ \mathfrak{m}_A $}
	child {node (child4) {$ \mathfrak{q}_2 $}edge from parent [dot diameter=1pt, dot spacing=2pt, dots]
		child {node {$ \mathfrak{q}_1 $}edge from parent [dot diameter=1pt, dot spacing=2pt, dots]
			child {node {$ \mathfrak{p}_1 $}edge from parent [dot diameter=1pt, dot spacing=2pt, dots]}
			child {node {$ \mathfrak{p}_2 $}edge from parent [dot diameter=1pt, dot spacing=2pt, dots]}}
		child {node {}edge from parent [draw=none]
			child {node {}edge from parent [draw=none]}
			child {node {$ \mathfrak{p}_3 $}edge from parent [draw=none]}}
};
\draw[dot diameter=1pt, dot spacing=2pt, dots] (child4) -- (child4-2-2);
\end{tikzpicture}	
}

If $ A $ is furthermore an SV-ring, then the fist diagram indicates that $ A $ is a ring of type $ (3,1) $, and the second diagram indicates that $ A $ is a ring of type $ (3,2) $. The fact that in the two diagrams on the left there is exactly one branching ideal creates a situation of \enquote{symmetry} that has been heavily exploited in Section \ref{SEC.mod_th}, namely, the model-theoretic analysis of rings of type $ (n,j ) $ ($ j \in [2] $) is done in terms of the model theory of each of the residue domains $ A/\mathfrak{p}_1, \dots, A/\mathfrak{p}_n $ ($ \text{Spec}^{\text{min}}(A) = \{ \mathfrak{p}_i \mid i \in [n] \}  $), where each of these domains are all canonically regarded as structures in the same language ($ \mathscr{L}(\texttt{m}) $ if $ j =1 $ and $ \mathscr{L}(\texttt{b},\texttt{m}) $ if $ j =2$, see Subsection \ref{SUBSEC.mod_compl}) and can therefore be easily compared and manipulated as models of a suitable theory ($ \sf{RCVR}(\texttt{m}) $ if $ j =1 $ and $ \sf{RCVR}(\texttt{b},\texttt{m}) $ if $ j =2$). 

This \enquote{symmetry} can break in the presence of more than one branching ideal. In particular, two residue domains $ A/\mathfrak{p}_i $ and $ A/\mathfrak{p}_j  $ may carry information about a different number of prime ideals of $ A $, and thus capturing this information via unary predicates leads to different languages (e.g., in the case of the third diagram, $ A/\mathfrak{p}_2 $ can be canonically regarded as an $ \mathscr{L}(\texttt{b}, \texttt{m}) $-structure, but $ A/\mathfrak{p}_3 $ may not, although the latter can be canonically regarded as an $ \mathscr{L}(\texttt{m}) $-structure);  the fact that the residue domains $ A/\mathfrak{p}_i $ may not be all canonically regarded as structures in the same language in the way described above creates an inherent difficulty in the model-theoretic analysis of local real closed SV-rings of finite rank in terms of the domains $ A/\mathfrak{p}_i $ when there is more than one branching ideal.

There are also differences in terms of definability in the presence of more than one branching ideal. One fact about local real closed rings $ A $ of finite rank with one branching ideal $ \mathfrak{b}_A $ that was used in the proof of completeness of $ T_{n,2} $ (Corollary \ref{C.MOD_TH.complete})  is that $ \mathfrak{b}_A $ is definable in $ A $ without parameters (Remark \ref{R.MODTH.def_branch_id.ii}); the next example shows that branching ideals are generally not definable without parameters in local real closed rings of finite rank which have more than one branching ideal:

\begin{example}\label{ex.not_def}
	Let $ V $ be a real closed domain of Krull dimension  $ 2 $ and let $ \mathfrak{p} $ be a non-zero non-maximal prime ideal. Define $ A_1 = A_2:= V\times_{V/\mathfrak{p}} V$, noting that $ A_1  $ is a local real closed ring of rank  $ 2 $  with unique branching ideal $ \text{ker}(A_1 \onto V/\mathfrak{p})  $ and with residue field $V/\mathfrak{m}_V =:\textbf{\emph{k}} $. The ring $ A:=A_1\times_{\emph{\textbf{k}}} A_2 \subseteq A_1 \times A_2$ is a local real closed ring of rank $ 4 $ with exactly $ 3 $ branching ideals, namely $\mathfrak{q}_1:= \text{ker}(A\onto A_1\onto V/\mathfrak{p}) $, $ \mathfrak{q}_2:=\text{ker}(A\onto A_2\onto V/\mathfrak{p}) $, and $ \mathfrak{m}_A=\text{ker}(A\onto \emph{\textbf{k}}) $;  if $ \text{Spec}^{\text{min}}(A)= \{ \mathfrak{p}_1, \dots, \mathfrak{p}_4\} $, then the Hasse diagram of $ (\text{Spec}(A), \subseteq) $ is

\noindent\adjustbox{center}{
\begin{tikzpicture}
[
    level distance=10mm,
    sibling distance=15mm,
]
\node     {$ \mathfrak{m}_A $} [sibling distance=20mm]
		child {node {$ \mathfrak{q}_1 $} [sibling distance=13mm]
			child {node {$ \mathfrak{p}_1$}}
			child {node {$ \mathfrak{p}_2$}}}
		child {node {$ \mathfrak{q}_2 $}[sibling distance=13mm]
			child {node {$ \mathfrak{p}_3 $}}
	child {node {$ \mathfrak{p}_4$}}};
\end{tikzpicture}	
}

\noindent and the map $ (a_1, a_2) \mapsto (a_2, a_1) $ is an automorphism of $ A \subseteq A_1\times A_2$ which swaps $ \mathfrak{q}_1 $ and $ \mathfrak{q}_2 $.
\end{example}

Finally, recall that given a real closed valuation ring $ C $, there always exists a non-trivial real closed valuation ring $ W $ with homomorphic image  $ C $ yielding the homogeneous ring  $ {\prod}_{C}^nW $ of type  $ (n,j ) $ (if $ C $ is a field, then $ j=1 $, otherwise $  j=2$), see Definition \ref{D.RCVSR.homogeneous}; loosely speaking, for any fixed \enquote{cofactor} $ C $ one can find a \enquote{factor} $ W $ yielding a ring of type  $ (n,1) $ or of type $ (n,2) $ having the property that all its residue domains modulo minimal prime ideals are abstractly isomorphic to $ W $. The model-theoretic relevance of this construction is that one can make the property of a tuple $ \overline{a} \in W^n $ being an element of $ {\prod}_{C}^nW  $ \enquote{internal to $ W $}, in the sense that  $ \overline{a} \in  {\prod}_{C}^nW $ if and only if $  a_{i}-a_{j} \in \text{ker}(W \onto C)$ for all $ i, j \in [n] $; this was crucially used in the model completeness proof of $ T_{n,1} $, see Lemma \ref{L.MODTH.1_embedd_imm.II}. Although it is easy to see that this construction can be done for all local real closed SV-rings of rank $ 3 $, the next example shows that this is not any more the case in the presence of two incomparable branching ideals (the precise statement is Claim 2 in the example below):

\begin{example}
	Let $ \emph{\textbf{k}} $ be a real closed field and $ \Gamma $ be a divisible  totally ordered abelian group without a smallest non-zero convex subgroup (for instance, $ \Gamma $ can be taken to be $\Q^{\N}$ ordered lexicographically); in particular, $V_1:= \emph{\textbf{k}}[[\Gamma]] $ is a non-trivial real closed valuation ring without  a largest non-maximal prime ideal, i.e., $ \mathfrak{m}_{V_1} $ does not have an immediate predecessor in $(\text{Spec}(V_1), \subseteq)  $. Let $ V_2:= \emph{\textbf{k}}[[\Q]] $, noting that $ \text{Spec}(V_2)= \{(0), \mathfrak{m}_{V_2} \}  $.

\noindent \textit{Claim 1.} There does not exist a surjective ring homomorphism $ f: V_1 \lonto V_2$ nor a surjective ring homomorphism $ g: V_2\lonto V_1$.

\noindent \textit{Proof of Claim 1.} Assume for contradiction that either $ f $ or $ g $ as in the statement of the claim exist. Note that since  $ f $ and $ g $ are surjective,  either $ \text{Spec}(V_2) $ is a final segment in $ (\text{Spec}(V_1), \subseteq) $ or  $ \text{Spec}(V_1) $ is a final segment in  $ (\text{Spec}(V_2), \subseteq)$, respectively; but this is impossible by choice of $ V_1 $ and $ V_2 $. \hfill $ \square_{\text{Claim }1} $

\noindent \textit{Claim 2.} There does not exist a non-trivial real closed valuation ring $ W $ having both $ V_1 $ and $ V_2  $ as homomorphic images; in particular, there is no local real closed SV-ring of rank $ 4 $ of the form $ (W\times_{V_1}W) \times_\textbf{\emph{k}} (W\times_{V_2}W)$.

\noindent \textit{Proof of Claim 2.} Assume for contradiction that there exist surjective ring homomorphisms $ f_1: W \lonto V_1 $ and  $ f_2: W\lonto V_2 $; since  $ W  $ is a valuation ring, either $ \text{ker}(f_1) \subseteq \text{ker}(f_2) $ or $ \text{ker}(f_2) \subseteq \text{ker}(f_1) $, therefore either $ V_2 $ is a homomorphic image of  $ V_1$ or  $ V_1$ is a homomorphic image of  $ V_2$, a contradiction to Claim 1.
\hfill $ \square_{\text{Claim 2}} $
\end{example}

\subsection{The branching spectrum of a local real closed ring of finite rank}\label{SUBSEC.branching}

Although individual branching ideals in an arbitrary local real closed ring $ A $ of finite rank are generally  not definable without parameters (Example \ref{ex.not_def}), the theory of $ A $ \enquote{knows} about the poset configuration of its branching ideals in a sense which is made precise in  Corollary \ref{corll}. First, some preliminaries are needed.

\begin{definition}\label{def.br}
Let $ A $ be a local real closed ring of finite rank. Define the \textit{branching spectrum of $ A $} to be 
\[
	\text{BrSpec}(A):= \text{Spec}^{\text{min}}(A) \ \cup \  \{ \mathfrak{m}_A \} \ \cup \   \{ \mathfrak{p} \in \text{Spec}(A) \mid \mathfrak{p} \text{ is a branching ideal} \} 
.\] 
\end{definition}

\begin{definition}\label{def_root}
	Let $ (P, \sqsubseteq) $ be a  \textit{root system}, i.e., $(P, \sqsubseteq)   $  is a poset such that the principal up-set $ p^{\uparrow}:= \{ q \in P \mid p \sqsubseteq q \}   $ is a chain for all $ p \in P $.

	\begin{enumerate}[\normalfont(I)]	
	\item  $ q\in P $ is a  \textit{branching point} if there exist $ p_1, p_2 \in P $ such that $ p_1, p_2 \sqsubsetneq q $ and $\{ q \} = (p_1 ^{\uparrow} \cap p_2^{\uparrow})^{\text{min}} $ \footnote{If $ S \subseteq (P, \sqsubseteq)$ is any subset, define $ S^{\text{min}}:= \{ s \in S \mid s  \text{ is minimal in } S \text{ with respect to }\sqsubseteq \} $.}.
	\item  $ P $ is a \textit{root} if there exists an element  $ \top \in P $ such that  $ p \sqsubseteq \top   $ for all $ p \in P $ (note that if such element exists, it must be unique). 
	\item Suppose that  $ P $ is a finite root (i.e., $ P $ is a finite root system with unique maximal element).
		\begin{enumerate}[\normalfont(i)]
			\item  The \textit{rank of $ P $} is $\text{rk}(P):= |P^{\text{min}}|  $.
			\item The  \textit{branching root of $ P $} is the subposet  \[
					\text{Br}(P):= P^{\text{min}} \ \cup \  \{ \top \} \ \cup \   \{ p \in P \mid p \text{ is a branching point} \} 
			.\] 
		\item $ P $ is \textit{reduced} if $ P = \text{Br}(P) $, i.e., $ P $ is reduced if every  $ p \in P \setminus (P^{\text{min}} \cup \{ \top \} ) $  is a branching point.
		\end{enumerate}
	\end{enumerate}
\end{definition}

\begin{remark}\label{R.TOW.roots}
	\begin{enumerate}[\normalfont(i), ref=\ref{R.TOW.roots} (\roman*)]
	\item\label{R.TOW.roots.i} 		Any finite root is a $ \vee $-semilattice with join operation given by 	$ p_1 \vee p_2:=(p_1 ^{\uparrow} \cap p_2^{\uparrow})^{\text{min}}  $.
	\item By Remark \ref{R.RCSVR.branch_2}, local real closed rings of finite rank have finitely many branching ideals; in particular, the poset $ (\text{BrSpec}(A), \subseteq )$ is a finite reduced root. 
	\end{enumerate}
\end{remark}

If  $ (P, \sqsubseteq) $ is a finite root system, then there exists a real closed ring $ A  $ such that  $ (\text{Spec}(A), \subseteq) \cong(P, \sqsubseteq) $. Indeed, by \cite{dickmann/gluschankof/the_order_str} there exists a ring $ B $ such that $ (\text{Sper}(B), \subseteq) \cong(P, \sqsubseteq) $, where $ \text{Sper}(B) $ is the real spectrum of $ B $  (\cite[Section 13]{dickmann/schwartz/tressl.specbook}); then $ (\text{Spec}(A), \subseteq) \cong (P, \sqsubseteq)$, where $ A:= \rho(B) $ is the real closure of $ B $, see \cite[Section 13.6.3]{dickmann/schwartz/tressl.specbook} and the references therein. The next lemma shows that one can in fact choose $ A $ to be a real closed SV-ring:

\begin{lemma} 
	Let $ (P, \sqsubseteq) $ be a finite root system. There exists a  real closed SV-ring $ A $  such that  $ (\emph{Spec}(A), \subseteq) \cong (P, \sqsubseteq) $.
\end{lemma}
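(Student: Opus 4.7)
The plan is to induct on $|P|$ after first reducing to the case of a single finite root. Any finite root system decomposes uniquely as a disjoint union $P = \bigsqcup_j \top_j^{\downarrow}$ of the principal downsets at its maximal elements, since two maxima in a root system cannot share a lower bound; producing a local real closed SV-ring $A_j$ for each $\top_j^{\downarrow}$ and taking the direct product then gives a real closed SV-ring for $P$ via Theorem \ref{T.RCSVR.properties_rcr.I} and Proposition \ref{P.SV.constr_SV-ring.i}. So I will assume $P$ is a finite root with top $\top$.

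I will induct on $|P|$ proving the strengthened statement that for every real closed field $K$ and every finite root $P$ there exists a local real closed SV-ring $A$ with residue field $K$ and $(\text{Spec}(A), \subseteq) \cong (P, \sqsubseteq)$; strengthening in this way is essential because the residue fields at intermediate stages of the construction will be larger than $K$. The base case $|P|=1$ is $A := K$. For $|P| > 1$ I will split according to the number of elements covered by $\top$.

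If $\top$ covers a unique element $q$, take $V := K[[\mathbb{Q}]]$, a non-trivial real closed valuation ring of Krull dimension one with residue field $K$ (Theorem \ref{T.RCSVR.Hahn}), and let $L := \text{qf}(V)$, itself a real closed field. Applying the inductive hypothesis to $P \setminus \{\top\}$ with residue field $L$ yields a local real closed SV-ring $A'$ with residue field $L$ and spectrum $\cong P \setminus \{\top\}$. The ring to take is $A := \lambda^{-1}(V) \subseteq A'$, where $\lambda : A' \twoheadrightarrow L$ is the residue map, equivalently the pullback $A' \times_L V$ along $\lambda$ and the inclusion $V \hookrightarrow L$. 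I will check that $A$ is local with maximal ideal $\lambda^{-1}(\mathfrak{m}_V)$ and residue field $K$, and that $A_{\ker\lambda} = A'$ (using the convexity of $V$ in $L$ to produce inverses of elements of $A \setminus \ker\lambda$), so that $\text{Spec}(A) \cong \text{Spec}(A') \cup \{\mathfrak{m}_A\}$ with $\mathfrak{m}_A$ placed directly above $\ker\lambda$, which is $P$. Each minimal prime quotient of $A$ will appear inside the corresponding non-trivial RCVR quotient of $A'$ as $\bar\lambda_i^{-1}(V)$, where $\bar\lambda_i : A'/p_i \twoheadrightarrow L$ is the induced residue map, and is a RCVR by Lemma \ref{lem.rcr_domain_not_rcvr}; real closedness of $A$ comes from interpreting it as a pullback in the category of real closed rings (Theorem \ref{T.RCSVR.properties_rcr.I}).

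If $\top$ covers $s \geq 2$ elements $q_1, \ldots, q_s$, then $P \setminus \{\top\} = \bigsqcup_i q_i^\downarrow$, and each $q_i^\downarrow \cup \{\top_i\}$ (with $\top_i$ a fresh top covering only $q_i$) is a root of size strictly less than $|P|$. By the inductive hypothesis there exist local real closed SV-rings $V_i$ with residue field $K$ and spectrum $\cong q_i^\downarrow \cup \{\top_i\}$. The ring $A := V_1 \times_K \cdots \times_K V_s$, the iterated fiber product along residue maps, is a local real closed SV-ring with residue field $K$ by Theorem \ref{T.RCSVR.properties_rcr.I} and Proposition \ref{P.SV.constr_SV-ring.iv}, and the pushout description of $\text{Spec}$ of a fibre product from \cite[Section 12.5.7]{dickmann/schwartz/tressl.specbook} glues the fresh tops $\top_i$ to a single new top $\top$ above all the $q_i$, giving $\text{Spec}(A) \cong P$. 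The main obstacle is case A: a fibre product $V \times_K A'$ over the residue field would add a new minimal prime below the old top rather than a new maximum above it, so $A$ must instead be obtained as a pullback along the inclusion $V \hookrightarrow L$, and verifying the SV property of this $A$ via Lemma \ref{lem.rcr_domain_not_rcvr} at each minimal prime quotient is what forces the strengthened inductive hypothesis allowing arbitrary real closed residue fields.
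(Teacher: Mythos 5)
Your proof is correct, and it takes a genuinely different route from the paper's. The paper inducts on $\mathrm{rk}(P)$: a chain is handled by a real closed valuation ring of the right Krull dimension, and the inductive step excises one minimal element $p_j$ together with its private up-set, builds $A'$ for the resulting root of smaller rank, and reattaches the lost branch by a single fibre product of $A'$ with a suitable quotient of $A'$ over the first branching point $p_i\vee p_j$, with a small case distinction on whether $|p_j^\uparrow|$ realizes the maximal chain length; crucially the new factor is itself a quotient of $A'$, so residue fields match automatically and no strengthening is needed. You instead induct on $|P|$ with the strengthened hypothesis of a prescribed real closed residue field $K$, splitting on how many elements $\top$ covers: your Case~B (decorate each sub-root $q_i^{\downarrow}$ with a fresh top, recurse, and glue in one iterated fibre product over $K$) is cleaner than the paper's $m=n$ versus $m<n$ dichotomy, while your Case~A (grow a new maximum upward by pulling $A'$ back along $K[[\Q]]\subseteq K((\Q))$, with $A'$ built over the larger residue field $K((\Q))$) has no counterpart in the paper and is precisely what forces the strengthening, since the residue field must enlarge as one descends the recursion. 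The paper's route avoids both the strengthened statement and the Lemma~\ref{lem.rcr_domain_not_rcvr}-style verification that each residue domain $\bar{\lambda}_i^{-1}(V)$ is a real closed valuation ring, at the price of a somewhat fiddlier gluing step; yours more cleanly isolates the two ways a finite root can grow --- one level taller or one branch wider --- and never touches quotients of $A'$ by non-minimal primes.
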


\begin{proof}
	First note that it suffices to prove the statement for finite roots. Indeed, if  $ (P, \sqsubseteq) $ is any finite root system, then $ P= P_1 \ \dot{\cup} \ \dots \  \dot{\cup} \ P_m$, where each $ (P_i, \sqsubseteq) $ is a finite root; if $ A_i $ is a real closed SV-ring such that $ (\text{Spec}(A_i), \subseteq) \cong (P_i, \sqsubseteq) $ for all $ i \in [m] $, then $ A:= A_1 \times \dots \times A_m $ is a real closed SV-ring  (by Proposition \ref{P.SV.constr_SV-ring.i} and Theorem \ref{T.RCSVR.properties_rcr.I}) such that $ (\text{Spec}(A), \subseteq) \cong (P, \sqsubseteq)  $. 

	The proof is now by induction on the rank of finite roots $ (P, \sqsubseteq) $. If $ \text{rk}(P) = 1$, then  $ (P, \sqsubseteq) $   chain of $ n$ elements for some $ n \in \N $, therefore choosing $ A $ to be a real closed valuation ring of Krull dimension $ n-1 $ does the job. Let  $k \in \N  $ and assume  that the statement holds for all finite  roots of rank $ k $. Let $ (P, \sqsubseteq)  $ be a finite  root of rank $ k+1 $ with minimal elements $p_1, \dots, p_{k}, p_{k+1}  $. Choose $ i \in [k+1] $ such that  $ p_i^{\uparrow}  $ is a chain of maximal cardinality $ n \in \N $ in $( P, \sqsubseteq)  $, pick any $ j \in [k+1] \setminus \{ i \}  $, and define $ P':= \bigcup_{\ell \in [k+1]\setminus \{ j \} } p_{\ell}^{\uparrow} \subseteq P$; then $ (P', \sqsubseteq) $ is a finite root of rank $ k $, and thus by inductive hypothesis there exists a real closed SV-ring $ A' $ and a poset isomorphism $f: (P', \sqsubseteq)\lra (\text{Spec}(A'), \subseteq) $. Define  $ V:= A'/f(p_{i})  $ (noting that $ V $ is a real closed valuation ring of Krull dimension $ n-1 $) and  $ q:= p_{i} \vee p_{j} $ (Remark \ref{R.TOW.roots.i}); then $ |p_{j}^{\uparrow} |:= m\leq n $ by assumption on $ p_i^{\uparrow} $, and this yields two possible cases:
	\begin{enumerate}[-]
		\item $ m = n $.  In this case, $ (\text{Spec}(A), \subseteq) \cong (P, \sqsubseteq)$ for $ A:= A'\times_{B/f(q)} B $, see \cite[Section 12.5.7]{dickmann/schwartz/tressl.specbook}.
		\item $ m < n $. In this case, $ (\text{Spec}(A), \subseteq) \cong (P, \sqsubseteq)$ for $ A:= A'\times_{B/f(q)} B/f(r) $, where $ r \in p_i^{\uparrow} $  is such that $ |r^{\uparrow}| = m $.
\end{enumerate}
In each of the cases above, $ A $ is a real closed SV-ring by Proposition \ref{P.SV.constr_SV-ring.iv} and Theorem \ref{T.RCSVR.properties_rcr.I}; this concludes the inductive step and thus the proof.
\end{proof}

\begin{lemma}\label{L.br_spec}
	Let $ (P, \sqsubseteq)$ be a finite reduced root of rank at least $ 2 $. There exists an $ \mathscr{L} $-sentence $ \phi_{(P, \sqsubseteq)} $ such that  \[
		A \models \phi_{(P, \sqsubseteq)} \iff (\emph{BrSpec}(A), \subseteq) \cong (P, \sqsubseteq) 
	\]  for all local real closed rings $ A $ of finite rank.	
\end{lemma}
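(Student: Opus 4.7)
The plan is to encode $(P, \sqsubseteq)$ combinatorially by the subsets of $[n]$, where $n := \text{rk}(P) \geq 2$, and translate this into first-order conditions on sums of annihilators. Fix a labelling $P^{\min} = \{p_1, \dots, p_n\}$; since a finite root is a $\vee$-semilattice (Remark \ref{R.TOW.roots.i}), for each non-empty $S \subseteq [n]$ the join $q_S := \bigvee_{i \in S} p_i$ exists in $P$. Given non-zero pairwise orthogonal $a_1, \dots, a_n \in A$, the minimal primes of $A$ are $\{\text{Ann}(a_i) : i \in [n]\}$ (Corollary \ref{C.SV.min_prime_ann}), and $\mathfrak{m}_S := \sum_{i \in S} \text{Ann}(a_i)$ is a prime ideal by Theorem \ref{T.RCSVR.properties_rcr.II.iv.a}.

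A preliminary distinction I would make is whether or not $\top$ is a branching point of $P$: in Case A every non-minimal element of $P$ is a branching point, and in Case B the top $\top$ has a unique immediate predecessor $q_0 \in P$, which (using reducedness and $\text{rk}(P) \geq 2$) must itself be a branching point with $L(q_0) = [n]$, where $L(p) := \{i \in [n] : p_i \sqsubseteq p\}$. I would take $\phi_{(P, \sqsubseteq)}$ to be the conjunction of $\phi_{\text{rk}=n}$ (Lemma \ref{L.MOD_TH.rk_n_first-order}) with the statement ``there exist non-zero pairwise orthogonal $a_1, \dots, a_n$ such that $\mathfrak{m}_A = \mathfrak{m}_{[n]}$ (in Case A) or $\mathfrak{m}_A \neq \mathfrak{m}_{[n]}$ (in Case B), and for all non-empty $S, T \subseteq [n]$, $\mathfrak{m}_S \subseteq \mathfrak{m}_T$ if and only if $q_S \sqsubseteq q_T$ in $P$''. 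This is first-order with parameters $a_i$: the inclusion $\mathfrak{m}_S \subseteq \mathfrak{m}_T$ unfolds to $\forall (y_i)_{i \in S}[\bigwedge_{i \in S} a_i y_i = 0 \to \exists (z_i)_{i \in T}(\sum_{i \in S} y_i = \sum_{i \in T} z_i \wedge \bigwedge_{i \in T} a_i z_i = 0)]$, and the truth values $q_S \sqsubseteq q_T$ are constants determined by $P$.

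The key combinatorial step is to show the identity $q_{L(p)} = p$ for every $p \in P^* := P$ (Case A) or $P^* := P \setminus \{\top\}$ (Case B); this is proved by induction on the height of $p$ in $P^*$, immediate for minimal $p$, while for $p = r_1 \vee r_2$ a branching point (with $r_1, r_2 \sqsubsetneq p$ incomparable) the inductive hypothesis gives $r_k = q_{L(r_k)} \sqsubseteq q_{L(p)}$, whence $p = r_1 \vee r_2 \sqsubseteq q_{L(p)} \sqsubseteq p$. Once this identity is in hand, injectivity of $L$ on $P^*$ is immediate and order-reflectivity follows from monotonicity of $S \mapsto q_S$, so $L$ is an order embedding of $P^*$ into $(\mathcal{P}([n]) \setminus \{\emptyset\}, \subseteq)$.

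For the backward direction of the main equivalence, a poset isomorphism $\psi \colon (P, \sqsubseteq) \to (\text{BrSpec}(A), \subseteq)$ restricts to a bijection on minimal elements, so by Corollary \ref{C.SV.min_prime_ann} I would pick $a_1, \dots, a_n$ with $\text{Ann}(a_i) = \psi(p_i)$; preservation of joins gives $\mathfrak{m}_S = \psi(q_S)$, and comparing $\psi(\top)$ with $\psi(q_{[n]})$ produces the required equality or inequality of $\mathfrak{m}_A$ versus $\mathfrak{m}_{[n]}$ according to the case. For the forward direction, given witnesses $a_1, \dots, a_n$ of $\phi_{(P, \sqsubseteq)}$, I would define $\psi(p) := \mathfrak{m}_{L(p)}$ on $P^*$ and additionally $\psi(\top) := \mathfrak{m}_A$ in Case B; the combinatorial identity together with the order condition gives an order embedding of $P^*$ onto $\{\mathfrak{m}_S : \emptyset \neq S \subseteq [n]\}$, and surjectivity onto $\text{BrSpec}(A)$ uses the description $\text{BrSpec}(A) = \{\mathfrak{m}_S : \emptyset \neq S \subseteq [n]\} \cup \{\mathfrak{m}_A\}$, combining Remark \ref{R.RCSVR.branch_1} with Lemma \ref{L.RCSVR.finite_sum_prim_id_is_sum_of_two_prim_id}. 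The main obstacle is recognising and handling these two cases cleanly, as the appropriate clause concerning $\mathfrak{m}_A$ versus $\mathfrak{m}_{[n]}$ in $\phi_{(P, \sqsubseteq)}$ depends on whether $\top$ is a branching point of $P$, and correspondingly on whether $\mathfrak{m}_A$ is itself a branching ideal of $A$ (cf. Proposition \ref{P.RCSVR.equiv_branching_max_id}); once this dichotomy is in place, the rest is a routine verification.
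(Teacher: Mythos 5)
Your proposal is correct and takes essentially the same route as the paper: both express the condition by existentially quantifying over a tuple of non-zero pairwise orthogonal elements $a_1,\dots,a_n$, identify the minimal primes with the $\text{Ann}(a_i)$, read off the branching ideals as sums of these annihilators (all subsets $\mathfrak{m}_S$ in your version, pairwise sums $\text{Ann}(a_i)+\text{Ann}(a_j)$ in the paper's, which coincide by Lemma \ref{L.RCSVR.finite_sum_prim_id_is_sum_of_two_prim_id}), and split into the two cases according to whether $\top$ is a branching point (your ``$\mathfrak{m}_A = \mathfrak{m}_{[n]}$'' clause versus the paper's ``every non-unit is a sum of two zero divisors''). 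The paper treats only the case where $\top$ is a branching point and leaves the other case as an implicit ``without loss of generality''; your write-up spells out both cases (including the observation that in Case B the unique immediate predecessor $q_0$ of $\top$ is a branching point with $L(q_0)=[n]$) and supplies the combinatorial identity $q_{L(p)}=p$, which the paper leaves to the reader. This is a useful cleanup but does not constitute a different argument.
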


\begin{proof}
	This is clear from combining Lemma \ref{L.MOD_TH.rk_n_first-order}  together with the following facts about a  local real closed ring  $ A  $ of rank $ n\in \N^{\geq 2} $:

	\begin{enumerate}[\normalfont(i)]
		\item 	$ \text{Spec}^{\text{min}}(A)= \{ \text{Ann}(a_i) \mid i \in [n] \}  $ for all non-zero pairwise orthogonal  elements $ a_1, \dots, a_n \in A $ (Lemma \ref{L.SV.ann_intersection_of_min_prime_id.ii.b});
		\item   each branching ideal of  $ A $ is a sum of two distinct minimal prime ideals (Remark \ref{R.RCSVR.branch_1}); and
		\item the maximal ideal is a branching ideal if and only if every non-unit is a sum of two zero divisors (Proposition \ref{P.RCSVR.equiv_branching_max_id}).
	\end{enumerate}  
	More precisely, assume without loss of generality that $ (P, \sqsubseteq) $ is a finite reduced root of rank $ n \in \N^{\geq 2} $ such that $ \top $ is a branching point. Then $ \phi_{(P, \sqsubseteq)} $ can be taken to be the conjunction of: $ \phi_{\text{rk}=n} $ (Lemma \ref{L.MOD_TH.rk_n_first-order}), the $ \mathscr{L} $-sentence expressing \enquote{every non-unit is a sum of two zero divisors}, and the $ \mathscr{L} $-sentence expressing \enquote{there exist non-zero orthogonal elements $ a_1, \dots, a_n\in A $ such that $ (\{ \text{Ann}(a_i) \mid i \in [n] \}  \cup \{ \text{Ann}(a_i) + \text{Ann}(a_j)\mid i, j  \in [n] \}, \subseteq) $ is poset-isomorphic to $ (P, \sqsubseteq) $}. For instance, if $ (P, \sqsubseteq) $  is the finite reduced root 

\noindent\adjustbox{center}{
\begin{tikzpicture}
[
    level distance=10mm,
    sibling distance=13mm
]
\node (child4) {$ \top $}
		child {node {$ \bullet $}
			child {node {$ p_1$}}
			child {node {$ p_2 $}}}
		child {node {}edge from parent [draw=none]
			child {node {}edge from parent [draw=none]}
			child {node {$ p_3 $}edge from parent [draw=none]}}
;
\draw[-] (child4) -- (child4-2-2);
\end{tikzpicture}	
}
then the last $ \mathscr{L} $-sentence described above would be the one expressing \enquote{there exist non-zero pairwise orthogonal elements $ a_1, a_2, a_3 \in A $ such that  $ \text{Ann}(a_1)+ \text{Ann}(a_3) = \text{Ann}(a_2)+ \text{Ann}(a_3) $ and $ \text{Ann}(a_1)+ \text{Ann}(a_2)\subsetneq \text{Ann}(a_2)+ \text{Ann}(a_3)  $}.
\end{proof}

\begin{corollary}\label{corll}
Let $ A $ and $ B $ be local real closed rings of finite rank. If $ A \equiv B $, then $ (\emph{BrSpec}(A), \subseteq) \cong (\emph{BrSpec}(B), \subseteq) $.
\end{corollary}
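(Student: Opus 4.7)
The plan is to reduce the corollary almost immediately to Lemma \ref{L.br_spec}, after handling separately the trivial case where $A$ and $B$ have rank $1$. The only genuine work is packaging the data correctly; the main obstacle, namely producing the defining sentence $\phi_{(P,\sqsubseteq)}$, has already been overcome in Lemma \ref{L.br_spec}.

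First I would argue that $A$ and $B$ have the same rank. Since real closed rings are reduced (Definition \ref{D.RCSVR.rcr}) and local, Lemma \ref{L.MOD_TH.rk_n_first-order} furnishes an $\mathscr{L}$-sentence $\phi_{\mathrm{rk}=n}$ such that $A \models \phi_{\mathrm{rk}=n}$ iff $A$ has rank $n$; thus if $A$ has finite rank $n$, then $A \equiv B$ yields $B \models \phi_{\mathrm{rk}=n}$, so $B$ also has rank $n$.

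If $n = 1$, then both $A$ and $B$ are local real closed domains by (the proof of) Lemma \ref{L.SV.val_ring_equiv}, so each has a unique minimal prime ideal, namely $(0)$, and no branching ideals. Hence $\mathrm{BrSpec}(A) = \{(0), \mathfrak{m}_A\}$ if $A$ is not a field and $\mathrm{BrSpec}(A) = \{(0)\}$ if $A$ is a field, and likewise for $B$. Since being a field is expressed by the $\mathscr{L}$-sentence $\forall x\,[x = 0 \vee \exists y\,(xy = 1)]$, the assumption $A \equiv B$ forces $A$ to be a field iff $B$ is, so $(\mathrm{BrSpec}(A), \subseteq) \cong (\mathrm{BrSpec}(B), \subseteq)$ trivially.

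If $n \geq 2$, then by Remark \ref{R.TOW.roots} the poset $(P, \sqsubseteq) := (\mathrm{BrSpec}(A), \subseteq)$ is a finite reduced root, and its rank (in the sense of Definition \ref{def_root}) equals $|\mathrm{Spec}^{\mathrm{min}}(A)| = n \geq 2$. Lemma \ref{L.br_spec} then supplies an $\mathscr{L}$-sentence $\phi_{(P, \sqsubseteq)}$ characterising, among local real closed rings of finite rank, precisely those whose branching spectrum is poset-isomorphic to $(P, \sqsubseteq)$. Since $A \models \phi_{(P, \sqsubseteq)}$ by construction and $A \equiv B$, we obtain $B \models \phi_{(P, \sqsubseteq)}$, and therefore $(\mathrm{BrSpec}(B), \subseteq) \cong (P, \sqsubseteq) = (\mathrm{BrSpec}(A), \subseteq)$, as required.
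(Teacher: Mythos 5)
Your proof is correct and takes exactly the route the paper intends, which simply cites Lemma \ref{L.br_spec} as immediate. You are in fact slightly more careful than the printed proof: Lemma \ref{L.br_spec} only covers rank at least $2$, and your preliminary step isolating and disposing of the rank-$1$ case (domains, with or without a nontrivial maximal ideal) fills a small gap that the paper's one-line justification glosses over.
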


\begin{proof}
	Immediate from Lemma \ref{L.br_spec}.
\end{proof}

Let $ A $ and $ B $ are local real closed SV-rings of finite rank $ n\in \N^{\geq 2} $ with one branching ideal and suppose that $ (\text{BrSpec}(A), \subseteq) \cong (\text{BrSpec}(B), \subseteq)  $; then $ A $ is of type $ (n,j ) $  ($ j \in [2] $) if and only if $ B $ is of type $ (n,j) $, therefore $ A \equiv B $ by Corollary \ref{C.MOD_TH.complete}. This observation gives rise to the following conjecture on an elementary classification of local real closed SV-rings of finite rank:

\begin{conjecture}\label{conj_elem}
Let $ A  $ and $ B $ be local real closed SV-rings of finite rank.  Then $ A \equiv B$ if and only if $  (\emph{BrSpec}(A), \subseteq) \cong (\emph{BrSpec}(B), \subseteq)  $.
\end{conjecture}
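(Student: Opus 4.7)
The forward implication is precisely Corollary \ref{corll}, so the substance is the converse. Fix a finite reduced root $P$ of rank $n \geq 2$; my target is completeness of the $\mathscr{L}$-theory $T_P := T_n \cup \{\phi_P\}$, with $\phi_P$ as in Lemma \ref{L.br_spec}. My overall strategy parallels, and extends, the treatment of types $(n,1)$ and $(n,2)$: first enrich $\mathscr{L}$ with a unary predicate $\texttt{b}_p$ for each non-minimal $p \in \text{Br}(P)$, form the $\mathscr{L}_P$-theory $T_P^*$ extending $T_P$ with axioms asserting that the $\texttt{b}_p$'s name the branching ideals of $A$ compatibly with a fixed poset isomorphism $\text{Br}(P) \to (\text{BrSpec}(A), \subseteq)$; second, prove that $T_P^*$ is model complete and has the joint embedding property, hence is complete; third, deduce completeness of $T_P$. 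The last step works because, although individual branching ideals are generally not without-parameters definable in models of $T_P$ (Example \ref{ex.not_def}), any two legal choices of the $\texttt{b}_p$ predicates on a fixed model are related by an automorphism and therefore give the same complete $\mathscr{L}_P$-theory.

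For the model completeness of $T_P^*$, I would generalize Lemmas \ref{L.RCSVR.1_square} and \ref{L.RCSVR.2_square}. Given an $\mathscr{L}_P$-embedding $A \subseteq B$ of models of $T_P^*$, I would construct sharp $\mathscr{L}_P$-embeddings $A \hookrightarrow A'$ and $B \hookrightarrow B'$ with $A' \subseteq B'$, where $A'$ and $B'$ are branch-by-branch homogeneous along $P$: for each $p \in P^{\text{min}}$ the factor $A'/\mathfrak{q}_p^{A'}$ should be an explicit Hahn-series valuation ring whose residue fields and value groups match those of the sibling factors at each branching point above $p$, as dictated by the poset $P$. The construction proceeds inductively top-down along $P$, using Proposition \ref{P.RCSVR.rcvr_embedd_nice} and Theorem \ref{T.APP.rcvf_hahn_embedd_II} at each branching edge to amalgamate valuation-theoretic data coherently, and Theorem \ref{T.RCSVR.equiv_loc_real_closed_SV-ring_finite_rank} to glue everything back into a ring of the correct branching type. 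A formula-factoring lemma in the spirit of Lemmas \ref{L.MODTH.qfformula_factors} and \ref{L.MODTH.2_qfformula_factors} (but for general iterated fibre products over the poset $P$) then reduces existential closure of $A'$ in $B'$ to the model completeness of $\sf{RCVR}(\texttt{m})$ and $\sf{RCVR}(\texttt{b}, \texttt{m})$ from Proposition \ref{P.MODTH.RCVR_mod_compl} applied factor-by-factor. Joint embedding of $T_P^*$ follows by the same amalgamation scheme, paralleling Lemma \ref{L.MODTH.amalg_T_n_into_T_n_1}.

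The main obstacle is precisely the coherence issue flagged in Subsection \ref{SUBSEC.diff}: when $P$ has two incomparable branching ideals above distinct minimal primes, the naive ``all factors equal to one $V$'' homogenization fails (the $V_1,V_2$ example), and instead one needs a whole family of non-trivial real closed valuation rings $\{V_p\}_{p \in P^{\text{min}}}$ together with compatible surjections $V_p \twoheadrightarrow V_q$ for every edge $p \subsetneq q$ in $\text{Br}(P)$, with matching residue fields and value groups at every branch point. Producing such a family inside a single amalgamating real closed structure is where the combinatorial bookkeeping becomes heavy. I expect the right framework is the model theory of real closed rings with radical relations developed in \cite{prestel.schwartz/mod_th_rcr}, which is exactly the setting Subsection \ref{SUBSEC.branching} points to: each branching ideal becomes a distinguished radical relation, and a back-and-forth adapted to the finite root $P$ should construct the required coherent family by induction on the depth of $P$. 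Packaging this construction cleanly, and verifying that it extends arbitrary embeddings between models of $T_P^*$ as required for model completeness, is the place where I expect essentially all the real work in proving the conjecture to lie.
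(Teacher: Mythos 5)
This statement is labelled a \emph{conjecture} in the paper; it carries no proof there, only the motivating case of one branching ideal (Corollary \ref{C.MOD_TH.complete}) and a pointer in Subsection \ref{SUBSEC.branching} towards the radical-relations framework of \cite{prestel.schwartz/mod_th_rcr} as a possible route. There is therefore no paper proof to compare your attempt against; the relevant question is whether your sketch closes the gap the paper deliberately leaves open.

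It does not, and you say so yourself. Your strategy is a sensible generalization of the $T_{n,1}$/$T^*_{n,2}$ machinery: expand $\mathscr{L}$ by a predicate $\texttt{b}_p$ for each non-minimal $p \in \text{Br}(P)$, prove that the resulting $T_P^*$ is model complete and has the joint embedding property, then descend to $T_P$. But the load-bearing steps --- a branch-by-branch homogenization lemma generalizing Lemmas \ref{L.RCSVR.1_square} and \ref{L.RCSVR.2_square}, a $P$-indexed formula-factoring lemma generalizing Lemmas \ref{L.MODTH.qfformula_factors} and \ref{L.MODTH.2_qfformula_factors}, and the amalgamation/joint-embedding argument --- are all left as declared intentions rather than proved. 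These are precisely the steps that the obstacles of Subsection \ref{SUBSEC.diff} block: once $P$ has two incomparable branching points, one needs, for a given embedding $A \subseteq B$ of models of $T_P^*$, a coherent system of surjections between Hahn-series factors realizing all the branching data of $P$ simultaneously, and the paper's closing example in Subsection \ref{SUBSEC.diff} shows that the obvious candidates do not exist in general. Your proposal identifies this as ``where all the real work lies'' --- which is accurate, but it also means what you have written is a programme, not a proof, leaving the conjecture in exactly the status it already had.

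Two smaller points. First, your justification for deducing completeness of $T_P$ from completeness of $T_P^*$ --- that any two legal choices of the $\texttt{b}_p$ predicates on a fixed model are automorphism-related --- is almost certainly false in general: take $A := B_1 \times_{\textbf{k}} B_2$ with $B_1, B_2 \models T_{2,2}$ non-isomorphic (e.g.\ built from $\textbf{k}[[\Q]]$ and $\textbf{k}[[\Q\oplus\Q]]$); the poset automorphism of $\text{BrSpec}(A)$ swapping the two intermediate branching ideals is then not induced by a ring automorphism. Fortunately that claim is not needed: if $T_P^*$ is complete, every model of $T_P$ expands to a model of $T_P^*$ and all such models are elementarily equivalent as $\mathscr{L}_P$-structures, hence as $\mathscr{L}$-structures, so $T_P$ is complete directly. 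Second, making $T_P^*$ first-order requires spelling out how the axioms pin down a poset isomorphism $\text{Br}(P) \to (\text{BrSpec}(A), \subseteq)$ via the $\texttt{b}_p$; this is routine using Lemma \ref{L.SV.ann_intersection_of_min_prime_id.ii.b} and Remark \ref{R.RCSVR.branch_1}, as in the proof of Lemma \ref{L.br_spec}, but should be stated rather than assumed.
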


If $ A $ is a local real closed ring of finite rank, then  $ \text{BrSpec}(A) $ is a finite subset of the spectral space  $ \text{Spec}(A) $, and as such,  $ \text{BrSpec}(A) $ is proconstructible in  $ \text{Spec}(A) $ (i.e., it is a spectral subspace of $ \text{Spec}(A) $, see \cite{dickmann/schwartz/tressl.specbook}); in particular, $ A $ corresponds to the \textit{radical relation} $ \preceq_{\text{BrSpec}(A)} \subseteq A^2 $ on $ A $. Radical relations on rings are certain binary relations which were introduced in \cite{prestel/schmid.ex_cl_dom_with_rad_rel} and later used in \cite{prestel.schwartz/mod_th_rcr} for the model-theoretic analysis of real closed rings. It is shown in \cite{prestel.schwartz/mod_th_rcr} that if $ A $ is any real closed ring, then \[
	X \subseteq \text{Spec}(A) \ \longmapsto \ a \preceq_{X} b \overset{\text{def}}{\iff} \forall \mathfrak{p} \in X[b \in \mathfrak{p} \Ra a \in \mathfrak{p}]
\] is a bijection between proconstructible subsets $ X \subseteq \text{Spec}(A) $ and radical relations  $ \preceq $ on $ A $, therefore a real closed ring  with a radical relation $ (A, \preceq_X) $ \enquote{knows} about the spectral space $ X $ since the bounded and distributive lattice $ \overline{\mathcal{K}}(X) $ of closed constructible subsets of $ X $  is interpretable in  $ (A, \preceq_X) $; furthermore, the model theory of real closed valuation rings with radical relations is well-understood from the work carried in the last three sections of \cite{prestel.schwartz/mod_th_rcr}. In view of all of the above, a possible approach to a uniform model-theoretic analysis of all local real closed SV-rings of finite rank and to  answer  Conjecture \ref{conj_1} in the affirmative is to study such rings equipped with the radical relation corresponding to their branching spectrum.

\appendix
\section{Embedding Real Closed Valued Fields into Real Closed Hahn Series Fields}\label{SEC.app}
The aim of this section is proving Theorem \ref{T.APP.rcvf_hahn_embedd_II}. Familiarity with the basic notions and properties of valued fields is assumed throughout (see for example \cite[Chapter 2]{engler/prestel.valued_fields} or \cite[Chapter 3]{aschenbrenner/vdD/vdHoeven.asympt}), as well as familiarity with  ordered and real closed fields;  in what follows, fix the notation and conventions used for this appendix.

Every valuation on a field $ K $ is denoted by $ v $, with the exception of the canonical valuation on fields of Hahn series $ \emph{\textbf{k}}((\Gamma)) $, in which case the valuation is denoted by $\nu$, see Theorem \ref{T.RCSVR.Hahn}; in particular, if $ K \subseteq L $ is an extension of valued fields, then the valuation on  $ L $ is $ v $ and the valuation on $ K  $ is $ v_{\upharpoonright K} $. If $ K  $ is a valued field, then $ V_K $ is the corresponding valuation ring and $ \lambda_K : V \lonto V/\mathfrak{m}_V $ is the residue field map; write $ V:= V_K $ and $ \lambda:= \lambda_K $ if  $ K $ is clear from the context.

An \textit{ordered valued field}  is a totally ordered field $ K $ equipped with an \textit{order-compatible valuation}  (also called \textit{convex valuation}) $ v: K \lonto \Gamma $, i.e., for all $ a, b \in K  $, if  $ 0 < a < b $, then  $ v(b) \leq v(a) $. If $ K $ is a totally ordered field and $ v : K \lonto \Gamma $ is a valuation on  $ K $, then the following are equivalent:
		\begin{enumerate}[\normalfont(i)]
		\item 	$ v : K \lonto \Gamma $ is an order-compatible valuation.
		\item $ V$ is convex in $ K $.
		\item The composite map  $ K^{>0} \onto \Gamma \onto \Gamma^{\text{op}} $ given by $ a \mapsto -v(a) $ is a surjective morphism of totally ordered groups, where $ \Gamma^{\text{op}} $ is the totally ordered group obtained by reversing the order of $ \Gamma $; in particular, $ \text{ker}(v_{\upharpoonright K^{>0}}) $ is a convex subgroup of $ K^{>0} $.
		\end{enumerate}
		Since convex subrings of totally ordered fields are valuation rings (\cite[Proposition 2.2.4]{knebusch/scheiderer.real_algebra}), ordered valued fields can be equivalently defined as pairs $(K, V) $, where $ K $ is an ordered field and $ V \subseteq K$ is a convex subring; in particular, if $ K $ is an ordered valued field, then its residue field $ \textbf{\emph{k}}:= V/\mathfrak{m}_{V} $ is endowed with a canonical total order turning it into a totally ordered field in such a way that the residue field map $ \lambda: V \lonto \textbf{\emph{k}} $ is order-preserving. 

		A \textit{real closed valued field} is an ordered valued field  which is real closed as a field, i.e., it is a real closed field equipped with an order-compatible valuation; equivalently, it is a real closed field  with a distinguished convex subring. If $ K$ is an ordered valued field, then its real closure $ \rho(K) $ will be regarded as a real closed valued field with the valuation induced by $ K $, i.e.,  $ V_{\rho(K)}$ is defined as the convex hull of $ V_K $ in $ \rho(K) $; if the value group and the residue field of $ K $  are  $ \Gamma $ and  $ \emph{\textbf{k}}  $ (respectively), then the value group  and the residue field of $ \rho(K) $ are $ \Q \Gamma$ and $ \rho(\emph{\textbf{k}}) $ (respectively), and the field embedding $ \rho_K: K \linto \rho(K) $ is an embedding of valued fields, see \cite[Corollary 3.5.18]{aschenbrenner/vdD/vdHoeven.asympt}. Any isomorphism of ordered valued fields $ K \lra L $ extends uniquely to an isomorphism of valued fields $ \rho(K) \lra \rho(L) $, which is also  order-preserving  since $ \rho(K)  $ and $ \rho(L) $ are real closed; therefore, if $ R $ is a real closed valued field and $ \epsilon: K \linto R $ is an embedding of ordered valued fields, then $ \epsilon $ can be extended  uniquely to an embedding of valued fields $ \rho(K)\linto R $.

\begin{lemmaApp}\label{L.APP.mon_grp_rcvf}
	Let $ K$ be a real closed  valued field with value group $ \Gamma $ and $ G \subseteq K^{>0} $ be a subgroup. The following are equivalent:
	\begin{enumerate}[\normalfont(i)]
		\item  $ G $ is a \emph{monomial group of} $ K $, i.e., $ v_{\upharpoonright G}: G \lra \Gamma $ is a group isomorphism.
		\item $ G $ is a subgroup of $ K^{>0} $ maximal with $ G \cap \emph{ker}(v_{\upharpoonright K^{>0}}) = (1) $.
	\end{enumerate}
 In particular:
	\begin{enumerate}[\normalfont(a)]
	\item 	Every real closed valued field has a monomial group.
	\item If $ K \subseteq L $ is an extension of real closed valued fields and $ G $ is a monomial group of $ K $, then there exists a monomial group $ H $ of $ L $ containing $ G $.
	\end{enumerate}
\end{lemmaApp}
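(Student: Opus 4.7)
The plan is to establish the equivalence (i) $\Leftrightarrow$ (ii) directly, and then deduce (a) and (b) by a standard Zorn argument applied to the class of subgroups with trivial intersection with $\ker(v_{\upharpoonright K^{>0}})$.

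For (i) $\Rightarrow$ (ii), I would observe that injectivity of $v_{\upharpoonright G}$ already gives $G \cap \ker(v_{\upharpoonright K^{>0}}) = (1)$. Maximality then follows from surjectivity: if $G \subsetneq G' \subseteq K^{>0}$ with $G' \cap \ker(v_{\upharpoonright K^{>0}}) = (1)$ and $g' \in G' \setminus G$, pick $g \in G$ with $v(g) = v(g')$; then $g'g^{-1} \in G' \cap \ker(v_{\upharpoonright K^{>0}}) = (1)$, contradicting $g' \notin G$.

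The non-trivial direction is (ii) $\Rightarrow$ (i), and this is the only place the hypothesis that $K$ is real closed gets used. Injectivity of $v_{\upharpoonright G}$ is immediate from the trivial intersection condition, so the content is surjectivity onto $\Gamma$. Suppose for contradiction that $v(G) \subsetneq \Gamma$. Since $K$ is real closed, $\Gamma$ is divisible (every positive element of $K$ has an $n$-th root for each $n \in \N^{>0}$), so the quotient $\Gamma/v(G)$ is divisible and torsion-free. Pick $\gamma \in \Gamma \setminus v(G)$; then $n\gamma \notin v(G)$ for all $n \in \Z \setminus \{0\}$. Choose $a \in K^{>0}$ with $v(a) = \gamma$ and form $G' := \langle G, a \rangle \subseteq K^{>0}$. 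Any element of $G'$ is of the form $ga^n$ with $g \in G$, $n \in \Z$; if $v(ga^n) = v(g) + n\gamma = 0$, the torsion-freeness forces $n = 0$, whence $v(g) = 0$ and $g = 1$ by the trivial intersection hypothesis on $G$. Thus $G'$ also has trivial intersection with $\ker(v_{\upharpoonright K^{>0}})$, and $G \subsetneq G'$ (because $a \notin G$, since $v(a) = \gamma \notin v(G)$), contradicting the maximality of $G$. I expect this step to be the main (only) subtlety in the argument: it is where the structure of $\Gamma$ as a $\Q$-vector space is used to guarantee that the extension $\langle G, a\rangle$ does not acquire a nontrivial kernel element.

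For (a), I would apply Zorn's lemma to the poset of subgroups of $K^{>0}$ with trivial intersection with $\ker(v_{\upharpoonright K^{>0}})$, ordered by inclusion; this is nonempty ($(1)$ lies in it) and chain-unions preserve the triviality condition, so a maximal element exists and is a monomial group by (ii) $\Rightarrow$ (i). For (b), the key compatibility is that $\ker(v_{\upharpoonright L^{>0}}) \cap K^{>0} = \ker(v_{\upharpoonright K^{>0}})$, which follows immediately from $v_{\upharpoonright K} = v_{\upharpoonright K}$; hence a monomial group $G$ of $K$, viewed inside $L^{>0}$, still satisfies $G \cap \ker(v_{\upharpoonright L^{>0}}) = (1)$. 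Applying Zorn inside $L^{>0}$ to the subgroups $H \supseteq G$ with $H \cap \ker(v_{\upharpoonright L^{>0}}) = (1)$ yields a maximal $H$, which by (ii) $\Rightarrow$ (i) is a monomial group of $L$ containing $G$.
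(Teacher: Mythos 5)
Your arguments for (i) $\Rightarrow$ (ii) and for items (a) and (b) match the paper's. The gap is in (ii) $\Rightarrow$ (i). From ``$\Gamma$ is divisible'' you conclude that $\Gamma/v(G)$ is ``divisible and torsion-free''. Divisibility of the quotient is automatic, but torsion-freeness is not: $\Q/\Z$ is the quotient of a divisible torsion-free group by a subgroup and is all torsion. What you actually need is that $v(G)$ is a pure subgroup of $\Gamma$ (equivalently, since $\Gamma$ is a $\Q$-vector space, that $v(G)$ is divisible), and nothing you wrote establishes this. Without it, your chosen $\gamma \in \Gamma \setminus v(G)$ could satisfy $n\gamma \in v(G)$ for some $n \neq 0$, the group $\langle G, a\rangle$ could then meet $\ker(v_{\upharpoonright K^{>0}})$ nontrivially, and the contradiction with maximality evaporates.

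The missing ingredient is that $G$ itself is divisible, which does follow from maximality of $G$, real-closedness of $K$, and torsion-freeness of the ordered group $K^{>0}$: if $g \in G$ and $h \in K^{>0}$ is its positive $n$-th root with $h \notin G$, maximality yields $g' \in G$ and $m \in \Z$ with $1 \neq g'h^m \in \ker(v_{\upharpoonright K^{>0}})$; then $(g'h^m)^n = (g')^n g^m \in G \cap \ker(v_{\upharpoonright K^{>0}}) = (1)$, so $g'h^m = 1$, a contradiction. Once $G$, hence $v(G)$, is divisible, $\Gamma/v(G)$ is indeed torsion-free and your extension $G' = \langle G, a\rangle$ works. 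For comparison, the paper short-circuits all of this by noting that $\ker(v_{\upharpoonright K^{>0}})$ is a convex subgroup of the divisible ordered group $K^{>0}$, hence itself divisible, hence a direct summand by Fuchs' theorem; any $G$ maximal with trivial intersection is then a complement, which gives surjectivity of $v_{\upharpoonright G}$ in one stroke and avoids the discussion of $\Gamma/v(G)$ entirely.
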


\begin{proof}
	\underline{(i) $ \Ra $  (ii).} Since $ v_{\upharpoonright G} $ is injective, $ G \cap \text{ker}(v_{\upharpoonright K^{>0}}) = (1) $. Assume for contradiction that there exists a subgroup $ G \subsetneq G' \subseteq K^{>0} $  with $ G' \cap \text{ker}(v_{\upharpoonright G'}) = (1) $ and pick $ g' \in G' \setminus G $; since $ v_{\upharpoonright G} $ is surjective, there exists $ g \in G $ with $ v(g) = v(g') $, hence $ g'g^{-1} \in G' \cap  \text{ker}(v_{\upharpoonright K^{>0}})  = (1) $, and thus  $g' = g    $, a contradiction to the choice of $ g'$.

	\underline{(ii) $ \Ra $ (i).} Since  $ v $ is an order-compatible valuation on $ K $, $ \text{ker}(v_{\upharpoonright K^{>0}}) $ is a convex subgroup of $ K^{>0} $; since $ K $ is real closed, $ K^{>0} $ is divisible, and thus $ \text{ker}(v_{\upharpoonright K^{>0}}) $ is a divisible subgroup of $ K^{>0} $. By \cite[Theorem 21.2]{fuchs.inf_ab_gps_vol_1} and by choice of $ G $, $ K^{>0}=  \text{ker}(v_{\upharpoonright K^{>0}})\cdot G $, i.e., $ v_{\upharpoonright G}: G \lra \Gamma $ is a group isomorphism, as required.

\noindent Items (a) and (b) follow from the implication   (ii) $ \Rightarrow $ (i)  and an application of Zorn's lemma.
\end{proof}

\begin{lemmaApp}\label{L.APP.coeff_field_rcvf}
	Let $ K $ be a real closed valued field with residue field $ \textbf{k} $ and $ \textbf{k}_0 \subseteq V $ be a subfield. The following are equivalent: 
	\begin{enumerate}[\normalfont(i)]
		\item  $ \textbf{k}_0 $ is a \emph{coefficient field of} $ K $, i.e., $ \lambda_{\upharpoonright \textbf{k}_0} : \textbf{k}_0 \lra \textbf{k}$  is a field isomorphism. 

		\item $ \textbf{k}_0 $ is a maximal subfield of $ V $.
	\end{enumerate}
 In particular:
	\begin{enumerate}[\normalfont(a)]
	\item 	Every real closed valued field has a coefficient field.
	\item If $ K \subseteq L $ is an extension of real closed valued fields and $ \textbf{k}_0 $ is a coefficient field of $ K $, then there exists a coefficient field $ \textbf{l}_0 $ of $ L $ containing $ \textbf{k}_0 $.
	\end{enumerate}
\end{lemmaApp}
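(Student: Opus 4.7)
The plan is to establish the equivalence (i) $\Leftrightarrow$ (ii) directly and then deduce (a) and (b) from Zorn's lemma. For (i) $\Rightarrow$ (ii), suppose $\lambda_{\upharpoonright \textbf{k}_0}$ is an isomorphism and, for contradiction, that some subfield $F \subseteq V$ properly contains $\textbf{k}_0$; pick $a \in F \setminus \textbf{k}_0$. Surjectivity of $\lambda_{\upharpoonright \textbf{k}_0}$ yields $b \in \textbf{k}_0$ with $\lambda(a) = \lambda(b)$, so $a - b \in \mathfrak{m}_V$ is nonzero in the field $F$; but then $(a-b)^{-1} \in F \subseteq V$ contradicts $a - b \in \mathfrak{m}_V$.

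For (ii) $\Rightarrow$ (i), injectivity of $\lambda_{\upharpoonright \textbf{k}_0}$ is immediate since $\textbf{k}_0 \cap \mathfrak{m}_V = (0)$. For surjectivity, suppose for contradiction that some $\bar{a} \in \textbf{k}$ does not lie in $\lambda(\textbf{k}_0)$; the goal is to produce a subfield of $V$ strictly larger than $\textbf{k}_0$. If $\bar{a}$ is transcendental over $\lambda(\textbf{k}_0)$, lift it to an arbitrary $a \in V$: for any nonzero $p \in \textbf{k}_0[X]$, applying $\lambda$ coefficient-wise yields $\lambda(p(a)) = \bar{p}(\bar{a}) \neq 0$, so $p(a)$ is a unit in $V$ and thus $\textbf{k}_0(a) \subseteq V$. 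If $\bar{a}$ is algebraic, lift its minimal polynomial $\bar{p}$ over $\lambda(\textbf{k}_0)$ to $p \in \textbf{k}_0[X]$; since $\textbf{k}$ has characteristic zero, $\bar{a}$ is a simple root of $\bar{p}$, and Henselianness of the real closed valued field $K$ produces $a \in V$ with $p(a) = 0$ and $\lambda(a) = \bar{a}$, giving the strictly larger subfield $\textbf{k}_0(a) = \textbf{k}_0[a] \subseteq V$.

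The main obstacle is the algebraic case, which rests on the classical fact that every real closed valued field is Henselian, i.e., that convex subrings of real closed fields are Henselian valuation rings (see for instance \cite{engler/prestel.valued_fields}); if one prefers a self-contained argument, the intermediate value property for polynomials in $K$ (Theorem \ref{T.RCSVR.equiv_rcvr}) applied to $p$ on a sufficiently small interval around any lift of $\bar{a}$ locates a root with the required residue, using that $\bar{p}'(\bar{a}) \neq 0$ forces a sign change of $p$ on such an interval.

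Items (a) and (b) then follow from Zorn's lemma applied to the inductively ordered family of subfields of $V$ (respectively, subfields of $V_L$ containing the fixed coefficient field $\textbf{k}_0$ of $K$), which is non-empty since it contains $\mathbb{Q}$ (respectively, $\textbf{k}_0$); the equivalence (i) $\Leftrightarrow$ (ii) ensures that any maximal element is a coefficient field.
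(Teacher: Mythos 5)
Your proof is correct. The argument for (i) $\Rightarrow$ (ii) and the Zorn's lemma deductions for (a) and (b) match the paper's proof essentially verbatim. The one difference is that for (ii) $\Rightarrow$ (i) the paper simply labels the statement as folklore and cites Knebusch--Scheiderer and Schwartz, whereas you supply the proof: injectivity from $\textbf{k}_0 \cap \mathfrak{m}_V = (0)$, then surjectivity by the transcendental/algebraic dichotomy, using Henselianity of real closed valued fields (or, as you observe, the intermediate value property and a sign-change argument) in the algebraic case, and the fact that residue characteristic is zero to get separability of the minimal polynomial. This self-contained argument is a welcome filling-in of a step the paper delegates to references; it also correctly uses that $\textbf{k}$ is real closed of characteristic zero, both of which hold since $V$ is a convex subring of a real closed field.
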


\begin{proof}
	\underline{(i) $ \Ra $  (ii).} Assume for contradiction that there exists a subfield $ \emph{\textbf{k}}_0 \subsetneq \emph{\textbf{k}}' \subseteq V $ and pick $ a' \in \emph{\textbf{k}}' \setminus \emph{\textbf{k}}_0 $; since $ \lambda_{\upharpoonright \emph{\textbf{k}}_0} $ is surjective, there exists $ a \in \emph{\textbf{k}}_0 $ with $ \lambda(a) = \lambda(a') $, hence $ a-a' \in \emph{\textbf{k}}'  \cap \mathfrak{m}_V= (0)$, and thus $ a' = a  $, a contradiction to the choice of $ a' $.

	\underline{(ii) $ \Ra $ (i).} Folklore; see for instance \cite[Proposition 2.5.3]{knebusch/scheiderer.real_algebra} or \cite[Proposition 2.1]{schwartz.rcvr}.

\noindent Items (a) and (b) follow from the implication (ii) $ \Rightarrow $ (i) and an application of Zorn's lemma.
\end{proof}

\begin{lemmaApp}\label{lem.ord_pres_mon_group}
	Let $ K $ be an ordered valued field  with value group $ \Gamma $ and residue field  $ \textbf{k} $, and suppose that $ G \subseteq K^{>0} $ is a monomial group of $ K$. If  $ \epsilon : K  \linto \textbf{k}((\Gamma))$ is an embedding of valued fields such that $ \epsilon(g) = x^{v(g)} $ for all $  g \in G $, then $ \epsilon $ preserves the order.
\end{lemmaApp}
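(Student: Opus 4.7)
The plan is to reduce the claim to checking positivity of the leading Hahn-series coefficient of $\epsilon(a)$ for $a \in K^{>0}$, and then to identify that coefficient with a residue that is manifestly positive.

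First, I recall from the proof of Lemma \ref{L.APP.mon_grp_rcvf} that $\ker(v_{\upharpoonright K^{>0}})$ is a divisible convex subgroup of $K^{>0}$ and that $v_{\upharpoonright G}: G \to \Gamma$ is an isomorphism, so that $K^{>0} = \ker(v_{\upharpoonright K^{>0}}) \cdot G$ by the splitting argument used in that proof. Consequently, given any $a \in K^{>0}$, I can write $a = u \cdot g$ with $g \in G$ and $u \in K^{>0}$ satisfying $v(u) = 0$; in particular $u \in V_K^{\times}$, and since the residue map $\lambda_K: V_K \lonto \textbf{k}$ is order-preserving, $\lambda_K(u) > 0$ in $\textbf{k}$ (non-zero because $u$ is a unit, positive because $u > 0$).

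Next, I apply $\epsilon$ using the hypothesis on $G$:
\[
\epsilon(a) \;=\; \epsilon(u)\cdot \epsilon(g) \;=\; \epsilon(u)\cdot x^{v(g)}.
\]
Since $\epsilon$ is an embedding of valued fields, $\nu(\epsilon(u)) = v(u) = 0$, so $\epsilon(u) \in \textbf{k}[[\Gamma^{\geq 0}]]$ has a non-zero constant term $c_0 := \lambda_{\textbf{k}((\Gamma))}(\epsilon(u)) \in \textbf{k}$, and therefore
\[
\epsilon(a) \;=\; c_0\, x^{v(g)} \,+\, \sum_{\gamma > v(g)} c_{\gamma}\, x^{\gamma},
\qquad \nu(\epsilon(a)) = v(g).
\]
By Theorem \ref{T.RCSVR.Hahn}(iii), $\epsilon(a) > 0$ in $\textbf{k}((\Gamma))$ if and only if $c_0 > 0$ in $\textbf{k}$.

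Finally, I use the standing identification of residue fields: the residue field of $\textbf{k}((\Gamma))$ is canonically $\textbf{k}$ via the constant-term map, and the symbol $\textbf{k}$ in the codomain is the residue field of $K$; since $\epsilon$ is an embedding of valued fields with these identifications, the induced map on residue fields is the identity, giving $c_0 = \lambda_K(u)$. Combining with the first step, $c_0 = \lambda_K(u) > 0$, so $\epsilon(a) > 0$ in $\textbf{k}((\Gamma))$. As $a \in K^{>0}$ was arbitrary and $\epsilon$ is a field embedding, this shows $\epsilon$ preserves the order.

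The main (and only) subtlety is the last identification $c_0 = \lambda_K(u)$, which hinges on the convention that writing the codomain as $\textbf{k}((\Gamma))$ for $\textbf{k}$ the residue field of $K$ forces the induced residue field map of $\epsilon$ to be the canonical identification; absent this convention one would need to verify separately that the induced field embedding $\bar{\epsilon}: \textbf{k} \to \textbf{k}$ is order-preserving, which is automatic for example whenever $\textbf{k}$ is real closed, as in every application of this lemma in the paper.
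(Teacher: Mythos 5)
Your argument follows the same route as the paper's: you factor $a = u\cdot g$ with $g \in G$ and $v(u)=0$, while the paper picks $g\in G$ with $v(g)=v(r)$ and passes to $rg^{-1}$; these are the same decomposition, and both reduce the claim to positivity of the leading Hahn-series coefficient $c_0$ of $\epsilon(a)$, which both arguments then identify with a residue in $\textbf{k}$. (Note that the splitting $K^{>0}=\ker(v_{\upharpoonright K^{>0}})\cdot G$ you invoke does not actually need the divisibility argument from Lemma \ref{L.APP.mon_grp_rcvf}: given $a\in K^{>0}$, surjectivity of $v_{\upharpoonright G}$ already produces $g$ with $v(g)=v(a)$, and then $u:=ag^{-1}$ has $v(u)=0$.)

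The subtlety you flag in your closing paragraph is a genuine one, and in fact the paper's own proof does not address it either: the paper asserts $\lambda_{\textbf{k}((\Gamma))}(\epsilon(rg^{-1})) = \lambda_K(rg^{-1})$ without justification, which is exactly your $c_0 = \lambda_K(u)$, and this requires the induced residue embedding $\overline{\epsilon}\colon \textbf{k}\to\textbf{k}$ to be order-preserving, which does \emph{not} follow from the stated hypotheses. Concretely, take $\textbf{k} = \Q(\sqrt 2)$ with its usual order, $\Gamma=\Z$, $K:=\textbf{k}((\Gamma))$ with monomial group $G:=\{x^n : n\in\Z\}$, and let $\epsilon$ act coefficientwise by the conjugation $\sqrt 2 \mapsto -\sqrt 2$; then $\epsilon$ is a valued field embedding fixing $G$ pointwise, but $\epsilon(\sqrt 2)=-\sqrt 2 < 0$, so $\epsilon$ does not preserve the order. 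Thus the lemma as printed tacitly assumes $\overline{\epsilon}$ is order-preserving. Your observation that this is automatic when $\textbf{k}$ is real closed is correct, but it does not quite cover Case 2 of Theorem \ref{T.APP.rcvf_hahn_embedd_II}, where the residue field of $K(b)$ is $\textbf{k}(t)$, not real closed; there the hypothesis is instead secured because $\overline{\epsilon}_K$ is built to agree with the residue map on a coefficient field of $K(b)$ (namely $\textbf{k}_0(b)$), which forces the induced map on residues to be the identity. In short, you have reproduced the paper's argument and have been more careful than the source about the one hypothesis it leaves tacit.
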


\begin{proof}
	Let $ r \in K^{>0} $, assume without loss of generality that  $ r \in V $ (otherwise replace $ r $ by $ r^{-1} $), and write \[
		\epsilon(r) := a_{\gamma_0}x^{\gamma_0} + \sum a_{\gamma}x^{\gamma}	,\]
		where $ \gamma_0 := \nu(\epsilon(r)) = v(r) \in \Gamma $; it must be shown that $ \epsilon(r) >0 $, i.e., that $ a_{\gamma_0} >0 $. Let $ g\in G  $ be such that $ v(g) =  \gamma_0 $; then  $   0 = v(rg^{-1}) = \nu(\epsilon(rg^{-1}))$, and \[
			\epsilon(rg^{-1}) =  \epsilon(r)\epsilon(g^{-1}) = a_{\gamma_0} + \sum a_{\gamma}x^{\gamma-\gamma_0} \in \emph{\textbf{k}}[[\Gamma]] 
		,\] therefore $ 0 \neq a_{\gamma_0} = \lambda_{\emph{\textbf{k}}((\Gamma))}(\epsilon(rg^{-1})) = \lambda_{K}(rg^{-1})    $, and since $g >0  $, $ r>0 $, and  $ \lambda_{K}:V \lonto \textbf{\emph{k}}$ is order-preserving, $a_{\gamma_0} = \lambda_{K}(rg^{-1})>0 $ follows, as required.
\end{proof}

\begin{theoremApp}\label{T.APP.rcvf_hahn_embedd}
	Let $ K $ be a real closed valued field with value group $ \Gamma $ and residue field $ \textbf{k} $. Suppose that $ G \subseteq K^{>0} $ is a monomial group of $ K $ and $ \textbf{k}_0 \subseteq V $ is a coefficient field of $ K $. There exists an embedding of valued fields $\epsilon:  K \linto \textbf{k}((\Gamma)) $ such that  $ \epsilon(g) = x^{v(g)}$ for all $ g \in G $ and  $  \epsilon(a) = \lambda(a)$ for all $ a \in \textbf{k}_0 $. 
\end{theoremApp}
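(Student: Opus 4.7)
The plan is to build $\epsilon$ in two stages: first on the subfield $F_0 := \textbf{k}_0(G) \subseteq K$ (where the hypotheses essentially force the definition), and then extend to all of $K$ by Zorn's lemma combined with the maximal completeness of $\textbf{k}((\Gamma))$.

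For the first stage, I would use that $v_{\upharpoonright G}: G \to \Gamma$ is a group isomorphism, so distinct elements of $G$ have distinct values. Combined with $v(a) = 0$ for all $a \in \textbf{k}_0^\times$ (since $\textbf{k}_0 \subseteq V$ and $\lambda_{\upharpoonright \textbf{k}_0}$ is an isomorphism onto $\textbf{k}$), this yields $v(\sum_{i=1}^n c_i g_i) = \min_i v(g_i)$ for any $\textbf{k}_0$-linear combination of pairwise distinct $g_i \in G$ with $c_i \in \textbf{k}_0^\times$; in particular, the group ring $\textbf{k}_0[G]$ embeds canonically into $K$. One then defines $\epsilon$ on this subring by $\sum c_i g_i \mapsto \sum \lambda(c_i) x^{v(g_i)}$, checks that it is an injective ring homomorphism preserving the valuation (since the right-hand side also has $\nu$-value $\min_i v(g_i)$), and extends multiplicatively to the fraction field to obtain a valued field embedding $\epsilon_0: F_0 \hookrightarrow \textbf{k}((\Gamma))$ that satisfies the two prescribed conditions on $G$ and $\textbf{k}_0$.

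For the second stage, I would apply Zorn's lemma to the poset of pairs $(K', \epsilon')$ with $F_0 \subseteq K' \subseteq K$ and $\epsilon': K' \hookrightarrow \textbf{k}((\Gamma))$ a valued field embedding extending $\epsilon_0$, ordered by extension, and let $(K', \epsilon')$ be a maximal element. The key structural observation is that $v(K') \supseteq v(G) = \Gamma$ and the residue field of $K'$ contains (hence equals) $\textbf{k}$ via $\textbf{k}_0$, so any simple extension $K' \subsetneq K'(r) \subseteq K$ is necessarily \emph{immediate} (same value group, same residue field).

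The hard part will be showing $K' = K$. Suppose for contradiction there exists $r \in K \setminus K'$. Here I would invoke the classical fact that $\textbf{k}((\Gamma))$ is a \textbf{maximally complete} (equivalently, spherically complete) valued field, which is available in the ordered setting in \cite{priess-crampe.ord_str}. By Kaplansky's theory of pseudo-Cauchy sequences in immediate extensions, the element $r$ is a pseudo-limit of some pseudo-Cauchy sequence $(a_\sigma)$ in $K'$; the image sequence $(\epsilon'(a_\sigma))$ is then pseudo-Cauchy in $\textbf{k}((\Gamma))$ and, by maximal completeness, admits a pseudo-limit $s \in \textbf{k}((\Gamma))$ of the same breadth. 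Sending $r \mapsto s$ and extending multiplicatively produces a valued field embedding of $K'(r)$ into $\textbf{k}((\Gamma))$ extending $\epsilon'$, contradicting maximality. Hence $K' = K$, and the resulting $\epsilon$ satisfies $\epsilon(g) = x^{v(g)}$ and $\epsilon(a) = \lambda(a)$ by construction.
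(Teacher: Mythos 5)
The paper does not prove this theorem; it merely cites \cite[Satz 21, p.~62]{priess-crampe.ord_str}, so there is no in-paper argument to compare against. Your reconstruction is the standard Hahn--Krull--Kaplansky strategy and is almost certainly the one behind the cited result, but as written there is a gap in the crucial one-step extension.

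The issue is in the sentence ``Sending $r \mapsto s$ and extending multiplicatively produces a valued field embedding of $K'(r)$ into $\textbf{k}((\Gamma))$ extending $\epsilon'$.'' This is only immediate when $r$ is transcendental over $K'$. If $r$ is algebraic over $K'$, then for $r \mapsto s$ to extend to a \emph{field} homomorphism you must choose $s$ to be a root of the polynomial $\epsilon'(f)$, where $f$ is the minimal polynomial of $r$ over $K'$; and for the extended map to be a \emph{valued} field embedding you need this root to be a pseudo-limit of $(\epsilon'(a_\sigma))$ of the same breadth. Neither fact is automatic from spherical completeness alone: producing such an $s$ is precisely the content of Kaplansky's treatment of pseudo-Cauchy sequences of algebraic type, which relies on the residue field having characteristic $0$ (Kaplansky's hypothesis A). Even in the transcendental case there is a small obligation you have not discharged, namely that the chosen pseudo-limit $s$ is transcendental over $\epsilon'(K')$; this again follows from Kaplansky's computation of valuations of polynomials along a pseudo-Cauchy sequence of transcendental type, not from the existence of a pseudo-limit per se.

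Two ways to close the gap. One is to invoke Kaplansky's embedding theorem directly: since $K/F_0$ is immediate and $\textbf{k}((\Gamma))$ is a maximally complete immediate extension of $\epsilon_0(F_0)$ with residue characteristic $0$, the valued field embedding $\epsilon_0 : F_0 \hookrightarrow \textbf{k}((\Gamma))$ extends to a valued field embedding of $K$ into $\textbf{k}((\Gamma))$; this already packages the Zorn argument and the algebraic/transcendental dichotomy. The other is to refine your Zorn poset to real closed intermediate fields $K'$ (so every $r \in K \setminus K'$ is transcendental over $K'$), but then after each transcendental one-step extension you must further extend the embedding to the real closure of $K'(r)$ inside $K$, for which you need the partial embedding to be order-preserving --- which is exactly Lemma \ref{lem.ord_pres_mon_group} of the paper, and is not free. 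Both routes are workable; the first is closer in spirit to what you wrote.

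One thing you do handle cleanly and that is worth keeping: the explicit construction of $\epsilon_0$ on $\textbf{k}_0[G]$ via the Gauss computation $v(\sum c_i g_i) = \min_i v(g_i)$ (valid because $v$ is injective on $G$ and trivial on $\textbf{k}_0^\times$), which at once shows injectivity of $\textbf{k}_0[G] \to K$, well-definedness of $\epsilon_0$, and that $\epsilon_0$ is value-preserving. This is exactly the base case needed, and it already forces the two side conditions on $G$ and $\textbf{k}_0$.
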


\begin{proof}
	See \cite[Satz 21, p. 62]{priess-crampe.ord_str}.
\end{proof}

\begin{theoremApp}\label{T.APP.rcvf_hahn_embedd_II}
	Let $ K \subseteq L $ be an extension of real closed valued fields with value groups $ \Gamma $ and $ \Delta $, and residue fields $ \textbf{k} $ and $ \textbf{l} $, respectively.  There exist embeddings of valued fields $ \epsilon_{K} : K \linto \textbf{k}((\Gamma)) $ and $ \epsilon_{L} : L \linto \textbf{l}((\Delta)) $ such that $ \epsilon_{L \upharpoonright K} = \epsilon_K $.
\end{theoremApp}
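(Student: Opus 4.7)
The plan is to first fix compatible structural data on $K$ and $L$, then invoke Theorem \ref{T.APP.rcvf_hahn_embedd} to produce $\epsilon_K$, and finally extend $\epsilon_K$ across the inclusion $K \subseteq L$ by a Zorn's-lemma argument that mirrors the proof of Theorem \ref{T.APP.rcvf_hahn_embedd}. First I would apply Lemma \ref{L.APP.mon_grp_rcvf} parts (a) and (b) to choose a monomial group $G$ of $K$ together with a monomial group $H \supseteq G$ of $L$, and Lemma \ref{L.APP.coeff_field_rcvf} parts (a) and (b) to choose a coefficient field $\textbf{k}_0 \subseteq V_K$ of $K$ together with a coefficient field $\textbf{l}_0 \supseteq \textbf{k}_0$ of $L$. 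The inclusions $\textbf{k} \subseteq \textbf{l}$ and $\Gamma \subseteq \Delta$ yield a canonical embedding of valued fields $\iota : \textbf{k}((\Gamma)) \linto \textbf{l}((\Delta))$. Theorem \ref{T.APP.rcvf_hahn_embedd} applied to $K$ with data $(G, \textbf{k}_0)$ then produces an embedding of valued fields $\epsilon_K : K \linto \textbf{k}((\Gamma))$ with $\epsilon_K(g) = x^{v(g)}$ for $g \in G$ and $\epsilon_K(a) = \lambda(a)$ for $a \in \textbf{k}_0$, so that the compatibility $\epsilon_{L \upharpoonright K} = \epsilon_K$ will hold (after identifying $\textbf{k}((\Gamma))$ with its image under $\iota$) as soon as an $\epsilon_L$ extending $\iota \circ \epsilon_K$ is constructed.

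To produce $\epsilon_L$ I would set up a Zorn argument on the poset $\mathcal{P}$ of pairs $(M, \mu)$, where $M$ is a real closed valued subfield of $L$ containing $K$ and $\mu : M \linto \textbf{l}((\Delta))$ is an embedding of valued fields extending $\iota \circ \epsilon_K$ and satisfying $\mu(h) = x^{v(h)}$ for $h \in H \cap M$ and $\mu(a) = \lambda(a)$ for $a \in \textbf{l}_0 \cap M$. The pair $(K, \iota \circ \epsilon_K)$ lies in $\mathcal{P}$, and chains in $\mathcal{P}$ admit upper bounds via unions, so a maximal element $(M^*, \mu^*) \in \mathcal{P}$ exists by Zorn's lemma. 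Setting $\epsilon_L := \mu^*$ will finish the proof, provided I can establish $M^* = L$.

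The main obstacle is precisely this last step: assuming for contradiction that there exists $b \in L \setminus M^*$, I would build a strict extension of $(M^*, \mu^*)$ inside $\mathcal{P}$, violating maximality. I would follow the usual Kaplansky--Priess-Crampe trichotomy adapted to the relative setting. If $v(b) \notin v(M^*)$, use that $H$ is a monomial group of $L$ to reduce, by multiplying $b$ by a suitable element of $M^*$, to $b \in H$ and set $\mu(b) := x^{v(b)}$; if instead $v(b) \in v(M^*)$ but $\lambda(b)$ is not in the residue field of $V_{M^*}$, subtract an element of $V_{M^*}$ to reduce to $b \in \textbf{l}_0$ and set $\mu(b) := \lambda(b)$; otherwise $b$ realises an immediate (possibly algebraic) extension of $M^*$ inside $L$, and I would use the spherical completeness of $\textbf{l}((\Delta))$ to pick a value $\mu(b) \in \textbf{l}((\Delta))$ realising the same pseudo-Cauchy type over $\mu^*(M^*)$ as $b$ does over $M^*$. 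After each such basic extension I would close under the real closure taken inside $L$, using that $\textbf{l}((\Delta))$ is itself a real closed valued field and that an embedding of ordered valued fields extends uniquely to real closures. The hardest case will be the immediate one: verifying that the chosen pseudo-limit yields a bona fide valued-field embedding requires invoking Kaplansky's theorem distinguishing algebraic from transcendental pseudo-Cauchy sequences and the maximality of $\textbf{l}((\Delta))$, and one must additionally check that the compatibility conditions with $H$ and $\textbf{l}_0$ persist after closing under real closure --- this follows because any element of $H$ or $\textbf{l}_0$ that appears in the real closure of $M^*(b)$ is already determined up to $v$- and $\lambda$-data by elements of $M^*(b)$ itself.
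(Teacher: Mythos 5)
Your overall strategy matches the paper's: choose compatible monomial groups $G \subseteq H$ and coefficient fields $\textbf{k}_0 \subseteq \textbf{l}_0$, get $\epsilon_K$ from Theorem \ref{T.APP.rcvf_hahn_embedd}, then propagate across $K \subseteq L$ by adjoining a new monomial or coefficient element (or a pseudo-limit in the immediate case) and passing to real closures. Whether one does the transfinite bookkeeping via Zorn's lemma, as you do, or via induction on transcendence degree, as the paper does, is cosmetic. But there are concrete gaps in the extension step as you have set it up.

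The trichotomy should be stated at the level of the extension $M^{*} \subseteq L$, not of the arbitrary element $b$. Conditions such as \enquote{$v(b) \notin v(M^{*})$} do not classify the extension $M^{*}\langle b\rangle/M^{*}$: take $b := 1 + h$ with $h \in L$, $v(h) = \delta \notin v(M^{*})$; then $v(b) = 0$ and $\lambda(b) = 1$ lie in $v(M^{*})$ and in the residue field, yet $M^{*}\langle b\rangle = M^{*}\langle h\rangle$ grows the value group. Moreover, your reduction moves \enquote{multiply $b$ by an element of $M^{*}$ to reduce to $b \in H$} and \enquote{subtract an element of $V_{M^{*}}$ to reduce to $b \in \textbf{l}_0$} do not work — $b m$ or $b - m$ for $m \in M^{*}$ need not land in $H$ or $\textbf{l}_0$. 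What does work, and what the paper does, is to discard $b$ entirely and adjoin a fresh element $h \in H$ with $v(h) \in \Delta \setminus v(M^{*})$ (resp.\ $c \in \textbf{l}_0$ with $\lambda(c) \notin \lambda(V_{M^{*}})$), whose existence is exactly what the monomial group and coefficient field provide. The reason only one of \enquote{new value} or \enquote{new residue} can occur at a single transcendental step is the Wilkie inequality (\cite[Corollary 5.6]{vdD.T-convex_II}), which is the paper's key structural tool and which your argument does not invoke — without it, the case analysis is not exhaustive in the clean form you want. Finally, the claim that the $H$- and $\textbf{l}_0$-compatibility conditions persist after the basic extension and real closure \enquote{because any element of $H$ or $\textbf{l}_0$ that appears is already determined up to $v$- and $\lambda$-data} is too vague to be checked. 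To make your Zorn argument close, the poset $\mathcal{P}$ should carry the stronger invariant that $H \cap M^{>0}$ is a monomial group \emph{of $M$} and $\textbf{l}_0 \cap M$ a coefficient field \emph{of $M$}; these are exactly the hypotheses needed to rerun the paper's Claim at the level of $M$, and they require a short computation to propagate (see the paper's Cases 1 and 2, where $G\cdot h^{\Z}$ resp.\ $G$ is identified as the relevant monomial group and order-preservation is verified via Lemma \ref{lem.ord_pres_mon_group} before passing to the real closure). As written, that verification step is missing from your proposal.
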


\begin{proof}
	Let $ G \subseteq K^{>0} $ be a monomial group of $ K $, $ H\subseteq L^{>0} $ be a monomial group of $ L $ containing $ G $, $ \emph{\textbf{k}}_0\subseteq V_K $ be a coefficient field of  $ K$, and $ \emph{\textbf{l}}_0 \subseteq V_L $ be a coefficient field of $ L $ containing  $ \emph{\textbf{k}}_0 $; these exist by items (a) and (b) in Lemmas \ref{L.APP.mon_grp_rcvf} and \ref{L.APP.coeff_field_rcvf}.  

\noindent\textit{Claim.}  $ K^{>0} \cap H = G $ and $ K \cap \emph{\textbf{l}}_0 = \emph{\textbf{k}}_0 $.

\noindent \textit{Proof of Claim.} Clearly $ G \subseteq K^{>0} \cap H $	and $ \emph{\textbf{k}}_0 \subseteq K \cap \emph{\textbf{l}}_0 $. Since   $ K \subseteq L $ as valued fields,  $ \text{ker}(v_{\upharpoonright K^{>0}})  \subseteq \text{ker}(v_{\upharpoonright L^{>0}})   $, and since $ H $ is a monomial group of $ L $, $ H \cap \text{ker}(v_{\upharpoonright L^{>0}}) = (1) $; therefore $ H \cap \text{ker}(v_{\upharpoonright K^{>0}}) = (1) $, hence $ (K^{>0} \cap H) \cap \text{ker}(v_{\upharpoonright K^{>0}}) = (1)$, and thus $ K^{>0} \cap H = G $ by the implication (i) $ \Rightarrow  $  (ii) in Lemma \ref{L.APP.mon_grp_rcvf}.  Similarly, since $ K \subseteq L $ as valued fields, $ K \cap V_L  = V_K$, hence $ \emph{\textbf{k}}_0 \subseteq K \cap \emph{\textbf{l}}_0  \subseteq K \cap V_L = V_K$, and thus  $ \emph{\textbf{k}}_0= K \cap \emph{\textbf{l}}_0 $ by the implication (i) $ \Rightarrow $  (ii) in Lemma \ref{L.APP.coeff_field_rcvf}. \hfill $ \square_{\text{Claim}} $

	By Theorem \ref{T.APP.rcvf_hahn_embedd}, there exists an embedding of valued fields $ \epsilon_K: K \linto \emph{\textbf{k}}((\Gamma)) $ such that  $ \epsilon(g) = x^{v(g)}$ for all $ g \in G $ and  $  \epsilon(a) = \lambda(a)$ for all $ a \in \emph{\textbf{k}}_0 $; the goal is to extend $ \epsilon_K$ to an embedding of valued fields $  \epsilon_L: L \linto \emph{\textbf{l}}((\Delta))  $. If the extension $ K \subseteq L   $ is immediate, then $ G = H $,  $ \textbf{\emph{k}}_0 = \emph{\textbf{l}}_0 $, and  $ \emph{\textbf{k}}((\Gamma)) = \emph{\textbf{l}}((\Delta)) $, therefore the existence of an embedding of valued fields $ \epsilon_L: L \linto \emph{\textbf{l}}((\Delta))$  such that $ \epsilon_{L \upharpoonright K} = \epsilon_K$ follows by \cite[Theorem 5]{kaplansky}. Suppose now that the extension $ K \subseteq L $ is not immediate; by induction it suffices to consider the case  $ L := K \langle r \rangle  $, where $ r \in L $ is transcendental over  $ K $ and $ K \langle r \rangle := \rho(K(r))  $. By the Wilkie inequality (\cite[Corollary 5.6]{vdD.T-convex_II}), there are two cases to consider:

	\underline{Case 1.} $ \emph{\textbf{k}} = \emph{\textbf{l}} $ (hence $ \emph{\textbf{k}}_0 = \emph{\textbf{l}}_0 $) and there exists $ \delta \in \Delta \setminus \Gamma $ such that  $ \Delta = \Gamma \oplus \Q\delta $.  Let $ h \in H $ be such that  $ v(h) = \delta $,  so that  $ h \in H \setminus G $ and $ H = G \cdot h^{\Q} $.  Since $ K^{>0} \cap H = G $ and $ h \notin G $, it follows that  $ h \in L \setminus K $, and since   $ K $ is a real closed field, $ h  $ is transcendental over $ K $; similarly,  $ x^{\delta}\in \emph{\textbf{l}}((\Delta)) $ is transcendental over $ K':= \epsilon_K(K) \subseteq \emph{\textbf{k}}((\Gamma)) $, and thus there exists a unique field isomorphism $\widetilde{\epsilon}_K: K(h) \lra K'(x^{\delta}) $ extending $ \epsilon_K  $ and sending $ h $ to $ x^{\delta}$. Note that $  K'(x^{\delta}) \subseteq \emph{\textbf{k}}((\Gamma))(x^{\delta})  \subseteq \emph{\textbf{k}} ((\Gamma\oplus\Z \delta)) $, therefore $ \widetilde{\epsilon}_K $ is the unique field embedding $ K(h) \linto \emph{\textbf{k}}((\Gamma\oplus\Z\delta)) $ extending $ \epsilon_K $ with $ \widetilde{\epsilon}_K(h) = x^{\delta} $. 

	Since $ \Gamma $ is divisible and  $ \Delta $ is torsion-free,  $ n\delta \notin \Gamma $ for all $ 0 \neq n \in \Z $, and thus given $ a:=\sum_{i=0}^na_ih^i \in K[h] $ with $ a_i \neq 0$ for all $ i \in [n]  $, it follows that $ v(a_ih^i) \neq v(a_jh^j) $ for all $ i, j \in [n] $ with $ i \neq j  $, therefore
\[
v(a)= v\left( \sum_{i=0}^na_ih^i \right) = \underset{0 \leq i \leq n}{\text{min}}\{v(a_ih^i)\}  =  \underset{0 \leq i \leq n}{\text{min}}\{\nu(\epsilon(a_i))+ i\delta\} =\nu\left( \sum_{i=0}^n\epsilon(a_i)(x^{\delta})^i \right) = \nu(\widetilde{\epsilon}_K(a)),
\] 
and thus $ \widetilde{\epsilon}_K:  K(h) \linto \emph{\textbf{k}}((\Gamma \oplus \Z \delta))  $ is an embedding of valued fields; moreover, the value group of  $ K(h) $ is $ \Gamma \oplus \Z \delta $ and its residue field is $ \emph{\textbf{k}} $ (\cite[Corollary 2.2.3]{engler/prestel.valued_fields}), therefore $ G\cdot h^{\Z} \subseteq K(h)^{>0}$ is a monomial group of $ K(h) $ such that $ \widetilde{\epsilon}_K(h') = x^{v(h')} $ for all $ h' \in G \cdot h^{\Z}$. By Lemma \ref{lem.ord_pres_mon_group}, $\widetilde{\epsilon}_K  $ is an embedding of ordered valued fields, and since $ h \in L=K \langle r \rangle \setminus K  $, $ K \langle h \rangle = K \langle r \rangle = L $ by the exchange property (\cite[Theorem 4.1]{pillay/steinhorn}), and thus it follows that $ \widetilde{\epsilon}_K $ can be extended to an embedding of valued fields $ \epsilon_L: L \linto \emph{\textbf{l}}((\Delta)) $.

\underline{Case 2.} $ \Gamma = \Delta $ (hence $ G = H $) and  there exists $ t \in \emph{\textbf{l}} \setminus \emph{\textbf{k}} $ such that $ \emph{\textbf{l}} = \emph{\textbf{k}}\langle t \rangle  $.  Let $ b \in \emph{\textbf{l}}_0 $ be such that  $ \lambda(b) = t$, so that $ b \in \emph{\textbf{l}}_0 \setminus \emph{\textbf{k}}_0 $ and $ \emph{\textbf{l}}_0 = \emph{\textbf{k}}_0 \langle b \rangle  $. Since $   K \cap \emph{\textbf{l}}_0= \emph{\textbf{k}}_0  $ and $ b \notin \textbf{\emph{k}}_0 $, it follows that $ b \in L \setminus K $, and since both  $ K $ and $ \emph{\textbf{k}}_0 $ are real closed fields, $ b  $ is transcendental both over $ K $ and over $ \emph{\textbf{k}}_0 $; similarly,  $ t \in \textbf{\emph{l}} \subseteq \textbf{\emph{l}}((\Delta)) $ is transcendental both over $ K':= \epsilon_K(K) \subseteq \emph{\textbf{k}}((\Gamma))$ and over $ \emph{\textbf{k}} $, and thus there exists  a unique field isomorphism $\overline{\epsilon}_K: K(b) \lra K'(t) $ extending $ \epsilon_K  $ and sending $ b $ to $t $. Note that $  K'(t) \subseteq \emph{\textbf{k}}((\Gamma))(t)  \subseteq \emph{\textbf{k}}(t) ((\Gamma)) $, therefore $ \overline{\epsilon}_K $ is the unique  field embedding $ K(b) \linto \emph{\textbf{k}}(t) ((\Gamma)) $ extending $ \epsilon_K$ with $ \overline{\epsilon}_K(b) = t$.

Since  $ v(b) = \nu(\lambda(b)) = \nu(t)  = 0 $,  it follows that both $ K \subseteq K(b) $ and $ K' \subseteq K'(t) $ are Gauss extensions (\cite[Corollary 2.2.2]{engler/prestel.valued_fields}). In particular, given $ a:=\sum_{i=0}^na_ib^i \in K[b] $ with $ a_i \neq 0 $ for all  $  i \in [n] $, 
\begin{align*}
	v(a)= v\left( \sum_{i=0}^na_ib^i \right) &= \underset{0 \leq i \leq n}{\text{min}}\{v(a_i)\} =  \underset{0 \leq i \leq n}{\text{min}}\{\nu(\epsilon(a_i))\} =\nu\left( \sum_{i=0}^n\epsilon(a_i)t^i \right) = \nu(\overline{\epsilon}_K(a)),
\end{align*}
and thus $ \overline{\epsilon}_K:  K(b) \linto \emph{\textbf{k}}(t) ((\Gamma))  $  is an embedding of valued fields; moreover, since $ K \subseteq K(b) $ is the Gauss extension, $ K(b) $ has value group $ \Gamma $ and residue field $\emph{\textbf{k}}(t)$, therefore $ G \subseteq K^{>0}\subseteq K(b)^{>0} $ is a monomial group of $ K(b) $ such that $ \overline{\epsilon}_K(g) = x^{v(g)} $ for all $ g \in G $. By Lemma \ref{lem.ord_pres_mon_group}, $ \overline{\epsilon}_K $ is an embedding of ordered valued fields, and arguing as in Case 1 it follows that $ \overline{\epsilon}_K$ can be extended to an embedding of valued fields $ \epsilon_L: L \linto \emph{\textbf{l}}((\Delta)) $.
\end{proof}

\printbibliography

\end{document}